\newtheorem{theorem}{Theorem} 	      	      	                              
\newtheorem{corollary}[theorem]{Corollary}     	      	      	      	      
\newtheorem{lemma}[theorem]{Lemma}     	       	      	      	      	      
\newtheorem{proposition}[theorem]{Proposition} 	      	      	      	      
\newtheorem{definition}[theorem]{Definition} 	      	      	                
\newtheorem{assumption}[theorem]{Assumption}     	      	      	      	    
\newtheorem{remark}[theorem]{Remark}                                          
\numberwithin{equation}{section}                                              
\numberwithin{theorem}{section}                                               
\newcommand{\mb}[1]{\mathbf{#1}}                                              
\newcommand{\mc}[1]{\mathcal{#1}}                                             
\newcommand{\mi}[1]{\mathscr{#1}}                                             
\newcommand{\R}{\mathbb{R}}                                                   
\newcommand{\Sph}{\mathbb{S}}                                                 
\newcommand\numberthis{\addtocounter{equation}{1}\tag{\theequation}}
\newcommand{\nb}{\nabla}
\begin{document}

\title{Control of waves on Lorentzian manifolds with curvature bounds}

\author{Vaibhav Kumar Jena}
\address{School of Mathematical Sciences\\
Queen Mary University of London\\
London E1 4NS\\ United Kingdom}
\email{v.k.jena@qmul.ac.uk}

\author{Arick Shao}
\address{School of Mathematical Sciences\\
Queen Mary University of London\\
London E1 4NS\\ United Kingdom}
\email{a.shao@qmul.ac.uk}

\begin{abstract}
We prove boundary controllability results for wave equations (with lower-order terms) on Lorentzian manifolds with time-dependent geometry satisfying suitable curvature bounds.
The main ingredient is a novel global Carleman estimate on Lorentzian manifolds that is supported in the exterior of a null (or characteristic) cone, which leads to both an observability inequality and bounds for the corresponding constant.
The Carleman estimate also yields a unique continuation result on the null cone exterior, which has applications toward inverse problems for linear waves on Lorentzian backgrounds.
\end{abstract}

\maketitle

\section{Introduction} \label{sec.intro}

The goal of this article is to obtain a novel \emph{boundary controllability} result for \emph{wave equations} on \emph{Lorentzian manifolds} with \emph{time-dependent geometry}.
To solve this control problem, we will derive a new Carleman estimate for the wave operator on Lorentzian geometries. 

To explain the problem further, let $( \mc{M}, g )$ be a $(n+1)$-dimensional Lorentzian manifold.
\footnote{By convention, we assume $g$ has signature $( -, +, \dots, + )$.}
Furthermore, let $\nb$ and $R$ denote the Levi-Civita connection and curvature for $g$, respectively.
Throughout this work, we use $\square$ to denote the usual wave operator for $g$, given by
\[
\square := g^{\alpha \beta} \nb_{\alpha\beta} \text{,}
\]
where the indices $(\alpha, \beta)$ represent spacetime components.
\footnote{We adopt Einstein summation notation throughout, hence $\alpha$ and $\beta$ and are summed over all components.}

We now give a description of the control problem. Let $\mc{U} \subset \mc{M}$ be an open subset, with smooth timelike boundary $\partial \mc{U}$.
\footnote{With regards to wave equations on $\bar{\mc{U}}$, this means one can impose boundary data on $\partial \mc{U}$.}
Consider the following wave equation,
\begin{equation} \label{eq_ctrl_wv_LM}
\begin{cases}
\square y + \nb_{\mc{X}} y + q y = 0 \text{,} \qquad & \text{on } \mc{U} \text{,}\\
y = F \textbf{1}_\Gamma \text{,} & \text{in } \partial \mc{U} \text{,}\\
(y,\partial_t y) = (y_0^-,y_1^-) \text{,} & \text{at } \tau=\tau_- \text{,}
\end{cases}
\end{equation}
where $\mc{X}$ is a smooth vector field on $\bar{\mc{U}}$, $q \in C^\infty(\bar{\mc{U}})$ is the potential, $F$ is the Dirichlet boundary data, $\Gamma \subseteq \partial \mc{U}$ is the control region, and $(y_0^-,y_1^-) \in L^2 ( \tau = \tau_- ) \times H^{-1} ( \tau = \tau_- )$ is the initial data.
Here, $\tau$ is some suitable time function in this setting.

Our main control problem is the following: \textit{Given any initial and final data $(y_0^{\pm},y_1^{\pm})$, does there exist Dirichlet boundary data $F$ such that the solution $y$ of \eqref{eq_ctrl_wv_LM} satisfies}
\begin{equation}
(y,\partial_t y) = ( y_0^+, y_1^+ ) \qquad \text{at } \tau = \tau_+ \text{?}
\end{equation}

From standard duality arguments (see \cite{dolec_russe:obs_control, lionj:control_hum, lionj:ctrlstab_hum, micu_zua:control_pde}), we know that in order to solve the control problem, we must prove a suitable \emph{observability inequality} of the form
\begin{equation} \label{eq_obs_ineq_1_LM}
\| (\phi_0,\phi_1) \|_{ H^1_0 ( \tau = \tau_- ) \times L^2 ( \tau = \tau_- ) }^2 \leqslant C \| \mc{N} \phi \|_{ L^2 (\Gamma) }^2 \text{,}
\end{equation}
for some constant $C>0$, where $\mc{N}$ denotes the outward pointing unit ($g$-)normal of $\mc{U}$, and where $\phi$ denotes the solution of the adjoint system with initial data $( \phi_0, \phi_1 )$,
\begin{equation} \label{eq_obs_wv_LM}
\begin{cases}
\square \phi + \nb_{\mc{X}} \phi + V \phi = 0 \text{,} \qquad & \text{on } \mc{U} \text{,}\\
\phi = 0 \text{,} & \text{in } \partial \mc{U} \text{,}\\
(\phi,\partial_t \phi) = (\phi_0,\phi_1) \text{,} & \text{at } \tau=\tau_- \text{,}
\end{cases}
\end{equation}
with the potential $V$ satisfying $V = q - \nb_\alpha \mc{X}^\alpha$.

Henceforth, our aim is to prove an observability inequality of the form \eqref{eq_obs_ineq_1_LM}.
This will be achieved through a new Carleman estimate for geometric wave operators.

\subsection{Literature}

There is an extensive amount of literature on controllability for wave equations.
For conciseness, we only focus here on results that are more directly relevant to the problem at hand.
For a more detailed survey of the area, see the introduction in \cite{Arick}.

First, \emph{microlocal methods} have proved rather successful for obtaining near-optimal results for controllability of wave equations.
A pioneering result in this direction is that of Bardos, Lebeau, and Rauch \cite{BLR}, which characterized the controllability of waves using the \emph{geometric control condition} (\emph{GCC}).
In the context of \eqref{eq_ctrl_wv_LM}, the GCC roughly states that every null geodesic---or bicharacteristic---in $\mc{U}$ that reflects off the boundary of $\partial \mc{U}$ must hit $\Gamma$.

Many microlocal results have further built on \cite{BLR} and the GCC.
For instance, \cite{laur_leaut:obs_unif} solved an interior control problem for operators of the form $-\partial_{tt}^2 + \Delta_k$ on $(-T,T) \times M$, where $(M, k)$ is a Riemannian manifold with or without boundary, and where $\Delta_k$ is the Laplace-Beltrami operator on $M$; the authors assume an analogue of the GCC, and they also obtain bounds for the observability constant $C$ in \eqref{eq_obs_ineq_1_LM}.
In addition, in \cite{lero_leb_terpo_trel:gcc_time}, the authors extended the above results to control regions $\Gamma$ that are time-dependent.

On the other hand, microlocal results have only been established for wave equations with time-analytic coefficients (i.e., $g$, $\mc{X}$, and $q$ in \eqref{eq_ctrl_wv_LM}), a rather restrictive class.
\footnote{This is because the argument uses the unique continuation result of Tataru \cite{tat:uc_hh} as a crucial step.}
Such methods have not been applicable to wave equations with more general time-dependent coefficients, in particular to waves on time-dependent geometric backgrounds.
Consequently, microlocal methods are not yet suitable for the boundary control problem considered here.

Another method for proving observability is using \emph{Carleman estimates} \cite{Carleman, lasie_trigg_zhang:wave_global_uc, tat:paper}, which are weighted integral inequalities that are often used for proving unique continuation properties for PDEs; see \cite{cald:unique_cauchy, carl:uc_strong, hor:lpdo4, iman}.
Carleman methods can be viewed as an extension of more traditional \emph{multiplier methods} for wave equations (see, e.g., \cite{ho:obs_wave, komo:control_mintime, lionj:ctrlstab_hum}), with the former having the advantage that it can also deal with wave equations \eqref{eq_ctrl_wv_LM}, \eqref{eq_obs_wv_LM} containing lower-order terms.
On one hand, results obtained via Carleman estimates tend to be weaker than their microlocal counterparts, in that the former does not always achieve the GCC.
However, the key advantage of Carleman estimates is that the former is applicable to a wider class of PDEs, in which the coefficients $V$ and $\mc{X}$ in \eqref{eq_obs_wv_LM} can also be non-analytic.

Carleman estimates have been used for showing observability for wave equations on time-independent geometric backgrounds $( -T, T ) \times M$, with $( M, k )$ as above, and with suitable assumptions on the geometry; see, for instance, \cite{duy_zhang_zua:obs_opt, FYZ, trigg_yao:carleman_wave_uc, yao_var_coeff}.
These articles assume the existence of a suitable function or vector field satisfying appropriate convexity properties; this can then be used to derive a suitable Carleman estimate.
However, for a given manifold, it may not always be easy to check if such a convex quantity exists.
\footnote{However, \cite{yao_var_coeff} shows that its convexity assumption is satisfied on backgrounds with positive curvature.}

On the other hand, there are nearly no corresponding results when the geometry is time-dependent, or in other words, in a general Lorentzian geometric setting.
(However, see \cite{lor_cald, taka:carleman_td} for closely related inverse problems result in a Lorentzian setting.)
To the authors' knowledge, this article provides one of the first results addressing control of wave equations \eqref{eq_ctrl_wv_LM} in Lorentzian geometries in such generality.
Another advantageous feature of our result is that it holds given sufficient bounds only on the curvature associated with the Lorentzian metric $g$, without assuming a priori the existence of a suitable convex quantity.

A closely connected result is that of \cite{Arick} by the second author.
This established boundary controllability for wave equations on flat spacetime, but on general time-dependent domains.
The present paper can be viewed as an extension of \cite{Arick} to geometric settings.
In particular, many of the ideas and tools in this article---e.g., the use of Lorentzian geometric techniques, the key features of our Carleman estimate---originated from \cite{Arick}.

Yet another closely related work is the recent inverse problems result of Alexakis, Feizmohammadi, and Oksanen \cite{lor_cald}, which established, assuming a specific set of curvature inequalities, a Lorentzian analogue of the Calder\'on problem for the wave equation on time-dependent geometries.
A key ingredient in their proof is a unique continuation result on the exteriors of null cones.
On the other hand, \cite{lor_cald} was only able to treat wave equations without first-order terms, that is, \eqref{eq_ctrl_wv_LM} with $\mc{X} = 0$.
The Carleman estimates in this paper yield a new unique continuation result in null cone exteriors that allows for $\mc{X} \neq 0$.
This, in turn, implies an analogue of the inverse problem result of \cite{lor_cald} that, under certain curvature bounds, allows for a larger class of geometric wave equations with general first-order terms.

Finally, another improvement to \cite{lor_cald} was independently obtained by Alexakis, Feizmohammadi, and Oksanen in \cite{lor_inv_new}, also by obtaining a modified unique continuation result.

\subsection{Main result}

Our next aim is to give rough statements of the main results of this article.
To help with our geometric assumptions, we recall the following:

\begin{definition} \label{def_geometric_LM}
$\mc{M}$ is called a \emph{geodesically star-shaped neighbourhood of $q \in \mc{M}$} iff every $q' \in \mc{M}$ is connected to $q$ by a unique geodesic in $\mc{M}$.
\end{definition}

In particular, $\mc{M}$ being a geodesically star-shaped neighbourhood of $q \in \mc{M}$ ensures that normal coordinates about $q$ are well-defined and smooth everywhere on $\mc{M}$.

\begin{assumption} \label{ass_LM}
The following setup will be common to all our main results:
\begin{enumerate}
\item Let $\mc{U}$ be an open subset of $\mc{M}$, with smooth timelike boundary $\partial \mc{U}$.

\item Fix $p \in \mc{M}$, and assume $\mc{M}$ is a geodesically star-shaped neighbourhood of $p$.
Moreover, fix a single normal coordinate system $( x^0 := t, x^1, \dots, x^n )$ about $p$.

\item Let $\mc{D}$ denote the exterior of the null cone in $\mc{M}$ about $p$.
\footnote{These are the points reached by the spacelike radial geodesics from $p$; see \eqref{eq.hf_D} for another description.}

\item Fix $r_0 > 0$, and assume
\[
\mc{U} \cap \mc{D} \subseteq \{ r < r_0 \} \text{,} \qquad r := \sqrt{ ( x^1 )^2 + \dots + ( x^n )^2 } \text{.}
\]

\item Assume that $\bar{\mc{D}} \cap \bar{\mc{U}}$ is ``fully contained in $\mc{M}$"---in other words, no spacelike radial geodesic from $p$ leaves $\mc{M}$ before it passes through $\partial \mc{U}$.
\footnote{An equivalent characterization is that the closure of $\bar{\mc{D}} \cap \bar{\mc{U}}$ is a compact subset of $\mc{M}$.}
\end{enumerate}
\end{assumption}

A few comments on Assumption \ref{ass_LM} are in order.
First, as before, $\mc{U}$ in (1) is the domain on which our wave equation is solved.
Observe that (2) can be achieved without any loss of generality by shrinking $\mc{M}$ if needed, as long as $\mc{U}$ remains within $\mc{M}$.

Next, (4) means that $\mc{U} \cap \mc{D}$---the part of $\mc{U}$ that we will primarily consider in our results---is spatially bounded.
Finally, (5) can be interpreted as our normal coordinate system being large enough, so that the null cone exterior $\mc{D}$ is well-defined up to all of $r \leq r_0$.

With the above setup, we now give a rough statement of our main control result:

\begin{theorem} \label{thm_ctrl_rslt_LM}
Suppose we are given the setup described in Assumption \ref{ass_LM}.
In addition:
\begin{itemize}
\item We assume there is a neighbourhood $\mc{V}$ of $p$ in $\mc{M}$ such that $\mc{M}$ is a geodesically star-shaped neighbourhood of every $q \in \mc{V}$.

\item Let $\mc{V}_-,\mc{V}_+$ be two spacelike cross-sections of $\mc{U}$,\footnote{With regards to wave equations on $\bar{\mc{U}}$, this means one can impose initial and final data on $\mc{V}^\pm$.} such that $\mc{V}_-$ lies in the past of $\mc{U} \cap \bar{\mc{D}}$ and $\mc{V}_+$ lies in the future of $\ \mc{U} \cap \bar{\mc{D}}$.
\end{itemize}
Then, there exist universal constants $\mc{C}_\dagger > 0$ and $0 < \varepsilon_0 \ll 1$, such that if
\begin{equation}\label{eq_curv_ass_LM}
|R| < \frac{\varepsilon_0 \mc{C}_\dagger}{r_0^2} \text{,} \qquad |\nb R| < \frac{\mc{C}_\dagger}{r_0^3} \text{,}
\end{equation}
then given any $(y_0^{\pm}, y_1^{\pm}) \in L^2(\mc{V}_{\pm}) \times H^{-1}(\mc{V}_{\pm})$, there exists a control function $F \in L^2(\Gamma)$, where $\Gamma$ is an appropriate subset of $\partial \mc{U}$, such that the solution $y$ of 
\begin{equation} \label{eq_ctrl_wv_1_LM}
\begin{cases}
\square y + \nb_{\mc{X}} y + q y = 0 \text{,} \qquad & \text{on } \mc{U} \text{,}\\
y = F \mathbf{1}_\Gamma \text{,} & \text{on } \partial \mc{U} \text{,} \\
(y,\partial_t y) = (y_0^-,y_1^-) \text{,} & \text{on } \mc{V}_- \text{,}
\end{cases}
\end{equation}
satisfies
\begin{equation} \label{ctrl_pblm_1_MB}
(y,\partial_t y) = (y_0^+,y_1^+) \qquad \text{on } \mc{V}_+ \text{.}
\end{equation}
\end{theorem}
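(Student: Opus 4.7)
The plan is to follow the classical Hilbert Uniqueness Method: by the duality arguments cited in the introduction, the existence of the control $F$ for arbitrary initial/final data is equivalent to a suitable observability inequality of the form \eqref{eq_obs_ineq_1_LM} for the adjoint system \eqref{eq_obs_wv_LM}, with the slices $\tau = \tau_\pm$ replaced by $\mc{V}_\pm$. I would therefore reduce the theorem to proving such an observability inequality for solutions $\phi$ of \eqref{eq_obs_wv_LM}, keeping track of the dependence of $C$ on $r_0$ and the coefficients so that it is controlled by the curvature hypotheses \eqref{eq_curv_ass_LM}.

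The central step is a global Carleman estimate for the operator $\square + \nb_{\mc{X}} + V$ on $\mc{U} \cap \mc{D}$. The natural weight is built from the normal-coordinate function $f := -t^2 + r^2$, which is positive on $\mc{D}$ and vanishes on the null cone from $p$, together with a large parameter $\lambda > 0$. Concretely, I would use weights of the form $e^{-\lambda \eta(f)}$ for a suitable convex profile $\eta$ (e.g.\ a negative power or logarithm of $f$), so that the level sets of $\eta$ foliate $\mc{D}$ by hypersurfaces which become increasingly timelike away from the cone. Following the scheme of \cite{Arick} transported to the Lorentzian geometric setting, conjugation by this weight yields a pointwise bulk identity whose leading contribution, in the flat Minkowski case, is a positive definite quadratic form in $\phi$ and $\nb \phi$.

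In the curved setting the same identity acquires error terms quadratic in $(\phi, \nb \phi)$ whose coefficients are, schematically, of the form $r^2 R$ and $r^3 \nb R$, together with lower-order contributions from $\mc{X}$ and $V$ that can be absorbed by taking $\lambda$ large. The hypotheses \eqref{eq_curv_ass_LM} are calibrated precisely so that these errors are dominated by the flat positive part: the $\varepsilon_0$ factor in the bound on $|R|$ makes the zeroth-order-in-$\nb$ error swallowable, while the bound on $|\nb R|$ controls the commutator terms generated by differentiating the connection. The geometric computations would be carried out in the normal coordinates about $p$: via the Gauss lemma and Jacobi field estimates, Christoffel symbols and Hessians differ from their Minkowskian counterparts only by curvature-dependent corrections that are quantitatively small on $\bar{\mc{D}} \cap \{r \leqslant r_0\}$ under \eqref{eq_curv_ass_LM}. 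The assumption that $\mc{M}$ is geodesically star-shaped about every point of a neighbourhood $\mc{V}$ of $p$ ensures that these objects are globally defined, smooth, and stable under small perturbations of the base point, which will be needed when one eventually shifts $p$ to cover $\bar{\mc{U}}$.

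Once the Carleman estimate is established, the observability inequality follows in the standard way. The boundary of $\mc{U} \cap \mc{D}$ decomposes into a portion on $\partial \mc{U}$ and a portion on the null cone $\partial \mc{D}$: the null-cone contribution vanishes thanks to the degeneration of $e^{-\lambda \eta(f)}$ there, the portion on $\partial \mc{U} \setminus \Gamma$ is killed by the Dirichlet condition $\phi = 0$ in \eqref{eq_obs_wv_LM}, and the remaining portion on $\Gamma$ yields exactly the $\| \mc{N} \phi \|_{L^2(\Gamma)}^2$ term on the right of \eqref{eq_obs_ineq_1_LM}. The bulk lower bound is converted to the $H^1_0 \times L^2$-norm of $(\phi_0, \phi_1)$ on $\mc{V}_-$ by standard energy estimates for \eqref{eq_obs_wv_LM}, using that $\mc{V}_-$ lies in the past of $\mc{U} \cap \bar{\mc{D}}$. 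The main obstacle, and the technical heart of the paper, is the pseudoconvexity verification in the curved geometry outlined above; the HUM duality, the boundary splitting, and the energy propagation are essentially bookkeeping in the now well-developed theory.
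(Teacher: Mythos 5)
Your high-level architecture is right and matches the paper: HUM duality reduces the controllability statement to an observability inequality for the adjoint system, the inequality is derived from a Carleman estimate on $\mc{U}\cap\mc{D}$ with a weight vanishing on the null cone, the null-cone boundary contribution then drops out automatically, $\partial\mc{U}\setminus\Gamma$ is killed by Dirichlet, lower-order terms are absorbed by the large parameter, and energy estimates propagate the bulk lower bound to $\mc{V}_-$. You also correctly identify the pseudoconvexity verification under the curvature bounds \eqref{eq_curv_ass_LM} as the technical heart. That said, there is a genuine gap in the proposed Carleman weight.

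You write that the weight should be $e^{-\lambda\eta(f)}$ for a scalar convex profile $\eta$ applied to $f$. Any such weight has the same level sets as $f$ itself, namely the hyperquadrics. These are \emph{not} pseudoconvex for $\square_g$, and a composition with a convex $\eta$ cannot repair this: pseudoconvexity for the wave operator is a sign condition on $\nabla^2\psi(X,X)$ over null vectors $X$ tangent to $\{\psi=\text{const}\}$, and for $\psi=\eta(f)$ and tangent $X$ (so $Xf=0$) one has $\nabla^2\psi(X,X)=\eta'(f)\,\nabla^2 f(X,X)+\eta''(f)(Xf)^2=\eta'(f)\,\nabla^2 f(X,X)$. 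In Minkowski, $\nabla^2 f=\tfrac12 g$, so $\nabla^2 f(X,X)=0$ for null $X$ --- the condition is borderline, and no convexification of the profile $\eta$ can push it strictly positive. In a curved background the curvature errors may tip it either way. This is precisely the obstruction the paper (following \cite{Arick}) circumvents by replacing $f$ with the \emph{shifted} hyperquadric $\bar{f}:=f\,(1-\varepsilon t^2)^{-1}$, whose level sets genuinely differ from those of $f$ (they mix $f$ and $t$); then $\nabla^2\bar f(X,X)$ acquires a strictly positive $O(\varepsilon)$ term coming from the $t^2$ perturbation (Theorem \ref{thm.pc}), and the curvature hypotheses \eqref{eq_curv_ass_LM} are calibrated to ensure that the geometric errors are small relative to this $\varepsilon$-gain. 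Without this specific mixing of $f$ and $t$ in the weight, the central pointwise inequality has no positive leading term to absorb errors into, so the scheme does not close.

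A smaller omission: when $p\in\bar{\mc{U}}$ the weight $\zeta$ degenerates inside $\overline{\mc{U}\cap\mc{D}}$ and cannot control any full spacelike cross-section, so the paper applies the Carleman estimate about one or two auxiliary points near $p$ (a perturbed exterior point, or a pair $p_1,p_2\in\mc{U}$) and sums the resulting estimates, at the cost of enlarging $\Gamma$ slightly and invoking star-shapedness for all $q$ in a neighbourhood of $p$ --- which is exactly why that hypothesis appears in the statement. Your plan does not address this degeneracy. Likewise, the passage from the weighted bulk term to an unweighted $H^1$-energy requires restricting to a slab $\{|t|\ll r_-\}$ on which $\zeta$ is bounded below, then using Fubini plus the energy estimate; this is more than bookkeeping and should be flagged.
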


The precise version of Theorem \ref{thm_ctrl_rslt_LM} is given later as Theorems \ref{thm_control_ext_mr} and \ref{thm_control_int_mr}.
Also, see these theorem statements for the precise curvature assumptions, and for the precise descriptions of $\Gamma$.
(In particular, $\Gamma$ can be chosen to lie within any neighbourhood of $\partial \mc{U} \cap \bar{\mc{D}}$ in $\partial \mc{U}$.)

\begin{remark}
One can show that the initial-boundary value problem \eqref{eq_ctrl_wv_1_LM} has a unique (weak) solution using standard well-posedness arguments.
For details see, for instance, \cite{hor:lpdo4, lionj_mage:bvp1}.
\end{remark}

\begin{remark}
Theorem \ref{thm_ctrl_rslt_LM} holds even when $\mc{V}_\pm$, where the initial and final data are imposed, do not lie inside a geodesically star-shaped neighbourhood of $p$.
Indeed, suppose $\mc{W} \subset \mc{M}$ is a geodesically star-shaped neighbourhood of $p$, and let $\mc{V}_+$, $\mc{V}_-$ lie in the future and past of $\mc{W}$, respectively.
Now, let $\mc{W}_\pm$ be two spacelike cross-sections of $\mc{U}$ in $\mc{W}$, with $\mc{W}_-$ in the future of $\mc{V}_-$ and $\mc{W}_+$ in the past of $\mc{V}_+$.
The data on $\mc{V}_\pm$ yields suitable corresponding data on $\mc{W}_\pm$ (for instance, by solving \eqref{eq_ctrl_wv_1_LM} with zero boundary data).
Then, the desired result, with data on $\mc{V}^\pm$, follows by applying Theorem \ref{thm_ctrl_rslt_LM} with manifold $\mc{W}$ and cross-sections $\mc{W}^\pm$.
\end{remark}

\begin{remark}
The assumptions in Theorem \ref{thm_ctrl_rslt_LM} of $\mc{M}$ being a geodesic star-shaped neighbourhood is imposed for convenience, as this can also be derived from appropriate geometric assumptions; see, e.g., \cite{cbl_lef:rin}.
In particular, the lack of conjugate points along radial geodesics can be deduced from the curvature bounds given in \eqref{eq_curv_ass_LM}.
\end{remark}

By the duality arguments of \cite{dolec_russe:obs_control, lionj:ctrlstab_hum}, the key step to proving Theorem \ref{thm_ctrl_rslt_LM} is to obtain an observability estimate of the form \eqref{eq_obs_ineq_1_LM}.
For this, the main ingredient is a novel \emph{Carleman estimate} that is suitable for the setting of Theorem \ref{thm_ctrl_rslt_LM}.
This Carleman estimate is the main technical contribution of the paper, and a rough version of this is stated as follows:

\begin{theorem} \label{thm_carleman_rslt_LM}
Suppose we are given the setup described in Assumption \ref{ass_LM}.
Then, there exist universal constants $\mc{C}_\dagger >0$ and $0 < \varepsilon_0 \ll 1$ such that if 
\begin{equation}\label{eq_carleman_curv_ass_LM}
|R| < \frac{\varepsilon_0 \mc{C}_\dagger}{r_0^2} \text{,} \qquad |\nb R| < \frac{\mc{C}_\dagger}{r_0^3} \text{,}
\end{equation}
then for any $\phi \in C^2(\mc{U}) \cap C^2(\bar{\mc{U}})$ satisfying
\[
\phi|_{\partial\mc{U} \cap \mc{D} } = 0 \text{,}
\]
and for any constants $a \gg 1$ and $\varepsilon_0 \ll b_0 \ll 1$, we have the estimate
\begin{align*}
\frac{ \varepsilon_0 }{ r_0^2 } \int_{\mc{U} \cap \mc{D}} \zeta & \left[ w_\rho (E_\rho \phi)^2 + w_\theta (E_\theta \phi)^2 + w_\omega \sum_A (E_A \phi)^2 \right] + \frac{ a^2 b_0 }{ r_0^2 } \int_{\mc{U} \cap \mc{D}} \zeta w_0 \phi^2 \\
&\qquad \leq \frac{1}{a} \int_{\mc{U} \cap \mc{D}} \zeta w_\square |\square \phi|^2 + \int_{\partial \mc{U} \cap \mc{D}} \zeta w_b |\mc{N} \phi|^2 \text{,} 
\end{align*}
where:
\begin{itemize}
\item $\zeta$ is a suitable Carleman weight that also depends on $a_0, b_0, \varepsilon_0$.

\item $E_\rho$, $E_\theta$, $E_A$ ($1 \leq A < n$) denote special orthogonal frames on $\mc{D}$.

\item $w_\rho$, $w_\theta$, $w_\omega$, $w_0$, $w_\square$ are positive weight functions on $\mc{D}$.

\item $w_b$ is a real-valued (not necessarily positive) weight function on $\partial \mc{U} \cap \mc{D}$.

\item $\mc{N}$ is the outward unit normal (with respect to $g$) of $\mc{U}$.
\end{itemize}
\end{theorem}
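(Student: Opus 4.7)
The plan is to follow the vector-field multiplier approach to Carleman estimates, adapted to the null-cone exterior $\mc{D}$, using the normal coordinate system $(t, x^1, \dots, x^n)$ about $p$. The natural defining function is the Lorentzian pseudo-distance $f := r^2 - t^2$, which is positive on $\mc{D}$ and vanishes on the null cone $\partial \mc{D}$. In the null-type coordinates $\rho := (r+t)/2$, $\theta := (r-t)/2$, so that $f = 4\rho\theta$, I take $(E_\rho, E_\theta, E_A)$ to be an orthogonal frame with $E_\rho, E_\theta$ aligned along $\partial_\rho, \partial_\theta$ and the $E_A$ tangent to the coordinate spheres. The Carleman weight has the schematic form $\zeta = f^{-2a} \exp(-2 b_0 F(f))$ for a suitable $F$: the large parameter $a$ drives the radial positivity, while $b_0$ generates the zeroth-order positive term with coefficient $a^2 b_0 / r_0^2$.

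The core computation is the standard conjugated-operator identity. I set $\psi := \zeta^{1/2} \phi$, conjugate $\square$ by $\zeta^{1/2}$, and split the conjugated operator into a symmetric part $L_+$ and an anti-symmetric part $L_-$. Then
\[
\int \zeta |\square \phi|^2 = \int |L_+ \psi|^2 + \int |L_- \psi|^2 + 2 \int L_+\psi \cdot L_-\psi \text{,}
\]
and integration by parts converts the cross term into $\int \tfrac{1}{2}[L_+, L_-] \psi \cdot \psi$ plus a boundary current. In Minkowski space, with the multiplier $S := \tfrac{1}{2} \nb f$ (a conformal Killing field) built into $L_-$, the deformation tensor of $S$ is pure trace, and the commutator $[L_+, L_-]$ reduces to a manifestly positive quadratic form in $(E_\rho \psi, E_\theta \psi, E_A \psi)$, while the $b_0$-correction inside the weight produces the positive $\phi^2$ term. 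Using $\phi|_{\partial \mc{U} \cap \mc{D}} = 0$ to eliminate tangential derivatives, the boundary current reduces to a (not necessarily signed) multiple of $|\mc{N}\phi|^2$, yielding the $w_b$ term.

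Passing from Minkowski to the curved Lorentzian setting introduces three classes of error. First, in normal coordinates the metric $g$ differs from the Minkowski form by terms of size $|R| r^2$ and $|\nb R| r^3$, controlled via the Jacobi equation along the spacelike radial geodesics emanating from $p$. Second, commuting covariant derivatives produces Riemann-tensor contractions. Third, the Hessian $\nb^2 f$, and hence the deformation tensor of $S$, acquire curvature-dependent corrections of the same size. Each error term is schematically of the form $(r^2 |R| + r^3 |\nb R|)$ times one of the positive bulk quantities, and the curvature bounds \eqref{eq_carleman_curv_ass_LM} make these prefactors bounded by $\varepsilon_0 \mc{C}_\dagger$ and $\mc{C}_\dagger$ respectively after using $r < r_0$. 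For $\varepsilon_0$ small and $a$ large, every error is strictly subordinate to either the $\varepsilon_0 / r_0^2$ gradient term or the $a^2 b_0 / r_0^2$ zeroth-order term on the left-hand side.

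The main obstacle is the bookkeeping of the parameter hierarchy $\varepsilon_0 \ll b_0 \ll 1 \ll a$: the $|R|$-errors must be absorbed \emph{before} $a$ is taken large, which is precisely why $\varepsilon_0$ appears in the bound on $|R|$ but not on $|\nb R|$; the gradient positivity must simultaneously survive all $a$-independent errors; and the $b_0$-corrections must still dominate the remaining $\phi^2$-errors. A subtler point is that the multiplier current is not sign-definite on the timelike boundary $\partial \mc{U}$, which forces $w_b$ to be merely real-valued; this is acceptable because the downstream observability application only requires control by $\int |\mc{N}\phi|^2$ regardless of the sign of the weight.
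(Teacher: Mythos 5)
Your proposal misses the single most important structural ingredient of the paper's argument, and also misidentifies the weight and the frames in ways that would break the proof.

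The key gap is the claim that, already in Minkowski space, the multiplier $S = \tfrac12\nabla f$ makes the commutator term "a manifestly positive quadratic form" in $(E_\rho\psi, E_\theta\psi, E_A\psi)$. This is false. In Minkowski, $\nabla^2 f = \tfrac12 g$ exactly, so $\nabla^2 f - \tfrac12 g \equiv 0$: the quadratic form degenerates on every null direction tangent to the level sets of $f$, and $f$ \emph{barely fails} to be pseudoconvex. This is precisely why the paper (following \cite{Arick}) does not use $f$ in the Carleman weight. Instead it introduces the perturbed function $\bar f := f/(1-\varepsilon t^2)$ and proves in Theorem \ref{thm.pc} that $\bar f$ is genuinely pseudoconvex under the curvature bounds, with the positivity coefficient $\tfrac{\varepsilon_0}{8}\tfrac{r^2}{r_0^2}$ coming from the second derivative of $\eta = 1-\varepsilon t^2$. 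Your $b_0$-correction inside the weight only produces the zeroth-order positivity; it cannot supply the gradient positivity, which comes from $\varepsilon_0$ via the $\eta$-perturbation. Without $\bar f$, the $L_+$/$L_-$ commutator is only nonnegative and the scheme does not close.

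Two smaller but still substantive errors: (i) Your weight $\zeta = f^{-2a}\exp(-2b_0 F(f))$ blows up on the null cone $\partial\mc D$. The paper's weight $\zeta = \bigl[\bar f\, e^{b\bar f}\bigr]^{2a}$ vanishes there, and this vanishing is essential — it is what lets the estimate be supported in $\mc D$ with no cutoff, and what makes the boundary term $\lim_{\delta\searrow 0}\int_{\mc U\cap\mc H_\delta}$ disappear in the integration step. (ii) Your frames are not the paper's frames. You take $\rho := (r+t)/2$, $\theta := (r-t)/2$ (null coordinates), with $E_\rho, E_\theta$ along $\partial_\rho, \partial_\theta$. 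In the paper $\rho := \sqrt{r^2-t^2}$, $E_\rho = \partial_r + \tfrac{t}{r}\partial_t$ is spacelike and normal to level sets of $f$ by the Gauss lemma, $E_\theta$ is timelike and tangent to those level sets, and all three families are obtained by \emph{parallel transport} along radial geodesics (Proposition \ref{thm.hf_E}) — not as coordinate fields. The parallel-transport structure is what allows the transport estimates for $q = \nabla^2 f - \tfrac12 g$, $\nabla^3 f$, $\nabla t$, $\nabla^2 t^2$ (Propositions \ref{thm.hf_q_est}–\ref{thm.hf_td_est}) to close by Gronwall's inequality, which is the paper's mechanism for converting the curvature bounds into usable estimates; your appeal to "the Jacobi equation" in normal coordinates is vaguer and would not give the precise angular weights ($\rho^2/r^2$ etc.) that the argument requires.
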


The precise version of Theorem \ref{thm_carleman_rslt_LM} is stated in Theorem \ref{thm_carl_est_LM}, in which the precise curvature assumptions\footnote{In particular, the bounds in \eqref{eq_curv_ass_LM} and \eqref{eq_carleman_curv_ass_LM} need only hold for a certain combination of frame elements.} and the precise weight functions $\zeta$, $w_\rho$, $w_\theta$, $w_\omega$, $w_0$, $w_\square$, $w_b$ are given.
For details on the frames $E_\rho$, $E_\theta$, $E_A$, see the beginning of Section \ref{ssec_hyperq_func_LM}.

\begin{remark}
The geometric assumptions in Theorem \ref{thm_ctrl_rslt_LM} can be slightly relaxed when $p \not\in \bar{\mc{U}}$.
In this case, we only require $\mc{M}$ to be a geodesically star-shaped neighbourhood of $p$ (and not nearby points).
Furthermore, we can choose $\Gamma$ to lie within $\partial \mc{U} \cap \mc{D}$.
This improvement arises since in this case, we only apply the Carleman estimate about the single point $p$.

This slightly stronger result is given in the precise theorem statement, in Theorem \ref{thm_control_ext_mr}; we omitted this in Theorem \ref{thm_ctrl_rslt_LM} in order to keep the rough version concise.
\end{remark}

\subsection{Key features}

As mentioned before, the key step in the proof of Theorem \ref{thm_ctrl_rslt_LM} is the Carleman estimate given in Theorem \ref{thm_carleman_rslt_LM}.
In particular, the main geometric assumptions of Theorem \ref{thm_ctrl_rslt_LM} arise because they are crucial for the Carleman estimate.
In the following, we turn our focus to the Carleman estimate, and we discuss its features in greater detail.

\subsubsection{The Carleman estimate}

The Carleman estimate, Theorem \ref{thm_carleman_rslt_LM}, holds for wave equations of the form \eqref{eq_obs_wv_LM} on $(n+1)$-dimensional Lorentzian manifolds with time-dependent geometry.
No analyticity or partial analyticity is assumed for any of the coefficients $g$, $\mc{X}$, or $V$.
Furthermore, this estimate allows us to control a weighted $H^1$-norm of the wave; this $H^1$-bound will then lead to the desired observability inequality \eqref{eq_obs_ineq_1_LM}.

Our Carleman estimate is naturally supported in the exterior $\mc{D}$ of a null cone about a point $p$, \emph{without imposing extra vanishing assumptions or using an additional cutoff function}.
This is achieved through the use of a specially chosen Carleman weight that vanishes on the null cone $\partial \mc{D}$.
These types of weights were originally constructed in \cite{alex_shao:uc_global}, which studied global unique continuation properties of waves, and has since been used in \cite{lor_cald, jena, Arick}.

One consequence of this feature with regards to Theorem \ref{thm_ctrl_rslt_LM} is that the control region $\Gamma$ can also be restricted to $\mc{D}$.
This was a key improvement in the controllability results of \cite{jena, Arick}, and it carries over the general Lorentzian geometric settings of this paper.

The above-mentioned Carleman weight is constructed using (a perturbation of) the hyperquadratic function $f$ about $p$.
(In particular, $f$ vanishes on the null cone $\partial \mc{D}$.)
Because of this essential role played by the hyperquadric function, we must assume, for our Carleman estimates, that the normal coordinates about $p$ are well-defined in $\mc{D}$.
This is also the main reason for the normal coordinate conditions imposed in Theorem \ref{thm_ctrl_rslt_LM}.

The curvature bounds \eqref{eq_curv_ass_LM} can be roughly interpreted as \emph{$( \mc{M}, g )$ being locally close to the Minkowski geometry}.
\eqref{eq_curv_ass_LM} is primarily used to control derivatives of $f$ and the normal time coordinate $t$, both of which are crucial for defining the pseudoconvex foliation of $\mc{D}$ that is central to our Carleman estimate.
In particular, this pseudoconvexity enables us to treat wave equations with general first-order terms and to control the $H^1$-norm of the wave on $\mc{D}$.

Finally, like in \cite{jena, Arick}, the constants in our Carleman estimate can be explicitly bounded in terms of the properties of $\mc{U}$, the geometry of $( \mc{M}, g )$, and the choice of $p \in \mc{M}$.
Using analogues of arguments given in \cite{jena, Arick}, this information can be then used to obtain explicit estimates for the observability constant $C$ in \eqref{eq_obs_ineq_1_LM} in terms of the same parameters.
This is one advantageous feature of our Carleman-based argument.

\subsubsection{Applications to inverse problems}

One immediate corollary of our Carleman estimate is a quantitative global unique continuation result for \eqref{eq_obs_wv_LM}---in particular, the Dirichlet and Neumann data on $\Gamma$ determines the solution $\phi$ on all of $\mc{U} \cap \mc{D}$.
This unique continuation property can be applied to improve the inverse problems result of \cite{lor_cald}.

To be more precise, in \cite{lor_cald}, the authors consider the wave equation
\begin{equation}
\label{eq_inverse_wave} \Box \phi + V \phi = 0 \text{.}
\end{equation}
Roughly speaking, the main results of \cite{lor_cald} state that, under appropriate geometric assumptions on $( \mc{M}, g )$, the potential $V$ can be recovered from the Dirichlet-to-Neumann map on $\partial \mc{U}$.
\footnote{For consistency, we use notations corresponding to our current setting.}
A key step in their proof is a unique continuation argument from the boundary $\partial \mc{U} \cap \mc{D}$ to the full null cone exterior, $\mc{U} \cap \mc{D}$.
\footnote{In \cite{lor_cald}, however, the centre point $p$ always lies within $\mc{U}$.}
Thus, by replacing the unique continuation argument in \cite{lor_cald} by the one implied by our Carleman estimate, we can hence obtain a variant of the main result of \cite{lor_cald} that contains some improvements; see Theorem \ref{thm_inv_pblm_VX}.

For instance, the wave equations \eqref{eq_inverse_wave} treated in \cite{lor_cald} do not contain first-order terms.
This is due to the fact that the Carleman weight used in \cite{lor_cald} (defined using exact hyperquadrics) may not be strictly pseudoconvex.
By applying Theorem \ref{thm_carleman_rslt_LM}, we can extend the result in \cite{lor_cald} to cover wave equations \eqref{eq_obs_wv_LM} that also contain first-order terms.

On the other hand, the results obtained in this article are not strict improvements of those in \cite{lor_cald}, as the two unique continuation results impose rather different assumptions on the curvature.
\footnote{On the other hand, both \cite{lor_cald} and the present article assume geodesically star-shaped neighbourhoods.}
In \cite{lor_cald}, the authors assume a bound of the form
\begin{equation} \label{eq_lor_cald_LM}
g ( R (X,Y) X, Y ) \leq K [ g (X, X) g (Y, Y) - g (X, Y)^2 ] \text{,} \qquad K \in \R \text{,}
\end{equation}
for tangent vectors $X, Y$ in $\mc{M}$.
One advantage of \eqref{eq_lor_cald_LM} is that these are one-sided bounds, in contrast to the absolute value bounds for the curvature in \eqref{eq_curv_ass_LM}.

On the other hand, the inequalities \eqref{eq_lor_cald_LM} can behave rather strangely.
In particular, it is possible that a perturbation of a manifold satisfying \eqref{eq_lor_cald_LM} can then fail to satisfy \eqref{eq_lor_cald_LM} for any $K \in \R$; see the discussion in the introduction of \cite{lor_cald}.
This is in contrast to our curvature assumptions \eqref{eq_curv_ass_LM}, which are stable under perturbations.

Lastly, we mention that the recent \cite{lor_inv_new} improved (independently from our work) the results of \cite{lor_cald} by weakening the curvature condition \eqref{eq_lor_cald_LM}.
A further conformal argument also allows them to treat perturbations of this enlarged class of spacetimes.
\footnote{See also \cite{jena, Arick}, which used conformal perturbations to treat similar problems on Minkowski spacetime.}
In particular, they obtain an improved unique continuation result for $C^2$-perturbations of Minkowski spacetime, similar to that of this paper; this once again feeds into improved inverse problems results.

We note that while the results of \cite{lor_inv_new} applies to some settings not covered here, our techniques yield a quantitative uniqueness result with explicit control on constants.

\subsection{Ideas of the proof}

We now discuss the main ideas behind the proofs of our main results.
To motivate this, we first present some relevant ideas from \cite{Arick}, which treats the analogous problems in Minkowski geometry. Then, we discuss how these concepts can be generalised to Lorentzian manifolds, as well as the difficulties we face in this process.

First, the process of using Theorem \ref{thm_carleman_rslt_LM} to prove observability (and hence controllability) is analogous to the arguments found in \cite{Arick} by the second author.
In particular, this is roughly accomplished by using the large factor $a$ in the Theorem \ref{thm_carleman_rslt_LM} to absorb lower-order terms and then by applying the standard energy inequalities.
As a result, this is not a major concern in the present discussion, hence the main challenge (as well as the current focus) is to prove a suitable Carleman estimate on Lorentzian manifolds.

To prove the Carleman estimate, one must show \emph{pseudoconvexity} for a suitable function, say $\bar{f}$.
(This function plays a major role in constructing the Carleman weight).
Pseudoconvexity can be roughly viewed as a convexity condition, but only along special null directions; this in turn leads to a good sign for some crucial terms in the Carleman estimate.

In the following discussion, we will use $\bar{f}_M$ to represent the corresponding function $\bar{f}$ used in \cite{Arick}.
The starting point is the squared Minkowski ``distance" $f_M$ from a point, which one can show just barely fails to be pseudoconvex.
To overcome this, \cite{Arick} applied a conformal transformation into a perturbed geometry and then showed that the analogue of $f_M$ in this ``warped" setting is pseudoconvex.
As pseudoconvexity is conformally invariant, the above yields a perturbation $\bar{f}_M$ of $f_M$ that is pseudoconvex.
Note that, in this setting, everything can be calculated explicitly due to the simple properties of the Minkowski metric.

On the other hand, we do not have explicit formulas in our current setting of Lorentzian manifolds.
Therefore, the first step in the analysis is to find analogues of the functions $f_M$ and $\bar{f}_M$.
For the former, a natural choice is to consider the \emph{hyperquadric} function\footnote{See Definition \ref{def_hf_f_LM} for the precise definition. This $f$ can be seen as the squared Lorentzian distance.} $f$, using normal coordinates about a fixed point $p \in \mc{M}$.
In particular, $f$ is only valid when normal coordinates are well-defined.
Since our Carleman estimate relies crucially on $f$, this puts a fundamental restriction on where our result is applicable---namely, only when the considered domain lies within a single normal coordinate system.
\footnote{In fact, the null cone exterior $\mc{D}$ is most usefully characterized as $\mc{D} := \{ q \in \mc{M} | f(q) > 0 \}$.}

Furthermore, a crucial identity from \cite{Arick} is the following:
\footnote{Here, the index $\alpha$ is raised using the Minkowski metric.}
\begin{equation} \label{eq_delf_delf_LM}
\nb^\alpha f_M \nb_\alpha f_M = f_M \text{.}
\end{equation}
Fortunately, this carries over directly to the Lorentzian setting---by the Gauss lemma, \eqref{eq_delf_delf_LM} holds with $f_M$ replaced by the above $f$, and with the Minkowski metric replaced by $g$.

Now, in general, we do not have any information about the pseudoconvexity of $f$.
To get around this, we work instead with a perturbation $\bar{f}$ of $f$, defined as
\begin{equation} \label{eq_psc_LM}
\bar{f} = f \eta^{-1} \text{,} \qquad \eta := 1-\varepsilon t^2 \text{,}
\end{equation}
where $\varepsilon > 0$ and $t$ represents the \emph{time} normal coordinate. 
We show that under the curvature assumptions \eqref{eq_curv_ass_LM}, the positive level sets of $\bar{f}$ are indeed pseudoconvex with respect to $g$.

The $\eta$ in \eqref{eq_psc_LM} is convenient because it is everywhere smooth, and it is relatively simple to work with.
As long as the normal coordinates do not deviate too much from Cartesian coordinates in Minkowski geometry, $t^2$ must be convex in the timelike and null directions.
This implies that for $g$-null vector fields $\bar{X}$ that are tangent to level sets of $\bar{f}$, we have
\[ \nb_{\bar{X}\bar{X}} \eta < 0 \text{,} \]
which leads to
\[ \nb_{\bar{X}\bar{X}} \bar{f} > 0 \text{,} \]
which is precisely the pseudoconvexity of $\bar{f}$ \cite{ler_robb:unique}.
\footnote{See Proposition \ref{thm.pc_fb_hessian} for the precise relation between the Hessian of $f$ and $\bar{f}$, and the role of $\eta$.}

The curvature assumption \eqref{eq_curv_ass_LM} ensures $t$ indeed remains like its Minkowski analogue, hence $t^2$ maintains the required convexity.
Thus, the level sets of $\bar{f}$ in $\mc{D}$ are indeed pseudoconvex, and we can proceed with the Carleman estimate derivation.
In addition, most of the nice properties of $f$ are also carried over to $\bar{f}$.
Most notably, while the Lorentzian analogue of \eqref{eq_delf_delf_LM} no longer holds exactly for $\bar{f}$, the error terms capturing the deviation from \eqref{eq_delf_delf_LM} can be adequately controlled and absorbed into leading-order terms.
(Unfortunately, treating these error terms does involve some extensive computations.)

Finally, in the proof of Theorem \ref{thm_carleman_rslt_LM}, we work with special frames that are adapted to $f$ and $\bar{f}$.
More specifically, we define these frames---denoted $E_\rho, E_\theta, E_A$ ($1 \leq A < n$)---via parallel transport along radial geodesics from the centre $p$.
\footnote{These are the same frames as in the statement of Theorem \ref{thm_carleman_rslt_LM}.}
The advantage of such parallel frames is that they automatically have the desired properties.
In particular:
\begin{itemize}
\item $E_\rho$ is normal to level sets of $f$, while $E_\theta$ and $E_A$ are tangent to level sets of $f$.

\item The $E_\alpha$'s are everywhere orthogonal.

\item $E_\theta$ and $E_A$ remain timelike and spacelike, respectively.
\footnote{Although we do not have exact formula for $E_\theta$ and $E_A$ in terms of normal coordinates, the Gauss lemma yields an exact expression for $E_\rho$, which is crucial to the proof.}
\end{itemize}
This allows us to avoid also controlling the normal coordinate vector fields in our proof.

\subsection{Outline}

The rest of the article is divided as follows:
\begin{itemize}
\item In Section \ref{sec.hf}, we present the basic geometric setting for our work.
We define the hyperquadric function $f$, and use it to foliate the null cone exterior.
We also present several results that will be used in later sections.
In particular, we bound derivatives of $f$ and $t$; this is essential for obtaining pseudoconvexity.

\item In Section \ref{sec.pc}, we show that with the assumed curvature bounds, the level sets of $\bar{f}$ are pseudoconvex.
In particular, we define and prove estimates for the function $\eta$.

\item In Section \ref{sec.CE}, we derive our main Carleman estimate.
First, we will present several preliminary identities and estimates.
Then, we use this along with the results of Section \ref{sec.pc} to complete the derivation of the Carleman estimate itself.

\item In Section \ref{sec_control_LM}, we briefly discuss the process of using the Carleman estimate to prove the observability estimate.
We also present precise statements of our main observability and controllability results, as well as our application to inverse problems.

\end{itemize}

\section{The hyperquadric foliation} \label{sec.hf}

Just as before we will use lower-case Greek letters ($\alpha, \beta, \ldots$) to signify spacetime indices. Indices will be lowered and raised using $g$. Furthermore, throughout this article (particularly within proofs), we will use $\mc{C}$ to denote various universal constants whose values can change between lines. We will first define a hyperquadric function $f$ that is a very basic requirement for the proof. For this purpose, we will make use of normal coordinates.

\subsection{Normal coordinates} \label{ssec_norm_coord_LM}
We give a brief description of some concepts that we will use in the next few sections. For details, the reader may refer \cite[Chapter 3]{oneill}.

\begin{definition}
Let $p \in \mc{M}$. For any $v \in T_p(\mc{M})$, let $\gamma_v$ denote the maximal geodesic in $\mc{M}$ such that $\gamma_v(0) = p$ and $ \gamma_v'(0)=v$. Now, let $\mc{E}_p$ be defined as
\begin{equation}
\mc{E}_p := \{ v \in T_P(\mc{M} ) \ |\ \gamma_v \text{ is defined on an interval containing } [0,1] \} \text{.}
\end{equation}
Then, we define the exponential map, $\exp_p$, as follows
\begin{equation}
\exp_p : \mc{E}_p \rightarrow \mc{M} \text{,} \qquad \exp_p(v) := \gamma_v(1) \text{.}
\end{equation}
\end{definition}

\begin{definition}
Let $p\in\mc{M}$ be fixed, and let $(e_0, e_1, \dots, e_n)$ be an orthonormal basis in $T_p \mc{M}$. Let $\mc{V}$ be a geodesically star-shaped neighbourhood of $p$. Then, the normal coordinates $( x^0, x^1, \dots, x^n )$, with respect to $(e_0, e_1, \dots, e_n)$, are given by
\[ \exp_p^{-1}(q) = \sum x^i(q) e_i \text{,} \qquad q \in \mc{V} \text{.} \]
\end{definition}
We state the Gauss lemma for the reader's convenience; see \cite[Chapter 5]{oneill} for details.
\begin{lemma}[Gauss Lemma] \label{lemma_gauss_LM}
Let $p \in \mc{M}$, and $0 \neq x \in T_p(\mc{M})$. If $ u_x, v_x \in T_x(T_p\mc{M})$ such that $u_x$ is radial, then
\begin{equation}
\langle d \exp_p(u_x) , d \exp_p(v_x) \rangle = \langle u_x , v_x \rangle \text{.}
\end{equation}
\end{lemma}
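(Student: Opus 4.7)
The plan is to use a geodesic variation to reduce the statement to the fact that geodesics have constant speed. Under the canonical identification $T_x(T_p\mc{M}) \cong T_p\mc{M}$, both sides of the identity are bilinear in $(u_x, v_x)$, so by $g_p$-orthogonal decomposition of $v_x$ into its component along $x$ and its component perpendicular to $x$, and by scaling in $u_x$, it suffices to treat the following two cases, both with $u_x = x$: (i) $v_x$ is parallel to $x$, and (ii) $v_x$ is $g_p$-orthogonal to $x$.

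Case (i) is immediate: writing $\gamma(t) := \exp_p(t x)$, I have $d\exp_p(u_x) = \gamma'(1)$ and $d\exp_p(c x) = c\, \gamma'(1)$, so the identity reduces to $c\langle \gamma'(1), \gamma'(1)\rangle = c \langle x, x\rangle_p$, which follows from $\nabla_{\partial_t}\gamma' = 0$ and $\gamma'(0) = x$.

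For case (ii), let $v \in T_p\mc{M}$ correspond to $v_x$ with $\langle x, v\rangle_p = 0$, and introduce the geodesic variation
\[
\alpha(s, t) := \exp_p\bigl( t (x + s v) \bigr), \qquad t \in [0, 1],\; |s| < \delta.
\]
Each $t$-curve is a geodesic, so $\nabla_{\partial_t} \partial_t \alpha = 0$, and the variation field $J(t) := \partial_s \alpha(0, t)$ is a Jacobi field along $\gamma$ with $J(0) = 0$. The chain rule gives $J(1) = d\exp_p(v_x)$ while $\partial_t\alpha(0, 1) = d\exp_p(u_x)$. The key computation is
\[
\frac{d}{dt} \langle \partial_t \alpha, J \rangle \Big|_{s=0} = \langle \partial_t \alpha, \nabla_{\partial_t} J \rangle = \langle \partial_t \alpha, \nabla_{\partial_s} \partial_t \alpha \rangle = \tfrac{1}{2}\, \partial_s \langle \partial_t \alpha, \partial_t \alpha \rangle,
\]
using $\nabla_{\partial_t}\partial_t\alpha = 0$, the torsion-free identity $\nabla_{\partial_t}\partial_s\alpha = \nabla_{\partial_s}\partial_t\alpha$, and metric compatibility. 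Since $\langle \partial_t\alpha, \partial_t\alpha\rangle(s, t) = \langle x + s v, x + s v\rangle_p$ is independent of $t$ and its $s$-derivative at $s = 0$ equals $2\langle x, v\rangle_p = 0$, the function $t \mapsto \langle \partial_t\alpha(0, t), J(t)\rangle$ is constant. Combined with $J(0) = 0$, this yields $\langle d\exp_p(u_x), d\exp_p(v_x)\rangle = 0 = \langle u_x, v_x\rangle$ at $t = 1$.

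I expect no substantive obstacle: the argument is classical, and since only metric compatibility and the torsion-free property of $\nabla$ are used, nothing depends on the signature of $g$, so the proof transfers unchanged to the Lorentzian setting relevant here. The only care required is in the identification of $T_x(T_p\mc{M})$ with $T_p\mc{M}$ and in verifying that the endpoint values of the variation field and its $t$-derivative are the correct pushforwards under $d\exp_p$.
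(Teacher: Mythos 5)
The paper itself does not supply a proof of this lemma: it is stated ``for the reader's convenience'' with the proof delegated to O'Neill, Chapter 5. So there is no paper proof to compare against, and your argument has to stand on its own. The variational computation you give is the standard one and is, at its core, sound. However, your reduction to cases (i) and (ii) contains a gap that is specific to the Lorentzian setting, and your closing claim that ``nothing depends on the signature of $g$'' is precisely where the gap hides. Decomposing $v_x$ as a multiple of $x$ plus a $g_p$-orthogonal complement requires $g_p(x,x) \neq 0$. In Lorentzian signature $x$ may be null, in which case $x$ lies in its own orthogonal complement $x^\perp$, the sum $\R x + x^\perp$ is not all of $T_p\mc{M}$, and a generic $v_x$ with $g_p(x,v_x) \neq 0$ admits no such decomposition. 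The Riemannian case hides this because $x \neq 0$ forces $g_p(x,x) \neq 0$.

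The fix is already latent in your own computation. In case (ii) you derive, for the variation $\alpha(s,t) = \exp_p(t(x+sv))$,
\[
\frac{d}{dt}\, \langle \partial_t\alpha,\, J\rangle \Big|_{s=0} \;=\; \tfrac{1}{2}\, \partial_s \langle \partial_t\alpha,\, \partial_t\alpha\rangle \Big|_{s=0} \;=\; \langle x, v\rangle_p \text{,}
\]
and nowhere in reaching this line did you use $\langle x,v\rangle_p = 0$; you invoked that hypothesis only afterwards to conclude the derivative vanishes. Keeping $v$ arbitrary, the function $t \mapsto \langle \partial_t\alpha(0,t), J(t)\rangle$ has constant derivative $\langle x,v\rangle_p$ and vanishes at $t=0$ (since $J(0)=0$), hence equals $\langle x,v\rangle_p$ at $t=1$. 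This proves the identity for all $v_x$ directly, with no orthogonal decomposition and no restriction on the causal character of $x$; the only reduction you then need is the bilinearity in $u_x$ to pass from $u_x = x$ to $u_x = cx$. Alternatively one can recover the null case by continuity, since non-null vectors are dense and both sides are continuous in $x$. Either way, drop the signature-independence remark and the orthogonal decomposition, and the proof is complete and correct.
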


\subsection{The hyperquadric function}

In this section, we discuss some properties of a function $f$ that is used to define the Carleman weight. We have to be particularly careful about the Lorentzian manifold setting and define this function accordingly.

Fix a point $p \in \mc{M}$, as well as an orthonormal basis
\begin{equation}
\label{eq.hf_initial} e_0, e_1, \dots, e_n \in T_p \mc{M} \text{.}
\end{equation}
For notational convenience, we will assume henceforth that $\mc{M}$ itself is geodesically star-shaped about $p$ (i.e.\ every $q \in \mc{M}$ is connected to $p$ by a unique geodesic).
Moreover, let $( x^0, x^1, \dots, x^n )$ denote the normal coordinates, constructed from \eqref{eq.hf_initial}, about $p$ on $\mc{M}$.
From this, we can construct normal polar coordinates
\begin{equation}
\label{eq.hf_nc} t := x^0 \text{,} \qquad r^2 := \sum_{ k = 1 }^n ( x^k )^2 \text{,} \qquad \omega_{ t, r } = r^{-1} ( x^1, \dots, x^k ) \text{,}
\end{equation}
on $\mc{M}$. We also construct the vector fields $\partial_t$ and $\partial_r$ corresponding to the above coordinates $t$ and $r$, respectively.

\begin{definition} \label{def_hf_f_LM}
Define the \textbf{hyperquadric function} $f$ as follows
\begin{equation}
\label{eq.hf_f} f \in C^\infty ( \mc{M} ) \text{,} \qquad f := \frac{1}{4} ( r^2 - t^2 ) \text{.}
\end{equation}
Further, define the exterior domain $\mc{D}$, as follows
\begin{equation}
\label{eq.hf_D} \mc{D} := \{ q \in \mc{M} \mid f (q) > 0 \} \text{.}
\end{equation}
For convenience, we will also make use of the following function on $\mc{D}$
\begin{equation}
\label{eq.hf_rho} \rho \in C^\infty ( \mc{D} ) \text{,} \qquad \rho := 2 \sqrt{f} = \sqrt{ r^2 - t^2 } \text{.}
\end{equation}
\end{definition}
For the majority of the work, we will be concerned with the region $\mc{D}$. 

\begin{proposition} \label{thm.hf_gauss}
The following identities for $f$ and $\rho$ hold on $\mc{M}$:
\begin{equation}
\label{eq.hf_gauss} \nabla^\sharp f = \frac{1}{2} ( t \partial_t + r \partial_r ) \text{,} \qquad \nabla^\alpha f \nabla_\alpha f = f \text{.}
\end{equation}
Furthermore, the following transport equations hold on $\mc{M}$:
\begin{align}
\label{eq.hf_gauss_2} \nabla^\mu f \nabla_{ \mu \alpha } f &= \frac{1}{2} \nabla_\alpha f \text{,} \\
\notag \nabla^\mu f \nabla_{ \mu \alpha \beta } f &= \frac{1}{2} \nabla_{ \alpha \beta } f - \nabla_\alpha{}^\mu f \nabla_{ \beta \mu } f - R_{ \mu \alpha \nu \beta } \nabla^\mu f \nabla^\nu f \text{,} \\
\notag \nabla^\mu f \nabla_{ \mu \alpha \beta \lambda } f &= \frac{1}{2} \nabla_{ \alpha \beta \lambda } f - \nabla_\lambda{}^\mu f \nabla_{ \alpha \beta \mu } f - \nabla_\alpha{}^\mu f \nabla_{ \mu \beta \lambda } f - \nabla_\beta{}^\mu f \nabla_{ \alpha \mu \lambda } f \\
\notag &\qquad - \nabla_\alpha R_{ \mu \beta \nu \lambda } \nabla^\mu f \nabla^\nu f - R_{ \mu \lambda \nu \beta } \nabla_\alpha{}^\nu f \nabla^\mu f - R_{ \mu \beta \nu \lambda } \nabla_\alpha{}^\nu f \nabla^\mu f \\
\notag &\qquad - R_{ \mu \alpha \nu \beta } \nabla_\lambda {}^\nu f \nabla^\mu f - R_{ \mu \alpha \nu \lambda } \nabla_\beta{}^\nu f \nabla^\mu f \text{.}
\end{align}
\end{proposition}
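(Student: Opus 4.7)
The plan is to prove the first identities in \eqref{eq.hf_gauss} directly from the Gauss lemma, and then to derive the three transport equations in \eqref{eq.hf_gauss_2} by successive covariant differentiation, with the Riemann tensor entering only through commutators of covariant derivatives. For the gradient identity, I work in the given normal coordinates and observe that $f = \tfrac{1}{4} \eta_{\mu\nu} x^\mu x^\nu$, where $\eta_{\mu\nu}$ is the matrix of $g|_p$ in the orthonormal basis \eqref{eq.hf_initial}. Hence $\nabla_\mu f = \partial_\mu f = \tfrac{1}{2} \eta_{\mu\nu} x^\nu$. Lemma \ref{lemma_gauss_LM}, applied with $u_x$ the radial vector $x^\mu e_\mu$ and $v_x = e_\nu$, yields $g_{\mu\nu} x^\mu = \eta_{\mu\nu} x^\mu$ in normal coordinates, so raising the index gives $\nabla^\mu f = \tfrac{1}{2} x^\mu$, which in polar form is $\tfrac{1}{2}(t \partial_t + r \partial_r)$; the eikonal identity $\nabla^\alpha f \nabla_\alpha f = \tfrac{1}{4} \eta_{\mu\nu} x^\mu x^\nu = f$ is then immediate.

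The three transport equations follow by iterated differentiation. Applying $\nabla_\alpha$ to the eikonal identity and using metric compatibility gives $2\nabla^\mu f \nabla_{\mu\alpha} f = \nabla_\alpha f$, which is the first transport equation. Applying $\nabla_\beta$ to this and expanding by Leibniz produces a term $\nabla^\mu f \nabla_{\beta\mu\alpha} f$ that must be converted to $\nabla^\mu f \nabla_{\mu\alpha\beta} f$; this is achieved by commuting the first pair of covariant derivatives via $\nabla_{\beta\mu\alpha} f = \nabla_{\mu\beta\alpha} f - R_{\beta\mu\alpha}{}^\sigma \nabla_\sigma f$, by using Hessian symmetry ($\nabla_{\mu\beta\alpha} f = \nabla_{\mu\alpha\beta} f$ and $\nabla_\beta{}^\mu f \nabla_{\mu\alpha} f = \nabla_\alpha{}^\mu f \nabla_{\beta\mu} f$), and by invoking the symmetries of $R$ together with the symmetry of $\nabla^\mu f \nabla^\nu f$ to rewrite the curvature term as $-R_{\mu\alpha\nu\beta} \nabla^\mu f \nabla^\nu f$. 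For the fourth identity, I apply $\nabla_\lambda$ to the third; the $\nabla R$-term arises from differentiating the curvature coefficient, while $\nabla^\mu f \nabla_{\lambda\mu\alpha\beta} f$ is shuffled into $\nabla^\mu f \nabla_{\mu\alpha\beta\lambda} f$ by repeated application of $[\nabla_\lambda, \nabla_\cdot]$ on tensors of the appropriate rank, each commutator yielding one of the curvature-times-Hessian contributions listed on the right-hand side.

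The main obstacle will be bookkeeping, particularly in the fourth identity, where multiple derivative swaps must be combined with the first Bianchi identity and the pair/antisymmetry relations of $R$ to collect all terms into the precise form stated. No conceptual novelty is required beyond the Gauss lemma---everything after \eqref{eq.hf_gauss} is a formal consequence of metric compatibility, the symmetry of the Hessian of a scalar, and the standard commutator identity for covariant derivatives.
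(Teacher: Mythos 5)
Your proposal mirrors the paper's own derivation: the Gauss lemma yields the gradient and eikonal identities, and the three transport equations then follow by iterated covariant differentiation, with curvature entering only through commutators of covariant derivatives, exactly as in the paper. Beware only that the commutator identity you quote, $\nabla_{\beta\mu\alpha} f = \nabla_{\mu\beta\alpha} f - R_{\beta\mu\alpha}{}^\sigma \nabla_\sigma f$, carries the opposite sign from the paper's implicit convention (which, from the stated second transport equation together with Hessian symmetry, must read $\nabla_{\mu\alpha\beta}f - \nabla_{\alpha\mu\beta}f = R_{\mu\alpha\beta}{}^{\nu}\nabla_\nu f$, equivalently $-R_{\mu\alpha\nu\beta}\nabla^\nu f$), so if carried through as written your bookkeeping would flip the sign of every Riemann term in \eqref{eq.hf_gauss_2}; align the conventions before attempting the fourth-order identity.
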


\begin{proof}
First, \eqref{eq.hf_gauss} follows from the Gauss lemma---both expressions have the same values as in Minkowski spacetime and Cartesian coordinates.
Using \eqref{eq.hf_gauss}, we then compute
\[
\nabla_{ \alpha \beta } f \nabla^\beta f = \frac{1}{2} \nabla_\alpha ( \nabla^\beta f \nabla_\beta f ) = \frac{1}{2} \nabla_\alpha f \text{,}
\]
which is precisely the first identity in \eqref{eq.hf_gauss_2}.

Next, commuting derivatives and recalling the first part of \eqref{eq.hf_gauss_2}, we obtain
\begin{align*}
\nabla^\mu f \nabla_{ \mu \alpha \beta } f &= \nabla^\mu f \nabla_{ \alpha \beta \mu } f - R_{ \mu \alpha \nu \beta } \nabla^\mu f \nabla^\nu f \\
&= \nabla_\alpha ( \nabla^\mu f \nabla_{ \beta \mu } f ) - \nabla_\alpha{}^\mu f \nabla_{ \beta \mu } f - R_{ \mu \alpha \nu \beta } \nabla^\mu f \nabla^\nu f \\
&= \frac{1}{2} \nabla_{ \alpha \beta } f - \nabla_\alpha{}^\mu f \nabla_{ \beta \mu } f - R_{ \mu \alpha \nu \beta } \nabla^\mu f \nabla^\nu f \text{,}
\end{align*}
which is the second part of \eqref{eq.hf_gauss_2}.
Finally, taking another derivative yields
\begin{align*}
\nabla^\mu f \nabla_{ \mu \alpha \beta \lambda } f &= \nabla^\mu f \nabla_{ \alpha \mu \beta \lambda } f - R_{ \mu \alpha \nu \beta } \nabla^\mu f \nabla_\lambda {}^\nu f - R_{ \mu \alpha \nu \lambda } \nabla^\mu f \nabla_\beta{}^\nu f \\
&= \nabla_\alpha ( \nabla^\mu f \nabla_{ \mu \beta \lambda } f ) - \nabla_\alpha{}^\mu f \nabla_{ \mu \beta \lambda } f - R_{ \mu \alpha \nu \beta } \nabla^\mu f \nabla_\lambda {}^\nu f - R_{ \mu \alpha \nu \lambda } \nabla^\mu f \nabla_\beta{}^\nu f \\
&= \frac{1}{2} \nabla_{ \alpha \beta \lambda } f - \nabla_\alpha ( \nabla_\beta{}^\mu f \nabla_{ \lambda \mu } f + R_{ \mu \beta \nu \lambda } \nabla^\mu f \nabla^\nu f ) - \nabla_\alpha{}^\mu f \nabla_{ \mu \beta \lambda } f \\
&\qquad - R_{ \mu \alpha \nu \beta } \nabla^\mu f \nabla_\lambda {}^\nu f - R_{ \mu \alpha \nu \lambda } \nabla^\mu f \nabla_\beta{}^\nu f \\
&= \frac{1}{2} \nabla_{ \alpha \beta \lambda } f - \nabla_\lambda{}^\mu f \nabla_{ \alpha \beta \mu } f - \nabla_\alpha{}^\mu f \nabla_{ \mu \beta \lambda } f - \nabla_\beta{}^\mu f \nabla_{ \alpha \mu \lambda } f \\
&\qquad - \nabla_\alpha R_{ \mu \beta \nu \lambda } \nabla^\mu f \nabla^\nu f - R_{ \mu \lambda \nu \beta } \nabla_\alpha{}^\nu f \nabla^\mu f - R_{ \mu \beta \nu \lambda } \nabla_\alpha{}^\nu f \nabla^\mu f \\
&\qquad - R_{ \mu \alpha \nu \beta } \nabla_\lambda {}^\nu f \nabla^\mu f - R_{ \mu \alpha \nu \lambda } \nabla_\beta{}^\nu f \nabla^\mu f \text{,}
\end{align*}
which proves the last identity in \eqref{eq.hf_gauss_2}.
\end{proof}
As mentioned in the introduction, for obtaining the pseudoconvexity condition, we will be working with a function $\eta := \eta(t)$. For this purpose, we will need to find derivatives of $t$. Hence, now we prove some transport equations for the coordinate $t$. 
\begin{proposition} \label{thm.hf_transport}
The following identities hold on $\mc{M}$:
\begin{align}
\label{eq.hf_transport} \nabla^\mu f \nabla_\mu t &= \frac{1}{2} t \text{,} \\
\notag \nabla^\mu f \nabla_{ \mu \alpha } t &= \frac{1}{2} \nabla_\alpha t - \nabla_\alpha{}^\mu f \nabla_\mu t \text{,} \\
\notag \nabla^\mu f \nabla_{ \mu \alpha \beta } t^2 &= \nabla_{ \alpha \beta } t^2 - \nabla_\alpha{}^\mu f \nabla_{ \beta \mu } t^2 - \nabla_\beta{}^\mu f \nabla_{ \alpha \mu } t^2 - \nabla_{ \alpha \beta \mu } f \nabla^\mu t^2 - R_{ \mu \alpha \nu \beta } \nabla^\mu f \nabla^\nu t^2 \text{.}
\end{align}
\end{proposition}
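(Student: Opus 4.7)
The plan is to derive the three identities in succession, each built on the one before it, with the pattern closely mirroring that of the proof of Proposition~\ref{thm.hf_gauss}.

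For the first identity, I would apply the Gauss-lemma formula $\nabla^\sharp f = \frac{1}{2}(t\partial_t + r\partial_r)$ from \eqref{eq.hf_gauss}. Since the normal coordinates are Cartesian at $p$, the polar coordinate $r$ satisfies $\partial_r t = 0$, while $\partial_t t = 1$; thus
\[
\nabla^\mu f\,\nabla_\mu t \;=\; \tfrac{1}{2}\bigl(t\,\partial_t + r\,\partial_r\bigr)t \;=\; \tfrac{1}{2}t \text{.}
\]

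For the second identity, I would simply differentiate the first with $\nabla_\alpha$ and use the fact that the Hessian of a scalar is symmetric, so that $\nabla_{\alpha\mu}t = \nabla_{\mu\alpha}t$. Explicitly,
\[
\nabla_\alpha{}^\mu f\,\nabla_\mu t + \nabla^\mu f\,\nabla_{\mu\alpha}t \;=\; \tfrac{1}{2}\nabla_\alpha t \text{,}
\]
and rearranging gives the claim. No curvature enters since the commutator $[\nabla_\mu,\nabla_\alpha]t$ vanishes on a scalar.

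For the third identity, which is the main point, I would first note that multiplying the first identity by $2t$ gives the scalar transport equation $\nabla^\mu f\,\nabla_\mu t^2 = t^2$. I would then differentiate once in $\nabla_\beta$ to obtain
\[
\nabla^\mu f\,\nabla_{\beta\mu}t^2 \;=\; \nabla_\beta t^2 - \nabla_\beta{}^\mu f\,\nabla_\mu t^2 \text{,}
\]
and differentiate once more in $\nabla_\alpha$, carefully applying the Leibniz rule to $\nabla_\beta{}^\mu f \,\nabla_\mu t^2$ and to $\nabla^\mu f\,\nabla_{\beta\mu}t^2$. The result is an expression for $\nabla^\mu f\,\nabla_{\alpha\beta\mu}t^2$. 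To convert this into $\nabla^\mu f\,\nabla_{\mu\alpha\beta}t^2$, I would apply the Ricci identity to the gradient $\nabla_\beta t^2$ (a $(0,1)$-tensor), giving
\[
\nabla_{\mu\alpha\beta}t^2 - \nabla_{\alpha\mu\beta}t^2 \;=\; -R^\sigma{}_{\beta\mu\alpha}\nabla_\sigma t^2 \text{,}
\]
and then use the scalar-Hessian symmetry $\nabla_{\alpha\mu\beta}t^2 = \nabla_{\alpha\beta\mu}t^2$ together with the pair-exchange symmetry of the Riemann tensor $R_{\nu\beta\mu\alpha} = R_{\mu\alpha\nu\beta}$ to put the curvature term in the stated form.

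The main (minor) obstacle is keeping the bookkeeping of index orderings straight in the last step: one must distinguish the three arrangements $\nabla_{\mu\alpha\beta}$, $\nabla_{\alpha\mu\beta}$, $\nabla_{\alpha\beta\mu}$ of the third covariant derivative and use, in sequence, both the scalar-Hessian symmetry (accounting for two of them being equal) and the Ricci identity (accounting for the curvature correction between the remaining pair). The algebra is essentially identical to the third line of the proof of Proposition~\ref{thm.hf_gauss}, except that no curvature derivative appears because $\nabla^\mu f\,\nabla_\mu t^2 = t^2$ rather than $f$—this absence of a factor $\tfrac{1}{2}$ on $\nabla_{\alpha\beta}t^2$ accounts for the different leading coefficient from the second-order identity for $f$.
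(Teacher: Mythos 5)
Your proposal is correct and follows essentially the same route as the paper: all three identities are obtained by repeatedly differentiating $\nabla^\mu f\,\nabla_\mu t = \tfrac{1}{2}t$ (and its consequence $\nabla^\mu f\,\nabla_\mu t^2 = t^2$) along $\nabla^\sharp f$, using the Leibniz rule together with scalar-Hessian symmetry, and then applying the Ricci identity plus the pair symmetry of $R$ to move $\mu$ to the front of the third derivative. The only cosmetic difference is that the paper commutes derivatives first and then applies the product rule, whereas you do it in the reverse order; both yield the stated formula.
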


\begin{proof}
The first equation follows from of the first part of \eqref{eq.hf_gauss}.
From this, we also obtain
\begin{align*}
\nabla^\mu f \nabla_{ \mu \alpha } t &= \nabla_\alpha ( \nabla^\mu f \nabla_\mu t ) - \nabla_\alpha{}^\mu f \nabla_\mu t \\
&= \frac{1}{2} \nabla_\alpha t - \nabla_\alpha{}^\mu f \nabla_\mu t \text{,}
\end{align*}
which proves the second identity.
Furthermore, similar computations yield
\[
\nabla^\mu f \nabla_\mu t^2 = t^2 \text{,} \qquad \nabla^\mu f \nabla_{ \mu \alpha } t^2 = \nabla_\alpha t^2 - \nabla_\alpha{}^\mu f \nabla_\mu t^2 \text{.}
\]
Finally, commuting derivatives and applying the above, we obtain
\begin{align*}
\nabla^\mu f \nabla_{ \mu \alpha \beta } t^2 &= \nabla^\mu f \nabla_{ \alpha \beta \mu } t^2 - R_{ \mu \alpha \nu \beta } \nabla^\mu f \nabla^\nu t^2 \\
&= \nabla_\alpha ( \nabla^\mu f \nabla_{ \mu \beta } t^2 ) - \nabla_\alpha{}^\mu f \nabla_{ \beta \mu } t^2 - R_{ \mu \alpha \nu \beta } \nabla^\mu f \nabla^\nu t^2 \\
&= \nabla_{ \alpha \beta } t^2 - \nabla_\alpha ( \nabla_\beta{}^\mu f \nabla_\mu t^2 ) - \nabla_\alpha{}^\mu f \nabla_{ \beta \mu } t^2 - R_{ \mu \alpha \nu \beta } \nabla^\mu f \nabla^\nu t^2 \\
&= \nabla_{ \alpha \beta } t^2 - \nabla_\alpha{}^\mu f \nabla_{ \beta \mu } t^2 - \nabla_\beta{}^\mu f \nabla_{ \alpha \mu } t^2 - \nabla_{ \alpha \beta \mu } f \nabla^\mu t^2 - R_{ \mu \alpha \nu \beta } \nabla^\mu f \nabla^\nu t^2 \text{,}
\end{align*}
which is the last part of \eqref{eq.hf_transport}.
\end{proof}

\subsection{Hyperquadric frames} \label{ssec_hyperq_func_LM}

Our next task is to construct frames on $\mc{D}$ that are adapted to the level sets of $f$. This will simplify the computations by ensuring that no (or minimal) cross terms arise from the coordinates when we raise/lower indices using $g$.

\begin{definition}
Let $e_0, \dots, e_n \in T_p \mc{M}$ be the basis from \eqref{eq.hf_initial}. Then, for each $\omega := ( \omega^0, \omega^1, \dots, \omega^n ) \in ( -1, 1 ) \times \Sph^{ n - 1 }$, we define
\begin{equation}
\label{eq.hf_e} e_{ r, \omega } := \sum_{ k = 1 }^n \omega^k e_k \text{,} \qquad e_{ \rho, \omega } := e_{ r, \omega } + \omega^0 e_0 \text{,} \qquad e_{ \theta, \omega } := e_0 + \omega^0 e_{ r, \omega } \text{.}
\end{equation}
In addition, we define the subset
\begin{equation}
\label{eq.hf_H} H_\omega := \{ v \in T_p \mc{M} \mid g ( v, e_0 ) = g ( v, e_{ r, \omega } ) = 0 \} \text{.}
\end{equation}
\end{definition}
Direct computations using \eqref{eq.hf_e} then yield
\begin{equation}
\label{eq.hf_eo} g ( e_{ \rho, \omega }, e_{ \rho, \omega } ) = 1 - ( \omega^0 )^2 \text{,} \qquad g ( e_{ \theta, \omega }, e_{ \theta, \omega } ) = -1 + ( \omega^0 )^2 \text{,} \qquad g ( e_{ \rho, \omega }, e_{ \theta, \omega } ) = 0 \text{.}
\end{equation}
In addition, we let $\gamma_\omega$ denote the radial geodesic satisfying
\begin{equation}
\label{eq.hf_geodesic} \gamma_\omega (0) := p \text{,} \qquad \gamma_\omega' (0) = e_{ \rho, \omega } \text{.}
\end{equation}
We then let $\Gamma$ denote the family of all such geodesics,
\begin{equation}
\label{eq.hf_geodesic_set} \Gamma := \{ \gamma_\omega \mid \omega \in ( -1, 1 ) \times \Sph^{ n - 1 } \} \text{.}
\end{equation}

We now define $E_0$, $E_\rho$, $E_\theta$, and $\mc{H}$ on each $\gamma_\omega \in \Gamma$ to be the parallel transports of $e_0$, $e_{ \rho, \omega }$, $e_{ \theta, \omega }$, and $H_\omega$, respectively, along $\gamma_\omega$.
This results in smooth vector fields $E_0$, $E_\rho$, and $E_\theta$ on $\mc{D}$, as well as an $(n - 1)$-dimensional smooth distribution $\mc{H}$ on $\mc{D}$.

Finally, we complete $E_\rho$ and $E_\theta$ using local ($g$-)orthonormal frames $E_1, \dots, E_{ n - 1 } \in \mc{H}$, which we can also assume to be parallel transported along each $\gamma_\omega \in \Gamma$.
From now on, we will use uppercase Latin letters $A, B, \dots$ to denote frame indices in the $\mc{H}$-components (e.g.\ $E_A \in \mc{H}$), taking values between $1$ and $n - 1$, inclusive. The next proposition gives us more details about the frames $E_\rho, E_\theta, E_A$.

\begin{proposition} \label{thm.hf_E}
The following properties hold along each $\gamma_\omega \in \Gamma$:
\begin{itemize}
\item The following identities hold:
\begin{equation}
\label{eq.hf_E_rho} \frac{t}{r} = \omega^0 \text{,} \qquad \frac{ \rho^2 }{ r^2 } = 1 - ( \omega^0 )^2 \text{,} \qquad E_\rho = \partial_r + \frac{t}{r} \partial_t \text{.}
\end{equation}

\item $E_\rho$, $E_\theta$, $E_A$ forms a (local) orthogonal frame.

\item The following identities hold:
\begin{equation}
\label{eq.hf_E} g ( E_\rho, E_\rho ) = \frac{ \rho^2 }{ r^2 } \text{,} \qquad g ( E_\theta, E_\theta ) = - \frac{ \rho^2 }{ r^2 } \text{,} \qquad g ( E_A, E_B ) = \delta_{ A B } \text{.}
\end{equation}

\item $E_\rho$ is normal to the level sets of $f$, while $E_\theta$, $E_A$ are tangent to the level sets of $f$.

\item The limits of $E_\rho$, $E_\theta$, $E_A$ at $p$ are uniformly bounded with respect to $\omega$.

\item The following identity holds:
\begin{equation}
\label{eq.hf_E_special} E_\theta = \frac{t}{r} E_\rho + \frac{ \rho^2 }{ r^2 } E_0 \text{.}
\end{equation}
\end{itemize}
\end{proposition}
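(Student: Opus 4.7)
The plan is to derive each of the six bullet points by systematically exploiting two facts: (a) along any radial geodesic $\gamma_\omega$, the normal coordinate values of $\gamma_\omega(s)$ are literally $(s\omega^0, s\omega^1, \ldots, s\omega^n)$, and (b) since $E_0, E_\rho, E_\theta, E_A$ are defined by parallel transport, inner products between them are constant along $\gamma_\omega$ and so are inherited from the initial data at $p$. The only non-parallel-transport ingredient we need is the Gauss lemma identity $\nabla^\sharp f = \tfrac{1}{2}(t\partial_t + r\partial_r)$ from Proposition \ref{thm.hf_gauss}.

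First, I would prove the identities \eqref{eq.hf_E_rho}. Since $\omega \in (-1,1)\times\Sph^{n-1}$ has $\sum_{k=1}^n (\omega^k)^2 = 1$, reading off the coordinates along $\gamma_\omega(s) = \exp_p(s\, e_{\rho,\omega})$ gives $t = s\omega^0$ and $r = s$, whence $t/r = \omega^0$ and $\rho^2/r^2 = 1 - (\omega^0)^2$. Since $\gamma_\omega$ is a geodesic, $E_\rho = \gamma_\omega'$ along $\gamma_\omega$, and differentiating the normal-coordinate expression shows $\gamma_\omega' = \omega^0 \partial_t + \sum_k \omega^k \partial_k = \omega^0 \partial_t + \partial_r$, which is precisely $\partial_r + (t/r)\partial_t$.

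Next, I would address the orthogonality statement, the inner product identities \eqref{eq.hf_E}, and the uniform boundedness of the limits at $p$. All of these reduce, via parallel transport, to the corresponding statements at $p$ about the initial vectors $e_{\rho,\omega}$, $e_{\theta,\omega}$, and an orthonormal basis of $H_\omega$. Direct substitution into \eqref{eq.hf_e}, using the orthonormality of $e_0, \ldots, e_n$, gives the inner products in \eqref{eq.hf_eo} (already noted in the text) and also $g(e_{\rho,\omega}, v) = g(e_{\theta,\omega}, v) = 0$ for $v \in H_\omega$, by the very definition of $H_\omega$. Uniform boundedness at $p$ follows because each initial vector has coefficients in $\{\omega^0, \omega^1, \ldots, \omega^n\} \subset [-1,1]$ when expanded in the fixed basis $e_0, \ldots, e_n$, and the orthogonal complement within a bounded family is itself bounded.

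For the normality/tangency claim, I would combine \eqref{eq.hf_gauss} with \eqref{eq.hf_E_rho}: on $\mc{D}$ we have
\[
\nabla^\sharp f = \tfrac{1}{2}(t\partial_t + r\partial_r) = \tfrac{r}{2}\Bigl(\partial_r + \tfrac{t}{r}\partial_t\Bigr) = \tfrac{r}{2}\, E_\rho,
\]
so $\nabla^\sharp f$ is proportional to $E_\rho$. The orthogonality of $E_\theta, E_A$ with $E_\rho$ then immediately gives $E_\theta f = E_A f = 0$, i.e.\ tangency to level sets of $f$, while $E_\rho$ itself is normal. Finally, \eqref{eq.hf_E_special} is proved by observing that since $t/r = \omega^0$ and $\rho^2/r^2 = 1 - (\omega^0)^2$ are constant along $\gamma_\omega$, and $E_0, E_\rho, E_\theta$ are parallel along $\gamma_\omega$, the identity propagates from its value at $p$; at $p$ it reduces to the algebraic identity $e_{\theta,\omega} = \omega^0 e_{\rho,\omega} + (1-(\omega^0)^2) e_0$, which one checks by direct substitution from \eqref{eq.hf_e}. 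There is no serious obstacle here: the proof is a careful unpacking of the definitions, with the Gauss lemma doing all the geometric work.
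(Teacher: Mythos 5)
Your proof is correct and relies on the same structural ingredients as the paper's: parallel transport of the frames, the Gauss lemma, and the structure of normal coordinates. The one place you genuinely diverge is in deriving \eqref{eq.hf_E_rho}: the paper first shows $t/r$ is constant along $\gamma_\omega$ using $\nabla^\sharp f(t/r) = 0$ and then identifies the constant by a limit argument at the vertex $p$, whereas you read off $t = s\omega^0$, $r = s$ directly from the normal-coordinate parametrization $\gamma_\omega(s) = \exp_p(s\,e_{\rho,\omega})$ and identify $E_\rho = \gamma_\omega' = \omega^0\partial_t + \sum_k \omega^k\partial_k$ by differentiating. Your version is somewhat more elementary and bypasses the need to argue via the radial direction of $\nabla^\sharp f$ at this step; both are valid, and the remaining bullet points are handled identically.
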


\begin{proof}
First, by the Gauss lemma, $\nabla^\sharp f$ points in the same direction as $\gamma_\omega'$.
As a result, that $\frac{t}{r}$ is constant along $\gamma_\omega$ follows immediately from the first part of \eqref{eq.hf_gauss}:
\[
( \nabla^\sharp f ) \left( \frac{t}{r} \right) = \frac{1}{2} ( r \partial_r + t \partial_t ) \left( \frac{t}{r} \right) = 0 \text{.}
\]
Moreover, by the definition \eqref{eq.hf_nc} of our normal polar coordinates, we have that
\begin{equation}
\label{eql.hf_E_1} \frac{2}{r} \nabla^\sharp f = \partial_r + \frac{t}{r} \partial_t \rightarrow e_{ r, \omega } + \frac{t}{r} e_0 \text{,}
\end{equation}
as one tends to $p$ along $\gamma_\omega$.
Since the left-hand side of \eqref{eql.hf_E_1} points in the same direction as $\gamma_\omega'$, the right-hand side of \eqref{eql.hf_E_1} also points in the same direction as $e_{ \rho, \omega }$, and thus the definition \eqref{eq.hf_e} of $e_{ \rho, \omega }$ yields the first part of \eqref{eq.hf_E_rho}.
In addition, we have that
\[
\frac{ \rho^2 }{ r^2 } = \frac{ r^2 - t^2 }{ r^2 } = 1 - ( \omega^0 )^2 \text{,}
\]
which is the second part of \eqref{eq.hf_E_rho}.
Finally, since $E_\rho$ is radial, then the Gauss lemma and the first part of \eqref{eq.hf_E_rho} together yield the third part of \eqref{eq.hf_E_rho}.

Next, that $E_\rho$, $E_\theta$, $E_A$ are orthogonal follows from their definitions via parallel transport.
Moreover, since both $E_\rho$ and $\nabla^\sharp f$ are radial, then $E_\rho$ is normal to the level sets of $f$, and hence $E_\theta$ and $E_A$ are everywhere tangent to the level sets of $f$.
Notice also that the limits of $E_\rho$, $E_\theta$, and $E_A$ at $p$ along $\gamma_\omega$ are, by definition, $e_{ \rho, \omega }$, $e_{ \theta, \omega }$, and a unit vector in $H_\omega$, respectively, which are indeed uniformly bounded for all $\omega \in ( -1, 1 ) \times \Sph^{ n - 1 }$.

Next, using \eqref{eq.hf_eo}, \eqref{eq.hf_E_rho}, and the parallel transport definition of $E_\rho, E_\theta, E_A$ shows that \eqref{eq.hf_E} holds.

Finally, for \eqref{eq.hf_E_special}, we first notice that
\[
\omega^0 e_{ \rho, \omega } + [ 1 - ( \omega^0 )^2 ] e_0 = \omega^0 ( e_{r,\omega} + \omega^0 e_0 ) + e_0 - ( \omega^0 )^2 e_0 = e_{ \theta, \omega } \text{.}
\]
Parallel transporting the above along $\gamma_\omega$ and recalling \eqref{eq.hf_E_rho} then yields \eqref{eq.hf_E_special}.
\end{proof}

\begin{remark}
In the special case of Minkowski spacetime $( \mc{M}, g ) := ( \R^{1+n}, g_0 )$, we have
\begin{equation}
\label{eq.hf_E_mink} E_\rho = \partial_r + \frac{t}{r} \partial_t \text{,} \qquad E_\theta = \partial_t + \frac{t}{r} \partial_r \text{,}
\end{equation}
and we have that the $E_A$'s are always tangent to level sets of $( t, r )$.
\end{remark}

In subsequent sections, we will adopt the following frame indexing conventions:
\begin{itemize}
\item Uppercase Latin letters are with respect to the frames $\{ E_A \}$ and their coframes.

\item Lowercase Latin letters are with respect to the frames $\{ E_\theta, E_A \}$ and their coframes.

\item We use the indices $\theta$ and $\rho$ to refer to $E_\theta$ and $E_\rho$, respectively.
\end{itemize}
Furthermore, we again adopt Einstein summation notation---repeated Latin indices denote summations over all above-mentioned frame elements.

\subsection{Vertex limits}

Next, we derive limits for derivatives of $f$ at the center $p$.
The following propositions follow from standard properties of normal coordinates:

\begin{proposition} \label{thm.hf_f_limit}
The following identities hold at $p$:
\begin{equation}
\label{eq.hf_f_limit} \nabla f |_p = 0 \text{,} \qquad \nabla^2 f |_p = \frac{1}{2} g |_p \text{,} \qquad \nabla^3 f |_p = 0 \text{.}
\end{equation}
In addition, the following identity holds at $p$:
\begin{equation}
\label{eq.hf_f_limit_2} \nabla_{ \alpha \beta \lambda \delta } f |_p = - \frac{1}{6} ( R_{ \alpha \lambda \beta \delta } + R_{ \alpha \delta \beta \lambda } ) |_p \text{.}
\end{equation}
\end{proposition}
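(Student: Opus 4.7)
The approach would be to work directly in the normal coordinates introduced in Section \ref{ssec_norm_coord_LM}. In these coordinates $f = \tfrac{1}{4}\eta_{\alpha\beta}\,x^\alpha x^\beta$ with $\eta = \operatorname{diag}(-1,1,\dots,1)$, and the only inputs needed are the standard normal-coordinate identities
\[
g_{\alpha\beta}|_p = \eta_{\alpha\beta}, \qquad \Gamma^\lambda_{\alpha\beta}|_p = 0, \qquad \partial_\gamma \Gamma^\lambda_{\alpha\beta}\big|_p = -\tfrac{1}{3}\bigl(R^\lambda{}_{\alpha\beta\gamma} + R^\lambda{}_{\beta\alpha\gamma}\bigr)\big|_p,
\]
the last of which follows from the classical expansion $g_{\alpha\beta}(x) = \eta_{\alpha\beta} - \tfrac{1}{3}R_{\alpha\mu\beta\nu}|_p\, x^\mu x^\nu + O(|x|^3)$.

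For the first three claims in \eqref{eq.hf_f_limit} the computation is immediate. Since $f$ is a scalar, $\nabla_\alpha f = \partial_\alpha f = \tfrac{1}{2}\eta_{\alpha\beta}\,x^\beta$ vanishes at $p$. The Hessian
\[
\nabla_{\alpha\beta}f \;=\; \tfrac{1}{2}\eta_{\alpha\beta} - \tfrac{1}{2}\Gamma^\lambda_{\alpha\beta}\,\eta_{\lambda\mu}\,x^\mu
\]
then equals $\tfrac{1}{2}\eta_{\alpha\beta}=\tfrac{1}{2}g|_p$ at $p$. Unfolding $\nabla_{\gamma\alpha\beta}f = \partial_\gamma\nabla_{\alpha\beta}f - \Gamma^\mu_{\gamma\alpha}\nabla_{\mu\beta}f - \Gamma^\mu_{\gamma\beta}\nabla_{\alpha\mu}f$, every surviving term at $p$ carries either an undifferentiated $\Gamma|_p$ or an explicit $x$-factor (coming from differentiating $\Gamma^\lambda_{\alpha\beta}\eta_{\lambda\mu}x^\mu$), so $\nabla^3 f|_p = 0$.

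For \eqref{eq.hf_f_limit_2} I would unfold
\[
\nabla_{\alpha\beta\lambda\delta}f \;=\; \partial_\alpha\nabla_{\beta\lambda\delta}f - \Gamma^\mu_{\alpha\beta}\nabla_{\mu\lambda\delta}f - \Gamma^\mu_{\alpha\lambda}\nabla_{\beta\mu\delta}f - \Gamma^\mu_{\alpha\delta}\nabla_{\beta\lambda\mu}f,
\]
then unfold $\nabla_{\beta\lambda\delta}f$ once more. Evaluating at $p$, every term containing an undifferentiated $\Gamma|_p$ or an explicit $x^\nu$ dies, and a direct accounting leaves the four surviving contributions
\[
\nabla_{\alpha\beta\lambda\delta}f|_p \;=\; -\tfrac{1}{2}\bigl[(\partial_\beta\Gamma^\mu_{\lambda\delta})\eta_{\mu\alpha} + (\partial_\alpha\Gamma^\mu_{\lambda\delta})\eta_{\mu\beta} + (\partial_\alpha\Gamma^\mu_{\beta\lambda})\eta_{\mu\delta} + (\partial_\alpha\Gamma^\mu_{\beta\delta})\eta_{\mu\lambda}\bigr]\big|_p,
\]
in which the first two terms come from $\partial_\alpha\partial_\beta(\nabla_{\lambda\delta}f)$ and the last two from the $\Gamma$-corrections in the outer $\nabla_\alpha$. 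Substituting the formula for $\partial\Gamma|_p$ produces eight curvature terms; applying the antisymmetries $R_{abcd}=-R_{bacd}=-R_{abdc}$ and the pair-swap symmetry $R_{abcd}=R_{cdab}$ to express each in terms of the pair $R_{\alpha\lambda\beta\delta}|_p$ and $R_{\alpha\delta\beta\lambda}|_p$, the sum collapses to $-\tfrac{1}{6}(R_{\alpha\lambda\beta\delta}+R_{\alpha\delta\beta\lambda})|_p$.

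The main obstacle is purely combinatorial: the fourth-derivative expansion yields eight different index orderings of the Riemann tensor, and the reduction to the two canonical orderings demands careful bookkeeping, though no identity beyond the standard Riemann symmetries is needed. (The first Bianchi identity is not required, and one may avoid the transport identities of Proposition \ref{thm.hf_gauss} entirely since they do not alone pin down $\nabla^2 f|_p$, only characterize it as an idempotent multiple of the metric.)
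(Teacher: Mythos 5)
Your proof is correct but takes a genuinely different route from the paper. The paper derives \eqref{eq.hf_f_limit} and \eqref{eq.hf_f_limit_2} by evaluating the Gauss-lemma transport identities \eqref{eq.hf_gauss_2} (Proposition \ref{thm.hf_gauss}) at $p$ along radial geodesics: $\nabla^2 f|_p = \tfrac{1}{2}g|_p$ comes from reading $\nabla_{\alpha\beta}f\,(\gamma')^\beta = \tfrac12 g_{\alpha\beta}(\gamma')^\beta$ at $\gamma(0)=p$ over the family of all radial geodesics, $\nabla^3 f|_p = 0$ follows from the third identity in \eqref{eq.hf_gauss_2}, and \eqref{eq.hf_f_limit_2} is extracted from the transport equation for $\nabla^5 f$ after all terms with odd-order derivatives of $f$ drop out, leaving a linear equation $-\tfrac32\nabla^4 f|_p + \tfrac14(R_{\alpha\delta\lambda\beta} + R_{\alpha\lambda\delta\beta})|_p = 0$. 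You instead compute in normal coordinates, using $\Gamma|_p=0$ and the classical expansion $\partial_\gamma\Gamma^\lambda_{\alpha\beta}|_p = -\tfrac13(R^\lambda{}_{\alpha\beta\gamma}+R^\lambda{}_{\beta\alpha\gamma})|_p$; I checked your intermediate formula $\nabla_{\alpha\beta\lambda\delta}f|_p = -\tfrac12[(\partial_\beta\Gamma^\mu_{\lambda\delta})\eta_{\mu\alpha}+(\partial_\alpha\Gamma^\mu_{\lambda\delta})\eta_{\mu\beta}+(\partial_\alpha\Gamma^\mu_{\beta\lambda})\eta_{\mu\delta}+(\partial_\alpha\Gamma^\mu_{\beta\delta})\eta_{\mu\lambda}]|_p$ and the eight curvature terms do collapse (using only the pair and antisymmetry relations, as you say) to $-\tfrac16(R_{\alpha\lambda\beta\delta}+R_{\alpha\delta\beta\lambda})|_p$. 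Each approach has merits: yours is self-contained and elementary (standard normal-coordinate facts, pure bookkeeping), while the paper's reuses the transport machinery of Proposition \ref{thm.hf_gauss}, which is central to the rest of Section \ref{sec.hf} and avoids invoking the jet expansion of $\Gamma$. One small correction to your closing remark: the transport identities \emph{do} pin down $\nabla^2 f|_p = \tfrac12 g|_p$ once you evaluate $\nabla_{\alpha\beta}f\,\nabla^\beta f = \tfrac12\nabla_\alpha f$ along radial geodesics $\gamma_\omega$ (contracting with $(\gamma_\omega')^\beta$, which is parallel to $\nabla^\sharp f$, and letting $\gamma_\omega'(0)$ range over all of $T_p\mc{M}$), rather than treating the identity as a single pointwise equation; this is exactly how the paper closes the argument.
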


\begin{proof}
Throughout the proof, we let $\gamma$ be any radial geodesic in $\mc{M}$, with $\gamma (0) = p$.
The first equation in \eqref{eq.hf_f_limit} follows immediately from the first part of \eqref{eq.hf_gauss}.
Moreover, since $\nabla^\sharp f$ is radial, then it follows from the first equation in \eqref{eq.hf_gauss_2} that, along $\gamma$,
\[
\nabla_{ \alpha \beta } f \, ( \gamma' )^\beta = \frac{1}{2} ( \gamma' )_\alpha = \frac{1}{2} g_{ \alpha \beta } \, ( \gamma' )^\beta \text{.}
\]
Applying the above at $\gamma (0)$ and for all radial geodesics $\gamma$ yields the second part of \eqref{eq.hf_f_limit}.
Next, we evaluate the third equation of \eqref{eq.hf_gauss_2} at $p$ and apply the first two parts of \eqref{eq.hf_f_limit}:
\begin{align*}
0 &= \frac{1}{2} \nabla_{ \alpha \beta \lambda } f |_p - \frac{1}{2} ( g_\lambda{}^\mu \nabla_{ \alpha \beta \mu } f ) |_p - \frac{1}{2} ( g_\alpha{}^\mu \nabla_{ \mu \beta \lambda } f ) |_p - \frac{1}{2} ( g_\beta{}^\mu \nabla_{ \alpha \mu \lambda } f ) |_p \\
&= - \nabla_{ \alpha \beta \lambda } f |_p \text{.}
\end{align*}
This is precisely the last part of \eqref{eq.hf_f_limit}.

Finally, for \eqref{eq.hf_f_limit_2}, we first observe that
\begin{align*}
\nabla^\mu f \nabla_{ \mu \alpha \beta \lambda \delta } f &= \nabla^\mu f \nabla_{ \alpha \mu \beta \lambda \delta } f - R_{ \mu \alpha \nu \beta } \nabla^\nu{}_{ \lambda \delta } f - R_{ \mu \alpha \nu \lambda } \nabla_\beta{}^\nu{}_\delta f - R_{ \mu \alpha \nu \delta } \nabla_{ \beta \gamma }{}^\nu f \\
&= \nabla_\alpha ( \nabla^\mu f \nabla_{ \mu \beta \lambda \delta } f ) - \nabla_\alpha{}^\mu f \nabla_{ \mu \beta \lambda \delta } f - R_{ \mu \alpha \nu \beta } \nabla^\nu{}_{ \lambda \delta } f \\
&\qquad - R_{ \mu \alpha \nu \lambda } \nabla_\beta{}^\nu{}_\delta f - R_{ \mu \alpha \nu \delta } \nabla_{ \beta \gamma }{}^\nu f \text{.}
\end{align*}
The first term can be further expanded by applying the last equation in \eqref{eq.hf_gauss_2} and applying the product rule.
Evaluating at $p$ and recalling from \eqref{eq.hf_f_limit} that any term containing one or three covariant derivatives of $f$ vanish, we then obtain from the above that
\begin{align*}
0 &= \frac{1}{2} \nabla_{ \alpha \beta \lambda \delta } f |_p - ( \nabla_\delta{}^\mu f \nabla_{ \alpha \beta \lambda \mu } f ) |_p - ( \nabla_\beta{}^\mu f \nabla_{ \alpha \mu \lambda \delta } f ) |_p - ( \nabla_\lambda{}^\mu f \nabla_{ \alpha \beta \mu \delta } f ) |_p \\
&\qquad - ( R_{ \mu \delta \nu \lambda } \nabla_\beta{}^\nu f \nabla_\alpha{}^\mu f ) |_p - ( R_{ \mu \lambda \nu \delta } \nabla_\beta{}^\nu f \nabla_\alpha{}^\mu f ) |_p - ( R_{ \mu \beta \nu \lambda } \nabla_\delta{}^\nu f \nabla_\alpha{}^\mu f ) |_p \\
&\qquad - ( R_{ \mu \beta \nu \delta } \nabla_\lambda{}^\nu f \nabla_\alpha{}^\mu f ) |_p - ( \nabla_\alpha{}^\mu f \nabla_{ \mu \beta \lambda \delta } f ) |_p \\
&= \frac{1}{2} \nabla_{ \alpha \beta \lambda \delta } f |_p - \frac{1}{2} \nabla_{ \alpha \beta \lambda \delta } f |_p - \frac{1}{2} \nabla_{ \alpha \beta \lambda \delta } f |_p - \frac{1}{2} \nabla_{ \alpha \beta \lambda \delta } f |_p - \frac{1}{2} \nabla_{ \alpha \beta \lambda \delta } f |_p \\
&\qquad - \frac{1}{4} R_{ \alpha \delta \beta \lambda } |_p - \frac{1}{4} R_{ \alpha \lambda \beta \delta } |_p - \frac{1}{4} R_{ \alpha \beta \delta \lambda } |_p - \frac{1}{4} R_{ \alpha \beta \lambda \delta } |_p \\
&= - \frac{3}{2} \nabla_{ \alpha \beta \lambda \delta } f |_p + \frac{1}{4} R_{ \alpha \delta \lambda \beta } |_p + \frac{1}{4} R_{ \alpha \lambda \delta \beta } |_p \text{.}
\end{align*}
Rearranging the above results in \eqref{eq.hf_f_limit_2}.
\end{proof}

\begin{proposition} \label{thm.hf_f_asymp}
Let $\gamma_\omega \in \Gamma$, let $x, y, z \in T_p \mc{M}$, and let $X, Y, Z$ denote the parallel transports of $x, y, z$ along $\gamma_\omega$, respectively.
Then, the following limits hold along $\gamma_\omega$:
\begin{align}
\label{eq.hf_f_asymp} \lim_p r^{-2} \left( \nabla^2 f - \frac{1}{2} g \right) ( X, Y ) &= - \frac{1}{6} R |_p ( e_{ \rho, \omega }, x, e_{ \rho, \omega }, y ) \text{,} \\
\notag \lim_p r^{-1} \, \nabla^3 f ( X, Y, Z ) &= - \frac{1}{6} R |_p ( e_{ \rho, \omega }, y, x, z ) - \frac{1}{6} R |_p ( e_{ \rho, \omega }, z, x, y ) \text{.}
\end{align}
\end{proposition}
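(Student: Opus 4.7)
The plan is to Taylor expand each quantity along the radial geodesic $\gamma_\omega$ about $p$, taking advantage of parallel transport. The key observation is that, in the normal coordinates of \eqref{eq.hf_nc}, $r$ restricted to $\gamma_\omega$ coincides with its affine parameter: since $\gamma_\omega(s) = (s\omega^0, s\omega^1, \dots, s\omega^n)$ and $(\omega^1,\dots,\omega^n) \in \Sph^{n-1}$, we have $r|_{\gamma_\omega(s)} = s$, and $\gamma_\omega'$ is everywhere the parallel transport of $e_{\rho,\omega}$ along $\gamma_\omega$. Since $X$, $Y$, $Z$ are also parallel along $\gamma_\omega$, any $r$-derivative of a covariant tensor contracted with them reduces to applying $\nabla_{\gamma_\omega'}$ to the tensor itself.

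For the first identity, I would set $h(r) := (\nabla^2 f - \tfrac{1}{2} g)(X,Y)|_{\gamma_\omega(r)}$. Using $\nabla g = 0$ together with \eqref{eq.hf_f_limit}, one finds $h(0) = 0$ and $h'(0) = \nabla^3 f|_p(e_{\rho,\omega}, x, y) = 0$. A second-order Taylor expansion then gives
\[
\lim_{r \to 0} r^{-2} h(r) = \tfrac{1}{2} h''(0), \qquad h''(0) = \nabla_{\nu \mu \alpha \beta} f|_p \, e_{\rho,\omega}^\nu \, e_{\rho,\omega}^\mu \, x^\alpha \, y^\beta.
\]
Substituting \eqref{eq.hf_f_limit_2} and using the pair-exchange symmetry $R_{\nu\alpha\mu\beta} = R_{\mu\beta\nu\alpha}$ of the Riemann tensor, the two terms of \eqref{eq.hf_f_limit_2} coincide after contraction with $e_{\rho,\omega}$ in the first and third slots. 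This yields $h''(0) = -\tfrac{1}{3} R|_p(e_{\rho,\omega}, x, e_{\rho,\omega}, y)$, which produces the claimed coefficient $-\tfrac{1}{6}$.

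The second identity is analogous but easier. Setting $k(r) := \nabla^3 f(X,Y,Z)|_{\gamma_\omega(r)}$, Proposition \ref{thm.hf_f_limit} gives $k(0) = 0$, so that $\lim_{r \to 0} r^{-1} k(r) = k'(0) = \nabla_{\mu \alpha \beta \gamma} f|_p \, e_{\rho,\omega}^\mu \, x^\alpha \, y^\beta \, z^\gamma$. Plugging in \eqref{eq.hf_f_limit_2} directly produces the stated two curvature contractions, without any further symmetry manipulation. Overall, no delicate analysis is required; the only mild obstacle is bookkeeping for the ordering of indices in $\nabla^4 f|_p$ and applying Riemann symmetries correctly, since all nontrivial information is already encoded in \eqref{eq.hf_f_limit_2}.
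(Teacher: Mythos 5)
Your proof is correct and follows essentially the same route as the paper: Taylor expansion in $r$ along $\gamma_\omega$, using that $X, Y, Z$ are parallel so $r$-derivatives become $E_\rho$-covariant derivatives, together with the vertex limits \eqref{eq.hf_f_limit}, the fourth-derivative formula \eqref{eq.hf_f_limit_2}, and the Riemann pair symmetry to merge the two curvature terms in the first limit. Your check that $E_\rho r = 1$ (so $r$ is the affine parameter along $\gamma_\omega$) is exactly what the paper invokes to justify the Taylor step.
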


\begin{proof}
The last part of \eqref{eq.hf_f_limit}, \eqref{eq.hf_f_limit_2}, and the fact that $E_\rho$ is parallel transported yield
\begin{align*}
\left. E_\rho \left[ \left( \nabla^2 f - \frac{1}{2} g \right) ( X, Y ) \right] \right|_p &= 0 \text{,} \\
\left. E_\rho^2 \left[ \left( \nabla^2 f - \frac{1}{2} g \right) ( X, Y ) \right] \right|_p &= - \frac{1}{3} R |_p ( e_{ \rho, \omega }, x, e_{ \rho, \omega }, y ) \text{.}
\end{align*}
The first identity in \eqref{eq.hf_f_asymp} now follows by applying Taylor's theorem along $\gamma_\omega$, and by noting from \eqref{eq.hf_E_rho} that $E_\rho r = 1$.
Similarly, another application of \eqref{eq.hf_f_limit_2} yields
\[
E_\rho [ \nabla^3 f ( X, Y, Z ) ] |_p = - \frac{1}{6} R |_p ( e_{ \rho, \omega }, y, x, z ) - \frac{1}{6} R |_p ( e_{ \rho, \omega }, z, x, y ) \text{.}
\]
The second equation in \eqref{eq.hf_f_asymp} then follows from Taylor's theorem and the above.
\end{proof}

\begin{remark}
A variant of \eqref{eq.hf_f_asymp} holds along every geodesic in $\mc{M}$.
Here, the restriction to $\mc{D}$ only arises since we are parametrizing the radial geodesics by $r$.
\end{remark}

\begin{corollary} \label{thm.hf_f_asymp_2}
The following limits hold along any $\gamma_\omega \in \Gamma$:
\begin{align}
\label{eq.hf_f_asymp_2} \lim_p r^{-2} \left( \nabla^2 f - \frac{1}{2} g \right) ( E_A, E_B ) &= - \frac{1}{6} R |_p ( e_{ \rho, \omega }, e_{ A, \omega }, e_{ \rho, \omega }, e_{ B, \omega } ) \text{,} \\
\notag \lim_p r^{-2} \left( \nabla^2 f - \frac{1}{2} g \right) ( E_\theta, E_A ) &= - \frac{1}{6} \frac{ \rho^2 }{ r^2 } \, R |_p ( e_{ \rho, \omega }, e_0, e_{ \rho, \omega }, e_{ A, \omega } ) \text{,} \\
\notag \lim_p r^{-2} \left( \nabla^2 f - \frac{1}{2} g \right) ( E_\theta, E_\theta ) &= - \frac{1}{6} \frac{ \rho^4 }{ r^4 } \, R |_p ( e_{ \rho, \omega }, e_0, e_{ \rho, \omega }, e_0 ) \text{.}
\end{align}
Furthermore, we have, along any $\gamma_\omega$, that
\begin{align}
\notag \lim_p r^{-1} \nabla^3 f ( E_A, E_B, E_C ) &= - \frac{1}{6} [ R |_p ( e_{ \rho, \omega }, e_{ B, \omega }, e_{ A, \omega }, e_{ C, \omega } ) + R |_p ( e_{ \rho, \omega }, e_{ C, \omega }, e_{ A, \omega }, e_{ B, \omega } ) ] \text{,} \\
\notag \lim_p r^{-1} \nabla^3 f ( E_\theta, E_B, E_C ) &= - \frac{1}{6} [ R |_p ( e_{ \rho, \omega }, e_{ B, \omega }, e_{ \theta, \omega }, e_{ C, \omega } ) + R |_p ( e_{ \rho, \omega }, e_{ C, \omega }, e_{ \theta, \omega }, e_{ B, \omega } ) ] \text{,} \\
\notag \lim_p r^{-1} \nabla^3 f ( E_A, E_\theta, E_C ) &= - \frac{1}{6} [ R |_p ( e_{ \rho, \omega }, e_{ \theta, \omega }, e_{ A, \omega }, e_{ C, \omega } ) + R |_p ( e_{ \rho, \omega }, e_{ C, \omega }, e_{ A, \omega }, e_{ \theta, \omega } ) ] \text{,} \\
\notag \lim_p r^{-1} \nabla^3 f ( E_\theta, E_\theta, E_C ) &= - \frac{1}{6} \frac{ \rho^2 }{ r^2 } \, R |_p ( e_{ \rho, \omega }, e_0, e_{ \theta, \omega }, e_{ C, \omega } ) \text{,} \\
\notag \lim_p r^{-1} \nabla^3 f ( E_A, E_\theta, E_\theta ) &= - \frac{1}{6} \frac{ \rho^2 }{ r^2 } \, R |_p ( e_{ \rho, \omega }, e_0, e_{ A, \omega }, e_{ \theta, \omega } ) \text{,} \\
\label{eq.hf_f_asymp_3} \lim_p r^{-1} \nabla^3 f ( E_\theta, E_\theta, E_\theta ) &= 0 \text{.}
\end{align}
\end{corollary}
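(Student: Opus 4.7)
The plan is to view Corollary \ref{thm.hf_f_asymp_2} as a direct book-keeping exercise built on Proposition \ref{thm.hf_f_asymp}. Every limit in the statement is obtained by specializing the vectors $x,y,z$ in Proposition \ref{thm.hf_f_asymp} to the initial values (at $p$) of the frame vector fields in question---namely $e_{A,\omega}$, $e_{B,\omega}$, $e_{C,\omega}$, and $e_{\theta,\omega}$---because $E_A, E_B, E_C, E_\theta$ are, by construction in Section \ref{ssec_hyperq_func_LM}, the parallel transports of these vectors along $\gamma_\omega$. Once one makes this substitution, the only remaining task is to rewrite the resulting curvature expressions in the forms stated.

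The main simplification tool is the identity
\[
e_{\theta,\omega} = \omega^0\, e_{\rho,\omega} + [1 - (\omega^0)^2]\, e_0 = \omega^0\, e_{\rho,\omega} + \frac{\rho^2}{r^2}\, e_0 \text{,}
\]
already extracted at the end of the proof of Proposition \ref{thm.hf_E}, together with the usual symmetries of the Riemann tensor. Whenever $e_{\theta,\omega}$ appears in a slot of $R|_p$, we substitute the above decomposition; any resulting term in which $e_{\rho,\omega}$ occupies both entries of a skew-symmetric pair (first-and-second or third-and-fourth) immediately drops out. For the quadratic Hessian limits, this is exactly what produces a single factor of $\rho^2/r^2$ in the $(E_\theta,E_A)$ case and a factor of $\rho^4/r^4$ in the $(E_\theta,E_\theta)$ case, since both arguments of $R|_p$ carrying $e_{\theta,\omega}$ contribute one factor of $\rho^2/r^2$ each after the radial component is killed.

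For the third-order limits the procedure is identical, applied to the two-term right-hand side of the second identity in \eqref{eq.hf_f_asymp}. The six cases split naturally by how many $E_\theta$ slots are present. With zero or one $E_\theta$, the formula is immediate upon relabeling. With two $E_\theta$'s in positions $y,z$ (the $\nabla^3 f(E_\theta,E_\theta,E_C)$ case), one of the two $R|_p$ terms vanishes outright by the antisymmetry of $R$ in its last two arguments, while the surviving term produces the single factor of $\rho^2/r^2$ after substituting for the remaining $e_{\theta,\omega}$. The $\nabla^3 f(E_\theta,E_\theta,E_\theta)$ case is even quicker: both curvature terms carry $e_{\theta,\omega}$ in a position that is skew-symmetric with another $e_{\theta,\omega}$, hence both vanish identically. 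The only genuine arithmetic check occurs in the $\nabla^3 f(E_A,E_\theta,E_\theta)$ case, where the two right-hand side terms coincide and combine into a single multiple of $R|_p(e_{\rho,\omega},e_0,e_{A,\omega},e_{\theta,\omega})$.

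No significant obstacle is expected; the content of the corollary is really just organizational, packaging the general formulas of Proposition \ref{thm.hf_f_asymp} into the specific frame-component expressions used in the Carleman estimate later. The only point one must be careful about is keeping track of the tensorial ordering in $\nabla^3 f(X,Y,Z)$ and matching it to the right-hand side of \eqref{eq.hf_f_asymp}, since the two curvature summands differ by the interchange of $y \leftrightarrow z$, which matters for the mixed cases $\nabla^3 f(E_\theta,E_B,E_C)$ and $\nabla^3 f(E_A,E_\theta,E_C)$.
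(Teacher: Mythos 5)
Your proposal is precisely the approach the paper takes; the paper's proof of this corollary is the one-liner ``These follow by combining \eqref{eq.hf_f_asymp} with \eqref{eq.hf_E_special},'' and what you have written out is exactly the book-keeping that sentence compresses: specialise $x,y,z$ in Proposition~\ref{thm.hf_f_asymp} to $e_{A,\omega},e_{\theta,\omega}$, substitute $e_{\theta,\omega}=\omega^0 e_{\rho,\omega}+\frac{\rho^2}{r^2}e_0$ (the vertex value of \eqref{eq.hf_E_special}), and kill radial slots by antisymmetry of $R$.

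One small point worth tightening in the $\nabla^3 f(E_A,E_\theta,E_\theta)$ case: you say the two summands in \eqref{eq.hf_f_asymp} ``coincide and combine into a single multiple of $R|_p(e_{\rho,\omega},e_0,e_{A,\omega},e_{\theta,\omega})$,'' but since the two coincident terms each carry a $-\frac{1}{6}$, the combined coefficient is $-\frac{1}{3}\frac{\rho^2}{r^2}$, not the $-\frac{1}{6}\frac{\rho^2}{r^2}$ displayed in the corollary. This appears to be a typo in the paper's statement rather than a flaw in the method --- your procedure, carried out literally, yields $-\frac{1}{3}$ --- but your wording glosses over the doubling, so it is worth making explicit if you want to catch such discrepancies.
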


\begin{proof}
These follow by combining \eqref{eq.hf_f_asymp} with \eqref{eq.hf_E_special}.
\end{proof}

\subsection{Estimates for $f$}

Our next task is establish estimates for the hyperquadric function $f$, under certain geometric assumptions.
\begin{definition}
Let $r_0 > 0$ be fixed. Then, we define the region $\mc{D}_{r_0}$ as follows
\begin{equation}
\label{eq.hf_DR} \mc{D}_{ r_0 } := \mc{D} \cap \{ r < r_0 \} \text{.}
\end{equation}
\end{definition}

The estimates we obtain will be on the region $\mc{D}_{r_0}$. Now, our key curvature assumption will be the following:

\begin{assumption} \label{ass.hf_curv}
The following hold on $\mc{D}_{ r_0 }$, with constants $r_0 > 0$ and $\mc{C}_0, \mc{C}_1 > 0$:
\footnote{Note we use $E_0$ rather than $E_\theta$ in \eqref{eq.hf_curv} and \eqref{eq.hf_curvd}. This is because $E_\theta$ and $E_\rho$ become the same vector as one approaches $\partial \mc{D}$, hence the frame $\{ E_\rho, E_\theta, E_A \}$ does not generate all the directions in a uniform manner.}
\begin{align}
\label{eq.hf_curv} \sup_{ X, Y, Z \in \{ E_\rho, E_0, E_A \} } | R ( E_\rho, X, Y, Z ) | & \leq \frac{ \mc{C}_0 }{ n r_0^2 } \text{,} \\
\label{eq.hf_curvd} \sup_{ X, Y, Z \in \{ E_\rho, E_0, E_A \} } | \nabla_Z R ( E_\rho, X, E_\rho, Y ) | & \leq \frac{ \mc{C}_1 }{ n r_0^3 } \text{.}
\end{align}
\end{assumption}

For convenience, we also define the deviation of $\nabla^2 f$ from its Minkowski value:
\begin{equation}
\label{eq.hf_q} q := \nabla^2 f - \frac{1}{2} g \text{.}
\end{equation}
We can now state our main estimates for derivatives of $f$.

\begin{proposition} \label{thm.hf_q_est}
There is a universal constant $\mc{C}_\ast > 0$ such that if Assumption \ref{ass.hf_curv} holds and $0 < \mc{C}_0 < \mc{C}_\ast$, then the following inequalities also hold on $\mc{D}_{ r_0 }$:
\begin{equation}
\label{eq.hf_q_est} | q ( E_A, E_B ) | \leq \frac{ \mc{C}_0 }{ 3 n } \frac{ r^2 }{ r_0^2 } \text{,} \qquad | q ( E_\theta, E_A ) | \leq \frac{ \rho^2 }{ r^2 } \, \frac{ \mc{C}_0 }{ 3 n } \frac{ r^2 }{ r_0^2 } \text{,} \qquad | q ( E_\theta, E_\theta ) | \leq \frac{ \rho^4 }{ r^4 } \, \frac{ \mc{C}_0 }{ 3 n } \frac{ r^2 }{ r_0^2 } \text{.}
\end{equation}
\end{proposition}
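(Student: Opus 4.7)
The plan is to derive a radial ODE along each $\gamma_\omega \in \Gamma$ for the relevant frame components of $q$, then close the estimates by a bootstrap/continuity argument whose initial data is supplied by Corollary~\ref{thm.hf_f_asymp_2}. Along $\gamma_\omega$, the Gauss identity \eqref{eq.hf_gauss} combined with \eqref{eq.hf_E_rho} gives $\nabla^\sharp f = \tfrac{r}{2}E_\rho$ and $E_\rho = d/dr$, so the transport operator $\nabla^\mu f\,\nabla_\mu$ acts as $\tfrac{r}{2}\,d/dr$; moreover $t/r$ and $\rho/r$ are first integrals of the radial geodesic flow. The first identity in \eqref{eq.hf_gauss_2} implies $\nabla^2 f(E_\rho,\cdot) = \tfrac{1}{r}\nabla f$, so $q(E_\rho,\cdot)$ vanishes on directions tangent to level sets of $f$. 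Contracting the second identity of \eqref{eq.hf_gauss_2} against parallel vectors $X, Y \in \{E_\theta, E_A\}$, substituting $\nabla^2 f = \tfrac{1}{2}g + q$, and rearranging produces the scalar first-order ODE
\begin{equation*}
(r\, q(X,Y))' \;=\; 2\,\mr{Quad}(q)(X,Y) \;-\; \tfrac{r^2}{2}\,R(E_\rho, X, E_\rho, Y),
\end{equation*}
where $\mr{Quad}(q)$ is bilinear in the components of $q$ and has $g^{\theta\theta} = -r^2/\rho^2$ as its only ``large'' coefficient.

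By Corollary~\ref{thm.hf_f_asymp_2}, $q(X,Y) = O(r^2)$ at $p$, so $rq(X,Y) \to 0$ as $r \to 0$, and integrating the ODE gives
\begin{equation*}
q(X,Y)(r) \;=\; \frac{2}{r}\int_0^r \Bigl[\mr{Quad}(q)(X,Y) \;-\; \tfrac{s^2}{4}\,R(E_\rho, X, E_\rho, Y)\Bigr]\,ds.
\end{equation*}
To bound the curvature forcing with the correct weight, I would use the decomposition \eqref{eq.hf_E_special}, $E_\theta = \tfrac{t}{r}E_\rho + \tfrac{\rho^2}{r^2}E_0$, together with the antisymmetry of $R$ in its first two slots: each $E_\theta$ in a slot of $R(E_\rho,\cdot,E_\rho,\cdot)$ produces exactly one factor of $\rho^2/r^2$. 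Combined with \eqref{eq.hf_curv} and integration in $s$, this bounds the curvature contributions to $q(E_A,E_B)$, $q(E_\theta,E_A)$, and $q(E_\theta,E_\theta)$ by $\tfrac{\mc{C}_0}{6n}\tfrac{r^2}{r_0^2}$, $\tfrac{\rho^2}{r^2}\tfrac{\mc{C}_0}{6n}\tfrac{r^2}{r_0^2}$, $\tfrac{\rho^4}{r^4}\tfrac{\mc{C}_0}{6n}\tfrac{r^2}{r_0^2}$ respectively---exactly one half of the claimed bounds.

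Finally I would run a bootstrap. Let $r_\ast \in (0, r_0]$ be the supremum of radii for which \eqref{eq.hf_q_est} holds along every $\gamma_\omega$; Corollary~\ref{thm.hf_f_asymp_2} ensures $r_\ast > 0$. On $[0, r_\ast]$ the bootstrap ansatz lets me estimate each bilinear term of $\mr{Quad}(q)$: the apparently singular factor $r^2/\rho^2$ in the $E_\theta$ cross-term is exactly compensated by the two factors of $\rho^2/r^2$ that the ansatz supplies for $q(E_\theta,\cdot)$, so every contribution to $\mr{Quad}(q)(X,Y)$ is at most $O(\mc{C}_0^2/n)(s/r_0)^4$ times the appropriate power of $\rho^2/r^2$. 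After integration and division by $r$, this contributes at most $O(\mc{C}_0) \cdot \tfrac{\mc{C}_0}{3n}\tfrac{r^2}{r_0^2}$ (with the correct $\rho/r$-weighting) to $|q(X,Y)|$, strictly less than half the target bound provided $\mc{C}_0 < \mc{C}_\ast$ for a universal small $\mc{C}_\ast$. Combined with the curvature half-bound from the previous step, this strictly improves the bootstrap ansatz on $[0, r_\ast]$, forcing $r_\ast = r_0$ by continuity.

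The main obstacle is the bookkeeping of the $\rho^2/r^2$ weights: one must verify that in every nonlinear term of $\mr{Quad}(q)$ the (possibly singular) coefficient $g^{\theta\theta} = -r^2/\rho^2$ is exactly compensated by the powers of $\rho/r$ supplied by the bootstrap ansatz for $q(E_\theta,\cdot)$, uniformly along each radial geodesic. This works precisely because $\rho/r$ is constant along $\gamma_\omega$, so the weights commute with the radial integration and close the bootstrap with the stated constants.
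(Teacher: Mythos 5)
Your proposal is correct and follows essentially the same route as the paper: you derive the transport equation $E_\rho(r\,q_{ab}) = 2\frac{r^2}{\rho^2}q_{a\theta}q_{b\theta} - 2\sum_C q_{aC}q_{bC} - \frac{r^2}{2}R_{\rho a\rho b}$ along radial geodesics, seed it with the vertex limits from Corollary~\ref{thm.hf_f_asymp_2}, and close a bootstrap where the curvature forcing (weighted by powers of $\rho^2/r^2$ via \eqref{eq.hf_E_special}) contributes exactly half the target and the quadratic terms contribute strictly less than half once $\mc{C}_0 < \mc{C}_\ast = \tfrac{15}{4}$. The paper phrases the closure via the explicit quadratic formula for $\delta$, but the net argument and the numerical threshold are the same.
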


\begin{proof}
First, note from \eqref{eq.hf_q} that the second identity in \eqref{eq.hf_gauss_2} can be rewritten as
\[
\nabla^\mu f \nabla_\mu q_{ \alpha \beta } = - \frac{1}{2} q_{ \alpha \beta } - q_\alpha{}^\mu q_{ \beta \mu } - R_{ \mu \alpha \nu \beta } \nabla^\mu f \nabla^\nu f \text{.}
\]
Since $E_\rho = 2 r^{-1} \nabla^\sharp f$ and $E_\rho r = 1$, by \eqref{eq.hf_gauss} and \eqref{eq.hf_E_rho}, the above becomes
\begin{align}
\label{eql.hf_q_est_0} E_\rho ( r q_{ a b } ) &= - 2 q_a{}^\mu q_{ b \mu } - \frac{r^2}{2} R_{ \rho a \rho b } \\
\notag &= 2 \frac{ r^2 }{ \rho^2 } \, q_{ a \theta } q_{ b \theta } - 2 \sum_A q_{ a A } q_{ b A } - \frac{r^2}{2} R_{ \rho a \rho b } \text{,}
\end{align}
where in the last step, we expanded using \eqref{eq.hf_E}.

We now make the bootstrap assumptions
\begin{equation}
\label{eql.hf_q_est_1} | q_{ A B } | \leq \frac{ \delta r^2 }{ r_0^2 } \text{,} \qquad | q_{ \theta A } | \leq \frac{ \rho^2 }{ r^2 } \, \frac{ \delta r^2 }{ r_0^2 } \text{,} \qquad | q_{ \theta \theta } | \leq \frac{ \rho^4 }{ r^4 } \, \frac{ \delta r^2 }{ r_0^2 } \text{,} \qquad \delta > 0 \text{,}
\end{equation}
and we integrate \eqref{eql.hf_q_est_0} from $p$ along each geodesic $\gamma \in \Gamma$.
Observing from \eqref{eq.hf_f_asymp_2} that $r q_{ a b }$ vanishes at $p$ along each $\gamma$, while applying \eqref{eq.hf_E_special}, \eqref{eq.hf_curv}, and \eqref{eql.hf_q_est_1}, we see that
\begin{align}
\label{eql.hf_q_est_2} | r q_{ A B } | \leq \frac{ 2 n }{5} \, \frac{ \delta^2 r^5 }{ r_0^4 } + \frac{ \mc{C}_0 }{ 6 n } \, \frac{ r^3 }{ r_0^2 } \text{,} &\qquad | q_{ A B } | \leq \frac{ 2 n }{5} \, \frac{ \delta^2 r^4 }{ r_0^4 } + \frac{ \mc{C}_0 }{ 6 n } \, \frac{ r^2 }{ r_0^2 } \text{,} \\
\notag | r q_{ \theta A } | \leq \frac{ \rho^2 }{ r^2 } \left( \frac{ 2 n }{5} \, \frac{ \delta^2 r^5 }{ r_0^4 } + \frac{ \mc{C}_0 }{ 6 n } \, \frac{ r^3 }{ r_0^2 } \right) \text{,} &\qquad | q_{ \theta A } | \leq \frac{ \rho^2 }{ r^2 } \left( \frac{ 2 n }{5} \, \frac{ \delta^2 r^4 }{ r_0^4 } + \frac{ \mc{C}_0 }{ 6 n } \, \frac{ r^2 }{ r_0^2 } \right) \text{,} \\
\notag | r q_{ \theta \theta } | \leq \frac{ \rho^4 }{ r^4 } \left( \frac{ 2 n }{5} \, \frac{ \delta^2 r^5 }{ r_0^4 } + \frac{ \mc{C}_0 }{ 6 n } \, \frac{ r^3 }{ r_0^2 } \right) \text{,} &\qquad | q_{ \theta \theta } | \leq \frac{ \rho^4 }{ r^4 } \left( \frac{ 2 n }{5} \, \frac{ \delta^2 r^4 }{ r_0^4 } + \frac{ \mc{C}_0 }{ 6 n } \, \frac{ r^2 }{ r_0^2 } \right) \text{.}
\end{align}

From the quadratic equation, we see that if $\mc{C}_0 < \mc{C}_\ast := \frac{15}{4}$, then
\[
\frac{ 2 n }{5} \, \frac{ \delta^2 r^4 }{ r_0^4 } + \frac{ \mc{C}_0 }{ 6 n } \, \frac{ r^2 }{ r_0^2 } < \frac{ \delta r^2 }{ r_0^2 } \text{,}
\]
as long as we take
\[
\delta := \frac{ \mc{C}_0 }{ 3 n } > \sup_{ 0 < r < r_0 } \frac{5}{4n} \frac{ r_0^2 }{ r^2 } \left( 1 - \sqrt{ 1 - \frac{ 4 \mc{C}_0 }{ 15 } \frac{ r^2 }{ r_0^2 } } \right) \text{.}
\]
In particular, for these values of $\mc{C}_0$ and $\delta$, we have that \eqref{eql.hf_q_est_2} strictly improves the bootstrap assumptions \eqref{eql.hf_q_est_1}.
A standard continuity argument now yields \eqref{eq.hf_q_est}.
\end{proof}

\begin{proposition} \label{thm.hf_qd_est}
There is a universal constant $\mc{C}_\ast > 0$ such that if Assumption \ref{ass.hf_curv} holds and $0 < \mc{C}_0 < \mc{C}_\ast$, then the following inequalities hold on $\mc{D}_{ r_0 }$ for some universal $\mc{C} > 0$:
\begin{align}
\label{eq.hf_qd_est} | \nabla^3 f ( E_A, E_B, E_C ) | &\leq \frac{ \mc{C} }{n} \left( \frac{ \mc{C}_0 r }{ r_0^2 } + \frac{ \mc{C}_1 r^2 }{ r_0^3 } \right) \text{,} \\
\notag | \nabla^3 f ( E_\theta, E_A, E_B ) | &\leq \frac{ \mc{C} }{n} \left( \frac{ \mc{C}_0 r }{ r_0^2 } + \frac{ \mc{C}_1 r^2 }{ r_0^3 } \right) \text{,} \\
\notag | \nabla^3 f ( E_A, E_\theta, E_B ) | &\leq \frac{ \mc{C} }{n} \left( \frac{ \mc{C}_0 r }{ r_0^2 } + \frac{ \mc{C}_1 r^2 }{ r_0^3 } \right) \text{,} \\
\notag | \nabla^3 f ( E_\theta, E_\theta, E_A ) | &\leq \frac{ \mc{C} }{n} \frac{ \rho^2 }{ r^2 } \left( \frac{ \mc{C}_0 r }{ r_0^2 } + \frac{ \mc{C}_1 r^2 }{ r_0^3 } \right) \text{,} \\
\notag | \nabla^3 f ( E_A, E_\theta, E_\theta ) | &\leq \frac{ \mc{C} }{n} \frac{ \rho^2 }{ r^2 } \left( \frac{ \mc{C}_0 r }{ r_0^2 } + \frac{ \mc{C}_1 r^2 }{ r_0^3 } \right) \text{,} \\
\notag | \nabla^3 f ( E_\theta, E_\theta, E_\theta ) | &\leq \frac{ \mc{C} }{n} \frac{ \rho^4 }{ r^4 } \left( \frac{ \mc{C}_0 r }{ r_0^2 } + \frac{ \mc{C}_1 r^2 }{ r_0^3 } \right) \text{.}
\end{align}
\end{proposition}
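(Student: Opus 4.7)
The strategy is to mirror the bootstrap proof of Proposition \ref{thm.hf_q_est}, now applied to the third identity in \eqref{eq.hf_gauss_2}. First I would decompose $\nabla_a{}^\mu f = \tfrac{1}{2} g_a{}^\mu + q_a{}^\mu$ in each of the three Hessian-cubed terms on the right-hand side; the three $\tfrac{1}{2}$-contributions combine with $\tfrac{1}{2}\nabla_{\alpha\beta\lambda} f$ to produce $-(\nabla^3 f)_{\alpha\beta\lambda}$. The resulting identity takes the form
\[
\nabla^\mu f\,\nabla_\mu (\nabla^3 f)_{\alpha\beta\lambda} = -(\nabla^3 f)_{\alpha\beta\lambda} + T_{\alpha\beta\lambda},
\]
where $T$ collects (i) three $q\cdot\nabla^3 f$ contractions, (ii) $R\cdot\nabla f\cdot\nabla^2 f$ pieces (further split via $\nabla^2 f = \tfrac{1}{2} g + q$ into $R\cdot\nabla f$ and $R\cdot q\cdot\nabla f$ parts), and (iii) the $\nabla R\cdot(\nabla f)^2$ term. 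Using $\nabla^\sharp f = \tfrac{r}{2} E_\rho$ from \eqref{eq.hf_gauss} and \eqref{eq.hf_E_rho}, together with $E_\rho r = 1$, this rearranges in the parallel frame $(E_\rho, E_\theta, E_A)$ to
\[
E_\rho\!\left[ r^2 (\nabla^3 f)_{abc} \right] = 2r\,T_{abc}.
\]
By Proposition \ref{thm.hf_f_limit}, $\nabla^3 f|_p = 0$, so $r^2(\nabla^3 f)_{abc}$ vanishes at $p$ along every $\gamma_\omega \in \Gamma$, giving a trivial initial condition for radial integration.

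Next I would estimate $T$ in the frame. Because $\nabla^\sharp f$ is purely radial, each $R\cdot\nabla f$ factor pairs $\nabla^\sharp f$ into an outer slot of $R$ and produces $\tfrac{r}{2} R(E_\rho,\cdot,\cdot,\cdot)$---exactly the object controlled by \eqref{eq.hf_curv}---while $\nabla R\cdot(\nabla f)^2$ yields $\tfrac{r^2}{4}\nabla_Z R(E_\rho,\cdot,E_\rho,\cdot)$, controlled by \eqref{eq.hf_curvd}. The $R\cdot q\cdot\nabla f$ contributions inherit both the curvature bound and the $q$-bound of Proposition \ref{thm.hf_q_est}. For the $q\cdot\nabla^3 f$ pieces I would impose bootstrap assumptions mirroring the desired inequalities,
\[
|(\nabla^3 f)_{abc}| \leq \frac{\delta}{n}\!\left(\frac{\rho^2}{r^2}\right)^{\!k(a,b,c)}\!\left(\frac{\mc{C}_0 r}{r_0^2} + \frac{\mc{C}_1 r^2}{r_0^3}\right),
\]
where $k(a,b,c)$ equals $0$, $1$, or $2$ according to whether the number of frame indices equal to $\theta$ is at most one, exactly two, or exactly three. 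These anisotropic factors arise via \eqref{eq.hf_E_special}: each $E_\theta$-slot decomposes as $\tfrac{t}{r} E_\rho + \tfrac{\rho^2}{r^2} E_0$, and in $T$ the $E_\rho$-component either reduces through the radial identities in \eqref{eq.hf_gauss_2} or recombines with $(\nabla^3 f)_{abc}$ on the left, while the $E_0$-component delivers the advertised $\rho^2/r^2$ factor. Since the $q$-bounds in \eqref{eq.hf_q_est} follow the same scaling pattern, the $q\cdot\nabla^3 f$ products line up correctly.

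Collecting these estimates should yield a pointwise bound of the schematic form
\[
|T_{abc}(s)| \leq \frac{\mc{C}}{n}\!\left(\frac{\rho^2}{r^2}\right)^{\!k(a,b,c)}\!\left[\frac{\mc{C}_0}{r_0^2} + \frac{\mc{C}_1 s}{r_0^3} + \frac{\mc{C}_0\delta\,s^2}{n\,r_0^2}\!\left(\frac{\mc{C}_0 s}{r_0^2} + \frac{\mc{C}_1 s^2}{r_0^3}\right)\right].
\]
Integrating $E_\rho[r^2(\nabla^3 f)_{abc}] = 2r T_{abc}$ from $p$ along each $\gamma_\omega\in\Gamma$---during which $\rho^2/r^2 = 1-(\omega^0)^2$ is constant by \eqref{eq.hf_E_rho}---and dividing by $r^2$ reproduces a bound of the same anisotropic form as the bootstrap assumption, with the leading coefficient of $(\mc{C}_0 r/r_0^2 + \mc{C}_1 r^2/r_0^3)$ given by a universal constant plus a term of order $\mc{C}_0\delta\,r^2/r_0^2$. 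Choosing $\mc{C}_\ast$ sufficiently small and $\delta$ a suitable universal multiple of the leading constant strictly improves the bootstrap, and a continuity argument identical in structure to the one at the end of the proof of Proposition \ref{thm.hf_q_est} then yields \eqref{eq.hf_qd_est}.

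The main difficulty will be verifying that every $E_\theta$-slot in $T$ legitimately extracts the claimed $\rho^2/r^2$ factor---i.e., that after applying \eqref{eq.hf_E_special} all surviving $E_\rho$-components either collapse via the radial transport structure or recombine into $(\nabla^3 f)_{abc}$ on the left-hand side. This bookkeeping is precisely what permits the curvature hypothesis to be imposed only on $R(E_\rho,\cdot,\cdot,\cdot)$ and $\nabla_\cdot R(E_\rho,\cdot,E_\rho,\cdot)$ as in \eqref{eq.hf_curv}--\eqref{eq.hf_curvd}, rather than on all components of $R$ and $\nabla R$.
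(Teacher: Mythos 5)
Your argument follows the paper's proof closely: both start from the third transport identity in Proposition \ref{thm.hf_gauss}, substitute $\nabla^2 f = \tfrac{1}{2}g + q$ so that the leading term cancels and the equation becomes $E_\rho(r^2\nabla_{abc}f) = 2r\,T_{abc}$, control $T$ by Assumption \ref{ass.hf_curv} and Proposition \ref{thm.hf_q_est}, track the anisotropic $\rho^2/r^2$ factors via \eqref{eq.hf_E_special}, and integrate radially from $p$ using $\nabla^3 f|_p = 0$. The one genuine difference is the closing tool: you propose a bootstrap/continuity argument imitating Proposition \ref{thm.hf_q_est}, whereas the paper introduces the single weighted maximum $\mc{I}$ in \eqref{eql.hf_qd_est_2} and applies Gronwall. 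Both are valid---once $q$ is already bounded the transport equation is \emph{linear} in $\nabla^3 f$, so Gronwall applies directly and is the cleaner choice; a bootstrap also works, but you then owe the extra step of verifying that, for $\mc{C}_\ast$ small, the $q\cdot\nabla^3 f$ contribution stays strictly below the bootstrap threshold. The bootstrap machinery is genuinely needed in Proposition \ref{thm.hf_q_est} because that equation is quadratic in $q$; here it is unnecessary overhead.

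One concrete slip in your schematic bound for $T$: since $\nabla^\mu f = \tfrac{r}{2}E_\rho^\mu$, each contraction against $\nabla f$ contributes a factor of $s/2$, so the pure-curvature and curvature-derivative sources enter $T$ as $\mc{C}_0 s/r_0^2$ and $\mc{C}_1 s^2/r_0^3$, not $\mc{C}_0/r_0^2$ and $\mc{C}_1 s/r_0^3$. This is consistent with \eqref{eql.hf_qd_est_1} in the paper, where $E_\rho(r^2\nabla^3 f)$ has sources $\mc{C}_0 r^2/r_0^2$ and $\mc{C}_1 r^3/r_0^3$ (which equal $2r\,T$). Your later statement that integrating and dividing by $r^2$ reproduces the form $\mc{C}_0 r/r_0^2 + \mc{C}_1 r^2/r_0^3$ only holds with the corrected powers, so this appears to be a transcription error rather than a conceptual one; as written the bootstrap would not close, so it needs fixing.
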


\begin{proof}
First, using \eqref{eq.hf_q}, we can write the last equation in \eqref{eq.hf_gauss_2} as
\begin{align*}
\nabla^\mu f \nabla_{ \mu \alpha \beta \lambda } f &= - \nabla_{ \alpha \beta \lambda } f - q_\lambda{}^\mu \nabla_{ \alpha \beta \mu } f - q_\alpha{}^\mu \nabla_{ \mu \beta \lambda } f - q_\beta{}^\mu \nabla_{ \alpha \mu \lambda } f \\
\notag &\qquad - \nabla_\alpha R_{ \mu \beta \nu \lambda } \nabla^\mu f \nabla^\nu f - \frac{1}{2} R_{ \mu \lambda \alpha \beta } \nabla^\mu f - \frac{1}{2} R_{ \mu \beta \alpha \lambda } \nabla^\mu f \\
\notag &\qquad - \frac{1}{2} R_{ \mu \alpha \lambda \beta } \nabla^\mu f - \frac{1}{2} R_{ \mu \alpha \beta \lambda } \nabla^\mu f - R_{ \mu \lambda \nu \beta } q_\alpha{}^\nu \nabla^\mu f \\
\notag &\qquad - R_{ \mu \beta \nu \lambda } q_\alpha{}^\nu \nabla^\mu f - R_{ \mu \alpha \nu \beta } q_\lambda {}^\nu \nabla^\mu f - R_{ \mu \alpha \nu \lambda } q_\beta{}^\nu \nabla^\mu f \\
\notag &= - \nabla_{ \alpha \beta \lambda } f - q_\lambda{}^\mu \nabla_{ \alpha \beta \mu } f - q_\alpha{}^\mu \nabla_{ \mu \beta \lambda } f - q_\beta{}^\mu \nabla_{ \alpha \mu \lambda } f \\
\notag &\qquad + \frac{1}{2} ( R_{ \mu \beta \lambda \alpha } + R_{ \mu \lambda \beta \alpha } ) \nabla^\mu f - ( R_{ \mu \lambda \nu \beta } + R_{ \mu \beta \nu \lambda } ) q_\alpha{}^\nu \nabla^\mu f \\
\notag &\qquad - R_{ \mu \alpha \nu \beta } q_\lambda {}^\nu \nabla^\mu f - R_{ \mu \alpha \nu \lambda } q_\beta{}^\nu \nabla^\mu f - \nabla_\alpha R_{ \mu \beta \nu \lambda } \nabla^\mu f \nabla^\nu f \text{.}
\end{align*}
Recalling Proposition \ref{thm.hf_gauss} and \eqref{eq.hf_E_rho}, while contracting with $E_\theta$ and $E_A$, yields
\begin{align*}
E_\rho ( r^2 \nabla_{ a b c } f ) &= - 2 r ( q_c{}^d \nabla_{ a b d } f + q_a{}^d \nabla_{ d b c } f + q_b{}^d \nabla_{ a d c } f ) + \frac{r^2}{2} ( R_{ \rho b c a } + R_{ \rho c b a } ) \\
\notag &\qquad - r^2 ( R_{ \rho c d b } + R_{ \rho b d c } ) q_a{}^d - r^2 R_{ \rho a d b } q_c{}^d - r^2 R_{ \rho a d c } q_b{}^d - \frac{r^3}{2} \nabla_a R_{ \rho b \rho c } \text{.}
\end{align*}

Splitting the above into cases and recalling Propositions \ref{thm.hf_E} and \ref{thm.hf_q_est}, we then have
\begin{align}
\label{eql.hf_qd_est_1} | E_\rho ( r^2 \nabla_{ A B C } f ) | &\leq \mc{C} \mc{C}_\ast \frac{ r }{ r_0^2 } \max_d ( | r^2 \nabla_{ A B d } f | + | r^2 \nabla_{ d B C } f | + | r^2 \nabla_{ A d C } f | ) \\
\notag &\qquad + \frac{ \mc{C} \mc{C}_0 }{n} \frac{ r^2 }{ r_0^2 } + \frac{ \mc{C} \mc{C}_1 }{n} \frac{ r^3 }{ r_0^3 } \text{,} \\
\notag | E_\rho ( r^2 \nabla_{ \theta B C } f ) | &\leq \mc{C} \mc{C}_\ast \frac{ r }{ r_0^2 } \max_d \left( | r^2 \nabla_{ \theta B d } f | + \frac{ \rho^2 }{ r^2 } | r^2 \nabla_{ d B C } f | + | r^2 \nabla_{ \theta d C } f | \right) \\
\notag &\qquad + \frac{ \mc{C} \mc{C}_0 }{n} \frac{ r^2 }{ r_0^2 } + \frac{ \mc{C} \mc{C}_1 }{n} \frac{ r^3 }{ r_0^3 } \text{,} \\
\notag | E_\rho ( r^2 \nabla_{ A \theta C } f ) | &\leq \mc{C} \mc{C}_\ast \frac{ r }{ r_0^2 } \max_d \left( | r^2 \nabla_{ A \theta d } f | + | r^2 \nabla_{ d \theta C } f | + \frac{ \rho^2 }{ r^2 } | r^2 \nabla_{ A d C } f | \right) \\
\notag &\qquad + \frac{ \mc{C} \mc{C}_0 }{n} \frac{ r^2 }{ r_0^2 } + \frac{ \mc{C} \mc{C}_1 }{n} \frac{ \rho^2 }{ r^2 } \frac{ r^3 }{ r_0^3 } \text{,} \\
\notag | E_\rho ( r^2 \nabla_{ \theta \theta C } f ) | &\leq \mc{C} \mc{C}_\ast \frac{ r }{ r_0^2 } \max_d \left( | r^2 \nabla_{ \theta \theta d } f | + \frac{ \rho^2 }{ r^2 } | r^2 \nabla_{ d \theta C } f | + \frac{ \rho^2 }{ r^2 } | r^2 \nabla_{ \theta d C } f | \right) \\
\notag &\qquad + \frac{ \mc{C} \mc{C}_0 }{n} \frac{ \rho^2 }{ r^2 } \frac{ r^2 }{ r_0^2 } + \frac{ \mc{C} \mc{C}_1 }{n} \frac{ \rho^2 }{ r^2 } \frac{ r^3 }{ r_0^3 } \text{,} \\
\notag | E_\rho ( r^2 \nabla_{ A \theta \theta } f ) | &\leq \mc{C} \mc{C}_\ast \frac{ r }{ r_0^2 } \max_d \left( \frac{ \rho^2 }{ r^2 } | r^2 \nabla_{ A \theta d } f | + | r^2 \nabla_{ d \theta \theta } f | + \frac{ \rho^2 }{ r^2 } | r^2 \nabla_{ \theta d C } f | \right) \\
\notag &\qquad + \frac{ \mc{C} \mc{C}_0 }{n} \frac{ \rho^2 }{ r^2 } \frac{ r^2 }{ r_0^2 } + \frac{ \mc{C} \mc{C}_1 }{n} \frac{ \rho^4 }{ r^4 } \frac{ r^3 }{ r_0^3 } \text{,} \\
\notag | E_\rho ( r^2 \nabla_{ \theta \theta \theta } f ) | &\leq \mc{C} \mc{C}_\ast \frac{ r }{ r_0^2 } \max_d \left( \frac{ \rho^2 }{ r^2 } | r^2 \nabla_{ \theta \theta d } f | + \frac{ \rho^2 }{ r^2 } | r^2 \nabla_{ d \theta \theta } f | + \frac{ \rho^2 }{ r^2 } | r^2 \nabla_{ \theta d \theta } f | \right) \\
\notag &\qquad + \frac{ \mc{C} \mc{C}_0 }{n} \frac{ \rho^4 }{ r^4 } \frac{ r^2 }{ r_0^2 } + \frac{ \mc{C} \mc{C}_1 }{n} \frac{ \rho^4 }{ r^4 } \frac{ r^3 }{ r_0^3 } \text{.}
\end{align}

Consider now the quantity
\begin{align}
\label{eql.hf_qd_est_2} \mc{I} &:= \max_{ A, B, C } \left( \frac{ \rho^4 }{ r^4 } | r^2 \nabla_{ A B C } f |, \frac{ \rho^4 }{ r^4 } | r^2 \nabla_{ \theta A B } f |, \frac{ \rho^4 }{ r^4 } | r^2 \nabla_{ A \theta B } f |, \right. \\
\notag &\qquad\qquad\quad \left. \frac{ \rho^2 }{ r^2 } | r^2 \nabla_{ \theta \theta A } f |, \frac{ \rho^2 }{ r^2 } | r^2 \nabla_{ A \theta \theta } f |, | r^2 \nabla_{ \theta \theta \theta } f | \right) \text{.}
\end{align}
Integrating each inequality in \eqref{eql.hf_qd_est_1} from $p$ along each $\gamma \in \Gamma$ (and recalling the limits \eqref{eq.hf_f_asymp_3}), and then applying the Gronwall inequality yields the bound
\[
\mc{I} \leq \frac{ \mc{C} }{n} \frac{ \rho^4 }{ r^4 } \left( \frac{ \mc{C}_0 r^3 }{ r_0^2 } + \frac{ \mc{C}_1 r^4 }{ r_0^3 } \right) \text{.}
\]
The desired result \eqref{eq.hf_qd_est} follows immediately from \eqref{eql.hf_qd_est_2} and the above.
\end{proof}

\subsection{Estimates for $t$}

We now apply the estimates for $f$ to control derivatives of $t$ and $t^2$. These estimates will be used to derive the pseudoconvexity condition for an appropriate perturbation of $f$ (see Section \ref{sec.pc} below).

\begin{proposition} \label{thm.hf_t_est}
There exists a universal $\mc{C}_\ast > 0$ such that if Assumption \ref{ass.hf_curv} holds and $0 < \mc{C}_0 < \mc{C}_\ast$, then the following also hold on $\mc{D}_{ r_0 }$ for some universal $\mc{C} > 0$:
\begin{equation}
\label{eq.hf_t_est} | \nabla t ( E_A ) | \leq \frac{ \mc{C} \mc{C}_0 }{n} \, \frac{ r^2 }{ r_0^2 } \text{,} \qquad | \nabla t ( E_\theta ) - 1 | \leq \frac{ \mc{C} \mc{C}_0 }{n} \frac{ \rho^2 }{ r^2 } \frac{ r^2 }{ r_0^2 } \text{.}
\end{equation}
\end{proposition}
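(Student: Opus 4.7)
The plan is to derive and integrate transport ODEs for $\nabla t ( E_A )$ and $\nabla t ( E_\theta )$ along each radial geodesic $\gamma_\omega \in \Gamma$. Starting from the second identity in Proposition \ref{thm.hf_transport}, contracting against an arbitrary vector $V$, and rewriting $\nabla^2 f$ in terms of $q := \nabla^2 f - \tfrac{1}{2} g$ (defined in \eqref{eq.hf_q}), the $\tfrac{1}{2} \nabla t$ piece cancels and one arrives at the clean identity
\[
\nabla^2 t ( \nabla^\sharp f, V ) = - q ( V, \nabla^\sharp t ) \text{.}
\]
Taking $V \in \{ E_A, E_\theta \}$, invoking $\nabla^\sharp f = \tfrac{r}{2} E_\rho$ from \eqref{eq.hf_gauss} and \eqref{eq.hf_E_rho}, and using that $E_A$ and $E_\theta$ are parallel transported along $\gamma_\omega$, this converts into the scalar ODEs
\[
\frac{r}{2} E_\rho [ \nabla t ( E_A ) ] = - q ( E_A, \nabla^\sharp t ) \text{,} \qquad \frac{r}{2} E_\rho [ \nabla t ( E_\theta ) ] = - q ( E_\theta, \nabla^\sharp t ) \text{.}
\]

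Next I would expand $\nabla^\sharp t$ in the orthogonal frame $\{ E_\rho, E_\theta, E_A \}$ using the inverse metric coefficients from \eqref{eq.hf_E}. The crucial observation is that the first identity of \eqref{eq.hf_gauss_2} gives $q ( \cdot, E_\rho ) = 0$, so the $E_\rho$-component of $\nabla^\sharp t$---which carries the potentially singular factor $r^2 / \rho^2$---drops out entirely. Applying Proposition \ref{thm.hf_q_est} to the remaining components then yields
\[
| q ( E_A, \nabla^\sharp t ) | \leq \frac{ \mc{C}_0 }{ 3 n } \frac{ r^2 }{ r_0^2 } \left[ | \nabla t ( E_\theta ) | + \sum_B | \nabla t ( E_B ) | \right] \text{,}
\]
\[
| q ( E_\theta, \nabla^\sharp t ) | \leq \frac{ \mc{C}_0 }{ 3 n } \frac{ \rho^2 }{ r_0^2 } \left[ | \nabla t ( E_\theta ) | + \sum_A | \nabla t ( E_A ) | \right] \text{,}
\]
where in the $E_\theta$-bound the factor $r^2 / \rho^2$ from the metric inverse is absorbed by the $\rho^2 / r^2$ weights supplied by Proposition \ref{thm.hf_q_est}, leaving only a single $\rho^2 / r_0^2$.

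The initial conditions at $p$ are read off directly from normal coordinates and \eqref{eq.hf_e}: since $e_{ \theta, \omega } = e_0 + \omega^0 e_{ r, \omega }$ has $e_0$-component equal to $1$, we have $\nabla t ( E_\theta ) |_p = 1$, while $E_A |_p \in H_\omega$ is orthogonal to $e_0$, so $\nabla t ( E_A ) |_p = 0$. Using $E_\rho r = 1$ to parametrize the ODEs by $r$ along $\gamma_\omega$, and setting $M ( r ) := | \nabla t ( E_\theta ) - 1 | + \sum_A | \nabla t ( E_A ) |$, I would sum the two inequalities above---the $n - 1$ terms from summing over $A$ cancel the $1/n$ factor---to obtain a Gronwall-type differential inequality of the form
\[
\frac{ dM }{ dr } \leq \frac{ \mc{C} \mc{C}_0 \, r }{ r_0^2 } \, [ 1 + M ( r ) ] \text{,}
\]
which, together with $M ( 0 ) = 0$, yields $M ( r ) \leq \mc{C}$ uniformly on $\mc{D}_{ r_0 }$ provided $\mc{C}_0$ is sufficiently small.

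Substituting this uniform bound back into each individual ODE and integrating once more from $r = 0$ produces the stated estimates: the $E_A$-bound follows immediately from the $r^2 / r_0^2$ weight in the first inequality, while the $E_\theta$-bound inherits the extra $\rho^2 / r^2$ weight from the sharper bound on $q ( E_\theta, \nabla^\sharp t )$, upon using $\rho^2 = r^2 [ 1 - ( \omega^0 )^2 ]$ along $\gamma_\omega$. The main technical point---and the reason the argument works at all---is the algebraic cancellation $q ( \cdot, E_\rho ) = 0$; without it, the singular $r^2 / \rho^2$ weight appearing in the $E_\rho$-component of $\nabla^\sharp t$ would make the right-hand side of the transport equation blow up as one approaches the null cone $\partial \mc{D}$, and the bootstrap could not be closed uniformly on $\mc{D}_{ r_0 }$.
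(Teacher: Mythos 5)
Your proof is correct and follows essentially the same route as the paper: rewrite the transport identity from Proposition \ref{thm.hf_transport} as $E_\rho(\nabla_\alpha t) = -\tfrac{2}{r} q_\alpha{}^\mu \nabla_\mu t$, use $q(\cdot, E_\rho) = 0$ to eliminate the singular $r^2/\rho^2$ factor from the $E_\rho$-component of $\nabla^\sharp t$, bound the remaining components via Proposition \ref{thm.hf_q_est}, and close by Gronwall along each $\gamma_\omega$. The only implementation difference is that you run Gronwall in two passes with the unweighted quantity $M(r)$, whereas the paper gets the sharp rate in a single pass by tracking the weighted quantity $\max_A\left(\tfrac{\rho^2}{r^2}|\nabla_A t|, |\nabla_\theta t - 1|\right)$; both variants are valid.
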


\begin{proof}
Let $\mc{C}_\ast$ be as in Proposition \ref{thm.hf_q_est}, and consider an arbitrary geodesic $\gamma \in \Gamma$.
Note that by the basic properties of normal coordinates, we have the following limits at $p$ along $\gamma$:
\begin{equation}
\label{eql.hf_t_est_1} \lim_p \nabla_{ E_A } t = - g ( e_0 , e_{ A, \omega } ) = 0 \text{,} \qquad \lim_p \nabla_{ E_\theta } t = - g ( e_0, e_{ \theta, \omega } ) = 1 \text{.}
\end{equation}

Next, using \eqref{eq.hf_gauss}, \eqref{eq.hf_E_rho}, and \eqref{eq.hf_q}, we can rewrite the second equation in \eqref{eq.hf_transport} as
\[
\nabla^\mu f \nabla_{ \mu \alpha } t = - q_\alpha{}^\mu \nabla_\mu t \text{,} \qquad E_\rho ( \nabla_\alpha t ) = - \frac{2}{r} q_\alpha{}^\mu \nabla_\mu t \text{.}
\]
Integrating the second equation above and recalling \eqref{eq.hf_q_est} and \eqref{eql.hf_t_est_1} then yields
\begin{align*}
| \nabla_A t | &\leq \frac{ r^2 }{ \rho^2 } \int_0^r \frac{2}{s} | q_{ A \theta } | | \nabla_\theta t | \, ds + \int_0^r \frac{2}{s} \sum_B | q_{ A B } | | \nabla_B t | \, ds \\
&\leq \frac{ \mc{C}_0 }{ 3 n } \int_0^r \frac{ 2 s }{ r_0^2 } \left( n \sum_B | \nabla_B t | + | \nabla_\theta t - 1 | + 1 \right) ds \text{,} \\
| \nabla_\theta t - 1 | &\leq \frac{ r^2 }{ \rho^2 } \int_0^r \frac{2}{s} | q_{ \theta \theta } | | \nabla_\theta t | ds + \int_0^r \frac{2}{s} \sum_B | q_{ \theta B } | | \nabla_B t | \, ds \\
&\leq \frac{ \rho^2 }{ r^2 } \, \frac{ \mc{C}_0 }{ 3 n } \int_0^r \frac{ 2 s }{ r_0^2 } \left( n \sum_B | \nabla_B t | + | \nabla_\theta t - 1 | + 1 \right) ds \text{,}
\end{align*}
where the integrals are understood to be along the geodesic $\gamma$ and parametrized by $r$.
(Recall also that $\rho^2 r^{-2}$ is constant along $\gamma$.)
Combining the above inequalities then yields
\begin{equation}
\label{eql.hf_t_est_2} \max_A \left( \frac{ \rho^2 }{ r^2 } | \nabla_A t |, | \nabla_\theta t - 1 | \right) \leq \frac{ \mc{C}_0 }{ 3 n } \int_0^r \frac{ 2 s }{ r_0^2 } \left[ n \max_A \left( \frac{ \rho^2 }{ r^2 } | \nabla_A t |, | \nabla_\theta t - 1 | \right) + \frac{ \rho^2 }{ r^2 } \right] ds \text{.}
\end{equation}

Applying the Gronwall inequality to the above and recalling that $\mc{C}_0 < \mc{C}_\ast$ yields
\[
\max_A \left( \frac{ \rho^2 }{ r^2 } | \nabla_A t |, | \nabla_\theta t - 1 | \right) \leq \frac{ \rho^2 }{ r^2 } \, \frac{ \mc{C} \mc{C}_0 }{ n } \int_0^r \frac{ 2 s }{ r_0^2 } \, ds = \frac{ \rho^2 }{ r^2 } \, \frac{ \mc{C} \mc{C}_0 }{ n } \, \frac{ r^2 }{ r_0^2 } \text{.}
\]
The above immediately implies the desired bound \eqref{eq.hf_t_est}.
\end{proof}

\begin{corollary} \label{thm.hf_t2_est}
There exists a universal constant $\mc{C}_\ast > 0$ such that if Assumption \ref{ass.hf_curv} holds and $0 < \mc{C}_0 < \mc{C}_\ast$, then the following also hold on $\mc{D}_{ r_0 }$ for some universal $\mc{C} > 0$:
\begin{equation}
\label{eq.hf_t2_est} | \nabla t^2 ( E_A ) | \leq \frac{ \mc{C} \mc{C}_0 }{n} \, \frac{ r^2 t }{ r_0^2 } \text{,} \qquad | \nabla t^2 ( E_\theta ) - 2 t | \leq \frac{ \mc{C} \mc{C}_0 }{n} \frac{ \rho^2 }{ r^2 } \frac{ r^2 t }{ r_0^2 } \text{.}
\end{equation}
\end{corollary}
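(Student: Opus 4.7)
The corollary follows essentially from the chain/product rule applied to Proposition \ref{thm.hf_t_est}. Since $t^2$ is a smooth function of $t$, we simply have $\nabla t^2 = 2t \, \nabla t$ as a one-form, so for any vector field $X$,
\[
\nabla t^2 ( X ) = 2 t \, \nabla t ( X ) \text{.}
\]

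First, I would apply this identity to $X = E_A$, giving
\[
| \nabla t^2 ( E_A ) | = 2 | t | \cdot | \nabla t ( E_A ) | \leq 2 | t | \cdot \frac{ \mc{C} \mc{C}_0 }{ n } \frac{ r^2 }{ r_0^2 } \text{,}
\]
using the first inequality in \eqref{eq.hf_t_est}. Absorbing the factor of $2$ and the sign of $t$ into the universal constant $\mc{C}$ then yields the first bound in \eqref{eq.hf_t2_est}. Next, for $X = E_\theta$, I would write
\[
\nabla t^2 ( E_\theta ) - 2 t = 2 t \, \nabla t ( E_\theta ) - 2 t = 2 t \, [ \nabla t ( E_\theta ) - 1 ] \text{,}
\]
so that by the second inequality in \eqref{eq.hf_t_est},
\[
| \nabla t^2 ( E_\theta ) - 2 t | = 2 | t | \cdot | \nabla t ( E_\theta ) - 1 | \leq 2 | t | \cdot \frac{ \mc{C} \mc{C}_0 }{ n } \frac{ \rho^2 }{ r^2 } \frac{ r^2 }{ r_0^2 } \text{,}
\]
and again absorbing the $2$ into $\mc{C}$ gives the second bound.

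There is essentially no obstacle here; the only minor point of bookkeeping is that the conclusion is stated with $t$ rather than $|t|$, which is harmless since an additional sign can be folded into the universal constant $\mc{C}$ (alternatively, one can interpret the bounds as written for $t > 0$ and apply the analogous argument in the region $t < 0$, since all the preceding estimates for $f$, $q$, and $\nabla t$ are invariant under $t \mapsto -t$ by the symmetry of the hyperquadric construction). No new curvature bounds or integration along geodesics are required beyond those already used in Proposition \ref{thm.hf_t_est}.
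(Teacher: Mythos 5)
Your proof is correct and matches the paper's (one-line) argument, which simply invokes Proposition \ref{thm.hf_t_est}; the chain-rule identity $\nabla t^2 = 2t\,\nabla t$ and the rewriting $\nabla t^2(E_\theta) - 2t = 2t[\nabla t(E_\theta) - 1]$ are exactly the intended steps. The remark about $t$ versus $|t|$ is a fair observation of the paper's notational convention and is handled appropriately.
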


\begin{proof}
This follows immediately from Proposition \ref{thm.hf_t_est}.
\end{proof}

\begin{proposition} \label{thm.hf_td_est}
There exists a universal $\mc{C}_\ast > 0$ such that if Assumption \ref{ass.hf_curv} holds and $0 < \mc{C}_0 < \mc{C}_\ast$, then the following also hold on $\mc{D}_{ r_0 }$ for some universal $\mc{C} > 0$:
\begin{align}
\label{eq.hf_td_est} | \nabla^2 t^2 ( E_A, E_B ) | &\leq \frac{ \mc{C} \mc{C}_0 }{n} \, \frac{ r^2 }{ \rho^2 } \frac{ r^2 }{ r_0^2 } + \frac{ \mc{C} \mc{C}_1 }{n} \, \frac{ r^2 }{ \rho^2 } \frac{ r^2 t }{ r_0^3 } \text{,} \\
\notag | \nabla^2 t^2 ( E_\theta, E_A ) | &\leq \frac{ \mc{C} \mc{C}_0 }{n} \, \frac{ r^2 }{ r_0^2 } + \frac{ \mc{C} \mc{C}_1 }{n} \, \frac{ r^2 t }{ r_0^3 } \text{,} \\
\notag | \nabla^2 t^2 ( E_\theta, E_\theta ) - 2 | &\leq \frac{ \mc{C} \mc{C}_0 }{n} \, \frac{ \rho^2 }{ r^2 } \frac{ r^2 }{ r_0^2 } + \frac{ \mc{C} \mc{C}_1 }{n} \, \frac{ \rho^2 }{ r^2 } \frac{ r^2 t }{ r_0^3 } \text{.}
\end{align}
\end{proposition}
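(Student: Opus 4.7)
The plan is to follow the same strategy as the proof of Proposition \ref{thm.hf_t_est}, simply one derivative higher. The starting point is the third identity of Proposition \ref{thm.hf_transport}. Substituting $\nabla_\alpha{}^\mu f = \tfrac{1}{2}\delta_\alpha^\mu + q_\alpha{}^\mu$ from \eqref{eq.hf_q} and using the identity $\nabla^\sharp f = \tfrac{r}{2}E_\rho$ provided by \eqref{eq.hf_gauss} and \eqref{eq.hf_E_rho}, the transport equation rearranges into the radial ODE
\[
E_\rho(\nabla_{\alpha\beta} t^2) = -\tfrac{2}{r}\bigl[q_\alpha{}^\mu \nabla_{\beta\mu}t^2 + q_\beta{}^\mu \nabla_{\alpha\mu}t^2 + \nabla_{\alpha\beta\mu}f\, \nabla^\mu t^2\bigr] - R_{\rho\alpha\nu\beta}\,\nabla^\nu t^2
\]
along each $\gamma \in \Gamma$. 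For initial data at $p$, note that $t(p)=0$ gives $\nabla^2 t^2|_p = 2\, dt|_p \otimes dt|_p$; combined with $dt|_p = -(e_0)^\flat$ and \eqref{eq.hf_e}, this yields $\nabla^2 t^2(E_A,E_B)|_p = 0$, $\nabla^2 t^2(E_\theta, E_A)|_p = 0$, and $\nabla^2 t^2(E_\theta,E_\theta)|_p = 2$, which matches the structure of the claimed bounds.

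Next, I introduce the master quantity
\[
\Phi := \max\!\Bigl(\tfrac{\rho^2}{r^2}\,|\nabla^2 t^2(E_A,E_B)|,\; |\nabla^2 t^2(E_\theta, E_A)|,\; \tfrac{r^2}{\rho^2}\,|\nabla^2 t^2(E_\theta,E_\theta) - 2|\Bigr),
\]
whose weights are chosen so that the target bound \eqref{eq.hf_td_est} becomes the single statement $\Phi \leq \tfrac{\mc{C}\mc{C}_0}{n}\tfrac{r^2}{r_0^2} + \tfrac{\mc{C}\mc{C}_1}{n}\tfrac{r^2 t}{r_0^3}$, and so that $\Phi$ vanishes at $p$. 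Since $\rho^2/r^2$ is constant along each radial geodesic, the weights factor through the ODE without difficulty.

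Then I expand the ODE into the cases $(\alpha,\beta) \in \{(A,B),(\theta,A),(\theta,\theta)\}$, raising indices via $g^{\rho\rho} = r^2/\rho^2$, $g^{\theta\theta} = -r^2/\rho^2$, $g^{AB}=\delta^{AB}$, and bound every factor on the right-hand side: Proposition \ref{thm.hf_q_est} controls $q$; Proposition \ref{thm.hf_qd_est} controls $\nabla^3 f$; Corollary \ref{thm.hf_t2_est} controls $\nabla t^2$; and Assumption \ref{ass.hf_curv} controls $R$. The various factors of $\rho^2/r^2$ appearing in those estimates are designed to match the weights of $\Phi$, so the resulting integral inequality along $\gamma$ takes the form
\[
\Phi(r) \leq \int_0^r \tfrac{\mc{C}\mc{C}_0}{n}\,\tfrac{2s}{r_0^2}\,\Phi(s)\,ds \;+\; \tfrac{\mc{C}\mc{C}_0}{n}\,\tfrac{r^2}{r_0^2} \;+\; \tfrac{\mc{C}\mc{C}_1}{n}\,\tfrac{r^2 t}{r_0^3}.
\]
A standard Gronwall/bootstrap argument, exactly as in Proposition \ref{thm.hf_t_est}, then closes the estimate (provided $\mc{C}_0 < \mc{C}_\ast$ where $\mc{C}_\ast$ is taken to be the minimum of the constants from Propositions \ref{thm.hf_q_est}, \ref{thm.hf_qd_est}, \ref{thm.hf_t_est}), and unpacking the definition of $\Phi$ yields \eqref{eq.hf_td_est}.

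The main obstacle is bookkeeping. Each of the six scalar ODEs has many terms whose weights in $\rho/r$, $r$, and $t$ must be tracked; in particular, one must verify that the $\mc{C}_0$ contributions always appear without an extra factor of $t$, while the $\mc{C}_1$ contributions always carry such a factor. The $t$-independent $\mc{C}_0$ terms come from $q_\alpha{}^\mu \nabla_{\beta\mu}t^2$, in which $\nabla_{\beta\mu}t^2$ contributes its leading-order constant value $2\,dt_\beta dt_\mu$, whereas the $\mc{C}_1$ dependence enters only via the $\nabla_\alpha R$ piece hidden inside Proposition \ref{thm.hf_qd_est}, contracted against $\nabla^\mu t^2 \sim 2t\,\nabla^\mu t$ (which itself is controlled by Proposition \ref{thm.hf_t_est}). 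Finally, since $\Phi$ is a maximum of absolute values it is not directly differentiable, so the Gronwall step is most cleanly framed as a continuity/bootstrap argument in the style of Proposition \ref{thm.hf_q_est}.
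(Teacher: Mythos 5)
Your overall strategy---a transport ODE along radial geodesics, a weighted master quantity, and Gronwall---is the same as the paper's, which works with the tensor $T := \nabla^2 t^2 - 2\,\nabla t \otimes \nabla t$ (vanishing at $p$) rather than with $\nabla^2 t^2$ directly. The two formulations are interchangeable modulo Proposition \ref{thm.hf_t_est}, and your choice of weights for $\Phi$ is equivalent to the paper's up to a factor of $\rho^2/r^2$, which is constant along each $\gamma_\omega$; so the overall architecture is sound.

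However, there is a concrete gap. When you expand $\nabla_{\alpha\beta\mu} f\, \nabla^\mu t^2$ over the frame, the summed index $\mu$ also ranges over $\rho$, and since
\[
\nabla^\rho t^2 = g^{\rho\rho}\nabla_\rho t^2 = \frac{r^2}{\rho^2}\cdot\frac{2t^2}{r}
\]
this contribution is nonzero and even becomes singular as $\rho \to 0$. It requires a bound on $\nabla^3 f(E_a, E_b, E_\rho)$ with compensating powers of $\rho$, but Proposition \ref{thm.hf_qd_est} only controls the purely tangential components $\nabla^3 f(E_a, E_b, E_c)$ with $a,b,c \in \{\theta, A\}$---it says nothing about a $\rho$ slot. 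Your claim that ``Proposition \ref{thm.hf_qd_est} controls $\nabla^3 f$'' therefore overreaches, and without addressing this piece the integral inequality does not close. The way to handle it, as the paper does, is to split $\nabla^\mu t^2$ into its part along $\nabla^\sharp f$ and its part tangent to level sets of $f$: the tangential piece is handled by Proposition \ref{thm.hf_qd_est}, while for the radial piece one commutes indices (picking up controlled curvature terms) to bring $\nabla^\mu f$ onto the outermost derivative and then invokes the transport identity from Proposition \ref{thm.hf_gauss} (the second line of \eqref{eq.hf_gauss_2}), which reexpresses $\nabla^\mu f\,\nabla_{\mu ab}f$ in terms of $q$, $q\cdot q$, and curvature---all of which carry the needed $\rho/r$ factors via Proposition \ref{thm.hf_q_est}, Proposition \ref{thm.hf_E}, and \eqref{eq.hf_E_special}. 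You should incorporate this decomposition into step 3 of your plan; once done, the rest of the Gronwall argument proceeds as you describe.
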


\begin{proof}
First, using \eqref{eq.hf_q}, we can rewrite the last equation in \eqref{eq.hf_transport} as
\[
\nabla^\mu f \nabla_{ \mu \alpha \beta } t^2 = q_\alpha{}^\mu \nabla_{ \beta \mu } t^2 - q_\beta{}^\mu \nabla_{ \alpha \mu } t^2 - \nabla_{ \alpha \beta \mu } f \nabla^\mu t^2 - R_{ \mu \alpha \nu \beta } \nabla^\mu f \nabla^\nu t^2 \text{.}
\]
Moreover, contracting with $E_\theta$ and $E_A$, and applying \eqref{eq.hf_gauss} and \eqref{eq.hf_E_rho}, the above becomes
\begin{equation}
\label{eql.hf_td_est_1} E_\rho ( \nabla_{ a b } t^2 ) = \frac{2}{r} q_a{}^c \nabla_{ b c } t^2 - \frac{2}{r} q_b{}^c \nabla_{ a c } t^2 - \frac{2}{r} \nabla_{ a b \mu } f \nabla^\mu t^2 - R_{ \rho a \nu b } \nabla^\nu t^2 \text{.}
\end{equation}
A similar computation using the second part of \eqref{eq.hf_transport} yields
\begin{equation}
\label{eql.hf_td_est_2} E_\rho ( \nabla_a t \nabla_b t ) = - \frac{2}{r} q_a{}^c \nabla_b t \nabla_c t - \frac{2}{r} q_b{}^c \nabla_a t \nabla_c t \text{.}
\end{equation}

Let us now define the tensor field
\begin{equation}
\label{eql.hf_td_est_10} T := \nabla^2 t^2 - 2 ( \nabla t \otimes \nabla t ) \text{.}
\end{equation}
Observe that by the properties of normal coordinates, we have that
\begin{equation}
\label{eql.hf_td_est_11} T |_p = 0 \text{.}
\end{equation}
Moreover, combining \eqref{eql.hf_td_est_1} and \eqref{eql.hf_td_est_2}, we obtain
\begin{align}
\label{eql.hf_td_est_12} E_\rho ( T_{ a b } ) &= \frac{2}{r} q_a{}^c T_{ b c } - \frac{2}{r} q_b{}^c T_{ a c } + \mc{B}_{ a b } \text{,} \\
\notag \mc{B}_{ a b } &:= - \frac{2}{r} \nabla_{ a b \mu } f \nabla^\mu t^2 - R_{ \rho a \nu b } \nabla^\nu t^2 \\
\notag &= - \left( \frac{2}{r} \nabla_{ \mu a b } f + R_{ \rho b \mu a } + R_{ \rho a \mu b } \right) \nabla^\mu t^2 \text{.}
\end{align}

Expanding \eqref{eql.hf_td_est_12} and recalling \eqref{eq.hf_rho}, \eqref{eq.hf_gauss}, \eqref{eq.hf_transport}, and \eqref{eq.hf_E_rho}, we obtain
\begin{align*}
\mc{B}_{ a b } &= - f^{-1} \left( \frac{2}{r} \nabla_{ \mu a b } f + R_{ \rho b \mu a } + R_{ \rho a \mu b } \right) \nabla^\mu f \nabla^\nu f \nabla_\nu t^2 - \left( \frac{2}{r} \nabla_{ c a b } f + R_{ \rho b c a } + R_{ \rho a c b } \right) \nabla^c t^2 \\
&= - \frac{ 4 t^2 }{ \rho^2 } \left( \frac{2}{r} \nabla^\mu f \nabla_{ \mu a b } f + r R_{ \rho a \rho b } \right) - \left( \frac{2}{r} \nabla_{ c a b } f + R_{ \rho b c a } + R_{ \rho a c b } \right) \nabla^c t^2 \\
&= \frac{ 4 t^2 }{ \rho^2 } \left( \frac{1}{r} q_{ a b } + \frac{2}{r} q_a{}^c q_{ b c } - \frac{r}{2} R_{ \rho a \rho b } \right) + \frac{ 2 r^2 }{ \rho^2 } \left( \frac{ 2 t }{r} \nabla_{ \theta a b } f + t R_{ \rho b \theta a } + t R_{ \rho a \theta b } \right) \\
&\qquad + \frac{ r^2 }{ \rho^2 } \left( \frac{2}{r} \nabla_{ \theta a b } f + R_{ \rho b \theta a } + R_{ \rho a \theta b } \right) ( \nabla_\theta t^2 - 2 t ) - \left( \frac{2}{r} \nabla_{ C a b } f + R_{ \rho b C a } + R_{ \rho a C b } \right) \nabla^C t^2 \text{.}
\end{align*}
Applying Propositions \ref{thm.hf_E}, \ref{thm.hf_q_est}, \ref{thm.hf_qd_est}, and \ref{thm.hf_t_est} to the above, in conjunction with Assumption \ref{ass.hf_curv}, we obtain the following bounds for the components of $\mc{B}$:
\begin{align}
\label{eql.hf_td_est_20} | \mc{B}_{ A B } | &\leq \frac{ \mc{C} \mc{C}_0 }{ n } \frac{ r^2 }{ \rho^2 } \frac{ t }{ r_0^2 } + \frac{ \mc{C} \mc{C}_1 }{ n } \frac{ r^2 }{ \rho^2 } \frac{ r t }{ r_0^3 } \\
\notag | \mc{B}_{ \theta A } | &\leq \frac{ \mc{C} \mc{C}_0 }{ n } \frac{ t }{ r_0^2 } + \frac{ \mc{C} \mc{C}_1 }{ n } \frac{ r t }{ r_0^3 } \\
\notag | \mc{B}_{ \theta \theta } | &\leq \frac{ \mc{C} \mc{C}_0 }{ n } \frac{ \rho^2 }{ r^2 } \frac{ t }{ r_0^2 } + \frac{ \mc{C} \mc{C}_1 }{ n } \frac{ \rho^2 }{ r^2 } \frac{ r t }{ r_0^3 }
\end{align}

Combining \eqref{eql.hf_td_est_12} with the estimates \eqref{eq.hf_q_est} and \eqref{eql.hf_td_est_20}, we obtain
\begin{align*}
| E_\rho ( T_{ A B } ) | &\leq \mc{C} \frac{ r }{ r_0^2 } \max_c ( | T_{ A c } | + | T_{ B c } | ) + \frac{ \mc{C} \mc{C}_0 }{ n } \frac{ r^2 }{ \rho^2 } \frac{ t }{ r_0^2 } + \frac{ \mc{C} \mc{C}_1 }{ n } \frac{ r^2 }{ \rho^2 } \frac{ r t }{ r_0^3 } \text{,} \\
| E_\rho ( T_{ \theta A } ) | &\leq \mc{C} \frac{ r }{ r_0^2 } \max_c \left( | T_{ \theta c } | + \frac{ \rho^2 }{ r^2 } | T_{ A c } | \right) + \frac{ \mc{C} \mc{C}_0 }{ n } \frac{ t }{ r_0^2 } + \frac{ \mc{C} \mc{C}_1 }{ n } \frac{ r t }{ r_0^3 } \text{,} \\
| E_\rho ( T_{ \theta \theta } ) | &\leq \mc{C} \frac{ r }{ r_0^2 } \max_c \frac{ \rho^2 }{ r^2 } | T_{ \theta c } | + \frac{ \mc{C} \mc{C}_0 }{ n } \frac{ \rho^2 }{ r^2 } \frac{ t }{ r_0^2 } + \frac{ \mc{C} \mc{C}_1 }{ n } \frac{ \rho^2 }{ r^2 } \frac{ r t }{ r_0^3 } \text{.}
\end{align*}
We can now integrate each of the above inequalities from $p$ along each $\gamma \in \Gamma$.
Recalling the vertex limit \eqref{eql.hf_td_est_11} and applying the Gronwall inequality yields
\[
\max_{ A, B } \left( \frac{ \rho^4 }{ r^4 } | T_{ A B } |, \frac{ \rho^2 }{ r^2 } | T_{ \theta A } |, | T_{ \theta \theta } | \right) \leq \frac{ \mc{C} \mc{C}_0 }{ n } \frac{ \rho^2 }{ r^2 } \frac{ r t }{ r_0^2 } + \frac{ \mc{C} \mc{C}_1 }{ n } \frac{ \rho^2 }{ r^2 } \frac{ r^2 t }{ r_0^3 } \text{.}
\]
The desired results now follow by combining the above with \eqref{eq.hf_t_est} and \eqref{eql.hf_td_est_10}.
\end{proof}

\section{Pseudoconvexity} \label{sec.pc}

Throughout this section, we once again assume the setting and definitions of Section \ref{sec.hf}.
In addition, we fix a constant $r_0 > 0$, and we consider the function
\begin{equation}
\label{eq.pc_eta} \eta := 1 - \varepsilon t^2 \in C^\infty ( \mc{M} ) \text{,} \qquad \varepsilon := \varepsilon_0 r_0^{-2} \text{,} \qquad 0 < \varepsilon_0 \ll 1 \text{,}
\end{equation}
where the exact value of $\varepsilon_0$ will be determined later in the notes.
Using this particular $\eta$, we define the corresponding \emph{shifted hyperquadric} function:
\begin{equation}
\label{eq.pc_fb} \bar{f} := f \eta^{-1} \in C^\infty ( \bar{\mc{D}}_{ r_0 } ) \text{.}
\end{equation}
In this section, we explore when $\bar{f}$ is pseudoconvex with respect to $\Box_g$ on $\mc{D}_{ r_0 }$.

\subsection{Shifted hyperquadrics}

We begin by establishing various algebraic observations involving level sets of $f$ and $\bar{f}$.
In general, we say a vector field $X$ (resp.\ $\bar{X}$) on $\mc{D}_{ r_0 }$ is $f$-tangent (resp.\ $\bar{f}$-tangent) iff it is everywhere tangent to level sets of $f$ (resp.\ $\bar{f}$).
Our first step is to develop notations for working with such $f$-tangent and $\bar{f}$-tangent vector fields.

Given $f$-tangent and $\bar{f}$-tangent vector fields $X$ and $\bar{X}$, respectively, we define
\begin{align}
\label{eq.pc_P} P X &:= X + ( \eta - \nabla^\alpha f \nabla_\alpha \eta )^{-1} X \eta \cdot \nabla^\sharp f = X + X \eta \cdot \nabla^\sharp f \text{,} \\
\notag \bar{P} \bar{X} &:= \bar{X} - \eta^{-1} \bar{X} \eta \cdot \nabla^\sharp f \text{.}
\end{align}
(Note the extra equality in \eqref{eq.pc_P} holds, since \eqref{eq.hf_transport} and \eqref{eq.pc_eta} imply $\eta - \nabla^\alpha f \nabla_\alpha \eta = 1$.)

The following proposition shows that $P$ and $\bar{P}$ are inverses of each other.
\begin{proposition} \label{thm.pc_P}
Given an $f$-tangent $X$ and an $\bar{f}$-tangent $\bar{X}$:
\begin{itemize}
\item $P X$ is $\bar{f}$-tangent.

\item $\bar{P} \bar{X}$ is $f$-tangent.

\item The following identities hold:
\begin{equation}
\label{eq.pc_P_id} P \bar{P} \bar{X} = \bar{X} \text{,} \qquad \bar{P} P X = X \text{.}
\end{equation}
\end{itemize}
\end{proposition}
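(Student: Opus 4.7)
The plan is to verify all three claims by direct computation, relying on two key identities: the Gauss lemma $\nabla^\alpha f \nabla_\alpha f = f$ from Proposition \ref{thm.hf_gauss}, and the relation $\eta - \nabla^\alpha f \nabla_\alpha \eta = 1$ indicated parenthetically in the text (which follows from applying \eqref{eq.hf_transport} to $\eta = 1 - \varepsilon t^2$). The first step is to record the differential of $\bar f = f \eta^{-1}$, namely $\nabla_\alpha \bar f = \eta^{-1} \nabla_\alpha f - f \eta^{-2} \nabla_\alpha \eta$, which will be used repeatedly.

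For the first bullet, I would apply $(PX)\bar f = X\bar f + X\eta \cdot \nabla^\alpha f \nabla_\alpha \bar f$. Using $Xf = 0$ kills the first term in the expansion of $X\bar f$, and collecting the remaining terms gives an overall factor of $f \eta^{-2} X\eta \cdot [\eta - \nabla^\alpha f \nabla_\alpha \eta - 1]$, which vanishes by the identity above. The second bullet is analogous but easier: from $\bar X \bar f = 0$ one extracts $\bar X f = f\eta^{-1} \bar X\eta$, so $(\bar P \bar X) f = \bar X f - \eta^{-1} \bar X\eta \cdot \nabla^\alpha f \nabla_\alpha f = f\eta^{-1}\bar X\eta - \eta^{-1} \bar X \eta \cdot f = 0$ by the Gauss lemma.

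For the inverse identities \eqref{eq.pc_P_id}, I would simply substitute the definitions and simplify. For instance, $P \bar P \bar X = \bar P \bar X + (\bar P \bar X)\eta \cdot \nabla^\sharp f$, where $(\bar P \bar X)\eta = \bar X \eta - \eta^{-1} \bar X \eta \cdot \nabla^\alpha f \nabla_\alpha \eta$. Collecting the coefficient of $\nabla^\sharp f$ produces $\eta^{-1} \bar X \eta \cdot [\eta - 1 - \nabla^\alpha f \nabla_\alpha \eta]$, which once again vanishes by the key identity. The computation for $\bar P P X = X$ is entirely parallel.

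None of the steps present a genuine obstacle; the proposition is essentially an algebraic verification that the correction terms in $P$ and $\bar P$ are precisely calibrated by the relation $\eta - \nabla^\alpha f \nabla_\alpha \eta = 1$. The only thing to be careful about is keeping the signs and the powers of $\eta$ straight when expanding $\bar X \bar f$ versus $X \bar f$, and remembering to substitute $\nabla^\alpha f \nabla_\alpha f = f$ at the right moment.
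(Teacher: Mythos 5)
Your proposal is correct and follows essentially the same path as the paper: direct algebraic verification using $\nabla^\alpha f\nabla_\alpha f = f$ from the Gauss lemma and the identity $\eta - \nabla^\alpha f\nabla_\alpha \eta = 1$. The only cosmetic difference is that you work with the simplified form $PX = X + X\eta\cdot\nabla^\sharp f$ from the start, whereas the paper carries the coefficient $(\eta - \nabla^\alpha f\nabla_\alpha\eta)^{-1}$ through the computation and simplifies at the end; the underlying argument is the same.
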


\begin{proof}
A direct computation using \eqref{eq.hf_gauss} shows that
\[
( \bar{P} \bar{X} ) f = \bar{X} ( \eta \bar{f} ) - \eta^{-1} \bar{X} \eta \cdot \nabla^\alpha f \nabla_\alpha f = 0 \text{,}
\]
as well as
\begin{align*}
( P X ) \bar{f} &= X ( \eta^{-1} f ) + ( \eta - \nabla^\alpha f \nabla_\alpha \eta )^{-1} X \eta \cdot \nabla^\alpha f \nabla_\alpha ( f \eta^{-1} ) \\
&= - f \eta^{-2} X \eta + ( \eta - \nabla^\alpha f \nabla_\alpha \eta )^{-1} X \eta \cdot f ( \eta^{-1} - \eta^{-2} \nabla^\alpha f \nabla_\alpha \eta ) \\
&= 0 \text{.}
\end{align*}
This proves the first two points above.
The identities \eqref{eq.pc_P_id} are now direct computations:
\begin{align*}
P \bar{P} \bar{X} &= ( \bar{P} \bar{X} ) + ( \eta - \nabla^\alpha f \nabla_\alpha \eta )^{-1} ( \bar{P} \bar{X} ) \eta \cdot \nabla^\sharp f \\
&= \bar{X} - \eta^{-1} \bar{X} \eta \cdot \nabla^\sharp f + ( \eta - \nabla^\alpha f \nabla_\alpha \eta )^{-1} ( \bar{X} \eta - \eta^{-1} \bar{X} \eta \cdot \nabla^\alpha f \nabla_\alpha \eta ) \cdot \nabla^\sharp f \\
&= \bar{X} - \eta^{-1} \bar{X} \eta \cdot \nabla^\sharp f + \eta^{-1} \bar{X} \eta \cdot \nabla^\sharp f \\
&= \bar{X} \text{,} \\
\bar{P} P X &= P X - \eta^{-1} ( P X ) \eta \cdot \nabla^\sharp f \\
&= X + ( \eta - \nabla^\alpha f \nabla_\alpha \eta )^{-1} X \eta \cdot \nabla^\sharp f - \eta^{-1} [ X \eta + ( \eta - \nabla^\alpha f \nabla_\alpha \eta )^{-1} X \eta \nabla^\alpha f \nabla_\alpha \eta ] \cdot \nabla^\sharp f \\
&= X + \eta^{-1} X \eta \cdot \nabla^\sharp f - \eta^{-1} X \eta \cdot \nabla^\sharp f \\
&= X \text{.} \qedhere
\end{align*}
\end{proof}
Our next proposition gives us a relation between $\nb^2 f$ and $\nb^2 \bar{f}$, and their deviation from the Minkowski value.
\begin{proposition} \label{thm.pc_fb_hessian}
The following holds for $\bar{f}$-tangent vector fields $\bar{X}$, $\bar{Y}$:
\begin{align}
\label{eq.pc_fb_hessian} \left( \nabla^2 \bar{f} - \frac{1}{2} \eta^{-1} g + \eta^{-1} \bar{f} \nabla^2 \eta \right) ( \bar{X}, \bar{Y} ) &= \eta^{-1} \left( \nabla^2 f - \frac{1}{2} g \right) ( \bar{P} \bar{X}, \bar{P} \bar{Y} ) \text{.}
\end{align}
\end{proposition}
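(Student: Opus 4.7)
The cleanest route is a direct algebraic computation exploiting the relation $f = \bar{f}\,\eta$ together with the Gauss identities established in Proposition \ref{thm.hf_gauss}. My plan is as follows.

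First, I would differentiate $f = \bar{f}\eta$ twice to obtain the pointwise identity
\[
\nabla^2 f \;=\; \eta\,\nabla^2 \bar{f} \;+\; \bar{f}\,\nabla^2 \eta \;+\; d\bar{f} \otimes d\eta \;+\; d\eta \otimes d\bar{f}\text{,}
\]
and then solve for $\nabla^2 \bar{f}$. Contracting with the $\bar{f}$-tangent fields $\bar{X}$ and $\bar{Y}$ annihilates the two cross terms $d\bar{f}\otimes d\eta$ and $d\eta\otimes d\bar{f}$ (since $\bar{X}\bar{f} = \bar{Y}\bar{f} = 0$), which immediately yields
\[
\left(\nabla^2 \bar{f} - \tfrac{1}{2}\eta^{-1} g + \eta^{-1}\bar{f}\,\nabla^2\eta\right)(\bar{X},\bar{Y}) \;=\; \eta^{-1}\left(\nabla^2 f - \tfrac{1}{2}g\right)(\bar{X},\bar{Y})\text{.}
\]

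The remaining task is to replace the arguments $\bar{X},\bar{Y}$ on the right by $\bar{P}\bar{X},\bar{P}\bar{Y}$. By the definition $\bar{P}\bar{X} = \bar{X} - \eta^{-1}\bar{X}\eta \cdot \nabla^\sharp f$, the difference between $(\nabla^2 f - \tfrac12 g)(\bar{X},\bar{Y})$ and $(\nabla^2 f - \tfrac12 g)(\bar{P}\bar{X},\bar{P}\bar{Y})$ is a sum of terms in which at least one slot of $\nabla^2 f - \tfrac{1}{2}g$ is filled by $\nabla^\sharp f$. The first identity of \eqref{eq.hf_gauss_2} gives $\nabla^\mu f\,\nabla_{\mu\alpha}f = \tfrac{1}{2}\nabla_\alpha f$, which is equivalent to $(\nabla^2 f - \tfrac{1}{2}g)(\nabla^\sharp f,\,\cdot\,) = 0$, so each of those terms vanishes identically. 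This matches the two sides of \eqref{eq.pc_fb_hessian} and completes the proof.

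There is no genuine obstacle here: the argument is a one-page manipulation, and the only delicate point is recognising that the projector $\bar{P}$ is tailored precisely so that $\bar{P}\bar{X}$ lies in the kernel of the ``$\nabla^\sharp f$-slot'' of $\nabla^2 f - \tfrac{1}{2}g$ modulo vectors already tangent to $f$-level sets. In particular, one does not need to invoke the finer curvature estimates of Propositions \ref{thm.hf_q_est}--\ref{thm.hf_td_est}; the identity is exact and rests only on the Gauss lemma, the product rule, and the $\bar{f}$-tangency of $\bar{X},\bar{Y}$.
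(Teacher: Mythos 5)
Your proof is correct and uses essentially the same ingredients as the paper's: the product rule for Hessians, the $\bar{f}$-tangency of $\bar{X},\bar{Y}$ to dispose of cross terms, and the Gauss-lemma identity $\nabla^\mu f\,\nabla_{\mu\alpha}f = \tfrac12\nabla_\alpha f$ (i.e. $(\nabla^2 f - \tfrac12 g)(\nabla^\sharp f,\cdot)=0$) to pass from $\bar{X},\bar{Y}$ to $\bar{P}\bar{X},\bar{P}\bar{Y}$. The only difference is cosmetic but favourable: by differentiating $f=\bar{f}\eta$ rather than $\bar{f}=f\eta^{-1}$ as the paper does, your cross terms $d\bar f\otimes d\eta$ and $d\eta\otimes d\bar f$ vanish immediately on $\bar{f}$-tangent vectors, whereas the paper's route produces two nonzero terms that require an extra short cancellation using $\bar{X}\bar{f}=0$ a second time.
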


\begin{proof}
We first use \eqref{eq.pc_fb} to compute
\begin{align*}
\left( \nabla^2 \bar{f} - \frac{1}{2} \eta^{-1} g \right) ( \bar{X}, \bar{Y} ) &= \bar{X}^\alpha \bar{Y}^\beta \nabla_\alpha ( \eta^{-1} \nabla_\beta f - \eta^{-2} f \nabla_\beta \eta ) - \frac{1}{2} \eta^{-1} g ( \bar{X}, \bar{Y} ) \\
&= \eta^{-1} \left( \nabla^2 f - \frac{1}{2} g \right) ( \bar{X}, \bar{Y} ) - \eta^{-2} ( \bar{X} \eta \bar{Y} f + \bar{X} f \bar{Y} \eta ) \\
&\qquad + 2 \eta^{-3} f \bar{X} \eta \bar{Y} \eta - \eta^{-2} f \nabla^2 \eta ( \bar{X}, \bar{Y} ) \text{.}
\end{align*}
Recalling Proposition \ref{thm.hf_gauss}, \eqref{eq.pc_eta}, \eqref{eq.pc_P}, and Proposition \ref{thm.pc_P}, we see that
\begin{align*}
\eta^{-1} \left( \nabla^2 f - \frac{1}{2} g \right) ( \bar{X}, \bar{Y} ) &= \eta^{-1} \left( \nabla^2 f - \frac{1}{2} g \right) ( \bar{P} \bar{X}, \bar{P} \bar{Y} ) \text{,} \\
- \eta^{-2} ( \bar{X} \eta \bar{Y} f + \bar{X} f \bar{Y} \eta ) &= - 2 \eta^{-3} f \bar{X} \eta \bar{Y} \eta \text{.}
\end{align*}
Combining the above shows that \eqref{eq.pc_fb_hessian} holds.
\end{proof}

Next, we define the metric $g_+$ on level sets of $f$ by
\begin{equation}
\label{eq.pc_gp} g_+ := g + \frac{ 2 r^2 }{ \rho^2 } \, E_\theta^\flat \otimes E_\theta^\flat \text{,}
\end{equation}
where the $1$-form $E_\theta^\flat$ denotes the $g$-metric dual to $E_\theta$.
Note the last term in \eqref{eq.pc_gp} simply reverses the sign of $g ( E_\theta, E_\theta )$, hence $g_+$ will be positive-definite.
In addition, we define
\begin{equation}
\label{eq.pc_gpb} \bar{g}_+ := g_+ ( \bar{P} \cdot, \bar{P} \cdot ) \text{,}
\end{equation}
which then gives positive-definite metrics on the level sets of $\bar{f}$.

Next, letting $E_\theta$, $ E_A$ be as before, we define the corresponding frames
\begin{equation}\label{eq.pc_Eb}
\begin{aligned}
\bar{E}_\theta &:= P E_\theta = E_\theta + \frac{r}{2} E_\theta \eta \cdot E_\rho \text{,} \\
\bar{E}_A &:= P E_A = E_A + \frac{r}{2} E_A \eta \cdot E_\rho \text{,}
\end{aligned}
\end{equation}
which are $\bar{f}$-tangent. Note the two equalities in \eqref{eq.pc_Eb} follow from \eqref{eq.hf_gauss}, \eqref{eq.hf_E_rho}, and \eqref{eq.pc_P}.

\begin{proposition} \label{thm.pc_gp_frame}
Let $X$ and $Y$ be $f$-tangent vector fields, let $\bar{X}$ and $\bar{Y}$ be $\bar{f}$-tangent vector fields, and suppose $X$, $Y$, $\bar{X}$, $\bar{Y}$ can be expanded locally in terms of frames as
\begin{align}
\label{eq.pc_gp_ass} X := X^\theta E_\theta + \sum_A X^A E_A \text{,} &\qquad Y := Y^\theta E_\theta + \sum_A Y^A E_A \text{,} \\
\notag \bar{X} := \bar{X}^\theta \bar{E}_\theta + \sum_A \bar{Y}^A \bar{E}_A \text{,} &\qquad \bar{Y} := \bar{Y}^\theta \bar{E}_\theta + \sum_A \bar{Y}^A \bar{E}_A \text{.}
\end{align}
Then, the following formulas hold:
\begin{equation}
\label{eq.pc_gp_frame} g_+ ( X, Y ) = \frac{ \rho^2 }{ r^2 } X^\theta Y^\theta + \sum_A X^A Y^A \text{,} \qquad \bar{g}_+ ( \bar{X}, \bar{Y} ) = \frac{ \rho^2 }{ r^2 } \bar{X}^\theta \bar{Y}^\theta + \sum_A \bar{X}^A \bar{Y}^A \text{.}
\end{equation}
\end{proposition}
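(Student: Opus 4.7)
The plan is a direct frame computation, reducing everything to orthogonality relations from Proposition \ref{thm.hf_E} and the inversion property of $P$ and $\bar{P}$ established in Proposition \ref{thm.pc_P}.

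For the first identity, I would start by computing $E_\theta^\flat$ on the frame vectors $\{E_\theta, E_A\}$. From \eqref{eq.hf_E}, orthogonality gives $E_\theta^\flat(E_A) = g(E_\theta, E_A) = 0$ and $E_\theta^\flat(E_\theta) = g(E_\theta, E_\theta) = -\rho^2/r^2$. Expanding $X$ and $Y$ via \eqref{eq.pc_gp_ass} and using orthogonality again yields
\[ g(X,Y) = -\frac{\rho^2}{r^2} X^\theta Y^\theta + \sum_A X^A Y^A, \qquad E_\theta^\flat(X)\,E_\theta^\flat(Y) = \frac{\rho^4}{r^4} X^\theta Y^\theta. \]
Plugging into the definition \eqref{eq.pc_gp} of $g_+$, the sign on the $X^\theta Y^\theta$ term flips and I obtain the claimed formula for $g_+(X,Y)$.

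For the second identity, the key observation is that $\bar{P}$ and $P$ are mutual inverses on $f$-tangent and $\bar{f}$-tangent vectors (Proposition \ref{thm.pc_P}). Since $\bar{E}_\theta := P E_\theta$ and $\bar{E}_A := P E_A$ by \eqref{eq.pc_Eb}, applying $\bar{P}$ and using \eqref{eq.pc_P_id} pointwise gives $\bar{P}\bar{E}_\theta = E_\theta$ and $\bar{P}\bar{E}_A = E_A$. Because $\bar{P}$ is $C^\infty$-linear in its vector argument (it is defined pointwise and depends linearly on the input vector at each point), expanding $\bar{X}$ in the $\bar{E}$-frame yields
\[ \bar{P}\bar{X} = \bar{X}^\theta E_\theta + \sum_A \bar{X}^A E_A, \]
which is an $f$-tangent field whose components in the $\{E_\theta, E_A\}$ frame are $(\bar{X}^\theta, \bar{X}^A)$. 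Applying the first formula of \eqref{eq.pc_gp_frame} to $\bar{P}\bar{X}$ and $\bar{P}\bar{Y}$, and recalling the definition \eqref{eq.pc_gpb} of $\bar{g}_+$, gives the second identity.

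There is no real obstacle here: the result is an algebraic consequence of the orthogonality of $\{E_\rho, E_\theta, E_A\}$, the explicit form of the perturbation term in \eqref{eq.pc_gp}, and the inversion identities \eqref{eq.pc_P_id}. The only place to be careful is confirming that expanding $\bar{X}$ in the $\bar{E}$-frame and then applying $\bar{P}$ commutes with the scalar coefficients (which it does, since $\bar{P}$ acts pointwise linearly on tangent vectors), so that the components $\bar{X}^\theta, \bar{X}^A$ transfer unchanged to the expansion of $\bar{P}\bar{X}$ in the $\{E_\theta, E_A\}$-frame.
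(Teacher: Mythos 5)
Your proof is correct and follows essentially the same route as the paper: a direct frame computation using the orthogonality relations of Proposition \ref{thm.hf_E} for the first identity, then the $C^\infty$-linearity of $\bar{P}$ together with the inversion identities of Proposition \ref{thm.pc_P} to transfer the components $(\bar{X}^\theta, \bar{X}^A)$ from the $\bar{E}$-frame to the $E$-frame and reduce the second identity to the first.
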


\begin{proof}
Using \eqref{eq.hf_E} and the expansions \eqref{eq.pc_gp_ass} for $X$ and $Y$, we compute
\begin{align*}
g_+ ( X, Y ) &= g ( X, Y ) + \frac{ 2 r^2 }{ \rho^2 } \, g ( E_\theta, X ) g ( E_\theta, Y ) \\
&= - \frac{ \rho^2 }{ r^2 } X^\theta Y^\theta + \sum_A X^A Y^A + \frac{ 2 r^2 }{ \rho^2 } \cdot \frac{ \rho^2 }{ r^2 } X^\theta \cdot \frac{ \rho^2 }{ r^2 } Y^\theta \\
&= \frac{ \rho^2 }{ r^2 } X^\theta Y^\theta + \sum_A X^A Y^A \text{,}
\end{align*}
which is the first identity in \eqref{eq.pc_gp_frame}.
The second part of \eqref{eq.pc_gp_frame} then immediately follows from the first, in addition to the observation that $\bar{P}$ is $C^\infty$-linear:
\[
\bar{P} \bar{X} := \bar{X}^\theta E_\theta + \sum_A \bar{Y}^A E_A \text{,} \qquad \bar{P} \bar{Y} := \bar{Y}^\theta E_\theta + \sum_A \bar{Y}^A E_A \text{.} \qedhere
\]
\end{proof}

\subsection{Estimates for $\eta$}

In what follows, we use Assumption \ref{ass.hf_curv} along with our estimates for $t$ in Propositions \ref{thm.hf_t_est} and \ref{thm.hf_td_est} in order to obtain estimates for derivatives of $\eta$. We will derive the estimates corresponding to $f$-frames as well as $\bar{f}$-frames.

\begin{proposition} \label{thm.pc_eta_est}
There exists a universal constant $\mc{C}_\ast > 0$ so that if Assumption \ref{ass.hf_curv} holds and $0 < \mc{C}_0 < \mc{C}_\ast$, then the following hold on $\mc{D}_{ r_0 }$ for some universal $\mc{C} > 0$:
\begin{itemize}
\item $f$-frame estimates for $\nabla \eta$:
\begin{align}
\label{eq.pc_eta_est} | \nabla \eta ( E_A ) | &\leq \frac{ \mc{C} \mc{C}_0 }{ n } \frac{ r^2 }{ r_0^2 } \cdot \varepsilon t \text{,} \\
\notag | \nabla \eta ( E_\theta ) + 2 \varepsilon t | &\leq \frac{ \mc{C} \mc{C}_0 }{ n } \frac{ \rho^2 }{ r^2 } \frac{ r^2 }{ r_0^2 } \cdot \varepsilon t \text{,} \\
\notag \nabla \eta ( E_\rho ) + \frac{2 t}{r} \cdot \varepsilon t &= 0 \text{.}
\end{align}

\item $f$-frame estimates for $\nabla^2 \eta$:
\begin{align}
\label{eq.pc_eta_estd} | \nabla^2 \eta ( E_A, E_B ) | &\leq \frac{ \mc{C} ( \mc{C}_0 + \mc{C}_1 ) }{ n } \frac{ r^2 }{ \rho^2 } \frac{ r^2 }{ r_0^2 } \cdot \varepsilon \text{,} \\
\notag | \nabla^2 \eta ( E_\theta, E_A ) | &\leq \frac{ \mc{C} ( \mc{C}_0 + \mc{C}_1 ) }{ n } \frac{ r^2 }{ r_0^2 } \cdot \varepsilon \text{,} \\
\notag | \nabla^2 \eta ( E_\theta, E_\theta ) + 2 \varepsilon | &\leq \frac{ \mc{C} ( \mc{C}_0 + \mc{C}_1 ) }{ n } \frac{ \rho^2 }{ r^2 } \frac{ r^2 }{ r_0^2 } \cdot \varepsilon \text{,} \\
\notag \nabla^2 \eta ( E_\rho, E_\rho ) + \frac{ t^2 }{ r^2 } \cdot 2 \varepsilon &= 0 \text{,} \\
\notag | \nabla^2 \eta ( E_\rho, E_A ) | &\leq \frac{ \mc{C} \mc{C}_0 }{ n } \frac{t}{r} \frac{ r^2 }{ r_0^2 } \cdot \varepsilon \text{,} \\
\notag \left| \nabla^2 \eta ( E_\rho, E_\theta ) + \frac{t}{r} \cdot 2 \varepsilon \right| &\leq \frac{ \mc{C} \mc{C}_0 }{ n } \frac{t}{r} \frac{ \rho^2 }{ r^2 } \frac{ r^2 }{ r_0^2 } \cdot \varepsilon \text{.}
\end{align}
\end{itemize}
\end{proposition}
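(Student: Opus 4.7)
The plan is to reduce everything directly to the derivative estimates for $t$ and $t^2$ already established in Propositions \ref{thm.hf_t_est}, \ref{thm.hf_td_est} and Corollary \ref{thm.hf_t2_est}, since from \eqref{eq.pc_eta} we simply have $\nabla\eta = -\varepsilon\,\nabla t^2$ and $\nabla^2\eta = -\varepsilon\,\nabla^2 t^2$. For the first-derivative inequalities on $E_A$ and $E_\theta$, I would multiply the two bounds in Corollary \ref{thm.hf_t2_est} by $\varepsilon$ to obtain the first two lines of \eqref{eq.pc_eta_est} directly. For the $E_\rho$ identity in \eqref{eq.pc_eta_est}, note that \eqref{eq.hf_E_rho} gives $E_\rho = \partial_r + (t/r)\partial_t$, hence $E_\rho t = t/r$ and $E_\rho t^2 = 2t^2/r$, so $\nabla\eta(E_\rho) = -\varepsilon\cdot 2t^2/r = -(2t/r)\,\varepsilon t$, which is exactly the stated equality.

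For the Hessian bounds in \eqref{eq.pc_eta_estd}, I would treat the components in two groups. The $E_A$-$E_B$, $E_\theta$-$E_A$, and $E_\theta$-$E_\theta$ components come directly from multiplying the three inequalities of Proposition \ref{thm.hf_td_est} by $\varepsilon$; the two terms on each right-hand side (one proportional to $\mc{C}_0/r_0^2$ and one to $\mc{C}_1 t/r_0^3$) combine into a single bound of type $\mc{C}(\mc{C}_0+\mc{C}_1)/r_0^2$ using the fact that on $\mc{D}_{r_0}$ we have $|t| < r < r_0$ (since $f>0$ forces $r>|t|$), so $t/r_0 \le 1$. The exact identity for $\nabla^2\eta(E_\rho, E_\rho)$ then follows from the fact that $E_\rho$ is parallel-transported along itself, so $\nabla_{E_\rho} E_\rho = 0$ and $\nabla^2\eta(E_\rho, E_\rho) = E_\rho^2\eta = -2\varepsilon\,E_\rho(t^2/r)$; a direct computation using $E_\rho r = 1$ and $E_\rho t = t/r$ gives $E_\rho(t^2/r) = 2t^2/r^2 - t^2/r^2 = t^2/r^2$, yielding the stated equality.

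The only remaining components are the mixed $E_\rho$-$E_A$ and $E_\rho$-$E_\theta$ bounds, which are the main subtlety because Proposition \ref{thm.hf_td_est} does not directly supply them. My plan is to expand $\nabla^2 t^2 = 2t\,\nabla^2 t + 2\,\nabla t\otimes\nabla t$ and handle each piece separately. For the $\nabla t\otimes\nabla t$ piece, the factor $\nabla t(E_\rho) = t/r$ produces the claimed leading term $-(2t/r)\varepsilon$ in the $E_\rho$-$E_\theta$ component (via $\nabla t(E_\theta)\approx 1$) and is the source of the $t/r$ prefactor in the $E_\rho$-$E_A$ bound (via $\nabla t(E_A)$ being controlled by Proposition \ref{thm.hf_t_est}). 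For the $\nabla^2 t$ piece, I would invoke the second identity of \eqref{eq.hf_transport}, which, after using $\nabla^\sharp f = (r/2)E_\rho$ and $\nabla^2 f = q + \tfrac12 g$, reduces to
\[
\tfrac{r}{2}\,\nabla^2 t(E_\rho, Y) \;=\; -\,q(Y, \nabla^\sharp t)\text{,}
\]
for any $Y \in \{E_\theta, E_A\}$; expanding $\nabla^\sharp t$ in the orthogonal frame via \eqref{eq.hf_E} and applying the bounds of Propositions \ref{thm.hf_q_est} and \ref{thm.hf_t_est} (noting also that $q(E_\rho,\cdot)=0$ by \eqref{eq.hf_gauss_2}) yields the required estimate, and explains why no $\mc{C}_1$ term appears in the $E_\rho$-components, as only $q$ and $\nabla t$ are invoked. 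The only real difficulty is the bookkeeping of the $\rho^2/r^2$ and $r^2/\rho^2$ weights arising from $g(E_\theta,E_\theta) = -\rho^2/r^2$; once these are tracked carefully through the raising of indices, the stated bounds emerge directly.
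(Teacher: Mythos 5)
Your proposal is correct and follows essentially the same route as the paper: multiply the estimates of Corollary \ref{thm.hf_t2_est} and Proposition \ref{thm.hf_td_est} by $\varepsilon$ for the non-radial components (with the $\mc C_1$ term absorbed using $|t|<r<r_0$ on $\mc D_{r_0}$), derive the exact $E_\rho$ identities from $E_\rho t = t/r$, and for the mixed Hessian components reduce the transport identity for $\nabla^2 t$ to $\nabla^2 t(E_\rho,Y) = -\tfrac{2}{r}\,q(Y,\nabla^\sharp t)$ and bound it by Propositions \ref{thm.hf_q_est} and \ref{thm.hf_t_est}. The one cosmetic difference is that for $\nabla^2\eta(E_\rho,E_\rho)$ you invoke $\nabla_{E_\rho}E_\rho=0$ and compute $E_\rho^2\eta$ directly, whereas the paper evaluates the same transport equation at $\alpha=\rho$ using $q_{\rho\alpha}=0$; these give identical results.
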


\begin{proof}
From Corollary \ref{thm.hf_t2_est}, Proposition \ref{thm.hf_td_est}, and \eqref{eq.pc_eta}, we see that there is a universal constant $\mc{C}_\ast > 0$ such that as long as $\mc{C}_0 < \mc{C}_\ast$, the following inequalities hold:
\begin{align}
\label{eql.pc_eta_est_1} | \nabla_A \eta | &\leq \frac{ \varepsilon \mc{C} \mc{C}_0 }{n} \, \frac{ r^2 t }{ r_0^2 } \text{,} \\
\notag | \nabla_\theta \eta + 2 \varepsilon t | &\leq \frac{ \varepsilon \mc{C} \mc{C}_0 }{n} \frac{ \rho^2 }{ r^2 } \frac{ r^2 t }{ r_0^2 } \text{,} \\
\notag | \nabla_{ A B } \eta | &\leq \frac{ \varepsilon \mc{C} \mc{C}_0 }{n} \frac{ r^2 }{ \rho^2 } \frac{ r^2 }{ r_0^2 } + \frac{ \varepsilon \mc{C} \mc{C}_1 }{n} \frac{ r^2 }{ \rho^2 } \frac{ r^2 t }{ r_0^3 } \text{,} \\
\notag | \nabla_{ \theta A } \eta | &\leq \frac{ \varepsilon \mc{C} \mc{C}_0 }{n} \frac{ r^2 }{ r_0^2 } + \frac{ \varepsilon \mc{C} \mc{C}_1 }{n} \frac{ r^2 t }{ r_0^3 } \text{,} \\
\notag | \nabla_{ \theta \theta } \eta + 2 \varepsilon | &\leq \frac{ \varepsilon \mc{C} \mc{C}_0 }{n} \frac{ \rho^2 }{ r^2 } \frac{ r^2 }{ r_0^2 } + \frac{ \varepsilon \mc{C} \mc{C}_1 }{n} \frac{ \rho^2 }{ r^2 } \frac{ r^2 t }{ r_0^3 } \text{.}
\end{align}
From the above, we obtain the first two parts of \eqref{eq.pc_eta_est} and the first three parts of \eqref{eq.pc_eta_estd}.

Now, the last part of \eqref{eq.pc_eta_est} follows from \eqref{eq.pc_eta} and \eqref{eq.hf_E_rho}, which implies
\begin{equation}
\label{eql.pc_eta_est_10} \nabla_\rho t = \frac{t}{r} \text{;}
\end{equation}
this proves \eqref{eq.pc_eta_est}.
Next, from \eqref{eq.hf_gauss}, the second part of \eqref{eq.hf_transport}, and \eqref{eq.hf_E_rho}, we have
\[
\nabla_{ \rho \alpha } t = - \frac{2}{r} q_\alpha{}^\mu \nabla_\mu t \text{,}
\]
From \eqref{eql.pc_eta_est_10} and the above, we then conclude
\begin{equation}
\label{eql.pc_eta_est_11} \nabla_{ \rho \alpha } t^2 = \frac{ 2 t }{r} \nabla_\alpha t - \frac{ 4 t }{r} q_\alpha{}^\mu \nabla_\mu t \text{,}
\end{equation}
with $q$ as in \eqref{eq.hf_q}.
By Proposition \ref{thm.hf_gauss}, the second part of \eqref{eq.hf_transport}, and \eqref{eql.pc_eta_est_11}, we obtain
\[
\nabla_{ \rho \rho } t^2 = \frac{ 2 t^2 }{r^2} \text{,}
\]
and the fourth part of \eqref{eq.pc_eta_estd} follows immediately due to \eqref{eq.pc_eta}.

Next, from Proposition \ref{thm.hf_q_est}, Proposition \ref{thm.hf_t_est}, and \eqref{eql.pc_eta_est_11}, we have
\begin{align*}
| \nabla_{ \rho A } t^2 | &\leq \frac{ 2 t }{r} | \nabla_A t | + \frac{ 4 t }{r} \frac{ r^2 }{ \rho^2 } | q_{ A \theta } | ( 1 + | \nabla_\theta t - 1 | ) + \frac{ 4 t }{r} \sum_B | q_{ A B } | | \nabla_B t | \\
&\leq \frac{ \mc{C} \mc{C}_0 }{n} \frac{t}{r} \frac{ r^2 }{ r_0^2 } + \frac{ ( \mc{C} \mc{C}_0 )^2 }{n} \frac{t}{r} \frac{ r^4 }{ r_0^4 } \text{.}
\end{align*}
Since $\mc{C}_0 < \mc{C}_\ast \lesssim 1$, recalling \eqref{eq.pc_eta} immediately results in the fifth part of \eqref{eq.pc_eta_estd}.
Finally, by Proposition \ref{thm.hf_q_est}, Proposition \ref{thm.hf_t_est}, and \eqref{eql.pc_eta_est_11} again,
\begin{align*}
\left| \nabla_{ \rho \theta } t^2 - \frac{ 2 t }{r} \right| &\leq \frac{ 2 t }{r} | \nabla_\theta t - 1 | + \frac{ 4 t }{r} \frac{ r^2 }{ \rho^2 } | q_{ \theta \theta } | ( 1 + | \nabla_\theta t - 1 | ) + \frac{ 4 t }{r} \sum_A | q_{ \theta A } | | \nabla_A t | \\
&\leq \frac{ \mc{C} \mc{C}_0 }{n} \frac{t}{r} \frac{ \rho^2 }{ r^2 } \frac{ r^2 }{ r_0^2 } + \frac{ ( \mc{C} \mc{C}_0 )^2 }{n} \frac{t}{r} \frac{ \rho^2 }{ r^2 } \frac{ r^4 }{ r_0^4 } \text{.}
\end{align*}
Recalling $\mc{C}_0 < \mc{C}_\ast \lesssim 1$ once again proves the last part of \eqref{eq.pc_eta_estd}.
\end{proof}

\begin{proposition} \label{thm.pc_etab_est}
There exists a universal constant $\mc{C}_\ast > 0$ so that if Assumption \ref{ass.hf_curv} holds and $0 < \mc{C}_0 < \mc{C}_\ast$, then the following hold on $\mc{D}_{ r_0 }$ for some universal $\mc{C} > 0$:
\begin{itemize}
\item $\bar{f}$-frame estimates for $\nabla \eta$:
\begin{align}
\label{eq.pc_etab_est} | \nabla \eta ( \bar{E}_A ) | &\leq \frac{ \mc{C} \mc{C}_0 }{ n } \frac{ r^2 }{ r_0^2 } \cdot \varepsilon t \text{,} \\
\notag | \nabla \eta ( \bar{E}_\theta ) + \eta \cdot 2 \varepsilon t | &\leq \frac{ \mc{C} \mc{C}_0 }{ n } \frac{ \rho^2 }{ r^2 } \frac{ r^2 }{ r_0^2 } \cdot \varepsilon t \text{.}
\end{align}

\item $\bar{f}$-frame estimates for $\nabla^2 \eta$:
\begin{align}
\label{eq.pc_etab_estd} | \nabla^2 \eta ( \bar{E}_A, \bar{E}_B ) | &\leq \frac{ \mc{C} ( \mc{C}_0 + \mc{C}_1 ) }{ n } \frac{ r^2 }{ \rho^2 } \frac{ r^2 }{ r_0^2 } \cdot \varepsilon \text{,} \\
\notag | \nabla^2 \eta ( \bar{E}_\theta, \bar{E}_A ) | &\leq \frac{ \mc{C} ( \mc{C}_0 + \mc{C}_1 ) }{ n } \frac{ r^2 }{ r_0^2 } \cdot \varepsilon \text{,} \\
\notag | \nabla^2 \eta ( \bar{E}_\theta, \bar{E}_\theta ) + \eta^2 \cdot 2 \varepsilon | &\leq \frac{ \mc{C} ( \mc{C}_0 + \mc{C}_1 ) }{ n } \frac{ \rho^2 }{ r^2 } \frac{ r^2 }{ r_0^2 } \cdot \varepsilon \text{.}
\end{align}
\end{itemize}
\end{proposition}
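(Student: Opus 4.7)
The plan is to transfer the $f$-frame estimates already established in Proposition \ref{thm.pc_eta_est} to the $\bar{f}$-frame, using the explicit change-of-basis formulas \eqref{eq.pc_Eb}
\[
\bar{E}_\theta = E_\theta + \tfrac{r}{2}(E_\theta \eta)\,E_\rho, \qquad \bar{E}_A = E_A + \tfrac{r}{2}(E_A \eta)\,E_\rho.
\]
Since $\nabla\eta$ and $\nabla^2\eta$ are $C^\infty$-linear in their arguments, each contraction in \eqref{eq.pc_etab_est}--\eqref{eq.pc_etab_estd} becomes an algebraic combination of quantities controlled by \eqref{eq.pc_eta_est}--\eqref{eq.pc_eta_estd}. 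No new analytic input (curvature bound, Gronwall, etc.) is required; the proof is a bookkeeping exercise.

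For \eqref{eq.pc_etab_est}, I expand
\[
\nabla\eta(\bar{E}_A) = \nabla\eta(E_A) + \tfrac{r}{2}(E_A\eta)\,\nabla\eta(E_\rho), \qquad \nabla\eta(\bar{E}_\theta) = \nabla\eta(E_\theta) + \tfrac{r}{2}(E_\theta\eta)\,\nabla\eta(E_\rho),
\]
and substitute from \eqref{eq.pc_eta_est}. For $\bar{E}_A$ both summands are already of the required order, with the second being quadratically small since $E_A\eta$ is already small. For $\bar{E}_\theta$ the decisive algebraic identity is that the leading parts combine as
\[
-2\varepsilon t + \tfrac{r}{2}(-2\varepsilon t)\bigl(-\tfrac{2\varepsilon t^2}{r}\bigr) = -2\varepsilon t\,(1-\varepsilon t^2) = -2\varepsilon t\cdot\eta,
\]
which is exactly where the $\eta$-factor in \eqref{eq.pc_etab_est} originates. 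All remaining cross terms contain at least one small error factor from \eqref{eq.pc_eta_est} and are absorbed into the stated bound (using $\varepsilon t^2 \leq \varepsilon_0 \ll 1$ and $\mc{C}_0 \lesssim 1$).

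For \eqref{eq.pc_etab_estd}, bilinearity of $\nabla^2\eta$ yields, for $(X,Y) \in \{(A,B),(\theta,A),(\theta,\theta)\}$,
\begin{align*}
\nabla^2\eta(\bar{E}_X,\bar{E}_Y) &= \nabla^2\eta(E_X,E_Y) + \tfrac{r}{2}(E_X\eta)\,\nabla^2\eta(E_\rho,E_Y) \\
&\quad + \tfrac{r}{2}(E_Y\eta)\,\nabla^2\eta(E_X,E_\rho) + \tfrac{r^2}{4}(E_X\eta)(E_Y\eta)\,\nabla^2\eta(E_\rho,E_\rho).
\end{align*}
In the $(A,B)$ and $(\theta,A)$ cases every one of the four summands is already small by a combination of \eqref{eq.pc_eta_est} and \eqref{eq.pc_eta_estd}, and the estimate follows directly. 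In the $(\theta,\theta)$ case the $\eta^2$ in the target comes from the algebraic identity
\[
-2\varepsilon + r(-2\varepsilon t)\bigl(-\tfrac{2t}{r}\varepsilon\bigr) + \tfrac{r^2}{4}(-2\varepsilon t)^2\bigl(-\tfrac{2t^2}{r^2}\varepsilon\bigr) = -2\varepsilon(1-\varepsilon t^2)^2 = -2\varepsilon\,\eta^2,
\]
obtained by collecting the leading parts in \eqref{eq.pc_eta_est}--\eqref{eq.pc_eta_estd}. Again, every residual product contains at least one small factor and can be absorbed.

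The main obstacle is purely combinatorial: tracking the $\rho^2/r^2$ weights consistently. The three target bounds in \eqref{eq.pc_etab_estd} carry weights $r^2/\rho^2$, $1$, and $\rho^2/r^2$ respectively, and one must check that each of the many cross terms, when multiplied out, lands with (or below) the correct weight. The weights in \eqref{eq.pc_eta_est} and \eqref{eq.pc_eta_estd} are precisely designed for this: for instance, the cross term $r(E_\theta\eta)\nabla^2\eta(E_\theta,E_\rho)$ in the $(\theta,\theta)$ case picks up $\rho^2/r^2$ from the error in $E_\theta\eta$ (via the second line of \eqref{eq.pc_eta_est}) matched against the clean factor $\nabla^2\eta(E_\theta,E_\rho)\approx -2\varepsilon t/r$ of the sixth line of \eqref{eq.pc_eta_estd}. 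Since $\mc{C}_0 < \mc{C}_\ast \lesssim 1$ and $\varepsilon t^2 \leq \varepsilon_0 \ll 1$, higher-order products in $\mc{C}_0$ or $\varepsilon$ are swallowed by the universal constant $\mc{C}$.
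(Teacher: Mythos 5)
Your proposal follows the paper's proof essentially verbatim: expand $\bar{E}_\theta,\bar{E}_A$ via \eqref{eq.pc_Eb}, use bilinearity of $\nabla\eta$ and $\nabla^2\eta$, identify the same algebraic cancellations $-2\varepsilon t(1-\varepsilon t^2)=-2\varepsilon t\eta$ and $-2\varepsilon(1-\varepsilon t^2)^2=-2\varepsilon\eta^2$ for the leading parts, and absorb the cross terms using Proposition \ref{thm.pc_eta_est} together with $\varepsilon t^2\ll 1$ and $\mc{C}_0\lesssim 1$. Correct, and the same route as the paper.
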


\begin{proof}
First, by \eqref{eq.pc_Eb} and \eqref{eq.pc_eta_est}, we obtain
\[
\bar{E}_A \eta = E_A \eta + \frac{r}{2} E_A \eta \cdot E_\rho \eta = \eta E_A \eta \text{.}
\]
The above and the definition \eqref{eq.pc_eta} of $\varepsilon$ then yield the first part of \eqref{eq.pc_etab_est}.
Similarly,
\[
\bar{E}_\theta \eta + \eta \cdot 2 \varepsilon t = \eta ( E_\theta \eta + 2 \varepsilon t ) \text{,}
\]
and the second estimate in \eqref{eq.pc_etab_est} also follows.

Next, for the first part of \eqref{eq.pc_etab_estd}, we expand using \eqref{eq.pc_Eb}:
\begin{align*}
\nabla_{ \bar{E}_A \bar{E}_B } \eta &= \nabla_{ E_A E_B } \eta + \frac{r}{2} E_A \eta \cdot \nabla_{ E_\rho E_B } \eta + \frac{r}{2} E_B \eta \cdot \nabla_{ E_\rho E_A } \eta + \frac{ r^2 }{4} E_A \eta E_B \eta \cdot \nabla_{ E_\rho E_\rho } \eta \\
&= \nabla_{ E_A E_B } \eta + \frac{r}{2} E_A \eta \cdot \nabla_{ E_\rho E_B } \eta + \frac{r}{2} E_B \eta \cdot \nabla_{ E_\rho E_A } \eta - \frac{ \varepsilon t^2 }{2} \cdot E_A \eta E_B \eta \text{.}
\end{align*}
Each term on the right-hand side can then be bounded using \eqref{eq.pc_eta} and Proposition \ref{thm.pc_eta_est},
\begin{align*}
| \nabla_{ \bar{E}_A \bar{E}_B } \eta | &\leq \frac{ \mc{C} ( \mc{C}_0 + \mc{C}_1 ) }{ n } \frac{ r^2 }{ \rho^2 } \frac{ r^2 }{ r_0^2 } \cdot \varepsilon + \frac{r}{2} \cdot \frac{ \mc{C} \mc{C}_0 }{ n } \frac{ r^2 }{ r_0^2 } \, \varepsilon t \cdot \frac{ \mc{C} \mc{C}_0 }{ n } \frac{t}{r} \frac{ r^2 }{ r_0^2 } \, \varepsilon \\
&\qquad + \frac{ \varepsilon t^2 }{2} \cdot \frac{ \mc{C} \mc{C}_0 }{ n } \frac{ r^2 }{ r_0^2 } \cdot \varepsilon t \cdot \frac{ \mc{C} \mc{C}_0 }{ n } \frac{ r^2 }{ r_0^2 } \cdot \varepsilon t \\
&\leq \frac{ \mc{C} ( \mc{C}_0 + \mc{C}_1 ) }{ n } \frac{ r^2 }{ \rho^2 } \frac{ r^2 }{ r_0^2 } \cdot \varepsilon \text{,}
\end{align*}
resulting in the first estimate in \eqref{eq.pc_etab_estd}.
Furthermore, a similar computation yields
\begin{align*}
| \nabla_{ \bar{E}_\theta \bar{E}_A } \eta | &\leq | \nabla_{ E_\theta E_A } \eta | + \frac{r}{2} | E_\theta \eta | | \nabla_{ E_\rho E_A } \eta | + \frac{r}{2} | E_A \eta | | \nabla_{ E_\rho E_\theta } \eta | + \frac{ \varepsilon t^2 }{2} | E_\theta \eta | | E_A \eta | \text{.}
\end{align*}
More applications of \eqref{eq.pc_eta} and Proposition \ref{thm.pc_eta_est} then lead to the second part of \eqref{eq.pc_etab_estd}:
\begin{align*}
| \nabla_{ \bar{E}_\theta \bar{E}_A } \eta | &\leq \frac{ \mc{C} ( \mc{C}_0 + \mc{C}_1 ) }{ n } \frac{ r^2 }{ r_0^2 } \cdot \varepsilon + \frac{r}{2} \left| - 2 \varepsilon t + \frac{ \mc{C} \mc{C}_0 }{ n } \frac{ \rho^2 }{ r^2 } \frac{ r^2 }{ r_0^2 } \, \varepsilon t \right| \frac{ \mc{C} \mc{C}_0 }{ n } \frac{t}{r} \frac{ r^2 }{ r_0^2 } \, \varepsilon \\
&\qquad + \frac{r}{2} \cdot \frac{ \mc{C} \mc{C}_0 }{ n } \frac{ r^2 }{ r_0^2 } \, \varepsilon t \left| - \frac{t}{r} \, 2 \varepsilon + \frac{ \mc{C} \mc{C}_0 }{ n } \frac{t}{r} \frac{ \rho^2 }{ r^2 } \frac{ r^2 }{ r_0^2 } \, \varepsilon \right| \\
&\qquad + \frac{ \varepsilon t^2 }{2} \left| - 2 \varepsilon t + \frac{ \mc{C} \mc{C}_0 }{ n } \frac{ \rho^2 }{ r^2 } \frac{ r^2 }{ r_0^2 } \, \varepsilon t \right| \frac{ \mc{C} \mc{C}_0 }{ n } \frac{ r^2 }{ r_0^2 } \, \varepsilon t \\
&\leq \frac{ \mc{C} ( \mc{C}_0 + \mc{C}_1 ) }{ n } \frac{ r^2 }{ r_0^2 } \cdot \varepsilon \text{.}
\end{align*}

For the final estimate of \eqref{eq.pc_etab_estd}, we again expand
\begin{equation}
\label{eql.pc_etab_est_1} \nabla_{ \bar{E}_\theta \bar{E}_\theta } \eta = \nabla_{ E_\theta E_\theta } \eta + r E_\theta \eta \cdot \nabla_{ E_\rho E_\theta } \eta - \frac{ \varepsilon t^2 }{2} \cdot E_\theta \eta E_\theta \eta \text{.}
\end{equation}
Notice that using \eqref{eq.pc_eta} and Proposition \ref{thm.pc_eta_est}, we can estimate
\begin{align*}
| \nabla_{ E_\theta E_\theta } \eta + 2 \varepsilon | &\leq \frac{ \mc{C} ( \mc{C}_0 + \mc{C}_1 ) }{ n } \frac{ \rho^2 }{ r^2 } \frac{ r^2 }{ r_0^2 } \cdot \varepsilon \text{,} \\
| r E_\theta \eta \cdot \nabla_{ E_\rho E_\theta } \eta - 2 \varepsilon t^2 \cdot 2 \varepsilon | &\leq \frac{ \mc{C} \mc{C}_0 }{ n } \frac{ \rho^2 }{ r^2 } \frac{ r^2 }{ r_0^2 } \cdot \varepsilon \text{,} \\
\left| - \frac{ \varepsilon t^2 }{2} ( E_\theta \eta E_\theta \eta - 4 \varepsilon^2 t^2 ) \right| &\leq \frac{ \mc{C} \mc{C}_0 }{ n } \frac{ \rho^2 }{ r^2 } \frac{ r^2 }{ r_0^2 } \cdot \varepsilon \text{.}
\end{align*}
Thus, combining \eqref{eql.pc_etab_est_1} with the above, we obtain, as desired,
\begin{align*}
| \nabla_{ \bar{E}_\theta \bar{E}_\theta } \eta + ( 1 - 2 \varepsilon t^2 + \varepsilon^2 t^4 ) 2 \varepsilon | &\leq | \nabla_{ E_\theta E_\theta } \eta + 2 \varepsilon | + | r E_\theta \eta \cdot \nabla_{ E_\rho E_\theta } \eta - 2 \varepsilon t^2 \cdot 2 \varepsilon | \\
&\qquad + \left| - \frac{ \varepsilon t^2 }{2} ( E_\theta \eta E_\theta \eta - 4 \varepsilon^2 t^2 ) \right| \\
&\leq \frac{ \mc{C} ( \mc{C}_0 + \mc{C}_1 ) }{ n } \frac{ \rho^2 }{ r^2 } \frac{ r^2 }{ r_0^2 } \cdot \varepsilon \text{.} \qedhere
\end{align*}
\end{proof}

\subsection{Pseudoconvexity}

We now derive, under Assumption \ref{ass.hf_curv}, pseudoconvexity properties for the modified hyperquadric $\bar{f}$.
The following proposition, which in particular implies the level sets of $\bar{f}$ are pseudoconvex, will play an essential role in our Carleman estimate:

\begin{theorem} \label{thm.pc}
There exist universal $\mc{C}_\ast, \mc{C}_\dagger > 0$ such that if Assumption \ref{ass.hf_curv} holds, and
\begin{equation}
\label{eq.pc_ass} 0 < \mc{C}_0 < \mc{C}_\ast \text{,} \qquad \mc{C}_0 \leq n \varepsilon_0 \mc{C}_\dagger \text{,} \qquad \mc{C}_1 \leq n \mc{C}_\dagger \text{,}
\end{equation}
then we have the inequality
\begin{equation}
\label{eq.pc} ( \nabla^2 \bar{f} - \bar{h} \cdot g ) ( \bar{X}, \bar{X} ) \geq \frac{ \varepsilon_0 }{8} \frac{ r^2 }{ r_0^2 } \cdot \bar{g}_+ ( \bar{X}, \bar{X} ) \text{,} \qquad \bar{h} := \frac{1}{2} \eta^{-1} - \frac{1}{4} \varepsilon r^2 \in C^\infty ( \bar{\mc{D}} ) \text{,}
\end{equation}
for any $\bar{f}$-tangent vector field $\bar{X}$ on $\mc{D}_{ r_0 }$.
\end{theorem}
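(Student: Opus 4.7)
The plan is to derive \eqref{eq.pc} pointwise by combining the Hessian identity of Proposition \ref{thm.pc_fb_hessian} with the $\eta$- and $q$-estimates already established. First, since $\bar{X}$ is $\bar{f}$-tangent, Proposition \ref{thm.pc_fb_hessian} yields
\[
\nabla^2\bar{f}(\bar{X}, \bar{X}) = \tfrac{1}{2}\eta^{-1} g(\bar{X}, \bar{X}) - \eta^{-1}\bar{f}\,\nabla^2\eta(\bar{X}, \bar{X}) + \eta^{-1} q(\bar{P}\bar{X}, \bar{P}\bar{X}),
\]
with $q := \nabla^2 f - \tfrac{1}{2} g$. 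The choice $\bar{h} = \tfrac{1}{2}\eta^{-1} - \tfrac{1}{4}\varepsilon r^2$ is calibrated exactly so that the $\tfrac{1}{2}\eta^{-1} g$ terms cancel, leaving
\[
(\nabla^2\bar{f} - \bar{h} g)(\bar{X}, \bar{X}) = \tfrac{1}{4}\varepsilon r^2\, g(\bar{X}, \bar{X}) - \eta^{-1}\bar{f}\,\nabla^2\eta(\bar{X}, \bar{X}) + \eta^{-1} q(\bar{P}\bar{X}, \bar{P}\bar{X}).
\]

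Next, I would expand $\bar{X} = \bar{X}^\theta \bar{E}_\theta + \sum_A \bar{X}^A \bar{E}_A$; by Proposition \ref{thm.pc_P} this gives $\bar{P}\bar{X} = \bar{X}^\theta E_\theta + \sum_A \bar{X}^A E_A$. A direct computation from \eqref{eq.hf_E}, \eqref{eq.pc_Eb}, and the $f$-frame bounds on $E_\cdot \eta$ from Proposition \ref{thm.pc_eta_est} yields $g(\bar{E}_\theta, \bar{E}_\theta) \approx -\rho^2/r^2$, $g(\bar{E}_A, \bar{E}_B) \approx \delta_{AB}$, and $g(\bar{E}_\theta, \bar{E}_A) \approx 0$, with errors quadratic in $E_\cdot \eta = O(\varepsilon t \cdot r^2/r_0^2)$. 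Combined with Proposition \ref{thm.pc_etab_est}, whose main assertion is $\nabla^2\eta(\bar{E}_\theta, \bar{E}_\theta) \approx -2\eta^2 \varepsilon$, the leading-order contribution of the right-hand side above becomes
\[
\tfrac{\varepsilon r^2}{4}\Bigl[-\tfrac{\rho^2}{r^2}(\bar{X}^\theta)^2 + \sum_A (\bar{X}^A)^2\Bigr] + 2 f \varepsilon (\bar{X}^\theta)^2 = \tfrac{\varepsilon r^2}{4}\bar{g}_+(\bar{X}, \bar{X}),
\]
since $f = \rho^2/4$, with the identification of $\bar{g}_+$ provided by Proposition \ref{thm.pc_gp_frame}. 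Since $\varepsilon = \varepsilon_0 r_0^{-2}$, this main-order part equals $\tfrac{\varepsilon_0}{4}\tfrac{r^2}{r_0^2}\bar{g}_+(\bar{X}, \bar{X})$, which is twice the desired lower bound, leaving a margin $\tfrac{\varepsilon_0}{8}\tfrac{r^2}{r_0^2}\bar{g}_+(\bar{X}, \bar{X})$ for errors.

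Finally, I would absorb the remaining errors into this margin. The $q$-term is controlled by Proposition \ref{thm.hf_q_est}, contributing at most $\tfrac{\mc{C}\mc{C}_0}{n}\tfrac{r^2}{r_0^2}\bar{g}_+(\bar{X}, \bar{X})$ after frame expansion. The error from $-\eta^{-1}\bar{f}\,\nabla^2\eta$ is handled by combining Proposition \ref{thm.pc_etab_est} with the bound $|\bar{f}|\eta^{-1} \leq \rho^2/(4\eta^2) \leq \rho^2$: the frame-dependent $\rho^{2k}/r^{2k}$ factors in Proposition \ref{thm.pc_etab_est} combine with this $\rho^2$ and with $\varepsilon = \varepsilon_0/r_0^2$ to give a bound of the form $\tfrac{\mc{C}(\mc{C}_0+\mc{C}_1)}{n}\varepsilon_0 \tfrac{r^2}{r_0^2}\bar{g}_+(\bar{X}, \bar{X})$. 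The quadratic errors in $g(\bar{E}_\cdot, \bar{E}_\cdot)$ are of order $(\varepsilon t)^2 r^2 \lesssim \varepsilon_0^2$ and hence negligible. Off-diagonal cross products $(\bar{X}^\theta)(\bar{X}^A)$ arising from mixed frame entries are split by AM-GM between the $(\bar{X}^\theta)^2$ and $(\bar{X}^A)^2$ margins. Under $\mc{C}_0 \leq n\varepsilon_0 \mc{C}_\dagger$ and $\mc{C}_1 \leq n \mc{C}_\dagger$, choosing $\mc{C}_\dagger$ sufficiently small absorbs all errors into $\tfrac{\varepsilon_0}{8}\tfrac{r^2}{r_0^2}\bar{g}_+(\bar{X}, \bar{X})$, which yields \eqref{eq.pc}. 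The main obstacle is confirming that the $\mc{C}_1$-dependent errors, whose bounds in Proposition \ref{thm.pc_etab_est} do not themselves carry an intrinsic $\varepsilon_0$-smallness, nonetheless combine with the $\bar{f}/\eta$-factor and the frame weights $\rho^{2k}/r^{2k}$ to produce the requisite $\varepsilon_0 r^2/r_0^2$ size, so that the weaker assumption $\mc{C}_1 \leq n\mc{C}_\dagger$ (rather than $\mc{C}_1 \leq n\varepsilon_0 \mc{C}_\dagger$) suffices.
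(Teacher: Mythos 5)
Your overall strategy mirrors the paper's exactly: start from Proposition \ref{thm.pc_fb_hessian}, note that the $\bar{h}$ is chosen to cancel the $\frac{1}{2}\eta^{-1}g$ term, isolate the leading-order contribution $\frac{\varepsilon r^2}{4}\bar{g}_+(\bar{X},\bar{X})$, and then absorb the $q$- and $\nabla^2\eta$-errors using Propositions \ref{thm.hf_q_est} and \ref{thm.pc_etab_est}. Your flagged ``obstacle'' --- whether the $\mc{C}_1$-errors acquire an $\varepsilon_0$-factor without assuming $\mc{C}_1 \leq n\varepsilon_0\mc{C}_\dagger$ --- is resolved exactly as you anticipate: in the paper's estimate for $\mc{A}$, the factor $f\approx\rho^2/4$ combined with $\varepsilon = \varepsilon_0 r_0^{-2}$ and the frame weights yields $|\mc{A}| \leq \frac{\varepsilon_0\mc{C}(\mc{C}_0+\mc{C}_1)}{n}\frac{r^2}{r_0^2}\bar{g}_+(\bar{X},\bar{X})$, which is already $\varepsilon_0$-small. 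So the bulk of your plan is sound.

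There is, however, one place where your error bound as stated does not close. You claim the deviations of $g(\bar{E}_\theta,\bar{E}_\theta)$, $g(\bar{E}_\theta,\bar{E}_A)$, $g(\bar{E}_A,\bar{E}_B)$ from $-\rho^2/r^2,\,0,\,\delta_{AB}$ are ``of order $(\varepsilon t)^2 r^2 \lesssim \varepsilon_0^2$ and hence negligible.'' But the $(\bar{X}^\theta)^2$-component of the target margin $\frac{\varepsilon_0}{8}\frac{r^2}{r_0^2}\bar{g}_+(\bar{X},\bar{X})$ carries the weight $\frac{\rho^2}{r^2}$, so the margin is only $\frac{\varepsilon_0\rho^2}{8r_0^2}(\bar{X}^\theta)^2$, which degenerates as $\rho\to 0$ (i.e.\ approaching $\partial\mc{D}$). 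An error of uniform size $\varepsilon_0^2$, multiplied by $\frac{1}{4}\varepsilon r^2$, cannot be absorbed there. What saves the argument is that the true deviation of $g(\bar{E}_\theta,\bar{E}_\theta)$ from $-\rho^2/r^2$ is $\bigl(\tfrac{r E_\theta\eta}{2}\bigr)^2\tfrac{\rho^2}{r^2}\approx\varepsilon^2 t^2\rho^2$, which itself carries the $\rho^2$ weight --- your stated bound $(\varepsilon t)^2 r^2$ drops precisely this factor. The paper avoids the issue entirely by exploiting a sign: $g(\bar{X},\bar{X}) = g(\bar{P}\bar{X},\bar{P}\bar{X}) + f(\bar{X}\eta)^2 \geq g(\bar{P}\bar{X},\bar{P}\bar{X}) = -\frac{\rho^2}{r^2}(X^\theta)^2 + \sum_A(X^A)^2$, and since this is multiplied by $\frac{1}{4}\varepsilon r^2 > 0$, the nonnegative correction $f(\bar{X}\eta)^2$ can simply be discarded, no smallness required. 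Either carry the $\rho^2$ factor through carefully, or (better, as the paper does) use this exact one-sided identity instead of a perturbative expansion.
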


\begin{proof}
First, we expand $\bar{X}$ in terms of the usual frames as
\begin{equation}
\label{eql.pc_XE} \bar{X} := X^\theta \bar{E}_\theta + \sum_A X^A \bar{E}_A \text{,}
\end{equation}
and we set $X := \bar{P} \bar{X}$.
By \eqref{eq.hf_q}, Propositions \ref{thm.pc_P} and \ref{thm.pc_fb_hessian}, and \eqref{eq.pc_fb}, we have
\begin{equation}
\label{eql.pc_1} ( \nabla^2 \bar{f} - \bar{h} g ) ( \bar{X}, \bar{X} ) = \eta^{-1} \cdot q ( X, X ) - \eta^{-2} f \cdot \nabla_{ \bar{X} \bar{X} } \eta + \frac{1}{4} \varepsilon r^2 \cdot g ( \bar{X}, \bar{X} ) \text{.}
\end{equation}

Using Proposition \ref{thm.hf_q_est}, \eqref{eq.pc_eta}, and Proposition \ref{thm.pc_gp_frame}, we bound
\begin{align}
\label{eql.pc_2} | \eta^{-1} \, q ( X, X ) | &\leq \mc{C} \left[ ( X^\theta )^2 | q_{ \theta \theta } | + 2 \sum_A | X^A X^\theta | | q_{ \theta A } | + \sum_{ A, B } | X^A X^B | | q_{ A B } | \right] \\
\notag &\leq \mc{C} \bar{g}_+ ( \bar{X}, \bar{X} ) \left[ \frac{ r^2 }{ \rho^2 } | q_{ \theta \theta } | + 2 \frac{ r }{ \rho } \sup_A | q_{ \theta A } | + \sup_{ A, B } | q_{ A B } | \right] \\
\notag &\leq \frac{ \mc{C} \mc{C}_0 }{n} \frac{ r^2 }{ r_0^2 } \cdot \bar{g}_+ ( \bar{X}, \bar{X} ) \text{.}
\end{align}
Moreover, we can write
\begin{align}
\notag - \eta^{-2} f \cdot \nabla_{ \bar{X} \bar{X} } \eta &= - \eta^{-2} f \left[ ( X^\theta )^2 \nabla_{ \bar{E}_\theta \bar{E}_\theta } \eta + \sum_A X^A X^\theta \nabla_{ \bar{E}_\theta \bar{E}_A } \eta + \sum_{ A, B } X^A X^B \nabla_{ \bar{E}_A \bar{E}_B } \eta \right] \\
\label{eql.pc_3} &= ( X^\theta )^2 f \cdot 2 \varepsilon + \mc{A} \text{,}
\end{align}
where the quantity $\mc{A}$ satisfies, due to \eqref{eq.pc_eta} and \eqref{eq.pc_etab_estd}, the estimate
\begin{align}
\notag | \mc{A} | &\leq \mc{C} f \left[ ( X^\theta )^2 | \nabla_{ \bar{E}_\theta \bar{E}_\theta } \eta + \eta^2 \, 2 \varepsilon | + \sum_A X^A X^\theta | \nabla_{ \bar{E}_\theta \bar{E}_A } \eta | + \sum_{ A, B } X^A X^B | \nabla_{ \bar{E}_A \bar{E}_B } \eta | \right] \\
\notag &\leq \mc{C} f \, \bar{g}_+ ( \bar{X}, \bar{X} ) \left[ \frac{ r^2 }{ \rho^2 } | \nabla_{ \bar{E}_\theta \bar{E}_\theta } \eta + \eta^2 \, 2 \varepsilon | + \frac{ r }{ \rho } \sup_A | \nabla_{ \bar{E}_\theta \bar{E}_A } \eta | + \sup_{ A, B } | \nabla_{ \bar{E}_A \bar{E}_B } \eta | \right] \\
\notag &\leq f \, \bar{g}_+ ( \bar{X}, \bar{X} ) \cdot \frac{ \mc{C} ( \mc{C}_0 + \mc{C}_1 ) }{ n } \frac{ r^2 }{ \rho^2 } \frac{ r^2 }{ r_0^2 } \, \varepsilon \\
\label{eql.pc_4} &\leq \frac{ \varepsilon_0 \mc{C} ( \mc{C}_0 + \mc{C}_1 ) }{ n } \frac{ r^2 }{ r_0^2 } \cdot \bar{g}_+ ( \bar{X}, \bar{X} ) \text{.}
\end{align}
Combining \eqref{eql.pc_1}--\eqref{eql.pc_4}, we then see that
\begin{equation}
\label{eql.pc_10} ( \nabla^2 \bar{f} - \bar{h} g ) ( \bar{X}, \bar{X} ) \geq 2 \varepsilon f \cdot ( X^\theta )^2 + \frac{1}{4} \varepsilon r^2 \cdot g ( \bar{X}, \bar{X} ) - \frac{ \mc{C} ( \mc{C}_0 + \varepsilon_0 \mc{C}_1 ) }{n} \frac{ r^2 }{ r_0^2 } \cdot \bar{g}_+ ( \bar{X}, \bar{X} ) \text{.}
\end{equation}

Next, from \eqref{eq.hf_gauss}, \eqref{eq.pc_eta}, \eqref{eq.pc_P}, Proposition \ref{thm.pc_P}, and Proposition \ref{thm.pc_gp_frame}, we obtain
\begin{align}
\label{eql.pc_11} g ( \bar{X}, \bar{X} ) &= g ( X + X \eta \cdot \nabla^\sharp f, X + X \eta \cdot \nabla^\sharp f ) \\
\notag &= g ( X, X ) + f ( X \eta )^2 \\
\notag &\geq - \frac{ \rho^2 }{ r^2 } ( X^\theta )^2 + \sum_A ( X^A )^2 \text{.}
\end{align}
Finally, from \eqref{eq.hf_rho}, \eqref{eq.pc_eta}, \eqref{eql.pc_10}, and \eqref{eql.pc_11}, we derive the bound
\begin{align*}
( \nabla^2 \bar{f} - \bar{h} g ) ( \bar{X}, \bar{X} ) &\geq 2 \varepsilon f ( X^\theta )^2 - \frac{1}{4} \varepsilon r^2 \, \frac{ \rho^2 }{ r^2 } ( X^\theta )^2 + \frac{1}{4} \varepsilon r^2 \sum_A ( X^A )^2 \\
&\qquad - \frac{ \mc{C} ( \mc{C}_0 + \varepsilon_0 \mc{C}_1 ) }{n} \frac{ r^2 }{ r_0^2 } \cdot \bar{g}_+ ( \bar{X}, \bar{X} ) \\
&= \frac{1}{4} \varepsilon \rho^2 ( X^\theta )^2 + \frac{1}{4} \varepsilon r^2 \sum_A ( X^A )^2 - \frac{ \mc{C} ( \mc{C}_0 + \varepsilon_0 \mc{C}_1 ) }{n} \frac{ r^2 }{ r_0^2 } \cdot \bar{g}_+ ( \bar{X}, \bar{X} ) \text{.}
\end{align*}

Recalling \eqref{eq.pc_eta} and Proposition \ref{thm.pc_gp_frame}, the above becomes
\begin{align*}
( \nabla^2 \bar{f} - \bar{h} g ) ( \bar{X}, \bar{X} ) &\geq \frac{1}{4} \varepsilon r^2 \cdot \bar{g}_+ ( \bar{X}, \bar{X} ) - \frac{ \mc{C} ( \mc{C}_0 + \varepsilon_0 \mc{C}_1 ) }{n} \frac{ r^2 }{ r_0^2 } \cdot \bar{g}_+ ( \bar{X}, \bar{X} ) \\
&= \frac{ r^2 }{ r_0^2 } \left[ \frac{ \varepsilon_0 }{ 4 } - \frac{ \mc{C} ( \mc{C}_0 + \varepsilon_0 \mc{C}_1 ) }{n} \right] \bar{g}_+ ( \bar{X}, \bar{X} ) \text{.}
\end{align*}
Thus, as long as $16 \mc{C} \mc{C}_0 \leq n \varepsilon_0$ and $16 \mc{C} \mc{C}_1 \leq n$, the above implies \eqref{eq.pc}, as desired.
\end{proof}

\section{Carleman estimate} \label{sec.CE}

In this section, we will prove the following Carleman estimate:

\begin{theorem} \label{thm_carl_est_LM}
Assume the setting described in Assumption \ref{ass_LM}; in particular, let $\mc{D}$ be as in \eqref{eq.hf_D}.
Then, there exist universal constants $\mc{C}_\dagger >0$ and $0 < \varepsilon_0 \ll 1$ such that if
\begin{equation} \label{eq_thm_carl_cc0_LM}
\begin{aligned}
\sup_{ X, Y, Z \in \{ E_\rho, E_0, E_A \} } | R ( E_\rho, X, Y, Z ) | & < \frac{\varepsilon_0 \mc{C}_\dagger}{r_0^2}, \\ \sup_{ X, Y, Z \in \{ E_\rho, E_0, E_A \} } | \nabla_Z R ( E_\rho, X, E_\rho, Y ) | & < \frac{\mc{C}_\dagger}{r_0^3},
\end{aligned}
\end{equation}
then for any $\phi \in C^2(\mc{U}) \cap C^1(\bar{\mc{U}})$ satisfying
\begin{equation} \label{eq_thm_carl_p0_LM}
\phi|_{\partial\mc{U} \cap \mc{D}} = 0 \text{,}
\end{equation}
and for any constants $a, b_0, b, \varepsilon > 0$ satisfying
\begin{equation} \label{eq_thm_carl_a_LM}
a \geqslant n^2 \text{,} \qquad \varepsilon_0 \ll b_0 \ll 1 \text{,} \qquad \varepsilon := \varepsilon_0 r_0^{-2} \text{,} \qquad b := b_0 r_0^{-2} \text{,}
\end{equation}
we have the Carleman estimate
\begin{align} \label{eq_thm_carl_est_LM} 
\frac{\varepsilon}{64} \int_{\mc{U} \cap \mc{D}} \zeta r^2 & \left[ \frac{r^2}{\rho^2} (E_\rho \phi)^2 + \frac{r^2}{\rho^2} (E_\theta \phi)^2 + \sum_A (E_A \phi)^2 \right]+ \frac{1}{8} a^2 b \int_{\mc{U} \cap \mc{D}} \zeta \phi^2 \\
& \qquad \leqslant \frac{1}{4a} \int_{\mc{U} \cap \mc{D}} \zeta f |\square \phi|^2 + \frac{1}{2} \int_{\partial \mc{U} \cap \mc{D}} \zeta \mc{N} (f [1- \varepsilon t^2 ]^{-1}) |\mc{N} \phi|^2 \text{,} \notag
\end{align}
where $\zeta$ is the Carleman weight given by
\begin{equation} \label{eq_thm_carl_def_LM}
\zeta = \left\{ \frac{f}{(1-\varepsilon t^2)} \cdot \exp\left[ \frac{bf}{(1-\varepsilon t^2)}\right] \right\}^{2a},
\end{equation}
and where $\mc{N}$ is the outer-pointing unit normal of $\mc{U}$.
\end{theorem}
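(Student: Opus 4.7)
The plan is to follow the classical Carleman conjugation-and-commutator strategy. I set $\sigma := \log \zeta = 2a ( \log \bar{f} + b \bar{f} )$ and $\psi := \zeta^{1/2} \phi$, so that the conjugated operator $\mc{L} := e^{\sigma/2}\, \square\, e^{-\sigma/2}$ satisfies $\mc{L} \psi = \zeta^{1/2} \square \phi$. Expanding derivatives in $\phi = e^{-\sigma/2} \psi$ yields the decomposition $\mc{L} = \mc{S} + \mc{A}$ into self-adjoint and antisymmetric parts on $L^2 ( \mc{U} )$, with
\[
\mc{S} \psi = \square \psi + \tfrac{1}{4} | \nabla \sigma |^2 \, \psi, \qquad \mc{A} \psi = - \nabla^\alpha \sigma \, \nabla_\alpha \psi - \tfrac{1}{2} \square \sigma \cdot \psi.
\]
Squaring and integrating gives $\int \zeta | \square \phi |^2 = \int | \mc{S} \psi |^2 + \int | \mc{A} \psi |^2 + 2 \int \mc{S} \psi \cdot \mc{A} \psi$, and it is the cross term that, after integration by parts, produces the positivity needed on the left-hand side of \eqref{eq_thm_carl_est_LM}.

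Expanding the cross integrand and using the standard stress-energy identity for the vector-field multiplier $-\nabla^\sharp \sigma$ acting on $\psi$, I would extract
\[
2 \int \mc{S} \psi \, \mc{A} \psi \supseteq 2 \int \nabla^{\alpha \beta} \sigma \, \nabla_\alpha \psi \nabla_\beta \psi - \int \square \sigma \cdot | \nabla \psi |^2 + (\text{zeroth order in } \psi) + (\text{boundary}).
\]
Since $\nabla^2 \sigma = 2 a ( \bar{f}^{-1} + b ) \nabla^2 \bar{f} - 2 a \bar{f}^{-2} \, \nabla \bar{f} \otimes \nabla \bar{f}$, the Hessian term splits into a \emph{good} piece controlled on $\bar{f}$-tangent directions by the pseudoconvexity Theorem \ref{thm.pc}, and a \emph{bad} piece $-2 a \bar{f}^{-2} ( \nabla^\alpha \bar{f} \nabla_\alpha \psi )^2$ in the $\nabla \bar{f}$-direction. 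The bad piece is neutralised by retaining a fraction of $\int | \mc{A} \psi |^2 \sim 4 a^2 ( \bar{f}^{-1} + b )^2 ( \nabla^\alpha \bar{f} \nabla_\alpha \psi )^2$ from the sum-of-squares expansion above, which for $a \gg 1$ dominates and thereby controls the radial derivative $( E_\rho \phi )^2$; here Proposition \ref{thm.hf_gauss} identifies $\nabla^\alpha \bar{f} \nabla_\alpha \psi$ with $(r/2)\, E_\rho \psi$ up to $\eta$-corrections handled by Proposition \ref{thm.pc_eta_est}.

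The zeroth-order contribution to the cross term involves combinations such as $\tfrac{1}{4} | \nabla \sigma |^2 \, \square \sigma$ and $-\tfrac{1}{2} \nabla^\alpha \sigma \, \nabla_\alpha | \nabla \sigma |^2$, with leading behaviour $\sim a^3 b \, \bar{f}^{-1} | \nabla \bar{f} |^2 \, \psi^2 \sim a^3 b \, \psi^2$ after invoking $\nabla^\alpha \bar{f} \nabla_\alpha \bar{f} \approx \bar{f}$ from Proposition \ref{thm.hf_gauss}; keeping a fraction yields the $\tfrac{1}{8} a^2 b$ coefficient on the $\phi^2$ term in \eqref{eq_thm_carl_est_LM}. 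The $r^2 / \rho^2$ factors in the final estimate arise because $g ( E_\theta, E_\theta ) = -\rho^2 / r^2$ by Proposition \ref{thm.hf_E}, so that expressing the $g$-gradient in the orthogonal frame $( E_\rho, E_\theta, E_A )$ introduces exactly these reciprocals. Finally, since $\phi |_{\partial \mc{U} \cap \mc{D}} = 0$, every tangential derivative of $\phi$ vanishes on $\partial \mc{U} \cap \mc{D}$; the boundary contributions from the divergence theorem collapse to a single integral of $| \mc{N} \phi |^2$, whose weight is $\zeta \cdot \mc{N} ( \bar{f} ) = \zeta \cdot \mc{N} ( f [ 1 - \varepsilon t^2 ]^{-1} )$, obtained by evaluating the multiplier $\nabla^\sharp \sigma$ against $\mc{N}$.

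The principal obstacle will be handling the sizeable tail of error terms stemming from (i) the deviation of $\nabla^2 f$ from $\tfrac{1}{2} g$ in the curved Lorentzian setting, (ii) the perturbation by $\eta$ in the definition of $\bar{f}$, and (iii) commutators of covariant derivatives producing curvature. All of these are quantified by the frame-wise estimates collected in Sections \ref{sec.hf} and \ref{sec.pc}, in particular Propositions \ref{thm.hf_q_est}, \ref{thm.hf_qd_est}, \ref{thm.hf_t_est}, \ref{thm.hf_td_est}, \ref{thm.pc_eta_est}, and \ref{thm.pc_etab_est}. Under the curvature bounds \eqref{eq_thm_carl_cc0_LM} with $\mc{C}_\dagger$ small enough and $a \geq n^2$, each error term is strictly smaller than a fixed fraction of either the pseudoconvex $H^1$-contribution or the $a^2 b\, \psi^2$-contribution and may be absorbed. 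Reverting from $\psi$ back to $\phi = \zeta^{-1/2} \psi$, and using that $\eta \in ( 1 - \varepsilon_0, 1 ]$ is comparable to $1$ when $\varepsilon_0 \ll 1$ (so that $f$ and $\bar{f}$ differ only by a bounded multiplicative factor, allowing the right-hand side to be written with weight $\zeta f$ rather than $\zeta \bar{f}$), produces the estimate \eqref{eq_thm_carl_est_LM}.
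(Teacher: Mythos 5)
Your plan—conjugate by $\zeta^{1/2}$, split the conjugated operator into symmetric and anti-symmetric parts, extract positivity from the cross term via the pseudoconvexity of $\bar{f}$ (Theorem~\ref{thm.pc}), and absorb curvature errors using the frame-wise estimates—is essentially the same calculation the paper performs via the multiplier identity in Proposition~\ref{prop_carl_1}. (Your anti-symmetric part $\mc{A}\psi = -\nabla^\alpha\sigma\,\nabla_\alpha\psi - \tfrac12\square\sigma\cdot\psi$ differs from the paper's multiplier $S_w\psi$ only by the overall factor $2F'$ and a zero-order shift, so the resulting integral identities coincide after rearrangement.) The sign structure, the role of Proposition~\ref{thm.hf_gauss} for the radial direction, and the weight on the boundary term $\zeta\,\mc{N}(\bar f)\,|\mc{N}\phi|^2$ are all correctly identified.

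The genuine gap is the treatment of the null cone boundary. After the divergence theorem on $\mc{U}\cap\mc{D}$, the boundary has two pieces: $\partial\mc{U}\cap\mc{D}$ (which you dispose of using \eqref{eq_thm_carl_p0_LM}) and the cone $\mc{U}\cap\partial\mc{D}$, which you do not address. The hypothesis places no vanishing condition on $\phi$ at $\partial\mc{D}$, so the flux there does not vanish for trivial reasons; rather, the whole point of choosing $\zeta\sim\bar f^{2a}$ (which degenerates on $\partial\mc{D}$) is that this flux can be shown to vanish \emph{because of the weight}. The paper makes this precise by integrating over the truncated region $\mc{U}\cap\mc{D}_\delta = \mc{U}\cap\{f>\delta\}$, bounding the flux on $\{f=\delta\}$ via Proposition~\ref{prop_revP}, and then showing via a coarea computation that this flux is $O(\delta^{2a-3/2})\to 0$ as $\delta\searrow 0$ once $a\geq n^2$. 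Without this step the divergence-theorem passage is not justified, and one obtains at best a Carleman estimate for $\phi$ compactly supported away from $\partial\mc{D}$—strictly weaker than the stated theorem and insufficient for the observability application. A minor additional imprecision: your claimed $a^3 b\,\psi^2$ scaling for the zero-order contribution is too large by a factor of $a$; the $a^3$ contributions cancel upon expansion (this is the bookkeeping in Proposition~\ref{thm.misc_zero}), and the genuine leading order is $a^2 b$, so one cannot recover the $\tfrac18 a^2 b$ coefficient by merely "keeping a fraction" of an $a^3 b$ term.
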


Throughout the rest of this section we assume that the hypothesis of Theorem \ref{thm_carl_est_LM} is true. Note that, \eqref{eq_thm_carl_cc0_LM} implies that as long as $\mc{C}_\dagger$ is not too large, there exist constants $\mc{C}_0$, $\mc{C}_1>0$, such that Assumption \ref{ass.hf_curv} is satisfied and equation \eqref{eq.pc_ass} holds.

\begin{remark}
Note that since $\mc{U} \cap \mc{D} \subset \mc{D}_{r_0}$ from Assumption \ref{ass_LM}, all the geometric estimates of the previous sections are valid on $\mc{U} \cap \mc{D}$.
\end{remark}

\begin{remark}
The $E_\alpha$ derivatives in \eqref{eq_thm_carl_est_LM} can be reformulated in terms of the normal coordinate derivatives, with some additional estimates.
\end{remark}

\subsection{Preliminary estimates}

First, we provide some additional computations and estimates that will serve as crucial ingredients for proving the above Carleman estimate.
Throughout, we will use $\mc{O} ( \xi )$, where $\xi \in C^\infty ( \mc{D}_{ r_0 } )$, to denote any function $\chi\in C^\infty ( \mc{D}_{ r_0 } )$ that satisfies, for some universal $\mc{C} > 0$,
\begin{equation}
\label{eq.misc_O} | \chi | \leq \mc{C} \xi \text{.}
\end{equation}
Henceforth, we will make repeated use of Proposition \ref{thm.pc_eta_est} and Proposition \ref{thm.pc_etab_est} to estimate the derivatives of $\eta$, whenever necessary. Moreover, we will use \eqref{eq_thm_carl_cc0_LM} and the $\mc{O}(\cdot)$ notation to simplify these estimates.

Now, in Proposition \ref{thm.pc} we showed that $\bar{f}$ satisfies the pseudoconvexity requirement. Hence, to proceed with our proof we now need to derive some estimates for $\bar{f}$. One important step is to quantify how much $\bar{f}$ deviates from the Gauss lemma properties satisfied by $f$ in Proposition \ref{thm.hf_gauss}. 

\begin{proposition}[Shifted Gauss Lemma] \label{thm.misc_gauss}
The following identities hold on $\mc{D}_{ r_0 }$,
\begin{align}
\label{eq.misc_gauss} \nabla^\alpha \bar{f} \nabla_\alpha \bar{f} &= \bar{f} \eta^{-2} ( 1 + \varepsilon t^2 + \mc{E}_1 ) \text{,} \\
\notag \nabla_{ \alpha \beta } \bar{f} \nabla^\alpha \bar{f} \nabla^\beta \bar{f} &= \frac{1}{2} \bar{f} \eta^{-4} ( 1 + 5 \varepsilon t^2 + 2 \varepsilon^2 t^4 + \mc{E}_2 ) \text{,}
\end{align}
where the quantities $\mc{E}_1$ and $\mc{E}_2$ satisfy, for some universal $\mc{C} > 0$,
\begin{equation}
\label{eq.misc_gauss_error} | \mc{E}_1 | \leq \mc{C} \varepsilon^2 t^2 f \text{,} \qquad | \mc{E}_2 | \leq \mc{C} \varepsilon^2 t^2 f \text{.}
\end{equation}
\end{proposition}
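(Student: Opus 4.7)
The plan is to prove both identities by direct computation: differentiate $\bar f = f\eta^{-1}$ using the Leibniz rule, contract with the appropriate quantities, and reduce using three key identities already established, namely (a) $\nabla^\alpha f \nabla_\alpha f = f$ from the Gauss lemma in Proposition \ref{thm.hf_gauss}; (b) $\nabla^\alpha f \nabla_\alpha \eta = -\varepsilon t^2$, which follows from $\nabla^\alpha f \nabla_\alpha t^2 = t^2$ (from the proof of Proposition \ref{thm.hf_transport}); and (c) the transport identity $\nabla^\mu f \nabla_{\mu\alpha} f = \frac{1}{2}\nabla_\alpha f$ in \eqref{eq.hf_gauss_2}. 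The error quantities $\mc{E}_1, \mc{E}_2$ will then be controlled using the $t$-derivative bounds of Propositions \ref{thm.hf_t_est} and \ref{thm.hf_td_est}.

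For the first identity I would expand
\[
\nabla^\alpha \bar f \nabla_\alpha \bar f = \eta^{-2}\nabla^\alpha f \nabla_\alpha f - 2f\eta^{-3}\nabla^\alpha f \nabla_\alpha \eta + f^2\eta^{-4}\nabla^\alpha \eta \nabla_\alpha \eta.
\]
Applying (a) and (b), the first two terms collapse to $\bar f \eta^{-2}(1+\varepsilon t^2)$ by the arithmetic cancellation $\eta + 2\varepsilon t^2 = 1 + \varepsilon t^2$, leaving the residual $\mc{E}_1 = f\eta^{-1}\nabla^\alpha \eta \nabla_\alpha \eta$. Since $\nabla_\alpha \eta = -2\varepsilon t \nabla_\alpha t$, it suffices to show $|\nabla^\alpha t \nabla_\alpha t|$ is uniformly bounded on $\mc{D}_{r_0}$. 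I would compute this in the orthogonal frame of Proposition \ref{thm.hf_E} using the metric components \eqref{eq.hf_E}: the exact identity $E_\rho t = t/r$ from \eqref{eql.pc_eta_est_10}, together with the bounds on $E_\theta t$ and $E_A t$ from Proposition \ref{thm.hf_t_est}, causes the leading Minkowski terms to cancel via $t^2/\rho^2 - r^2/\rho^2 = -1$, so that $\nabla^\alpha t \nabla_\alpha t = -1 + O(r^2/r_0^2)$ and hence $|\mc{E}_1| \leq \mc{C}\varepsilon^2 t^2 f$.

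For the second identity, I would differentiate once more to obtain
\[
\nabla_{\alpha\beta}\bar f = \eta^{-1}\nabla_{\alpha\beta} f - \eta^{-2}(\nabla_\alpha f \nabla_\beta \eta + \nabla_\beta f \nabla_\alpha \eta) + 2f\eta^{-3}\nabla_\alpha \eta \nabla_\beta \eta - f\eta^{-2}\nabla_{\alpha\beta}\eta,
\]
and contract against the analogous expansion of $\nabla^\alpha \bar f \nabla^\beta \bar f$. Three principal scalar contractions drive the leading part: $\nabla_{\alpha\beta} f \nabla^\alpha f \nabla^\beta f = \frac{1}{2} f$ (contract (c) with $\nabla^\alpha f$); $\nabla_{\alpha\beta} f \nabla^\alpha f \nabla^\beta \eta = -\frac{1}{2}\varepsilon t^2$ (apply (c) and then (b)); and $\nabla_{\alpha\beta}\eta \nabla^\alpha f \nabla^\beta f = -\frac{1}{2}\varepsilon t^2$, where the last follows by writing $\nabla_{\alpha\beta}\eta = -2\varepsilon \nabla_\alpha t \nabla_\beta t - 2\varepsilon t \nabla_{\alpha\beta} t$ and observing that $\nabla^\alpha f \nabla^\beta f \nabla_{\alpha\beta} t = 0$; this last vanishing is obtained by contracting the second equation of \eqref{eq.hf_transport} with $\nabla^\alpha f$ and applying (c). Collecting the contributions at orders $\eta^{-3}$, $\eta^{-4}\varepsilon t^2$, and $\eta^{-5}\varepsilon^2 t^4$, and using $\eta = 1 - \varepsilon t^2$, the principal part assembles into $\frac{1}{2} f \eta^{-5}(\eta^2 + 7\eta\varepsilon t^2 + 8\varepsilon^2 t^4) = \frac{1}{2}\bar f \eta^{-4}(1 + 5\varepsilon t^2 + 2\varepsilon^2 t^4)$, exactly as claimed.

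The main obstacle lies in verifying that the remaining terms, namely those involving $\nabla^\alpha \eta \nabla_\alpha \eta$, $\nabla_{\alpha\beta} f \nabla^\alpha \eta \nabla^\beta \eta$, $\nabla_{\alpha\beta}\eta \nabla^\alpha f \nabla^\beta \eta$, and $\nabla_{\alpha\beta}\eta \nabla^\alpha \eta \nabla^\beta \eta$, assemble into an $\mc{E}_2$ with $|\mc{E}_2| \leq \mc{C}\varepsilon^2 t^2 f$. For each such contribution I would first extract the explicit $\varepsilon$-powers carried by each $\nabla \eta$ (one factor $\varepsilon t$ apiece) and each $\nabla^2 \eta$ (one factor $\varepsilon$), and then bound the remaining tensor contractions by writing $\nabla^2 f = \frac{1}{2} g + q$ with $q$ controlled by Proposition \ref{thm.hf_q_est}, and using the bounds on $\nabla t$ and $\nabla^2 t$ implicit in Propositions \ref{thm.hf_t_est} and \ref{thm.hf_td_est}. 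The smallness $\varepsilon r_0^2 = \varepsilon_0 \ll 1$ absorbs any surplus $\varepsilon t^2$ or $f$ factors arising from higher-order cross terms, and no $\rho^{-1}$ singularities survive in the final bound because every contraction with $\nabla f$ points radially along $E_\rho$ up to controlled corrections.
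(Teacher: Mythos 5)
Your proposal is correct and follows essentially the same route as the paper's proof: expand $\nabla\bar f$ and $\nabla^2\bar f$ via the Leibniz rule, apply the Gauss lemma identities $\nabla^\alpha f\nabla_\alpha f = f$, $\nabla^\mu f\nabla_{\mu\alpha} f = \tfrac12\nabla_\alpha f$, and $\nabla^\alpha f\nabla_\alpha\eta = -\varepsilon t^2$, assemble the principal part by hand, and control the remainder with the frame-component estimates for $\nabla\eta$, $\nabla^2\eta$, and $q$. One small caveat: the naive counting ``$\nabla\eta\sim\varepsilon t$, $\nabla^2\eta\sim\varepsilon$'' alone does not yield $\mc{O}(\varepsilon^2 t^2 f)$ for the $\nabla^2\eta\cdot\nabla f\cdot\nabla\eta$ term, because of the $r^2/\rho^2$ factors appearing when indices are raised with $g$; the required extra factor of $t$ and the cancellation of $\rho^{-2}$ come precisely from $\nabla^\alpha f\nabla_\alpha t = t/2$ and $t^2 - r^2 = -\rho^2$, which your final remark about radial contractions correctly identifies but which must be carried out explicitly in the frame, as the paper does via the $\nabla_{\rho\cdot}\eta$ components.
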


\begin{proof}
First, expanding \eqref{eq.pc_fb} and applying \eqref{eq.hf_gauss}, we have that
\begin{align}
\label{eql.misc_gauss_1} \nabla^\alpha \bar{f} \nabla_\alpha \bar{f} &= \eta^{-2} \nabla^\alpha f \nabla_\alpha f - 2 f \eta^{-3} \nabla^\alpha \eta \nabla_\alpha f + f^2 \eta^{-4} \nabla^\alpha \eta \nabla_\alpha \eta \\
\notag &= f \eta^{-4} ( \eta^2 - 2 \eta \nabla^\alpha \eta \nabla_\alpha f + f \nabla^\alpha \eta \nabla_\alpha \eta ) \text{.}
\end{align}
In addition, notice from \eqref{eq.hf_gauss}, \eqref{eq.hf_E_rho}, and \eqref{eq.pc_eta_est} that
\begin{equation}
\label{eql.misc_gauss_2} \nabla^\alpha \eta \nabla_\alpha f = \frac{ r^2 }{ \rho^2 } \nabla_\rho \eta \nabla_\rho f = - \varepsilon t^2 \text{.}
\end{equation}
Similarly, by \eqref{eq.hf_E} and \eqref{eq.pc_eta_est}, we have
\begin{align}
\label{eql.misc_gauss_3} \nabla^\alpha \eta \nabla_\alpha \eta &= \frac{ r^2 }{ \rho^2 } ( \nabla_\rho \eta )^2 - \frac{ r^2 }{ \rho^2 } ( \nabla_\theta \eta )^2 + \sum_A ( \nabla_A \eta )^2 \\
\notag &= \frac{ 4 \varepsilon^2 t^4 }{ \rho^2 } - \frac{ 4 \varepsilon^2 t^2 r^2 }{ \rho^2 } + \mc{O} \left( \frac{ \mc{C}_0 }{n} \frac{ r^2 }{ r_0^2 } \cdot \varepsilon^2 t^2 \right) \\
\notag &= - 4 \varepsilon^2 t^2 + \mc{O} \left( \frac{ \mc{C}_0 }{n} \frac{ r^2 }{ r_0^2 } \cdot \varepsilon^2 t^2 \right) \text{.}
\end{align}
Combining \eqref{eql.misc_gauss_1}--\eqref{eql.misc_gauss_3} yields
\begin{align}
\label{eql.misc_gauss_4} \nabla^\alpha \bar{f} \nabla_\alpha \bar{f} &= f \eta^{-4} \left[ \eta^2 + 2 \eta \cdot \varepsilon t^2 - 4 f \varepsilon^2 t^2 + \mc{O} \left( \frac{ \mc{C}_0 }{n} \frac{ \rho^2 r^2 }{ r_0^2 } \cdot \varepsilon^2 t^2 \right) \right] \\
\notag &= f \eta^{-4} [ \eta ( 1 + \varepsilon t^2 ) + \mc{O} ( \varepsilon^2 t^2 \rho^2 ) ] \text{,}
\end{align}
and the first parts of \eqref{eq.misc_gauss} and \eqref{eq.misc_gauss_error} follow.

Next, we expand $\nabla^2 \bar{f}$ as
\begin{align*}
\nabla_{ \alpha \beta } \bar{f} &= \nabla_\alpha ( \eta^{-1} \nabla_\beta f ) - \nabla_\alpha ( f \eta^{-2} \nabla_\beta \eta ) \\
&= \eta^{-1} \nabla_{ \alpha \beta } f - \eta^{-2} \nabla_\alpha \eta \nabla_\beta f - \eta^{-2} \nabla_\alpha f \nabla_\beta \eta \\
&\qquad + 2 f \eta^{-3} \nabla_\alpha \eta \nabla_\beta \eta - f \eta^{-2} \nabla_{ \alpha \beta } \eta \text{.}
\end{align*}
The above then implies
\begin{align}
\label{eql.misc_gauss_10} \nabla_{ \alpha \beta } \bar{f} \nabla^\alpha \bar{f} \nabla^\beta \bar{f} &= ( \eta^{-1} \nabla_{ \alpha \beta } f - 2 \eta^{-2} \nabla_\alpha \eta \nabla_\beta f + 2 f \eta^{-3} \nabla_\alpha \eta \nabla_\beta \eta - f \eta^{-2} \nabla_{ \alpha \beta } \eta ) \\
\notag &\qquad \cdot ( \eta^{-1} \nabla^\alpha f - f \eta^{-2} \nabla^\alpha \eta ) ( \eta^{-1} \nabla^\beta f - f \eta^{-2} \nabla^\beta \eta ) \\
\notag &= \eta^{-3} \nabla_{ \alpha \beta } f \nabla^\alpha f \nabla^\beta f - 2 f \eta^{-4} \nabla_{ \alpha \beta } f \nabla^\alpha f \nabla^\beta \eta + f^2 \eta^{-5} \nabla_{ \alpha \beta } f \nabla^\alpha \eta \nabla^\beta \eta \\
\notag &\qquad - 2 \eta^{-4} \nabla_\alpha \eta \nabla^\alpha f \nabla_\beta f \nabla^\beta f + 2 f \eta^{-5} \nabla_\alpha \eta \nabla^\alpha \eta \nabla_\beta f \nabla^\beta f \\
\notag &\qquad + 2 f \eta^{-5} \nabla_\alpha \eta \nabla^\alpha f \nabla_\beta f \nabla^\beta \eta - 2 f^2 \eta^{-6} \nabla_\alpha \eta \nabla^\alpha \eta \nabla_\beta f \nabla^\beta \eta \\
\notag &\qquad + 2 f \eta^{-5} \nabla_\alpha \eta \nabla^\alpha f \nabla_\beta \eta \nabla^\beta f - 4 f^2 \eta^{-6} \nabla_\alpha \eta \nabla^\alpha f \nabla_\beta \eta \nabla^\beta \eta \\
\notag &\qquad + 2 f^3 \eta^{-7} \nabla_\alpha \eta \nabla^\alpha \eta \nabla_\beta \eta \nabla^\beta \eta - f \eta^{-4} \nabla_{ \alpha \beta } \eta \nabla^\alpha f \nabla^\beta f \\
\notag &\qquad + 2 f^2 \eta^{-5} \nabla_{ \alpha \beta } \eta \nabla^\alpha f \nabla^\beta \eta - f^3 \eta^{-6} \nabla_{ \alpha \beta } \eta \nabla^\alpha \eta \nabla^\beta \eta \\
\notag &= I_1 + \dots + I_{13} \text{.}
\end{align}
Notice that \eqref{eq.hf_gauss_2} immediately implies
\begin{equation}
\label{eql.misc_gauss_11} I_1 = \frac{1}{2} f \eta^{-3} = \frac{1}{2} f \eta^{-7} \cdot \eta^4 \text{.}
\end{equation}
Next, using \eqref{eq.hf_gauss_2}, \eqref{eq.hf_q}, \eqref{eq.hf_q_est}, \eqref{eq.pc_eta_est}, \eqref{eql.misc_gauss_2}, and \eqref{eql.misc_gauss_3}, we also obtain
\begin{align}
\label{eql.misc_gauss_12} I_2 + I_4 &= - f \eta^{-4} \nabla_\alpha f \nabla^\alpha \eta - 2 f \eta^{-4} \nabla_\alpha f \nabla^\alpha \eta \\
\notag &= \frac{1}{2} f \eta^{-7} \cdot ( 6 \eta^3 \cdot \varepsilon t^2 ) \text{,} \\
\notag I_3 + I_5 &= f^2 \eta^{-5} \left[ \frac{1}{2} \nabla_\alpha \eta \nabla^\alpha \eta + \mc{O} \left( \frac{ \mc{C} \mc{C}_0 }{n} \frac{ r^2 }{ r_0^2 } \cdot \varepsilon^2 t^2 \right) \right] + 2 f^2 \eta^{-5} \nabla_\alpha \eta \nabla^\alpha \eta \\
\notag &= \frac{1}{2} f \eta^{-7} \cdot \mc{O} ( \varepsilon^2 t^2 \rho^2 ) \text{,} \\
\notag I_6 + I_8 &= f \eta^{-5} ( 4 \nabla^\alpha f \nabla_\alpha \eta \nabla^\beta f \nabla_\beta \eta ) \\
\notag &= \frac{1}{2} f \eta^{-7} ( 8 \eta^2 \cdot \varepsilon^2 t^4 ) \text{,} \\
\notag I_7 + I_9 &= - 6 f^2 \eta^{-6} \nabla^\alpha f \nabla_\alpha \eta \nabla^\beta \eta \nabla_\beta \eta \\
\notag &= \frac{1}{2} f \eta^{-7} \cdot \mc{O} ( \varepsilon^2 t^2 \rho^2 ) \text{,} \\
\notag I_{10} &= \frac{1}{2} f \eta^{-7} \cdot \mc{O} ( \varepsilon^2 t^2 \rho^2 ) \text{.}
\end{align}
Similarly, using the same observations, along with \eqref{eq.hf_gauss}, \eqref{eq.hf_E_rho}, and Proposition \ref{thm.pc_eta_est}, we have
\begin{align}
\label{eql.misc_gauss_13} I_{11} &= - f \eta^{-4} \cdot \frac{ r^2 }{4} \nabla_{ \rho \rho } \eta \\
\notag &= \frac{1}{2} f \eta^{-7} ( \eta^3 \cdot \varepsilon t^2 ) \\
\notag I_{12} &= 2 f^2 \eta^{-5} \cdot \frac{r}{2} \nabla_{ \rho \beta } \eta \nabla^\beta \eta \\
\notag &= f^2 \eta^{-5} r \left( \frac{ r^2 }{ \rho^2 } \nabla_{ \rho \rho } \eta \nabla_\rho \eta - \frac{ r^2 }{ \rho^2 } \nabla_{ \rho \theta } \eta \nabla_\theta \eta + \sum_A \nabla_{ \rho A } \eta \nabla_A \eta \right) \\
\notag &= f^2 \eta^{-5} r \left[ \frac{ r^2 }{ \rho^2 } \frac{ 2 \varepsilon t^2 }{ r^2 } \frac{ 2 \varepsilon t^2 }{r} - \frac{ r^2 }{ \rho^2 } \frac{ 2 \varepsilon t }{r} 2 \varepsilon t + \mc{O} \left( \frac{ \varepsilon^2 t^2 }{ r } \right) \right] \\
\notag &= \frac{1}{2} f \eta^{-7} \cdot \mc{O} ( \varepsilon^2 t^2 \rho^2 ) \\
\notag I_{13} &= f^3 \eta^{-6} \left[ \frac{ r^4 }{ \rho^4 } \frac{ 2 \varepsilon t^2 }{ r^2 } \frac{ 2 \varepsilon t^2 }{r} \frac{ 2 \varepsilon t^2 }{r} - \frac{ 2 r^4 }{ \rho^4 } \frac{ 2 \varepsilon t }{r} \frac{ 2 \varepsilon t^2 }{r} 2 \varepsilon t + \frac{ r^2 }{ \rho^4 } 2 \varepsilon 2 \varepsilon t 2 \varepsilon t + \mc{O} \left( \frac{ \varepsilon^3 t^2 r^2 }{ \rho^2 } \right) \right] \\
\notag &= \frac{1}{2} f \eta^{-6} [ \varepsilon^3 t^2 \rho^4 + \mc{O} ( \varepsilon^3 t^2 r^2 \rho^2 ) ] \\
\notag &= \frac{1}{2} f \eta^{-7} \cdot \mc{O} ( \varepsilon^2 t^2 \rho^2 ) \text{.}
\end{align}
Thus, combining \eqref{eql.misc_gauss_10}--\eqref{eql.misc_gauss_13} yields
\begin{align}
\label{eql.misc_gauss_14} \nabla_{ \alpha \beta } \bar{f} \nabla^\alpha \bar{f} \nabla^\beta \bar{f} &= \frac{1}{2} f \eta^{-7} [ \eta^4 + 7 \eta^3 \cdot \varepsilon t^2 + 8 \eta^2 \cdot \varepsilon^2 t^4 + \mc{O} ( \varepsilon^2 t^2 \rho^2 ) ] \\
\notag &= \frac{1}{2} f \eta^{-5} [ 1 + 5 \varepsilon t^2 + 2 \varepsilon^2 t^4 + \mc{O} ( \varepsilon^2 t^2 \rho^2 ) ] \text{,}
\end{align}
from which we obtain the second part of \eqref{eq.misc_gauss}.
\end{proof}

For our subsequent computations, it will be useful complete the frame corresponding to the vector fields $\bar{E}_\theta, \bar{E}_A$. For this purpose, we will introduce a vector field $\bar{E}_\rho$. Note that, due to \eqref{eq.hf_rho} and \eqref{eq.hf_E_rho}, we get the following
\begin{align}
f = \frac{\rho^2}{4}, & \quad E_\rho = \frac{\rho}{r} \partial_\rho = \partial_r + \frac{t}{r} \partial_t, \\
E_\rho f &= \frac{2f}{r} = \frac{\rho^2}{2r}. \notag
\end{align}
\begin{definition}
Define the vector field $ \bar{E}_\rho $ as follows
\begin{equation} \label{eq_def_barrho}
\bar{E}_\rho := \frac{2}{r} \nb^\sharp \bar{f}.
\end{equation}
\end{definition}
\begin{proposition} \label{prop_barrho}
We have the following representation for $ \bar{E}_\rho $
\begin{equation} \label{eq_barrho1}
\bar{E}_\rho = \eta^{-2} E_\rho + \eta^{-2} \frac{r E_\theta \eta}{2} E_\theta - \eta^{-2} \frac{\rho^2}{r^2} \sum_A \frac{r E_A \eta}{2} E_A. 
\end{equation}
Moreover, we have
\begin{equation} \label{eq_barrho2}
\bar{E}_\rho = \eta^{-2} [1 + \mc{O}(\varepsilon^2 t^2 r^2)] E_\rho + \eta^{-2} \frac{r E_\theta \eta}{2} \bar{E}_\theta  - \eta^{-2} \frac{\rho^2}{r^2} \sum_A \frac{r E_A \eta}{2} \bar{E}_A.
\end{equation}
In particular,
\begin{equation} \label{eq_del_rbrf}
\bar{E}_\rho \bar{f} = \frac{2 f}{r} \eta^{-4} [1 + \mc{O}(\varepsilon t^2)].
\end{equation}
\end{proposition}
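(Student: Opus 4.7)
The plan is to compute $\bar{E}_\rho$ by directly expanding $\nabla^\sharp \bar{f}$ via the product rule on $\bar{f} = f \eta^{-1}$, then decompose the result in the orthogonal frame $\{E_\rho, E_\theta, E_A\}$ using the metric components from Proposition \ref{thm.hf_E}. Concretely, starting from
\[
\nabla^\sharp \bar{f} = \eta^{-1} \nabla^\sharp f - f \eta^{-2} \nabla^\sharp \eta,
\]
I would use $\nabla^\sharp f = \tfrac{r}{2} E_\rho$ (from $E_\rho = \tfrac{2}{r}\nabla^\sharp f$) and write the metric dual of $d\eta$ as
\[
\nabla^\sharp \eta = \frac{r^2}{\rho^2} (E_\rho \eta)\, E_\rho - \frac{r^2}{\rho^2} (E_\theta \eta)\, E_\theta + \sum_A (E_A \eta)\, E_A,
\]
multiplying by $2/r$ to get $\bar{E}_\rho$. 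Using $f = \rho^2/4$, the coefficient $\tfrac{2f}{r}\cdot \tfrac{r^2}{\rho^2}$ simplifies to $r/2$, and the $E_A$-coefficients collect a factor $\rho^2/r^2$. This gives
\[
\bar{E}_\rho = \left[ \eta^{-1} - \eta^{-2}\frac{r E_\rho \eta}{2} \right] E_\rho + \eta^{-2} \frac{r E_\theta \eta}{2}\, E_\theta - \eta^{-2} \frac{\rho^2}{r^2} \sum_A \frac{r E_A \eta}{2}\, E_A.
\]

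The key algebraic point is that the bracketed $E_\rho$-coefficient collapses to exactly $\eta^{-2}$. I would derive this using $E_\rho t = t/r$ (equation \eqref{eql.pc_eta_est_10}), so that $E_\rho \eta = -2\varepsilon t \cdot (t/r) = -2\varepsilon t^2/r$, giving $\tfrac{r}{2} E_\rho \eta = -\varepsilon t^2 = \eta - 1$, whence
\[
\eta^{-1} - \eta^{-2}(\eta - 1) = \eta^{-2}.
\]
This is the only nontrivial simplification and produces \eqref{eq_barrho1} cleanly, without any error terms. It is also the identity that I expect to be the main (and only) obstacle — the statement claims an exact equality on the $E_\rho$-coefficient, and one must verify that no remainder appears.

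For \eqref{eq_barrho2}, I would substitute the inverses of the defining relations \eqref{eq.pc_Eb}, namely $E_\theta = \bar{E}_\theta - \tfrac{r}{2} E_\theta \eta \cdot E_\rho$ and $E_A = \bar{E}_A - \tfrac{r}{2} E_A \eta \cdot E_\rho$, into \eqref{eq_barrho1}. The resulting $\bar{E}_\theta$ and $\bar{E}_A$ coefficients match the stated form by inspection, and the extra contributions to the $E_\rho$-coefficient are
\[
-\eta^{-2}\frac{r^2}{4}(E_\theta \eta)^2 + \eta^{-2} \frac{\rho^2}{r^2}\sum_A \frac{r^2}{4}(E_A \eta)^2.
\]
Using Proposition \ref{thm.pc_eta_est} to bound $E_\theta \eta = -2\varepsilon t + \mc{O}(\cdot)$ and $E_A\eta = \mc{O}(\cdot)$, both contributions are of order $\varepsilon^2 t^2 r^2$, yielding the $\eta^{-2}[1 + \mc{O}(\varepsilon^2 t^2 r^2)]$ coefficient.

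Finally, for \eqref{eq_del_rbrf}, I would argue directly from the definition: $\bar{E}_\rho \bar{f} = \tfrac{2}{r} \nabla^\alpha \bar{f} \nabla_\alpha \bar{f}$, so that the first identity in Proposition \ref{thm.misc_gauss} gives
\[
\bar{E}_\rho \bar{f} = \frac{2\bar{f}}{r}\eta^{-2}(1 + \varepsilon t^2 + \mc{E}_1) = \frac{2f}{r}\eta^{-3}(1 + \varepsilon t^2 + \mc{E}_1).
\]
Rewriting $\eta^{-3} = \eta^{-4}\cdot\eta = \eta^{-4}(1 - \varepsilon t^2)$ and observing that $(1-\varepsilon t^2)(1 + \varepsilon t^2 + \mc{E}_1) = 1 + \mc{O}(\varepsilon t^2)$ (where the error absorbs $\mc{E}_1 = \mc{O}(\varepsilon^2 t^2 f) = \mc{O}(\varepsilon t^2)$ since $\varepsilon f \lesssim \varepsilon_0 \ll 1$ on $\mc{D}_{r_0}$) produces the stated form.
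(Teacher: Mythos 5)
Your proof is correct and for \eqref{eq_barrho1} and \eqref{eq_barrho2} it is essentially the paper's own computation: expand $\nabla^\sharp \bar f$ by the product rule, decompose $\nabla^\sharp\eta$ in the orthogonal frame using \eqref{eq.hf_E}, collapse the $E_\rho$-coefficient via $\tfrac{r}{2}E_\rho\eta=\eta-1$, and then substitute $E_\theta=\bar E_\theta-\tfrac{r}{2}E_\theta\eta\,E_\rho$, $E_A=\bar E_A-\tfrac{r}{2}E_A\eta\,E_\rho$. The only (minor) divergence is in \eqref{eq_del_rbrf}: you invoke the shifted Gauss lemma $\nabla^\alpha\bar f\nabla_\alpha\bar f=\bar f\eta^{-2}(1+\varepsilon t^2+\mc{E}_1)$ directly, whereas the paper instead applies the just-established \eqref{eq_barrho2} to $\bar f$, using the $\bar f$-tangency of $\bar E_\theta$, $\bar E_A$ so that only the $E_\rho\bar f$ term survives, and then computes $E_\rho\bar f=\tfrac{2f}{r}\eta^{-2}$ exactly; both routes produce the same $\mc{O}(\varepsilon t^2)$ remainder since $\varepsilon f,\varepsilon r^2\lesssim\varepsilon_0\ll1$ on $\mc{D}_{r_0}$.
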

\begin{proof}
We compute as follows
\begin{align*}
\bar{E}_\rho & = \frac{2}{r} \nb^\alpha \bar{f} \nb_\alpha \\
& = \frac{2}{r} \eta^{-1} \nb^\alpha f \nb_\alpha - \frac{2}{r} \eta^{-2} f \nb^\alpha \eta \nb_\alpha \\
& = \frac{2}{r} \eta^{-1} \frac{r}{2} E_\rho - \eta^{-2} \frac{\rho^2}{2r} \left[ \frac{r^2}{\rho^2} E_\rho \eta E_\rho - \frac{r^2}{\rho^2} E_\theta \eta E_\theta + \sum_A E_A \eta E_A \right] \\
& = \eta^{-1} E_\rho - \eta^{-2} \frac{r}{2} \left( \frac{-2 \varepsilon t^2}{r} \right) E_\rho + \eta^{-2} \frac{r E_\theta \eta}{2} E_\theta - \eta^{-2} \frac{\rho^2}{r^2} \sum_A \frac{r E_A \eta}{2} E_A \\
& = \eta^{-2} E_\rho + \eta^{-2} \frac{r E_\theta \eta}{2} E_\theta - \eta^{-2} \frac{\rho^2}{r^2} \sum_A \frac{r E_A \eta}{2} E_A,
\end{align*}
which shows that \eqref{eq_barrho1} is true.
To prove \eqref{eq_barrho2}, using \eqref{eq.pc_Eb} shows
\begin{align*}
\bar{E}_\rho & = \eta^{-2} E_\rho + \eta^{-2} \frac{r E_\theta \eta}{2} \bar{E}_\theta - \eta^{-2} \left( \frac{r E_\theta \eta}{2} \right)^2 E_\rho - \eta^{-2} \frac{\rho^2}{r^2} \sum_A \frac{r E_A \eta}{2} \bar{E}_A \\
& \qquad + \eta^{-2} \frac{\rho^2}{r^2} \sum_A \left( \frac{r E_A \eta}{2} \right)^2 E_\rho \\
& = \eta^{-2} \left[ 1 - \left( \frac{r E_\theta \eta}{2} \right)^2 + \frac{\rho^2}{r^2} \sum_A \left( \frac{r E_A \eta}{2} \right)^2 \right] E_\rho + \eta^{-2} \frac{r E_\theta \eta}{2} \bar{E}_\theta  - \eta^{-2} \frac{\rho^2}{r^2} \sum_A \frac{r E_A \eta}{2} \bar{E}_A \\
& = \eta^{-2} [1 + \mc{O}(\varepsilon^2 t^2 r^2)] E_\rho + \eta^{-2} \frac{r E_\theta \eta}{2} \bar{E}_\theta  - \eta^{-2} \frac{\rho^2}{r^2} \sum_A \frac{r E_A \eta}{2} \bar{E}_A,
\end{align*}
which completes the proof of \eqref{eq_barrho2}.

Next, since $\bar{E}_\theta$ and $\bar{E}_A$ are tangent to $\bar{f}$, \eqref{eq_barrho2} implies that
\begin{align*}
\bar{E}_\rho \bar{f} = \eta^{-2} [1 + \mc{O}(\varepsilon^2 t^2 r^2)] E_\rho \bar{f} =  \eta^{-2} [1 + \mc{O}(\varepsilon t^2)] \frac{2 f}{r} \eta^{-2}.
\end{align*}
This completes the proof of the proposition.
\end{proof}

Now we will compute the metric components with respect to the frames $\bar{E}_\alpha$. In particular, we will see that the frames $\bar{E}_\alpha$'s are not mutually orthogonal. 
\begin{proposition} \label{prop_gcomp}
We have the following
\begin{align} \label{eq_gcomp}
g (\bar{E}_\rho, \bar{E}_\rho) = \frac{\rho^2}{r^2} \cdot \eta^{-3} [1+ \mc{O} (\varepsilon t^2) ], & \qquad g(\bar{E}_\theta,\bar{E}_\theta) = \left[ -1 + \left(\frac{r E_\theta \eta}{2}\right)^2 \right] \frac{\rho^2}{r^2},\\ 
g(\bar{E}_\theta, \bar{E}_A) = \left( \frac{r E_\theta \eta}{2} \cdot \frac{r E_A \eta}{2} \right) \frac{\rho^2}{r^2}, & \qquad g(\bar{E}_A,\bar{E}_B) = \delta_{AB} + \left( \frac{r E_A \eta}{2} \cdot \frac{r E_B \eta}{2} \right) \frac{\rho^2}{r^2}. \notag
\end{align}
Also,
\begin{equation} \label{eq_gcomp_00}
g (\bar{E}_\rho, \bar{E}_\theta) = g (\bar{E}_\rho, \bar{E}_A) = 0.
\end{equation}
\end{proposition}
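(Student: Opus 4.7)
The plan is to compute each metric component directly using the expansions already established for the barred frames, together with the known orthogonality relations \eqref{eq.hf_E} for the unbarred frame $\{E_\rho, E_\theta, E_A\}$ and the Shifted Gauss Lemma (Proposition~\ref{thm.misc_gauss}). The computations split naturally into two types: the three components $g(\bar{E}_\theta,\bar{E}_\theta)$, $g(\bar{E}_\theta,\bar{E}_A)$, $g(\bar{E}_A,\bar{E}_B)$, which follow from bilinear expansion using \eqref{eq.pc_Eb}, and the components involving $\bar{E}_\rho$, which use the fact that $\bar{E}_\rho$ is proportional to $\nabla^\sharp \bar{f}$.

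First, recall from \eqref{eq.pc_Eb} that $\bar{E}_\theta = E_\theta + \tfrac{r}{2} E_\theta\eta \cdot E_\rho$ and $\bar{E}_A = E_A + \tfrac{r}{2} E_A\eta \cdot E_\rho$. Since $E_\rho, E_\theta, E_A$ are mutually $g$-orthogonal by Proposition~\ref{thm.hf_E}, bilinear expansion immediately gives
\[
g(\bar{E}_\theta,\bar{E}_\theta) = g(E_\theta,E_\theta) + \left(\tfrac{rE_\theta\eta}{2}\right)^2 g(E_\rho,E_\rho) = \left[-1 + \left(\tfrac{rE_\theta\eta}{2}\right)^2\right]\tfrac{\rho^2}{r^2},
\]
using \eqref{eq.hf_E}. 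The formulas for $g(\bar{E}_\theta,\bar{E}_A)$ and $g(\bar{E}_A,\bar{E}_B)$ follow by the same mechanical expansion, with all cross terms vanishing by orthogonality of the unbarred frame.

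For the orthogonality relations in \eqref{eq_gcomp_00}, I would use the definition $\bar{E}_\rho = \tfrac{2}{r}\nabla^\sharp \bar{f}$ from \eqref{eq_def_barrho}. Since $\bar{E}_\theta, \bar{E}_A$ are $\bar{f}$-tangent (by Proposition~\ref{thm.pc_P}, as they are images of $f$-tangent frames under $P$), we have
\[
g(\bar{E}_\rho, \bar{E}_\theta) = \tfrac{2}{r}\bar{E}_\theta\bar{f} = 0, \qquad g(\bar{E}_\rho, \bar{E}_A) = \tfrac{2}{r}\bar{E}_A\bar{f} = 0.
\]

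Finally, for $g(\bar{E}_\rho,\bar{E}_\rho)$, I compute
\[
g(\bar{E}_\rho,\bar{E}_\rho) = \tfrac{4}{r^2}\nabla^\alpha\bar{f}\nabla_\alpha\bar{f} = \tfrac{4 f}{r^2}\eta^{-2}(1 + \varepsilon t^2 + \mc{E}_1) = \tfrac{\rho^2}{r^2}\eta^{-3}\left[\eta(1+\varepsilon t^2) + \eta\mc{E}_1\right],
\]
by the first part of Proposition~\ref{thm.misc_gauss} and $\rho^2 = 4f$. Since $\eta = 1 - \varepsilon t^2$, the bracketed factor equals $1 + \mc{O}(\varepsilon^2 t^4) + \mc{O}(\varepsilon^2 t^2 f)$; bounding these errors using $\varepsilon t^2 \leq 1$ and $f \leq r^2/4 \lesssim r_0^2$ collapses both into $\mc{O}(\varepsilon t^2)$, yielding the stated expression. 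No step here is a serious obstacle; the only care required is in bookkeeping the error terms in the $\bar{E}_\rho$ self-pairing, for which the Shifted Gauss Lemma does the heavy lifting.
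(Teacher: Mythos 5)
Your approach is sound and nearly identical to the paper's for four of the five displayed formulas: the $\theta\theta$, $\theta A$, and $AB$ components are handled by the same bilinear expansion of \eqref{eq.pc_Eb} using the orthogonality of $\{E_\rho, E_\theta, E_A\}$, and the vanishing of $g(\bar{E}_\rho,\bar{E}_\theta)$ and $g(\bar{E}_\rho,\bar{E}_A)$ follows from $\bar{f}$-tangency exactly as you say. For $g(\bar{E}_\rho,\bar{E}_\rho)$ you take a genuinely different and arguably cleaner route: rather than expanding $\bar{E}_\rho = \eta^{-1}E_\rho - \tfrac{2}{r}\eta^{-2}f\nabla^\sharp\eta$ and bounding $\eta^{-1}f\,\nabla^\alpha\eta\nabla_\alpha\eta$ term-by-term as the paper does, you invoke the Shifted Gauss Lemma (Proposition~\ref{thm.misc_gauss}) directly. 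This is more modular and avoids re-deriving estimates that are already packaged in that lemma.

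However, there is a small but real arithmetic slip in the $\rho\rho$ computation. Proposition~\ref{thm.misc_gauss} gives $\nabla^\alpha\bar{f}\nabla_\alpha\bar{f} = \bar{f}\,\eta^{-2}(1+\varepsilon t^2 + \mc{E}_1)$, and since $\bar{f} = f\eta^{-1}$ this equals $f\eta^{-3}(1+\varepsilon t^2 + \mc{E}_1)$; you instead wrote $f\eta^{-2}(1+\varepsilon t^2 + \mc{E}_1)$, dropping one power of $\eta^{-1}$. The correct chain is
\[
g(\bar{E}_\rho,\bar{E}_\rho) = \frac{4}{r^2}\nabla^\alpha\bar{f}\nabla_\alpha\bar{f} = \frac{4f}{r^2}\,\eta^{-3}\bigl(1+\varepsilon t^2 + \mc{E}_1\bigr) = \frac{\rho^2}{r^2}\,\eta^{-3}\bigl[1+\mc{O}(\varepsilon t^2)\bigr],
\]
with no need for the $\eta(1+\varepsilon t^2) = 1-\varepsilon^2 t^4$ cancellation you perform. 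As a side effect of the slip, you claim the error is $\mc{O}(\varepsilon^2 t^4)+\mc{O}(\varepsilon^2 t^2 f)$, which overstates the precision---the true leading correction is the $\varepsilon t^2$ term, and only $\mc{E}_1$ carries the smaller $\mc{O}(\varepsilon^2 t^2 f)$ bound. Since both your bracket and the correct one reduce to $1+\mc{O}(\varepsilon t^2)$, the stated conclusion is unaffected, but the intermediate step should be corrected.
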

\begin{proof}
For the first equation, using \eqref{eq_def_barrho} shows that
\begin{align*}
\bar{E}_\rho =  \eta^{-1} E_\rho - \frac{2}{r} \eta^{-2} f \nb^\alpha \eta \nb_\alpha.
\end{align*}
Hence,
\begin{align*}
g (\bar{E}_\rho, \bar{E}_\rho) & = g (\eta^{-1} E_\rho - \frac{2}{r} \eta^{-2} f \nb^\sharp \eta,\eta^{-1} E_\rho - \frac{2}{r} \eta^{-2} f \nb^\sharp \eta) \\
& = \eta^{-2} g(E_\rho,E_\rho) - \frac{2}{r} \eta^{-3} f E_\rho \eta + \frac{4}{r^2} \eta^{-4} f^2 \nb^\alpha \eta \nb_\alpha \eta \\
& = \eta^{-2} \frac{\rho^2}{r^2} + \eta^{-3} \varepsilon t^2 \frac{\rho^2}{r^2} +  \eta^{-4} \frac{\rho^2}{r^2} f \nb^\alpha \eta \nb_\alpha \eta \\
& = \eta^{-3} \frac{\rho^2}{r^2} [1+ \eta^{-1} f \nb^\alpha \eta \nb_\alpha \eta ],
\end{align*}
which completes the proof of the first equation, since $\eta^{-1} f \nb^\alpha \eta \nb_\alpha \eta = \mc{O} (\varepsilon t^2)$ (similar ideas were used in \eqref{eql.misc_gauss_3} and \eqref{eql.misc_gauss_4}).

For the second equation, we have
\begin{align*}
g(\bar{E}_\theta,\bar{E}_\theta) & = g(E_\theta, E_\theta) + \frac{r}{2} E_\theta \eta \cdot g(E_\theta, E_\rho) + \frac{r^2}{4} (E_\theta \eta)^2 g(E_\rho, E_\rho) \\
& = -\frac{\rho^2}{r^2} + \frac{r^2}{4} (E_\theta \eta)^2 \cdot \frac{\rho^2}{r^2} \\
& = \left[ -1 + \left(\frac{r E_\theta \eta}{2}\right)^2 \right] \frac{\rho^2}{r^2}.
\end{align*}
Similarly, we can prove the remaining two equations in \eqref{eq_gcomp}. Finally, \eqref{eq_gcomp_00} is a consequence of the fact that $\bar{E}_\theta$ and $\bar{E}_A$ are $\bar{f}$-tangent.
\end{proof}

The next result is a nice consequence of the Gauss Lemma, and it will help us to estimate the components of $\nb^2 \bar{f}$.
\begin{proposition} \label{prop_del2f}
The following is satisfied
\begin{equation} \label{eq_del2f}
\nabla_{\rho \alpha} f = \frac{1}{r} \nabla_\alpha f .
\end{equation}
\end{proposition}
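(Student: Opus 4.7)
The plan is to deduce this identity directly from the Gauss lemma content already packaged in Proposition \ref{thm.hf_gauss}, using the explicit formula for $E_\rho$ in terms of $\nabla^\sharp f$.

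First, I would recall from the first part of \eqref{eq.hf_gauss} that $\nabla^\sharp f = \tfrac{1}{2}(t\,\partial_t + r\,\partial_r)$, and from \eqref{eq.hf_E_rho} that $E_\rho = \partial_r + \tfrac{t}{r}\partial_t$. Combining these gives the key algebraic relation
\[
\nabla^\sharp f = \frac{r}{2}\, E_\rho, \qquad \text{equivalently} \qquad E_\rho^{\mu} = \frac{2}{r}\, \nabla^{\mu} f.
\]
This identity is the whole content: $E_\rho$ is just a rescaling of the gradient of $f$.

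Then I would compute
\[
\nabla_{\rho\alpha} f \;=\; E_\rho^{\mu}\, \nabla_{\mu\alpha} f \;=\; \frac{2}{r}\, \nabla^{\mu} f\, \nabla_{\mu\alpha} f,
\]
and invoke the first transport identity of \eqref{eq.hf_gauss_2}, namely $\nabla^{\mu} f\, \nabla_{\mu\alpha} f = \tfrac{1}{2}\nabla_\alpha f$, to obtain
\[
\nabla_{\rho\alpha} f \;=\; \frac{2}{r}\cdot \frac{1}{2}\,\nabla_\alpha f \;=\; \frac{1}{r}\,\nabla_\alpha f,
\]
which is exactly \eqref{eq_del2f}. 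There is no real obstacle: the statement is essentially a reformulation of the fact that $\nabla^\sharp f$ is radial and of unit speed with respect to $r$, so the ``work'' is already done in Proposition \ref{thm.hf_gauss}; the only thing to check is the bookkeeping factor $\tfrac{2}{r}$ coming from $E_\rho = \tfrac{2}{r}\nabla^\sharp f$.
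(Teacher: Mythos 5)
Your proof is correct and takes the same route as the paper: rewrite $E_\rho = \tfrac{2}{r}\nabla^\sharp f$ via \eqref{eq.hf_gauss} and \eqref{eq.hf_E_rho}, then apply the first transport identity $\nabla^\mu f\,\nabla_{\mu\alpha}f = \tfrac{1}{2}\nabla_\alpha f$ from \eqref{eq.hf_gauss_2}. No gaps.
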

\begin{proof}
Due to \eqref{eq.hf_gauss} and \eqref{eq.hf_gauss_2}, we have
\[
\nabla_{\rho \alpha} f = \frac{2}{r} \nb^\mu f \nb_{\mu \alpha} f = \frac{1}{r} \nb_\alpha f. \qedhere 
\]
\end{proof}

\begin{proposition} \label{prop_del2_fbar}
The components of $ \nb^2 \bar{f} $ satisfy the following equations
\begin{align*}
\nabla_{\rho\rho} \bar{f} & = \left( 1 + 3 \varepsilon t^2 \right)\frac{\eta^{-3}}{2} \cdot \frac{\rho^2}{r^2}, \\
\nabla_{\rho\bar{\theta}} \bar{f} & = (-r E_\theta \eta - r^2 \nabla_{\rho \theta} \eta) \frac{\eta^{-2}}{4} \cdot \frac{\rho^2}{r^2}, \numberthis \label{eq_del2_fbar} \\
\nabla_{\rho\bar{A}} \bar{f} & = (-r E_A \eta - r^2 \nabla_{\rho A} \eta) \frac{\eta^{-2}}{4} \cdot \frac{\rho^2}{r^2}.
\end{align*}
\end{proposition}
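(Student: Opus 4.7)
The plan is to expand $\nabla^2\bar{f}$ via the product rule applied to $\bar{f} = f\eta^{-1}$ and then specialize to each of the three frame pairs, using the Gauss-type identity of Proposition \ref{prop_del2f} and the explicit formulas for the $E_\rho$-derivatives of $\eta$ from Proposition \ref{thm.pc_eta_est}. Concretely, differentiating twice gives
\[
\nabla_{\alpha\beta}\bar{f} = \eta^{-1}\nabla_{\alpha\beta}f - \eta^{-2}(\nabla_\alpha\eta\,\nabla_\beta f + \nabla_\alpha f\,\nabla_\beta\eta) + 2f\eta^{-3}\nabla_\alpha\eta\,\nabla_\beta\eta - f\eta^{-2}\nabla_{\alpha\beta}\eta,
\]
which is the master identity from which all three formulas will be extracted.

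For the first identity, I specialize the above to $(\alpha,\beta) = (\rho,\rho)$. Proposition \ref{prop_del2f} applied with $\alpha = \rho$ gives $\nabla_{\rho\rho}f = \frac{1}{r}\nabla_\rho f = \frac{2f}{r^2}$, while the last identity of \eqref{eq.pc_eta_est} and the fourth identity of \eqref{eq.pc_eta_estd} give the exact values $\nabla_\rho\eta = -2\varepsilon t^2/r$ and $\nabla_{\rho\rho}\eta = -2\varepsilon t^2/r^2$. Substituting these, combined with $f = \rho^2/4$, yields
\[
\nabla_{\rho\rho}\bar{f} = \tfrac{\rho^2}{2r^2}\,\eta^{-3}\bigl[\eta^2 + 5\varepsilon t^2\eta + 4\varepsilon^2t^4\bigr],
\]
and the bracket collapses to $1 + 3\varepsilon t^2$ using $\eta = 1 - \varepsilon t^2$.

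For the second identity, I expand $\bar{E}_\theta = E_\theta + \tfrac{r}{2}E_\theta\eta\cdot E_\rho$ by \eqref{eq.pc_Eb}, and evaluate the master identity with $(\alpha,\beta) = (\rho,\bar\theta)$ in two pieces. Since $E_\theta$ is $f$-tangent by Proposition \ref{thm.hf_E}, the only nonvanishing contribution to $\bar{E}_\theta f$ comes from the $E_\rho$ component, giving $\nabla_{\bar\theta}f = fE_\theta\eta$ and (via Proposition \ref{prop_del2f}) $\nabla_{\rho\bar\theta}f = fE_\theta\eta/r$. A direct calculation using $\nabla_\rho\eta = -2\varepsilon t^2/r$ yields $\nabla_{\bar\theta}\eta = \eta\,E_\theta\eta$. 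Plugging everything in and collecting, the $E_\theta\eta$-terms reduce to
\[
-\tfrac{f E_\theta\eta}{r}\bigl(\eta^{-1} + \varepsilon t^2\eta^{-2}\bigr) - f\eta^{-2}\nabla_{\rho\theta}\eta,
\]
at which point the crucial algebraic simplification $\eta + \varepsilon t^2 = 1$ combines the first two into $-\frac{fE_\theta\eta}{r}\eta^{-2}$; pulling out a factor $\frac{\eta^{-2}}{4}\cdot\frac{\rho^2}{r^2}$ gives the stated form. The third identity, for $(\rho,\bar A)$, is verbatim the same argument with $E_A$ in place of $E_\theta$.

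The main obstacle is purely bookkeeping: several terms involving different negative powers of $\eta$ must be combined, and the final closed form only appears after repeated use of the identity $\eta + \varepsilon t^2 = 1$ to merge $\eta^{-1}$ terms with $\varepsilon t^2\eta^{-2}$ terms. Each individual step is elementary, but an organized bookkeeping, separating contributions involving $E_\theta\eta$ (or $E_A\eta$) from those involving $\nabla_{\rho\theta}\eta$ (or $\nabla_{\rho A}\eta$), is needed to arrive cleanly at \eqref{eq_del2_fbar}.
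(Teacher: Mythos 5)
Your proposal is correct and follows essentially the same route as the paper: you expand $\nabla^2\bar{f}$ via the product rule applied to $\bar{f}=f\eta^{-1}$, substitute the Gauss-type identity $\nabla_{\rho\alpha}f=\tfrac{1}{r}\nabla_\alpha f$ together with the exact values $\nabla_\rho\eta=-2\varepsilon t^2/r$ and $\nabla_{\rho\rho}\eta=-2\varepsilon t^2/r^2$, and then collapse the resulting polynomial in $\eta^{-1}$ using $\eta+\varepsilon t^2=1$. The intermediate bookkeeping (splitting the $E_\theta\eta$-terms from the $\nabla_{\rho\theta}\eta$-term and combining $\eta^{-1}+\varepsilon t^2\eta^{-2}=\eta^{-2}$) matches the paper's calculation step for step.
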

\begin{proof}
Firstly, note that 
\begin{align*}
\label{eq_naalbe} \numberthis \nabla_{\alpha\beta} \bar{f} & = \eta^{-1} \nabla_{\alpha\beta} f + \nabla_\alpha f \nabla_\beta \eta^{-1} + \nabla_\alpha \eta^{-1} \nabla_\beta f + f \nabla_{\alpha \beta} \eta^{-1} \\
& = \eta^{-1} \nabla_{\alpha \beta} f - \eta^{-2} \nabla_\alpha f \nabla_\beta \eta - \eta^{-2} \nabla_\alpha \eta \nabla_\beta f + 2 \eta^{-3} f \nabla_\alpha \eta \nabla_\beta \eta - \eta^{-2} f \nabla_{\alpha \beta} \eta.
\end{align*}
Then, we use \eqref{eq_del2f} to get
\begin{equation}\label{eq_rho2f}
\nabla_{\rho\rho} f = \frac{1}{r} \nabla_\rho f = \frac{1}{r} \cdot \frac{2f}{r} = \frac{2f}{r^2}.
\end{equation}
For the \( (E_\rho,E_\rho) \) component, using \eqref{eq_del2f}, \eqref{eq_naalbe}, \eqref{eq_rho2f}, and Proposition \ref{thm.pc_eta_est}, we get
\begin{align*}
\nabla_{\rho\rho} \bar{f} & = \eta^{-1} \nabla_{\rho\rho} f - 2 \eta^{-2} \nabla_\rho f \nabla_\rho \eta + 2 \eta^{-3} f \nabla_\rho \eta \nabla_\rho \eta - \eta^{-2} f \nabla_{\rho\rho} \eta \\
& = \eta^{-1} \frac{2f}{r^2} - 2 \eta^{-2} \cdot \frac{2f}{r} \left( \frac{-2 \varepsilon t^2}{r} \right) + 2 \eta^{-3} f \left( \frac{-2 \varepsilon t^2}{r} \right)^2 - \eta^{-2} f \left( \frac{-2 \varepsilon t^2}{r^2} \right) \\
& = \left( 1 + 4 \eta^{-1} \varepsilon t^2 + 4 \eta^{-2} \varepsilon^2 t^4 + \eta^{-1} \varepsilon t^2 \right) \cdot \frac{2 \eta^{-1} f}{r^2} \\
& = \left( \eta^2 + 5 \varepsilon t^2 \eta + 4 \varepsilon^2 t^4 \right) \cdot \frac{2 \eta^{-3} f}{r^2} \\
& = \left( 1 + \varepsilon^2 t^4 - 2 \varepsilon t^2 + 5 \varepsilon t^2 - 5 \varepsilon^2 t^4 + 4 \varepsilon^2 t^4 \right) \cdot \frac{2 \eta^{-3} f}{r^2} \\
& = \left( 1 + 3 \varepsilon t^2 \right) \cdot \frac{2 \eta^{-3} f}{r^2} .
\end{align*}

Now, we look at the \( (E\rho, \bar{E}_\theta) \) component. For this, we consider
\begin{align*}
\nb_{\rho \bar{\theta}} f = \nb_{\rho \theta} f + \frac{r}{2} E_\theta \eta \nb_{\rho\rho}f = \frac{r}{2} E_\theta \eta \frac{2f}{r^2} = E_\theta \eta \cdot \frac{f}{r},
\end{align*}
and that
\begin{align*}
\nb_{\rho \bar{\theta}} \eta = \nb_{\rho \theta} \eta + \frac{r}{2} E_\theta \eta \nb_{\rho\rho} \eta = \nb_{\rho \theta} \eta + \frac{r}{2} E_\theta \eta \left( -\frac{2 \varepsilon t^2}{r^2} \right) = \nb_{\rho \theta} \eta - \frac{ \varepsilon t^2}{r}  E_\theta \eta.
\end{align*}
We further note that \( \bar{E}_\theta\eta = \eta E_\theta \eta \), and calculate as follows
\begin{align*}
\nabla_{\rho\bar{\theta}} \bar{f} & = \eta^{-1} \nabla_{\rho\bar{\theta}} f - \eta^{-2} \nabla_\rho f \nabla_{\bar{\theta}} \eta - \eta^{-2} \nabla_\rho \eta \nabla_{\bar{\theta}} f + 2 \eta^{-3} f \nabla_\rho \eta \nabla_{\bar{\theta}} \eta - \eta^{-2} f \nabla_{\rho\bar{\theta}} \eta \\
& = \eta^{-1} E_\theta \eta \frac{f}{r} - \eta^{-2} \bar{E}_{\theta} \eta \frac{2f}{r} - \eta^{-2} \left( \frac{-2\varepsilon t^2}{r} \right) E_\theta \eta f + 2 \eta^{-3} f \left( \frac{-2 \varepsilon t^2}{r} \right) \bar{E}_\theta \eta \\
& \qquad \qquad - \eta^{-2} f \left( \nabla_{\rho \theta} \eta - \frac{ \varepsilon t^2}{r} \nabla_\theta \eta \right) \\
& = \left( \eta^{-1} E_\theta \eta - 2 \eta^{-1} E_\theta \eta + 2 \eta^{-2} \varepsilon t^2 E_\theta \eta - 4 \eta^{-2} \varepsilon t^2 E_{\theta} \eta - r \eta^{-2} \nabla_{\rho\theta} \eta + \eta^{-2} \varepsilon t^2 E_\theta \eta \right) \cdot \frac{f}{r} \\
& = (-\eta^{-1} E_\theta \eta - \eta^{-2} \varepsilon t^2 E_\theta \eta - \eta^{-2} r \nabla_{\rho \theta} \eta ) \cdot \frac{f}{r} \\
& = \left[ - ( 1 + \varepsilon t^2 \eta^{-1} ) \eta^{-1} E_\theta \eta  - \eta^{-2} r \nabla_{\rho \theta} \eta \right] \cdot \frac{f}{r} \\
& = (-E_\theta \eta - r \nabla_{\rho \theta} \eta) \cdot \frac{f \eta^{-2}}{r}\\
& = (-r E_\theta \eta - r^2 \nabla_{\rho \theta} \eta) \frac{\eta^{-2}}{4} \cdot \frac{\rho^2}{r^2} \text{,}
\end{align*}
where we also used \eqref{eq.pc_eta}. The \( (E_\rho, \bar{E}_A) \) component is computed analogously.
\end{proof}

\begin{proposition} \label{prop_del2_f_barrho}
The following is satisfied
\begin{equation} \label{eq_del2_f_barrho}
\nb_{\bar{\rho}\bar{\rho}} \bar{f} = 2 \eta^{-4} \cdot \frac{\bar{f}}{r^2} [1+ \mc{O}(\varepsilon t^2)].
\end{equation}
\end{proposition}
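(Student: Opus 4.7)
The plan is to view this proposition as an essentially immediate corollary of the Shifted Gauss Lemma (Proposition \ref{thm.misc_gauss}). The key observation is that by the definition \eqref{eq_def_barrho}, we have $\bar{E}_\rho^\alpha = \tfrac{2}{r}\nabla^\alpha \bar{f}$, so that
\[
\nabla_{\bar{\rho}\bar{\rho}}\bar{f} = \bar{E}_\rho^\alpha \bar{E}_\rho^\beta \nabla_{\alpha\beta}\bar{f} = \frac{4}{r^2}\, \nabla^\alpha \bar{f} \nabla^\beta \bar{f} \nabla_{\alpha\beta}\bar{f}.
\]
The right-hand side is precisely four times the quantity computed in the second identity of Proposition \ref{thm.misc_gauss}, which yields
\[
\nabla_{\bar{\rho}\bar{\rho}}\bar{f} = \frac{2\bar{f}}{r^2}\,\eta^{-4}\,[\,1 + 5\varepsilon t^2 + 2\varepsilon^2 t^4 + \mathcal{E}_2\,].
\]

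The remaining step is to absorb the three correction terms into $\mathcal{O}(\varepsilon t^2)$. The term $5\varepsilon t^2$ is clearly of this form. For $2\varepsilon^2 t^4 = 2(\varepsilon t^2)(\varepsilon t^2)$, I would use that on $\mathcal{D}_{r_0}$ we have $\varepsilon t^2 \le \varepsilon r^2 \le \varepsilon_0$ (by the definitions \eqref{eq.hf_DR} and $\varepsilon = \varepsilon_0 r_0^{-2}$), hence $2\varepsilon^2 t^4 \le 2\varepsilon_0 \cdot \varepsilon t^2 = \mathcal{O}(\varepsilon t^2)$. For $\mathcal{E}_2$, the estimate \eqref{eq.misc_gauss_error} gives $|\mathcal{E}_2| \le \mathcal{C}\varepsilon^2 t^2 f$, and since $f \le r^2/4 \le r_0^2/4$ implies $\varepsilon f \le \varepsilon_0/4$, we conclude $|\mathcal{E}_2| \le \tfrac{\mathcal{C}\varepsilon_0}{4}\,\varepsilon t^2 = \mathcal{O}(\varepsilon t^2)$. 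Combining these three observations produces \eqref{eq_del2_f_barrho}.

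I do not anticipate any real obstacle here: the entire argument is a two-line substitution followed by the elementary bookkeeping of error terms, and all the geometric work has already been done in Proposition \ref{thm.misc_gauss}. The only mild subtlety is ensuring that $\varepsilon^2 t^4$ and $\mathcal{E}_2$ can genuinely be dominated by $\varepsilon t^2$, which relies crucially on the smallness assumption $\varepsilon_0 \ll 1$ and the restriction to $\mathcal{D}_{r_0}$.
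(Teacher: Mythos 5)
Your proof is correct and follows essentially the same route as the paper: both identify $\nabla_{\bar\rho\bar\rho}\bar f = \tfrac{4}{r^2}\nabla_{\alpha\beta}\bar f\,\nabla^\alpha\bar f\,\nabla^\beta\bar f$ from the definition \eqref{eq_def_barrho} and then invoke the second identity of Proposition \ref{thm.misc_gauss}. The only difference is that you spell out the absorption of $5\varepsilon t^2 + 2\varepsilon^2 t^4 + \mathcal{E}_2$ into $\mathcal{O}(\varepsilon t^2)$, which the paper treats as immediate; your bookkeeping for that step is correct.
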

\begin{proof}
For the first equation, using \eqref{eq_def_barrho} shows that
\begin{align*}
\nb^{\alpha\beta} \bar{f} \nb_\alpha \bar{f} \nb_\beta \bar{f} & = \nb^2 \bar{f} (\nb^\sharp \bar{f}, \nb^\sharp \bar{f}) \\
& = \frac{r^2}{4} \nb^2 \bar{f}(\bar{E}_\rho , \bar{E}_\rho) \\
& = \frac{r^2}{4} \nb_{\bar{\rho}\bar{\rho}} \bar{f}.
\end{align*}
Then, Proposition \ref{thm.misc_gauss} implies that
\begin{align*}
\frac{1}{2} \bar{f} \eta^{-4} [1+ \mc{O}(\varepsilon t^2)] = \frac{r^2}{4} \nb_{\bar{\rho}\bar{\rho}} \bar{f},
\end{align*}
which shows that \eqref{eq_del2_f_barrho} holds.
\end{proof}
Now, we will describe a new tensor which will help us to use the pseudoconvexity condition. Define $\pi$ as follows
\begin{equation}\label{eq_pi}
\pi := \nabla^2 \bar{f} - \bar{h}\cdot g,
\end{equation}
where $\bar{h}$ is given by \eqref{eq.pc}.
Now, we already know the behaviour of $\pi$ in directions that describe pseudoconvexity; see Theorem \ref{thm.pc}. Hence, we will find estimates for those components of $\pi$ which are unrelated to the pseudoconvexity condition.
\begin{proposition} \label{prop_pi_comp}
The components of $\pi$ satisfy the following estimates
\begin{equation} \label{eq_pi_comp}
\pi_{\bar{\rho}\bar{\rho}} = \frac{\rho^2}{4r^2} \mc{O}(\varepsilon r^2), \qquad |\pi_{\bar{\rho}\bar{\theta}}| \lesssim  \frac{\rho^2}{r^2} \mc{O}(\varepsilon r^2), \qquad |\pi_{\bar{\rho}\bar{A}}| \lesssim  \frac{\rho^2}{r^2} \mc{O}(\varepsilon r^2).
\end{equation}
\end{proposition}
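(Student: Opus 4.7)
My plan is to compute each of the three components of $\pi$ separately, using the previously established formulas for $\nabla^2 \bar{f}$, the metric coefficients in the $\bar{f}$-adapted frame, and the shifted Gauss Lemma. The key simplification comes from noting (via Proposition \ref{prop_gcomp}) that $g(\bar{E}_\rho,\bar{E}_\theta)=g(\bar{E}_\rho,\bar{E}_A)=0$, so the off-diagonal components of $\pi$ reduce purely to components of $\nabla^2 \bar{f}$.

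For $\pi_{\bar{\rho}\bar{\rho}}$, I would directly substitute Proposition \ref{prop_del2_f_barrho} together with Proposition \ref{prop_gcomp}, and then expand $\bar{h}=\tfrac{1}{2}\eta^{-1}-\tfrac{1}{4}\varepsilon r^2$. Writing $\bar{f}=f\eta^{-1}=\rho^2\eta^{-1}/4$, the two leading terms $\tfrac{\rho^2}{2r^2}\eta^{-5}$ and $\tfrac{\rho^2}{2r^2}\eta^{-4}$ differ only by the factor $\eta^{-1}-1 = \varepsilon t^2/\eta = \mc{O}(\varepsilon t^2)$, while the remaining piece $\tfrac{1}{4}\varepsilon \rho^2 \eta^{-3}$ is already of the advertised form. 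Using $t^2 < r^2$ on $\mc{D}$ to absorb $\varepsilon t^2$ into $\varepsilon r^2$, all contributions combine into $\frac{\rho^2}{4r^2}\mc{O}(\varepsilon r^2)$.

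For the off-diagonal components $\pi_{\bar{\rho}\bar{\theta}}$ and $\pi_{\bar{\rho}\bar{A}}$, the cleanest route is to polarize the shifted Gauss Lemma (Proposition \ref{thm.misc_gauss}). For any $\bar{f}$-tangent vector field $X$, I have $X\bar{f}=0$, hence differentiating $\nabla^\alpha \bar{f}\,\nabla_\alpha \bar{f}=\bar{f}\eta^{-2}(1+\varepsilon t^2 + \mc{E}_1)$ along $X$ gives
\begin{equation*}
2\,\nabla^2 \bar{f}(X,\nabla^\sharp \bar{f}) = \bar{f}\cdot X\!\left[\eta^{-2}(1+\varepsilon t^2+\mc{E}_1)\right].
\end{equation*}
Since $\nabla^\sharp \bar{f}=\tfrac{r}{2}\bar{E}_\rho$ (by \eqref{eq_def_barrho}), this yields $\pi_{\bar{\rho} X}=\nabla^2\bar{f}(\bar{E}_\rho, X)=\tfrac{\bar{f}}{r}\,X[\eta^{-2}(1+\varepsilon t^2 + \mc{E}_1)]$ for $X\in\{\bar{E}_\theta,\bar{E}_A\}$. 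The right-hand side is then estimated by the chain rule: $\bar{E}_\theta \eta = \eta E_\theta \eta$ and $\bar{E}_A \eta = \eta E_A \eta$ are controlled via Proposition \ref{thm.pc_eta_est}, while $\bar{E}_\theta t$ and $\bar{E}_A t$ are handled via Proposition \ref{thm.hf_t_est} together with \eqref{eq.pc_Eb} and \eqref{eq.hf_E_rho}. The remainder term $\mc{E}_1=\mc{O}(\varepsilon^2 t^2 f)$ and its derivatives contribute at strictly higher order.

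The main obstacle is bookkeeping: many small error terms appear and must be consolidated cleanly. The leading contribution to $\pi_{\bar{\rho}\bar{\theta}}$ is of size $\tfrac{\bar{f}}{r}\cdot \varepsilon t$, and one verifies $\tfrac{\bar{f}}{r}\cdot \varepsilon t \lesssim \varepsilon\rho^2 = \tfrac{\rho^2}{r^2}\cdot \varepsilon r^2$ using $\bar{f}\le \rho^2\eta^{-1}/4$ and $t\le r$ on $\mc{D}$. For $\pi_{\bar{\rho}\bar{A}}$ the bound on $E_A\eta$ carries an extra factor of $\mc{C}_0/n$ (reflecting that transverse derivatives of $\eta$ are smaller than temporal ones), so the desired estimate holds with room to spare. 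This gives all three estimates in \eqref{eq_pi_comp}.
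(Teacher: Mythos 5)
Your treatment of $\pi_{\bar{\rho}\bar{\rho}}$ is essentially the paper's argument (Proposition \ref{prop_del2_f_barrho} plus Proposition \ref{prop_gcomp}, expand $\bar{h}$, observe the near-cancellation of the $\eta^{-5}$ and $\eta^{-4}$ pieces). For the off-diagonal components you propose a genuinely different route: instead of expanding $\bar{E}_\rho$ via \eqref{eq_barrho2} and estimating the resulting contractions term by term (the paper's method, which requires Proposition \ref{prop_del2_fbar}, Proposition \ref{thm.pc_fb_hessian}, and Proposition \ref{prop_gcomp}), you polarize the shifted Gauss lemma by differentiating $\nabla^\alpha\bar{f}\nabla_\alpha\bar{f}$ along an $\bar{f}$-tangent direction $X$, so that $r\,\pi_{\bar{\rho}X}=\bar{f}\,X\bigl[\eta^{-2}(1+\varepsilon t^2+\mc{E}_1)\bigr]$. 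This is a cleaner and more conceptual organization of the computation, and the identity $\nabla^\sharp\bar{f}=\tfrac{r}{2}\bar{E}_\rho$ together with $g(\bar{E}_\rho,\bar{E}_\theta)=g(\bar{E}_\rho,\bar{E}_A)=0$ are used correctly.

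There is, however, a real gap in the last step. You invoke ``$\mc{E}_1=\mc{O}(\varepsilon^2 t^2 f)$ and its derivatives contribute at strictly higher order,'' but Proposition \ref{thm.misc_gauss} bounds only $|\mc{E}_1|$, not $X\mc{E}_1$. A pointwise $\mc{O}$-bound on an implicitly defined remainder gives no information about its derivative, so this sentence does not follow from what is available. To make the polarization argument rigorous you should bypass the packaged form of the shifted Gauss lemma and instead differentiate the exact identity from its proof, namely \eqref{eql.misc_gauss_1}:
\[
\nabla^\alpha\bar{f}\nabla_\alpha\bar{f}=f\eta^{-4}\bigl(\eta^2-2\eta\,\nabla^\alpha\eta\nabla_\alpha f+f\,\nabla^\alpha\eta\nabla_\alpha\eta\bigr).
\]
Differentiating this along $X$ produces only $Xf$, $X\eta$, and the quantities $X\bigl[\nabla^\alpha\eta\nabla_\alpha f\bigr]$ and $X\bigl[\nabla^\alpha\eta\nabla_\alpha\eta\bigr]$, all of which reduce to first and second derivatives of $\eta$ and $f$ and are controlled by Propositions \ref{thm.hf_q_est}, \ref{thm.pc_eta_est}, and \ref{thm.pc_etab_est}. (Note also $Xf=f\,X\eta/\eta$ since $X\bar{f}=0$, so $Xf$ is small.) With that substitution the remainder of your argument, including the elementary inequalities $\bar{f}\le\rho^2/(4\eta)$ and $|t|\le r$ on $\mc{D}$ used to land on $\tfrac{\rho^2}{r^2}\mc{O}(\varepsilon r^2)$, goes through.
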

\begin{proof}
For the first equation, using Proposition \ref{prop_del2_f_barrho} gives us
\begin{align*}
\pi_{\bar{\rho}\bar{\rho}} & = \nb_{\bar{\rho}\bar{\rho}} \bar{f} - \bar{h}\cdot g_{\bar{\rho}\bar{\rho}}\\
& = 2 \eta^{-4}  \frac{\bar{f}}{r^2} [1+ \mc{O}(\varepsilon t^2)] - \left( \frac{1}{2} \eta^{-1} - \frac{\varepsilon r^2}{4} \right) \frac{4 \eta^{-3} \bar{f}}{r^2} [1 + \mc{O}(\varepsilon t^2)]\\
& = \frac{\rho^2}{4r^2} ( 2 \eta^{-4} - 2 \eta^{-4} + \eta^{-3} \varepsilon \bar{f} ) + \frac{\rho^2}{4 r^2} \varepsilon \bar{f} \mc{O}(\varepsilon t^2)\\
& = \frac{\rho^2}{4r^2} \mc{O}(\varepsilon r^2).
\end{align*}
Hence, the first part of \eqref{eq_pi_comp} is true.
For the second part, note that \( g(\bar{E}_\rho, \bar{E}_\theta) = 0 \). Then recalling the representation of $\bar{E}_\rho$ from \eqref{eq_barrho2}, we get
\begin{align*}
\pi_{\bar{\rho}\bar{\theta}} = \nb_{\bar{\rho}\bar{\theta}} \bar{f} = \eta^{-2} [1 + \mc{O}(\varepsilon^2 t^2 r^2)] \nb_{\rho\bar{\theta}} \bar{f} + \eta^{-2} \frac{r E_\theta \eta}{2} \nb_{\bar{\theta}\bar{\theta}} \bar{f} - \eta^{-2} \frac{\rho^2}{r^2} \sum_A \frac{r E_A \eta}{2} \nb_{\bar{A}\bar{\theta}} \bar{f}.
\end{align*}
Firstly, Proposition \ref{prop_del2_fbar} implies that
\begin{align*}
\nb_{\rho\bar{\theta}} \bar{f} & = (-r E_\theta \eta - r^2 \nabla_{\rho \theta} \eta) \cdot \frac{ \rho^2 }{4r^2} \eta^{-2}.
\end{align*}
Next, we use Proposition \ref{thm.pc_fb_hessian}, to get
\begin{align*}
\nb_{\bar{\theta}\bar{\theta}} \bar{f} & = \eta^{-1} q(E_\theta, E_\theta) + \frac{1}{2} \eta^{-1} g (\bar{E}_\theta, \bar{E}_\theta) - \eta^{-1} \bar{f} \nb^2 \eta (\bar{E}_\theta, \bar{E}_\theta).
\end{align*}
Then, using Proposition \ref{thm.hf_q_est} and \eqref{eq_gcomp}, shows that
\begin{align*}
|\nb_{\bar{\theta}\bar{\theta}} \bar{f}| & \lesssim \eta^{-1} \frac{\rho^4}{r^4} \varepsilon r^2 + \frac{1}{2} \eta^{-1} \frac{\rho^2}{r^2} \left[ 1 +  \left( \frac{r E_\theta \eta}{2} \right)^2 \right] + \frac{\rho^2}{r^2} \varepsilon r^2.
\end{align*}
Analogously, we also have
\begin{align*}
\nb_{\bar{\theta}\bar{A}} \bar{f} & = \eta^{-1} q(E_\theta, E_A) + \frac{1}{2} \eta^{-1} g (\bar{E}_\theta, \bar{E}_A) - \eta^{-1} \bar{f} \nb^2 \eta (\bar{E}_\theta, \bar{E}_A). \\
\Rightarrow |\nb_{\bar{\theta}\bar{A}} \bar{f}| & \lesssim \eta^{-1} \frac{\rho^2}{r^2} \varepsilon r^2 + \frac{1}{2} \eta^{-1} \frac{\rho^2}{r^2} \frac{r E_\theta \eta}{2} \sum_A \frac{r E_A \eta}{2} + \eta^{-2} \frac{\rho^2}{r^2} \varepsilon r^2.
\end{align*}
Thus,
\begin{align*}
|\pi_{\bar{\rho}\bar{\theta}}| \lesssim [1 + \mc{O}(\varepsilon^2 t^2 r^2)] \mc{O}(|\varepsilon t r |) \frac{\rho^2}{r^2} + \mc{O}(|\varepsilon t r |) \mc{O} (\varepsilon r^2) \frac{\rho^2}{r^2} + \mc{O}(|\varepsilon t r |) \mc{O} (\varepsilon r^2) \frac{\rho^2}{r^2} \lesssim \frac{\rho^2}{r^2} \mc{O}(|\varepsilon t r |),
\end{align*}
which shows that the second part of \eqref{eq_pi_comp} is true. Similarly we can also show that the $\pi_{\bar{\rho}\bar{A}}$ estimate holds.
\end{proof}

\subsection{Carleman estimate derivation}
Now, we are ready to prove Theorem \ref{thm_carl_est_LM}. Throughout this section, we assume that the hypothesis of Theorem \ref{thm_carl_est_LM} holds. We have to be careful with the order of operations because here we only have estimates for most of the terms, instead of exact expressions.

\begin{definition} \label{def1_carl_LM}
Define the function $F := F ( \bar{f} )$ as
\begin{equation}
\label{eq.misc_F} F := - a ( \log \bar{f} + b \bar{f} ) \text{.}
\end{equation}
Next, define the following
\begin{equation} \label{eq_carl_def1_LM}
\mc{L} := e^{-F} \square e^F, \qquad S:= \nb^\sharp \bar{f}, \qquad w := \frac{1}{2} \square \bar{f} - \bar{h}, \qquad S_w := S + w,
\end{equation} 
where $\bar{h}$ is defined as in \eqref{eq.pc}.
\end{definition}
Note that, \eqref{eq_thm_carl_def_LM} and \eqref{eq_carl_def1_LM} imply that
\begin{equation} \label{eq_e2F_zeta_LM}
e^{-2F} = \zeta.
\end{equation}

\begin{remark}
Compare \eqref{eq.misc_F} with the definition of the function $F$ from \cite{Arick}. Here we take $\bar{f}$, instead of $\bar{f}^{\frac{1}{2}}$ as in \cite{Arick}, for simplifying our computations. Now, this will lead to a loss of the weight $f^{-\frac{1}{2}}$ in the zero order term in the Carleman estimate \eqref{eq_thm_carl_est_LM}. However, eventually it will not matter because while proving observability $f^{-\frac{1}{2}}$ is estimated from below by a constant.
\end{remark}

Throughout, we use $\prime$ to denote derivatives of $F$ with respect to $\bar{f}$. For the reader's convenience, we provide an outline of the proof here

\begin{itemize}

\item[1.] In Section \ref{ssec_carl_ptws_eq_LM}, we obtain a pointwise equation corresponding to the conjugated operator \(\mc{L}\). 

\item[2.] In Section \ref{ssec_carl_ptws_est_LM}, we obtain a pointwise estimate for the $\mc{L}$-equation. First, we will show that the zero order term is strictly positive. Then, we will estimate the first order term. Since $\bar{E}_\alpha$'s are not mutually orthogonal (see Proposition \ref{prop_gcomp} above), we will not be able to find exact expressions when we raise/lower indices. However, we can still estimate them, as shown in \eqref{eq_Y_barE_LM}, and that will be enough for our purposes. Also, we will use the pseudoconvexity result from Theorem \ref{thm.pc} to obtain positive coefficients for the $\bar{E}_\theta (\cdot)$ and $ \bar{E}_A (\cdot)$ terms .

\item[3.] In Section \ref{ssec_carl_abar_a_LM}, we change the frames used in the latest estimate and go from $\bar{E}_\alpha$ to the original frames $E_\alpha$. For this purpose, we express $\bar{E}_\alpha$'s in terms of $E_\alpha$'s using \eqref{eq.pc_Eb}  and \eqref{eq_barrho1}, and estimate the resulting terms accordingly.

\item[4.] In Section \ref{ssec_carl_conj_rev_LM}, we reverse the conjugation process introduced in \eqref{eq_carl_def1_LM} and obtain a pointwise Carleman estimate corresponding to the $\square$ operator.

\item[5.] In Section \ref{ssec_carl_int_eq_LM}, we integrate the pointwise estimate over the region \(\mc{U} \cap \mc{D} \) and complete the proof of Theorem \ref{thm_carl_est_LM}.

\end{itemize}

Henceforth, we will use the estimate $\eta = 1- \varepsilon t^2 \simeq 1$, whenever necessary.

\subsubsection{Pointwise equation} \label{ssec_carl_ptws_eq_LM}

\begin{proposition} \label{prop_carl_1}
Let $\psi \in C^2(\mc{U})$. Then, at every point of $\mc{U}$, the following is satisfied
\begin{equation} \label{eq_carl_1}
-\mc{L} \psi S_w \psi + \nb^\beta P_\beta = \pi_{\alpha\beta} \nb^\alpha \psi \nb^\beta \psi - 2 F' |S_w \psi|^2 + \left[ \frac{1}{2} \nb^\beta \bar{f} \nb_\beta \mc{A} + \mc{A} \bar{h} - \frac{1}{2} \square w \right] \cdot \psi^2,
\end{equation}
where \( P_\beta \) is defined as
\begin{equation}
P_\beta := S_w \psi \nb_\beta\psi - \frac{1}{2} \nb_\beta \bar{f} \nb^\mu \psi \nb_\mu \psi + \frac{1}{2} (\mc{A} \nb_\beta \bar{f} - \nb_\beta w ) \cdot \psi^2,
\end{equation}
and the quantity $\mc{A}$ is given by
\begin{equation} \label{eq.misc_A}
\mc{A} := [ ( F' )^2 + F'' ] \nabla^\alpha \bar{f} \nabla_\alpha \bar{f} + 2 \bar{h} F' \text{.}
\end{equation}
\end{proposition}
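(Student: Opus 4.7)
The approach is to derive \eqref{eq_carl_1} as a direct pointwise identity obtained by combining three ingredients: (i) the expansion of the conjugated operator $\mc{L} = e^{-F}\square e^F$, (ii) the vector-field multiplier method applied to $S = \nb^\sharp\bar{f}$, and (iii) pointwise integration by parts for the zeroth-order multiplier $w$. The first step is to expand
\[
\mc{L}\psi \;=\; \square\psi + 2F'\nb^\alpha\bar{f}\nb_\alpha\psi + \bigl\{[(F')^2+F'']\nb^\alpha\bar{f}\nb_\alpha\bar{f} + F'\square\bar{f}\bigr\}\psi,
\]
and then use $\square\bar{f} = 2(w+\bar{h})$ from \eqref{eq_carl_def1_LM} together with the definition \eqref{eq.misc_A} of $\mc{A}$ to collapse this into the clean form $\mc{L}\psi = \square\psi + 2F'S_w\psi + \mc{A}\psi$.

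Next I would multiply this by $-S_w\psi = -S\psi - w\psi$. For the principal piece $-\square\psi\cdot S\psi$, applying the standard multiplier identity for $S = \nb^\sharp\bar{f}$ against the energy-momentum tensor $T_{\alpha\beta} = \nb_\alpha\psi\nb_\beta\psi - \tfrac12 g_{\alpha\beta}\nb^\mu\psi\nb_\mu\psi$ yields
\[
\square\psi\cdot S\psi = \nb^\beta\!\left[S\psi\nb_\beta\psi - \tfrac12\nb_\beta\bar{f}\nb^\mu\psi\nb_\mu\psi\right] - \nb^{\alpha\beta}\bar{f}\nb_\alpha\psi\nb_\beta\psi + \tfrac12\square\bar{f}\nb^\mu\psi\nb_\mu\psi.
\]
For the secondary piece $-\square\psi\cdot w\psi$, I would use $\nb^\beta(w\psi\nb_\beta\psi) = w\psi\square\psi + w\nb^\mu\psi\nb_\mu\psi + \psi\nb^\mu w\nb_\mu\psi$. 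Summing both contributions and substituting $\tfrac12\square\bar{f} = w+\bar{h}$ causes the $w\nb^\mu\psi\nb_\mu\psi$ terms to cancel exactly, leaving the residual quadratic form $[\nb^{\alpha\beta}\bar{f} - \bar{h}g^{\alpha\beta}]\nb_\alpha\psi\nb_\beta\psi = \pi^{\alpha\beta}\nb_\alpha\psi\nb_\beta\psi$ by the definition \eqref{eq_pi} of $\pi$.

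The lower-order contribution $-\mc{A}\psi\cdot S_w\psi$ is handled analogously: writing $-\mc{A}\psi\cdot S\psi = -\tfrac12\mc{A}\nb^\alpha\bar{f}\nb_\alpha\psi^2$ and integrating by parts pointwise produces $-\tfrac12\nb^\alpha(\mc{A}\nb_\alpha\bar{f}\psi^2) + \tfrac12(\nb^\alpha\bar{f}\nb_\alpha\mc{A})\psi^2 + \mc{A}(w+\bar{h})\psi^2$, in which the $\mc{A}w\psi^2$ piece cancels against the $-\mc{A}w\psi^2$ contribution from $-\mc{A}\psi\cdot w\psi$. Finally, rewriting $\psi\nb^\mu w\nb_\mu\psi = \tfrac12\nb^\mu(\nb_\mu w\cdot\psi^2) - \tfrac12\square w\cdot\psi^2$ supplies both the missing $-\tfrac12\nb_\beta w\cdot\psi^2$ piece needed to complete $P_\beta$ and the $-\tfrac12\square w\cdot\psi^2$ term on the right-hand side. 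Assembling all divergences then reproduces $-\nb^\beta P_\beta$ exactly, and the $-2F'|S_w\psi|^2$ term on the right-hand side comes directly from the middle contribution of the expanded $\mc{L}\psi$.

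There is no analytical difficulty here since \eqref{eq_carl_1} is a pointwise bookkeeping identity rather than an inequality, but the main hurdle is tracking that three separate $w\psi^2$-type contributions -- arising from $\tfrac12\square\bar{f}\nb^\mu\psi\nb_\mu\psi$, from $\tfrac12\mc{A}\square\bar{f}\cdot\psi^2$, and from $-\mc{A}w\psi^2$ -- combine with exactly the right coefficients so that no $w$-dependence survives in the $\psi^2$ coefficient on the right-hand side, leaving cleanly $\mc{A}\bar{h} - \tfrac12\square w$. This is the one juncture at which a missed factor or sign would derail the identity; the remainder is routine manipulation once the multiplier $-S_w\psi$ and the divergence ansatz $P_\beta$ have been chosen.
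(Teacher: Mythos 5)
Your proposal is correct and follows essentially the same route as the paper: expand the conjugated operator $\mc{L}\psi = \square\psi + 2F'S_w\psi + \mc{A}\psi$, multiply by $-S_w\psi$, and carry out the multiplier/pointwise integration-by-parts bookkeeping so that the divergence pieces assemble into $-\nb^\beta P_\beta$ and the $w$-dependent quadratic terms cancel. The only difference is presentational — the paper states $\nb^\beta P_\beta$ in one step and lets the cancellation be implicit, while you expand the $-\square\psi\, w\psi$ and $\psi\nb^\mu w\nb_\mu\psi$ contributions explicitly — but the underlying computation and the key identity $\tfrac12\square\bar f = \bar h + w$ are identical.
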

\begin{proof}

We start with the following computation
\begin{align*}
\mc{L} \psi & = e^{-F} \square (e^{F} \psi) \\
& = \square \psi + 2 F' S_w \psi + (F''+(F')^2) \nb^\beta \bar{f} \nb_\beta \bar{f} \cdot \psi + 2 \bar{h} F' \cdot \psi \\
& = \square \psi + 2 F' S_w \psi + \mc{A} \psi.
\end{align*}
This implies that,
\begin{equation} \label{eq_carl_1_1}
- \mc{L} \psi S_w \psi = - \square \psi S_w \psi - 2 F' |S_w \psi|^2 - \mc{A} \psi S_w \psi.
\end{equation}

Now, we have
\begin{align*}
\frac{1}{2} \nb^\beta ( \mc{A} \nb_\beta \bar{f} \cdot \psi^2 ) & = \frac{1}{2} \nb^\beta \mc{A} \nb_\beta \bar{f} \cdot \psi^2 + \frac{1}{2} \mc{A} \square \bar{f} \cdot \psi^2 + \mc{A} \nb^\beta \bar{f} \nb_\beta \psi \cdot \psi \numberthis \label{eq_carl_1_2} \\
& = \frac{1}{2} \nb^\beta \mc{A} \nb_\beta \bar{f} \cdot \psi^2 + \mc{A} (\bar{h} + w) \psi^2 + \mc{A} \psi S \psi \\
& = \frac{1}{2} \nb^\beta \mc{A} \nb_\beta \bar{f} \cdot \psi^2 + \mc{A} \bar{h} \psi^2 + \mc{A} \cdot \psi S_w \psi \text{.}
\end{align*}
Next, computing the derivative of $P$, shows that
\begin{align*}
\nb^\beta P_\beta & = \square \psi S_w \psi +\nb_{\alpha \beta} \bar{f} \cdot \nb^\alpha \psi \nb^\beta \psi - \frac{1}{2} \square f \cdot \nb^\mu \psi \nb_\mu \psi + w \cdot \nb^\beta \psi \nb_\beta \psi \numberthis \label{eq_carl_1_3}\\
& \qquad + \frac{1}{2} \nb^\beta \mc{A} \nb_\beta \bar{f} \cdot \psi^2 + \mc{A} \bar{h} \psi^2 + \mc{A} \cdot \psi S_w \psi - \frac{1}{2} \square w \cdot \psi^2 .
\end{align*}

Combining \eqref{eq_carl_1_1}, \eqref{eq_carl_1_2}, and \eqref{eq_carl_1_3}, shows that
\begin{align*}
-\mc{L} \psi S_w \psi + \nb^\beta P_\beta & = (\nb_{\alpha\beta} \bar{f} - \bar{h} \cdot g_{\alpha\beta}) \nb^\alpha \psi \nb^\beta \psi - 2 F' |S_w \psi|^2 \\
& \qquad + \left[ \frac{1}{2} \nb^\beta \bar{f} \nb_\beta \mc{A}  + \mc{A} \bar{h} - \frac{1}{2} \square w \right] \cdot \psi^2 .
\end{align*}
Now, using \eqref{eq_pi} shows that \eqref{eq_carl_1} holds.
\end{proof}

\subsubsection{Pointwise estimate} \label{ssec_carl_ptws_est_LM}
Now, we will obtain a pointwise estimate by estimating some of the non-essential terms in \eqref{eq_carl_1}. First, we estimate the zero-order coefficient. In particular, we establish that this coefficient is indeed positive. Let us use $\mc{B}$ to denote the following
\begin{equation}
\label{eq.misc_B} \mc{B} := \frac{1}{2} \nabla^\alpha \bar{f} \nabla_\alpha \mc{A} + \bar{h} \mc{A} \text{.}
\end{equation}


\begin{proposition} \label{thm.misc_zero}
If $a$ and $b$ in \eqref{eq.misc_F} are chosen to satisfy 
\eqref{eq_thm_carl_a_LM}, then the following inequality holds for $\mc{B}$:
\begin{equation}
\label{eq.misc_zero} \mc{B} \geq \frac{1}{2} a^2 b + \frac{1}{4} a^2 \bar{f}^{-1} \cdot \varepsilon r^2 \text{.}
\end{equation}
\end{proposition}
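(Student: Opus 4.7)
The plan is to expand $\mc{B}$ into an explicit polynomial in $a$, $b$, and $\bar{f}^{-1}$, isolate the two positive leading contributions $\tfrac{1}{2}a^2 b$ and $\tfrac{1}{4}a^2\bar{f}^{-1}\varepsilon r^2$, and absorb all remaining terms using the smallness hypotheses in \eqref{eq_thm_carl_a_LM}. First I would record the derivatives of the Carleman weight: from \eqref{eq.misc_F} one has $F' = -a(\bar{f}^{-1}+b)$, $F'' = a\bar{f}^{-2}$, and $F''' = -2a\bar{f}^{-3}$, so that
\[
(F')^2+F'' = a(a+1)\bar{f}^{-2}+2a^2 b \bar{f}^{-1} + a^2 b^2, \qquad 2F'F''+F''' = -2a(a+1)\bar{f}^{-3}-2a^2 b \bar{f}^{-2}.
\]
I would then abbreviate $Q := \nabla^\alpha\bar{f}\,\nabla_\alpha \bar{f}$ and $P := 2\nabla_{\alpha\beta}\bar{f}\,\nabla^\alpha\bar{f}\,\nabla^\beta\bar{f}$, and apply the Shifted Gauss Lemma (Proposition \ref{thm.misc_gauss}) to obtain the explicit expressions
\[
Q = \bar{f}\eta^{-2}(1+\varepsilon t^2 + \mc{E}_1), \qquad P = \bar{f}\eta^{-4}(1+5\varepsilon t^2 + 2\varepsilon^2 t^4 + \mc{E}_2),
\]
with $|\mc{E}_1|,|\mc{E}_2|\lesssim \varepsilon^2 t^2 f$.

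Next, since $F=F(\bar{f})$, differentiating $\mc{A}$ along $\nabla^\sharp \bar{f}$ gives
\[
\nabla^\alpha\bar{f}\,\nabla_\alpha\mc{A} = [2F'F''+F''']\,Q^2 + [(F')^2+F'']\,P + 2\bar{h}F''\,Q + 2F'\,\nabla^\alpha\bar{f}\,\nabla_\alpha\bar{h},
\]
so that a regrouping produces the key identity
\[
\mc{B} = \tfrac{1}{2}[(F')^2+F'']\,\bigl(P+2\bar{h}Q\bigr) + \tfrac{1}{2}[2F'F''+F''']\,Q^2 + \bar{h}F''Q + 2\bar{h}^2 F' + F'\,\nabla^\alpha\bar{f}\,\nabla_\alpha\bar{h}.
\]
Using $\bar{h} = \tfrac{1}{2}\eta^{-1}-\tfrac{1}{4}\varepsilon r^2$ together with Proposition \ref{thm.misc_gauss}, a direct expansion yields the crucial Minkowski-type identity
\[
P + 2\bar{h}Q = 2\bar{f}\eta^{-4}\bigl[\,1 + \tfrac{1}{2}\varepsilon(\rho^2-t^2) + \widetilde{\mc{E}}\,\bigr] = 2\bar{f} + \tfrac{1}{2}\bar{f}\,\varepsilon r^2 \cdot (1+O(\varepsilon t^2))+\widetilde{\mc{E}}',
\]
where the error $\widetilde{\mc{E}}'$ is $O(\varepsilon^2 t^2 f)$. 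Multiplying by $\tfrac{1}{2}[(F')^2+F''] = a^2 b \bar{f}^{-1} + \tfrac{1}{2}a^2 b^2 + \tfrac{1}{2}a(a+1)\bar{f}^{-2} + \cdots$ produces precisely $2a^2 b + \tfrac{1}{2}a^2\bar{f}^{-1}\varepsilon r^2 + \cdots$ as the dominant terms.

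Then I would collect the remaining contributions $\tfrac{1}{2}[2F'F''+F''']Q^2$, $\bar{h}F''Q$, and $2\bar{h}^2 F'$; each is lower order in $a$ (at most $O(a)$ compared to $O(a^2)$ in the first term), and a short computation shows they combine to give $-\tfrac{1}{2}a^2\bar{f}^{-1} + O(a)$, which is beaten by the surplus $\tfrac{3}{2}a^2 b \bar{f}^{-1}\cdot \bar{f} = \tfrac{3}{2}a^2 b$ already extracted above (since $b\bar{f}\leq b_0 f/(r_0^2\eta) \ll 1$). The last term, $F'\,\nabla^\alpha\bar{f}\,\nabla_\alpha\bar{h}$, is the most delicate: I would expand $\nabla_\alpha \bar{h} = -\tfrac{1}{2}\eta^{-2}\nabla_\alpha\eta - \tfrac{1}{2}\varepsilon \nabla_\alpha r^2$ and apply the Gauss lemma identities $\nabla^\alpha f\,\nabla_\alpha\eta = -\varepsilon t^2$ and $\nabla^\alpha f\,\nabla_\alpha r^2 = r^2$, together with Propositions \ref{thm.pc_eta_est}--\ref{thm.pc_etab_est}, to show $|\nabla^\alpha\bar{f}\,\nabla_\alpha\bar{h}|\lesssim \varepsilon(r^2+t^2)$, so that $|F'\nabla^\alpha\bar{f}\,\nabla_\alpha\bar{h}|\lesssim a(\bar{f}^{-1}+b)\varepsilon r^2$, which is an $O(\varepsilon_0/b_0)$ fraction of the target surplus.

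The hard part will be this last bookkeeping step: confirming that every $O(a^2 b)$ and $O(a^2\bar{f}^{-1}\varepsilon)$ error generated in the expansion has a coefficient strictly smaller than those of the two positive reserve terms, after invoking $\varepsilon_0 \ll b_0 \ll 1$ and $a\geq n^2$. Once this is verified, dividing the surplus equally yields \eqref{eq.misc_zero}. Throughout I would use the $\mc{O}(\cdot)$-notation introduced in \eqref{eq.misc_O} to suppress the universal constants coming from the curvature assumption \eqref{eq_thm_carl_cc0_LM}.
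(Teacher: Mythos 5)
Your broad framework mirrors the paper's: compute $F'$, $F''$, $F'''$, feed the shifted Gauss Lemma into the expansion of $\mc{B}$, and absorb errors via the hierarchy $\varepsilon_0\ll b_0\ll 1$, $a\gg 1$. The regrouping of $\mc{B}$ you propose is algebraically legitimate (it just merges part of $\bar{h}\mc{A}$ with the $\nabla^2\bar{f}$-contraction). However, the key intermediate claims are wrong, and the errors are not merely cosmetic.

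\emph{The claimed ``Minkowski-type identity'' for $P+2\bar{h}Q$ is incorrect.} Using Proposition \ref{thm.misc_gauss} one obtains
\[
P + 2\bar{h}Q \;=\; 2\bar{f}\,\eta^{-4}\Bigl[\,1 + \tfrac{5}{2}\varepsilon t^2 - \tfrac{1}{4}\varepsilon r^2 + \mc{O}(\varepsilon^2 t^2 f)\,\Bigr]\text{,}
\]
not $2\bar{f}\eta^{-4}[1+\tfrac12\varepsilon(\rho^2-t^2)+\widetilde{\mc{E}}]$. The first-order coefficient $\tfrac52\varepsilon t^2-\tfrac14\varepsilon r^2$ is \emph{negative} on the slice $t=0$, so there is no clean $+\tfrac12\bar f\varepsilon r^2$ surplus coming from this factor alone. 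The positive $\varepsilon r^2$-coefficient $\tfrac14$ in \eqref{eq.misc_zero} emerges only after combining with the $-2\varepsilon t^2$ contribution from $\tfrac12[2F'F''+F''']Q^2$ and the $-\tfrac14\varepsilon r^2$ contribution from $\bar h\mc A$, together with the identity $t^2 = r^2 - \rho^2$ and absorption of the $\mc{O}(\varepsilon\rho^2)$ error into $\tfrac{\varepsilon_0}{b_0}\mc{O}(a^2 b)$.

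\emph{The order count for the remaining terms is wrong.} The term $\tfrac12[2F'F''+F''']Q^2$ is \emph{not} ``at most $O(a)$''; since $\tfrac12[2F'F''+F'''] = -a(a+1)\bar f^{-3}-a^2 b\bar f^{-2}$ and $Q^2 = \bar f^2\eta^{-4}(1+\mc{O}(\varepsilon t^2))$, it contributes $-a^2\bar f^{-1}\eta^{-4}-a^2b\eta^{-4}+\mc{O}(a)$, which is genuinely $O(a^2)$. Its $-a^2\bar f^{-1}\eta^{-4}$ piece is essential: it must cancel exactly against the $+a^2\bar f^{-1}\eta^{-4}$ hidden in your ``$\cdots$'' for the first term. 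The paper's decomposition makes this explicit: the $\tfrac12 a^2\bar f^{-1}$ from $\mc B_1$, the $-a^2\bar f^{-1}$ from $\mc B_2$, and the $\tfrac12 a^2\bar f^{-1}$ from $\mc B_5$ cancel to zero, which is the structural fact that makes the proposition work.

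\emph{The final absorption step is backwards.} You write that a leftover $-\tfrac12 a^2\bar f^{-1}$ is ``beaten by the surplus $\tfrac32 a^2 b$ \dots since $b\bar f\ll 1$.'' But $b\bar f\ll 1$ gives $a^2 b\ll a^2\bar f^{-1}$, so the surplus is \emph{smaller}, not larger. Since $\bar f\to 0$ on $\partial\mc D$, nothing of size $a^2 b$ or $a^2\bar f^{-1}\varepsilon r^2$ can dominate a genuine deficit of size $a^2\bar f^{-1}$; the only way to close the estimate is the exact cancellation of those terms. Without identifying that cancellation, your argument fails near the null cone, which is precisely the regime where the Carleman weight degenerates and where the estimate must hold.
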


\begin{proof}
Since the hypothesis of Theorem \ref{thm_carl_est_LM} is true, we get that there exist universal constants $\mc{C}_\ast, \mc{C}_\dagger > 0$ such that Assumption \ref{ass.hf_curv} and equation \eqref{eq.pc_ass} holds.

Now, note that direct computations yield
\begin{equation}
\label{eql.misc_zero_0} F' = - a \bar{f}^{-1} - a b \text{,} \qquad F'' = a \bar{f}^{-2} \text{,} \qquad F''' = - 2 a \bar{f}^{-3} \text{.}
\end{equation}
We begin by applying \eqref{eq.misc_B} and \eqref{eq.misc_A} to expand
\begin{align}
\label{eql.misc_zero_1} \mc{B} &= [ ( F' )^2 + F'' ] \nabla_{ \alpha \beta } \bar{f} \nabla^\alpha \bar{f} \nabla^\beta \bar{f} + \frac{1}{2} ( 2 F' F'' + F''' ) \nabla^\alpha \bar{f} \nabla_\alpha \bar{f} \nabla^\beta \bar{f} \nabla_\beta \bar{f} \\
\notag &\qquad + \bar{h} F'' \nabla^\alpha \bar{f} \nabla_\alpha \bar{f} + F' \nabla^\alpha \bar{f} \nabla_\alpha \bar{h} + \bar{h} \mc{A} \\
\notag &= \mc{B}_1 + \mc{B}_2 + \mc{B}_3 + \mc{B}_4 + \mc{B}_5 \text{.}
\end{align}
For $\mc{B}_1$, we expand using \eqref{eq.hf_f}, Proposition \ref{thm.misc_gauss}, and \eqref{eq.misc_F}:
\begin{align}
\label{eql.misc_zero_11} \mc{B}_1 &= \frac{1}{2} [ a^2 ( \bar{f}^{-1} + 2 b + b^2 \bar{f} ) + a \bar{f}^{-1} ] \eta^{-4} [ 1 + 5 \varepsilon t^2 + 2 \varepsilon^2 t^4 + \mc{O} ( \varepsilon^2 t^2 f ) ] \\
\notag &= a^2 \bar{f}^{-1} \eta^{-4} \left( \frac{1}{2} + \frac{5}{2} \varepsilon t^2 + \varepsilon^2 t^4 \right) + a^2 b \eta^{-4} + \frac{1}{2} a \bar{f}^{-1} \eta^{-4} + \mc{O} ( a^2 \varepsilon \eta^{-4} ) \\
\notag &\qquad + \mc{O} ( a^2 b \eta^{-4} \cdot \varepsilon t^2 ) + \mc{O} ( a^2 b^2 \bar{f} \eta^{-4} ) + \mc{O} ( a \bar{f}^{-1} \eta^{-4} \cdot \varepsilon t^2 ) \\
\notag &= a^2 \bar{f}^{-1} \eta^{-4} \left( \frac{1}{2} + \frac{5}{2} \varepsilon r^2 \right) + a^2 b \eta^{-4} + \frac{1}{2} a \bar{f}^{-1} \eta^{-4} \\
\notag &\qquad + \left( \frac{ \varepsilon_0 }{ b_0 } + b_0 \right) \mc{O} ( a^2 b \eta^{-4} ) + \left( \frac{1}{a} + \varepsilon_0 \right) \mc{O} ( a^2 \bar{f}^{-1} \eta^{-4} \cdot \varepsilon r^2 ) \text{.}
\end{align}
A similar computation for $\mc{B}_2$ then yields
\begin{align}
\label{eql.misc_zero_12} \mc{B}_2 &= - [ a^2 ( \bar{f}^{-1} + b ) + a \bar{f}^{-1} ] \eta^{-4} [ 1 + \varepsilon t^2 + \mc{O} ( \varepsilon^2 t^2 f ) ]^2 \\
\notag &= - a^2 \bar{f}^{-1} \eta^{-4} ( 1 + 2 \varepsilon t^2 + \varepsilon^2 t^4 ) - a^2 b \eta^{-4} - a \bar{f}^{-1} \eta^{-4} \\
\notag &\qquad + \mc{O} ( a^2 \varepsilon \eta^{-4} ) + \mc{O} ( a^2 b \eta^{-4} \cdot \varepsilon t^2 ) + \mc{O} ( a \bar{f}^{-1} \eta^{-4} \cdot \varepsilon t^2 ) \\
\notag &= - a^2 \bar{f}^{-1} \eta^{-4} ( 1 + 2 \varepsilon r^2 ) - a^2 b \eta^{-4} - a \bar{f}^{-1} \eta^{-4} \\
\notag &\qquad + \left( \frac{ \varepsilon_0 }{ b_0 } + \varepsilon_0 \right) \mc{O} ( a^2 b \eta^{-4} ) + \left( \frac{1}{a} + \varepsilon_0 \right) \mc{O} ( a^2 \bar{f}^{-1} \eta^{-4} \cdot \varepsilon r^2 ) \text{.}
\end{align}

Next, we turn to $\mc{B}_5$.
Using the same observations as before and \eqref{eq.pc}, we have
\begin{align*}
\mc{A} &= [ a^2 ( \bar{f}^{-1} + 2 b + b^2 \bar{f} ) + a \bar{f}^{-1} ] \eta^{-2} [ 1 + \varepsilon t^2 + \mc{O} ( \varepsilon t^2 f ) ] - a ( \bar{f}^{-1} + b ) \left( \eta^{-1} - \frac{1}{2} \varepsilon r^2 \right) \\
&= a^2 \bar{f}^{-1} \eta^{-2} \cdot ( 1 + \varepsilon t^2 ) + 2 a^2 b \eta^{-2} + a \bar{f}^{-1} \eta^{-2} - a \bar{f}^{-1} \eta^{-1} \\
&\qquad + \left( \frac{ \varepsilon_0 }{ b_0 } + b_0 \right) \mc{O} ( a^2 b \eta^{-4} ) + \frac{1}{a} \, \mc{O} ( a^2 \bar{f}^{-1} \eta^{-4} \cdot \varepsilon r^2 ) + \frac{1}{a} \cdot \mc{O} ( a^2 b \eta^{-4} ) \\
&= a^2 \bar{f}^{-1} \eta^{-2} ( 1 + \varepsilon t^2 ) + 2 a^2 b \eta^{-2} + \left( \frac{1}{a} + \frac{ \varepsilon_0 }{ b_0 } + b_0 \right) \mc{O} ( a^2 b \eta^{-4} ) \\
&\qquad + \frac{1}{a} \, \mc{O} ( a^2 \bar{f}^{-1} \eta^{-4} \cdot \varepsilon r^2 ) \text{.}
\end{align*}
Combining the above with \eqref{eq.pc} then yields
\begin{align}
\label{eql.misc_zero_13} \mc{B}_5 &= a^2 \bar{f}^{-1} \eta^{-4} \cdot \frac{1}{2} \eta ( 1 + \varepsilon t^2 ) + a^2 b \eta^{-4} - a^2 \bar{f}^{-1} \eta^{-4} \cdot \frac{1}{4} \varepsilon r^2 \eta^2 ( 1 + \varepsilon t^2 ) \\
\notag &\qquad + \left( \frac{1}{a} + \frac{ \varepsilon_0 }{ b_0 } + b_0 \right) \mc{O} ( a^2 b \eta^{-4} ) + \frac{1}{a} \, \mc{O} ( a^2 \bar{f}^{-1} \eta^{-4} \cdot \varepsilon r^2 ) \\
\notag &= a^2 \bar{f}^{-1} \eta^{-4} \left( \frac{1}{2} - \frac{1}{4} \varepsilon r^2 \right) + a^2 b \eta^{-4} + \left( \frac{1}{a} + \frac{ \varepsilon_0 }{ b_0 } + b_0 \right) \mc{O} ( a^2 b \eta^{-4} ) \\
\notag &\qquad + \left( \frac{1}{a} + \varepsilon_0 \right) \mc{O} ( a^2 \bar{f}^{-1} \eta^{-4} \cdot \varepsilon r^2 ) \text{.}
\end{align}
Furthermore, summing \eqref{eql.misc_zero_11}--\eqref{eql.misc_zero_13} yields
\begin{align}
\label{eql.misc_zero_20} \mc{B}_1 + \mc{B}_2 + \mc{B}_5 &= \frac{1}{4} a^2 \bar{f}^{-1} \cdot \varepsilon r^2 + a^2 b \eta^{-4} - \frac{1}{2} a \bar{f}^{-1} \eta^{-4} \\
\notag & \qquad + \left( \frac{1}{a} + \frac{ \varepsilon_0 }{ b_0 } + b_0 \right) \mc{O} ( a^2 b \eta^{-4} ) + \left( \frac{1}{a} + \varepsilon_0 \right) \mc{O} ( a^2 \bar{f}^{-1} \eta^{-4} \cdot \varepsilon r^2 ) \text{.}
\end{align}

For $\mc{B}_3$, we use the same tools as before to obtain
\begin{align}
\label{eql.misc_zero_21} \mc{B}_3 &= a \bar{f}^{-1} \eta^{-2} \left( \frac{1}{2} \eta^{-1} - \frac{1}{4} \varepsilon r^2 \right) [ 1 + \varepsilon t^2 + \mc{O} ( \varepsilon t^2 f ) ] \\
\notag &= \frac{1}{2} a \bar{f}^{-1} \eta^{-4} + \frac{1}{a} \, \mc{O} ( a^2 \bar{f}^{-1} \eta^{-4} \cdot \varepsilon r^2 ) \text{.}
\end{align}
For $\mc{B}_4$, a direct computation using \eqref{eq.pc_fb} and \eqref{eq.pc} yields
\begin{align*}
\nabla^\alpha \bar{f} \nabla_\alpha \bar{h} &= - \frac{1}{2} \eta^{-3} \nabla^\alpha f \nabla_\alpha \eta - \frac{1}{4} \eta^{-1} \nabla^\alpha f \nabla_\alpha r^2 + \frac{1}{2} \eta^{-4} f \nabla^\alpha \eta \nabla_\alpha \eta + \frac{1}{4} \varepsilon \eta^{-2} f \nabla^\alpha \eta \nabla_\alpha r^2 \\
&= - \frac{1}{2} \eta^{-3} \nabla^\alpha f \nabla_\alpha \eta - \frac{1}{4} \eta^{-1} \nabla^\alpha f \nabla_\alpha r^2 + \frac{1}{2} \eta^{-4} f \nabla^\alpha \eta \nabla_\alpha \eta \\
&\qquad + \varepsilon \eta^{-2} f \nabla^\alpha \eta \nabla_\alpha f + \frac{1}{4} \varepsilon \eta^{-2} f \nabla^\alpha \eta \nabla_\alpha t^2 \text{.}
\end{align*}
Applying \eqref{eq.hf_gauss}, \eqref{eq.pc_eta}, and \eqref{eq.pc_eta_est} to the above then results in the estimate
\[
\nabla^\alpha \bar{f} \nabla_\alpha \bar{h} = \mc{O} ( \varepsilon r^2 ) \text{,}
\]
and the above then yields that
\begin{equation}
\label{eql.misc_zero_22} \mc{B}_4 = a \bar{f}^{-1} \cdot \mc{O} ( \varepsilon r^2 ) = \frac{1}{a} \, \mc{O} ( a^2 \bar{f}^{-1} \eta^{-4} \cdot \varepsilon r^2 ) \text{.}
\end{equation}

Finally, combining \eqref{eql.misc_zero_1} and \eqref{eql.misc_zero_20}--\eqref{eql.misc_zero_22} yields
\begin{align*}
\mc{B} &= \frac{1}{4} a^2 \bar{f}^{-1} \cdot \varepsilon r^2 + a^2 b \eta^{-4} + \left( \frac{1}{a} + \frac{ \varepsilon_0 }{ b_0 } + b_0 \right) \mc{O} ( a^2 b \eta^{-4} ) \\
\notag &\qquad + \left( \frac{1}{a} + \varepsilon_0 \right) \mc{O} ( a^2 \bar{f}^{-1} \eta^{-4} \cdot \varepsilon r^2 ) \text{.}
\end{align*}
Taking the above and recalling \eqref{eq_thm_carl_a_LM} yields the desired \eqref{eq.misc_zero}.
\end{proof}

Now, there is one additional zero-order term in \eqref{eq_carl_1}, with the coefficient
\[
\mc{B}_\ast := - \frac{1}{2} \Box \bar{w} := - \frac{1}{2} \Box \left( \frac{1}{2} \Box \bar{f} - \bar{h} \right) \text{.}
\]
However, since both $\bar{f}$ and $\bar{h}$ are smooth on $\bar{\mc{D}}_{ r_0 }$, it follows that $\mc{B}_\ast$ must be bounded on $\mc{D}_{ r_0 }$.
Thus, taking $a$ to be sufficiently large, we can ensure $\mc{B}_\ast$ is dominated by $\mc{B}$. 

Next, using Propositions \ref{thm.hf_q_est} and \ref{thm.pc_eta_est}, shows that $w$ given by \eqref{eq_carl_def1_LM} satisfies
\[ w = c_n + \mc{O}(\varepsilon r^2), \]
for some constant $c_n > 0 $ dependent\footnote{One can actually show that $c_n = \frac{n-1}{4}$.} on $n$. Now, let us define the following vector field
\begin{equation} \label{eq_def_hatE_LM}
\hat{E}_\rho := \bar{f}^{- c_n} \bar{E}_\rho \bar{f}^{c_n}.
\end{equation}

Then, we have the following pointwise Carleman estimate.
\begin{proposition} \label{prop_carl_2}
Let $\psi \in C^2(\mc{U})$. Then, at every point of $\mc{U}$, the following estimate is satisfied
\begin{align} \label{eq_carl_2}
\frac{1}{4a} \bar{f} |\mc{L} \psi|^2 + \nb^\beta P_\beta & \geqslant \frac{r^2}{\rho^2} (\hat{E}_\rho \psi)^2 + \frac{1}{16} \varepsilon r^2 \cdot \frac{r^2}{\rho^2} (\bar{E}_\theta \psi)^2 + \frac{1}{16} \varepsilon r^2 \cdot \sum_A (\bar{E}_A \psi)^2 \\
\notag & \qquad + \frac{1}{4} a^2 b \cdot \psi^2 + \frac{1}{8} a^2 \bar{f}^{-1} \varepsilon r^2 \cdot \psi^2.
\end{align}
\end{proposition}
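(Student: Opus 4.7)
The strategy is to convert the pointwise identity \eqref{eq_carl_1} of Proposition \ref{prop_carl_1} into the one-sided estimate \eqref{eq_carl_2} in four steps: (i) absorb the cross term $\mc{L}\psi\cdot S_w\psi$ via weighted Cauchy--Schwarz, (ii) invoke pseudoconvexity on the $\bar{f}$-tangent part of the $\pi$-term, (iii) bound the remaining ``$\bar{E}_\rho$-directional'' pieces using Proposition \ref{prop_pi_comp} together with the positivity generated by $-2F'$, and (iv) apply Proposition \ref{thm.misc_zero} to the zero-order coefficient. First, moving $\mc{L}\psi\cdot S_w\psi$ to the right side of \eqref{eq_carl_1} and applying $\mc{L}\psi\cdot S_w\psi \ge -\frac{\bar{f}}{4a}|\mc{L}\psi|^2-\frac{a}{\bar{f}}|S_w\psi|^2$ yields
\[
\frac{\bar{f}}{4a}|\mc{L}\psi|^2+\nb^\beta P_\beta \ge \pi_{\alpha\beta}\nb^\alpha\psi\,\nb^\beta\psi + \bigl(-2F'-\tfrac{a}{\bar{f}}\bigr)|S_w\psi|^2 + \bigl(\mc{B}-\tfrac{1}{2}\square w\bigr)\psi^2.
\]
By \eqref{eql.misc_zero_0}, $-2F'=2a\bar{f}^{-1}+2ab$, so the coefficient of $|S_w\psi|^2$ equals $a\bar{f}^{-1}+2ab>0$.

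Next, I would decompose $Y:=\nb^\sharp\psi = Y^\rho\bar{E}_\rho + \bar{Y}$, where $\bar{Y}=Y^\theta\bar{E}_\theta+\sum_A Y^A\bar{E}_A$ is $\bar{f}$-tangent; by \eqref{eq_gcomp_00}, this splitting is $g$-orthogonal. Theorem \ref{thm.pc} together with Proposition \ref{thm.pc_gp_frame} then gives
\[
\pi(\bar{Y},\bar{Y}) \ge \frac{\varepsilon r^2}{8}\bar{g}_+(\bar{Y},\bar{Y}) = \frac{\varepsilon r^2}{8}\Bigl[\tfrac{\rho^2}{r^2}(Y^\theta)^2+\sum_A(Y^A)^2\Bigr].
\]
Using Proposition \ref{prop_gcomp} to invert the relations $\bar{E}_\theta\psi = g(Y,\bar{E}_\theta)$ and $\bar{E}_A\psi = g(Y,\bar{E}_A)$ (the off-diagonal pairings being $\mc{O}(\varepsilon r^2)$ perturbations of a diagonal matrix), I can rewrite this lower bound in the form $\frac{\varepsilon r^2}{16}\cdot\frac{r^2}{\rho^2}(\bar{E}_\theta\psi)^2 + \frac{\varepsilon r^2}{16}\sum_A(\bar{E}_A\psi)^2$ plus perturbations absorbed into the remaining half of the pseudoconvexity term.

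The main obstacle is the third step: handling the $\bar{E}_\rho$-cross terms in $\pi(Y,Y)$, namely
\[
(Y^\rho)^2\pi_{\bar\rho\bar\rho} + 2Y^\rho\Bigl(Y^\theta\pi_{\bar\rho\bar\theta}+\sum_A Y^A\pi_{\bar\rho\bar A}\Bigr),
\]
whose factors are all controlled by $\tfrac{\rho^2}{r^2}\mc{O}(\varepsilon r^2)$ by Proposition \ref{prop_pi_comp}. I would absorb the cross products into (a) the $\bar{f}$-tangent pseudoconvexity reserve set aside above, and (b) the $(a\bar{f}^{-1}+2ab)|S_w\psi|^2$ positivity, using weighted AM--GM; the $(Y^\rho)^2\pi_{\bar\rho\bar\rho}$ term is directly dominated by a small fraction of $(a\bar{f}^{-1}+2ab)|S_w\psi|^2$. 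To convert what remains of this positive reservoir into $\frac{r^2}{\rho^2}(\hat{E}_\rho\psi)^2$, I use the identity $S_w\psi - \tfrac{r}{2}\hat{E}_\rho\psi = \mc{O}(\varepsilon r^2)\,\psi$, which follows from $S=\tfrac{r}{2}\bar{E}_\rho$, the formula \eqref{eq_del_rbrf} for $\bar{E}_\rho\bar{f}$ applied to the definition \eqref{eq_def_hatE_LM}, and $w = c_n + \mc{O}(\varepsilon r^2)$. Since $a\bar{f}^{-1}+2ab \gtrsim a\rho^{-2}$ and $\bar{f}\simeq \rho^2/4$, this produces a coefficient $\gtrsim a\cdot\tfrac{r^2}{\rho^2}$, which is larger than $\tfrac{r^2}{\rho^2}$ under the assumption $a\ge n^2$ from \eqref{eq_thm_carl_a_LM}.

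Finally, for the zero-order coefficient, Proposition \ref{thm.misc_zero} gives $\mc{B} \ge \tfrac{1}{2}a^2b + \tfrac{1}{4}a^2\bar{f}^{-1}\varepsilon r^2$. Since $\bar{f}$ and $\bar{h}$ are smooth on $\bar{\mc{D}}_{r_0}$, the term $\tfrac{1}{2}\square w$ is pointwise bounded, so for $a\ge n^2$ (taken sufficiently large) we absorb it into a fraction of $\mc{B}$, and simultaneously absorb the $\mc{O}(\varepsilon^2 r^4)\psi^2$ losses generated above by $\tfrac{1}{8}a^2\bar{f}^{-1}\varepsilon r^2\psi^2$. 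Collecting all inequalities and rearranging yields \eqref{eq_carl_2}.
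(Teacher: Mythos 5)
Your proposal follows essentially the same route as the paper's proof: a weighted Cauchy--Schwarz to absorb $\mc{L}\psi\cdot S_w\psi$, decomposition of $\nb^\sharp\psi$ into its $\bar{E}_\rho$-component and the $\bar{f}$-tangent part $\bar{Y}$, pseudoconvexity (Theorem \ref{thm.pc}) applied to $\pi(\bar{Y},\bar{Y})$, the estimates of Proposition \ref{prop_pi_comp} and the near-diagonal inversion of $g(\bar{E}_a,\bar{E}_b)$ to absorb the $\bar\rho$-cross terms, the relation $S_w\psi=\tfrac{r}{2}\hat{E}_\rho\psi+\mc{O}(\varepsilon r^2)\psi$ to recover the $(\hat{E}_\rho\psi)^2$ term from $-F'|S_w\psi|^2$, Proposition \ref{thm.misc_zero} for the zero-order coefficient, and absorption of $\tfrac12\square w$ by taking $a$ large. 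The only cosmetic difference is that you fix the Cauchy--Schwarz weight $\bar{f}/(2a)$ at the outset rather than using the exact weight $-1/(4F')$ and then bounding it by $\bar{f}/(4a)$, which leaves you $(a\bar{f}^{-1}+2ab)|S_w\psi|^2$ instead of the paper's $(a\bar{f}^{-1}+ab)|S_w\psi|^2$; this changes nothing downstream.
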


\begin{proof}

Since $-F'>0$, we have
\begin{align*}
|\mc{L} \psi| |S_w \psi| \leqslant -\frac{1}{4F'} |\mc{L} \psi|^2 - F' |S_w \psi|^2.
\end{align*}
Furthermore, also note that
\begin{align*}
- \frac{1}{4F'} \leqslant \frac{\bar{f}}{4 a (1+ b \bar{f})} \leqslant \frac{\bar{f}}{4a}.
\end{align*}
Using the above and Proposition \ref{thm.misc_zero}, shows that \eqref{eq_carl_1} reduces to
\begin{equation} \label{eq_carl_2_1}
\frac{1}{4a} \bar{f} |\mc{L} \psi|^2 + \nb^\beta P_\beta \geqslant \pi_{\alpha\beta} \nb^\alpha \psi \nb^\beta \psi - F' |S_w \psi|^2 + \frac{1}{4} a^2 b \cdot \psi^2  + \frac{1}{4} a^2 \bar{f}^{-1} \varepsilon r^2 \cdot \psi^2.
\end{equation}
Now we will estimate the term $\pi_{\alpha\beta} \nb^\alpha \psi \nb^\beta \psi$. For this purpose let us define the following quantity
\begin{equation}
\nb^\alpha \psi := Y^\alpha.
\end{equation}
This implies that
\begin{align*}
\nb^\sharp \psi = Y^\rho \bar{E}_\rho + Y^a \bar{E}_a.
\end{align*}
Hence,
\begin{align*}
\pi_{\alpha\beta} \nb^\alpha \psi \nb^\beta \psi & = \pi_{\alpha\beta} (Y^\rho \bar{E}_\rho^\alpha + X^\alpha, Y^\rho \bar{E}_\rho^\beta + X^\beta) \numberthis \label{eq_pi_est}\\
& =  \pi_{\bar{\rho}\bar{\rho}} Y^\rho Y^\rho + 2 \pi_{\bar{\rho}\bar{\theta}} Y^\rho Y^\theta + 2 \sum_A \pi_{\bar{\rho}\bar{A}} Y^\rho Y^A + \pi(\bar{X}, \bar{X}),
\end{align*}
where $\bar{X}:= Y^a \bar{E}_a$. Note that, due to the pseudoconvexity result Theorem \ref{thm.pc}, we get
\begin{equation} \label{eq_pi_X}
\pi(\bar{X}, \bar{X}) \geqslant \frac{ \varepsilon_0 }{8} \frac{ r^2 }{ r_0^2 } \cdot \bar{g}_+ ( \bar{X}, \bar{X} ).
\end{equation}
Now, we will look at the behaviour of the remaining terms in \eqref{eq_pi_est}. Note that, the $\bar{E}_\alpha$ frames are not mutually orthogonal. Thus, lowering the indices on $Y^\alpha$ is tricky. Although exact computation is difficult, we can nevertheless estimate them.

\textbf{I. Changing $Y^\alpha$ to $\bar{E}_\alpha$.}
Note that, we have the following mapping of $Y^\alpha$ and $\bar{E}_\alpha$.
\begin{align*}
\bar{E}_\rho \psi & = \frac{\rho^2}{r^2} \eta^{-3} [1+ \mc{O} (\varepsilon t^2)] \cdot Y^\rho, \numberthis \label{eq_barE_Y_LM} \\
\bar{E}_\theta \psi & = g(\bar{E}_\theta , \bar{E}_\theta ) Y^\theta + \sum_A g(\bar{E}_\theta, \bar{E}_A) Y^A, \\
\bar{E}_A \psi & = g(\bar{E}_\theta, \bar{E}_A) Y^\theta + \sum_B g(\bar{E}_A, \bar{E}_B) Y^B.
\end{align*}

\begin{remark} \label{rmk_A+B_inv}
To find the inverse map, we will use the following Taylor expansion
\[ (A+B)^{-1} = A^{-1} - A^{-1} B A^{-1} + A^{-1} B A^{-1} B A^{-1} + \cdots,\]
where $A,B$ are matrices satisfying $||A^{-1}B|| < 1$. In our case, using Proposition \ref{prop_gcomp} and letting $g(\bar{E},\bar{E}) := g (E,E) + B_\varepsilon$, shows that we can use the above expansion with $A = g(E,E)$ and $B = B_\varepsilon$ to find the inverse map of \eqref{eq_barE_Y_LM}. Note that, this is possible because the error terms that form $B_\varepsilon$ are very small compared to the corresponding leading terms in $g(E,E)$. 
\end{remark}
Then, estimating the inverse map by using Propositions \ref{prop_gcomp}, \ref{thm.pc_eta_est}, and equations \eqref{eq_thm_carl_a_LM}, \eqref{eq_thm_carl_cc0_LM}, gives us
\begin{align*}
Y^\rho & = [1+ \mc{O} (\varepsilon t^2) ] \frac{r^2}{\rho^2} \bar{E}_\rho \psi, \numberthis \label{eq_Y_barE_LM} \\
Y^\theta & = -(1 + \mc{O} (\varepsilon^2 r^2 t^2 ))\frac{r^2}{\rho^2} \bar{E}_\theta \psi + \mc{O} ({\varepsilon^2 r^2 t^2} ) \sum_A \bar{E}_A \psi, \\
\sum_A Y^A & = (1 + \mc{O} (\varepsilon^2 r^2 t^2 )) \sum_A \bar{E}_A \psi + \mc{O} ({\varepsilon^2 r^2 t^2} ) \bar{E}_\theta \psi.
\end{align*}
This further implies the following estimates
\begin{align*}
(Y^\rho)^2 & = [1+ \mc{O} (\varepsilon t^2) ] \frac{r^4}{\rho^4} (\bar{E}_\rho \psi)^2, \numberthis \label{eq_Y2_list} \\
(Y^\theta)^2 & \gtrsim \frac{r^4}{\rho^4} (\bar{E}_\theta \psi)^2 + \mc{O} ({\varepsilon^4 r^4 t^4} ) \sum_A (\bar{E}_A \psi)^2, \\
\sum_A (Y^A)^2 & \gtrsim \sum_A (\bar{E}_A \psi)^2 + \mc{O} ({\varepsilon^4 r^4 t^4} ) (\bar{E}_\theta \psi)^2, \\
|Y^\rho Y^\theta| & \leqslant K \cdot \frac{r^4}{\rho^4} (\bar{E}_\rho \psi)^2 + \frac{1}{4K} \cdot \frac{r^4}{\rho^4} (\bar{E}_\theta \psi)^2 + \mc{O} (\varepsilon^4 r^4 t^4) \sum_A (\bar{E}_A \psi)^2, \\
|Y^\rho \sum_A Y^A| & \leqslant K \cdot \frac{r^4}{\rho^4} (\bar{E}_\rho \psi)^2 + \frac{1}{4K}\sum_A (\bar{E}_A \psi)^2 + \mc{O} (\varepsilon^4 r^4 t^4) (\bar{E}_\theta \psi)^2.,
\end{align*}
where $K>0$ is some constant to be fixed later. 
Then, using Proposition \ref{prop_pi_comp} and equation \eqref{eq_Y2_list}, shows that
\begin{align*}
(a) \pi_{\bar{\rho}\bar{\rho}} Y^\rho Y^\rho & = \frac{\rho^2}{4r^2} \mc{O}(\varepsilon r^2) \cdot [1+ \mc{O} (\varepsilon t^2) ] \frac{r^4}{\rho^4} (\bar{E}_\rho \psi)^2 \\
& = \frac{r^2}{\rho^2} (\bar{E}_\rho \psi)^2 \mc{O}(\varepsilon r^2),\\
(b) \pi_{\bar{\rho}\bar{\theta}} Y^\rho Y^\theta & = \frac{\rho^2}{r^2} \mc{O}(\varepsilon r^2) \cdot Y^\rho Y^\theta \\
& \geqslant - \frac{\rho^2}{r^2} \mc{O}(\varepsilon r^2) \left[ K \frac{r^4}{\rho^4} (\bar{E}_\rho \psi)^2 + \frac{1}{4K} \frac{r^4}{\rho^4} (\bar{E}_\theta \psi)^2 + \mc{O} (\varepsilon^4 t^4 r^4) \sum_A (\bar{E}_A \psi)^2 \right] \\
& \geqslant - \frac{r^2}{\rho^2}(\bar{E}_\rho \psi)^2 \cdot K \cdot \mc{O}(\varepsilon r^2) - \frac{1}{64} \varepsilon r^2 \cdot \frac{r^2}{\rho^2}(\bar{E}_\theta \psi)^2 - \frac{\rho^2}{r^2} \sum_A (\bar{E}_A \psi)^2 \mc{O} (\varepsilon^4 t^4 r^4),
\end{align*}
where the constant $K$ is chosen such that it offsets the constant from the $\mc{O}(\varepsilon r^2)$ coefficient of the $(\bar{E}_\theta \psi)^2$ term and gives us the above expression. Analogously, we also get the following
\begin{align*}
(c) \sum_A \pi_{\bar{\rho}\bar{A}} Y^\rho Y^A \geqslant - \frac{r^2}{\rho^2}(\bar{E}_\rho \psi)^2 \cdot K \cdot \mc{O}(\varepsilon r^2) - \frac{r^2}{\rho^2} (\bar{E}_\theta \psi)^2 \mc{O} (\varepsilon^4 t^4 r^4) - \frac{1}{64} \varepsilon r^2 \cdot \sum_A (\bar{E}_A \psi)^2.
\end{align*}
Combining the three equations $(a),(b),(c)$ above and using \eqref{eq_pi_X}, shows that \eqref{eq_pi_est} reduces to
\begin{align*}
\pi_{\alpha\beta} \nb^\alpha\psi \nb^\beta \psi & \geqslant \pi_{\bar{\rho}\bar{\rho}} Y^\rho Y^\rho + 2 \pi_{\bar{\rho}\bar{\theta}} Y^\rho Y^\theta + 2 \sum_A \pi_{\bar{\rho}\bar{A}} Y^\rho Y^A + \frac{ \varepsilon_0 }{8} \frac{ r^2 }{ r_0^2 } \cdot \bar{g}_+ ( \bar{X}, \bar{X} ) \numberthis \label{eq_pi_final}\\
& \geqslant \frac{r^2}{\rho^2} (\bar{E}_\rho \psi)^2 \cdot K \cdot \mc{O}(\varepsilon r^2) - \frac{1}{32} \varepsilon r^2 \cdot \frac{r^2}{\rho^2}(\bar{E}_\theta \psi)^2 - \frac{1}{32} \varepsilon r^2 \cdot \sum_A (\bar{E}_A \psi)^2 \\
& \qquad + \frac{1}{8} \varepsilon r^2 \left[ \frac{r^2}{\rho^2} (\bar{E}_\theta \psi)^2 + \sum_A (\bar{E}_A \psi)^2 \right]\\
& \geqslant \frac{r^2}{\rho^2} (\bar{E}_\rho \psi)^2 \cdot K \cdot \mc{O}(\varepsilon r^2) + \frac{1}{16} \varepsilon r^2 \cdot \frac{r^2}{\rho^2} (\bar{E}_\theta \psi)^2 + \frac{1}{16} \varepsilon r^2 \cdot \sum_A (\bar{E}_A \psi)^2,
\end{align*}
for $\varepsilon_0$ sufficiently small.

\textbf{II. Estimating $- F' |S_w \psi|^2$.} Now, we will consider the term $ - F' |S_w \psi|^2 $. Recall the following definition from \eqref{eq_def_hatE_LM}:
\begin{equation*}
\hat{E}_\rho := \bar{f}^{- c_n} \bar{E}_\rho \bar{f}^{c_n}.
\end{equation*}
Using the above and \eqref{eq_del_rbrf}, we get
\begin{equation} \label{eq_Ehat_1_LM}
\hat{E}_\rho \psi = \bar{E}_\rho \psi + c_n \cdot \frac{2}{r} \eta^{-3} [1 + \mc{O}(\varepsilon r^2)] \cdot \psi.
\end{equation}
Now, \eqref{eq_def_barrho} and  \eqref{eq_carl_def1_LM} imply that
\begin{align*}
S \psi = \frac{r}{2} \bar{E}_\rho \psi.
\end{align*}
Then using \eqref{eq_Ehat_1_LM}, show that
\begin{align*}
S_w \psi & = S\psi + w\psi\\
& = \frac{r}{2} \hat{E}_\rho \psi - c_n \cdot \eta^{-3} [1 + \mc{O}(\varepsilon r^2)] \cdot \psi + c_n \psi^2 + \mc{O}(\varepsilon r^2) \cdot \psi^2 \\
& = \frac{r}{2} \hat{E}_\rho \psi + \mc{O}(\varepsilon r^2) \cdot \psi^2.
\end{align*}
This implies that
\begin{align*}
- F' |S_w \psi|^2 & \geqslant a \bar{f}^{-1} (1 + b \bar{f}) \left[ \frac{r^2}{4}(\hat{E}_\rho \psi)^2 - c_n^2 \mc{O}(\varepsilon^2 r^4) \cdot \psi^2 - r |\hat{E}_\rho \psi| |\psi| \mc{O}(\varepsilon r^2) \right] \numberthis \label{eq_Erho_a} \\
& \geqslant \frac{a}{8} \cdot \frac{r^2}{\rho^2}(\hat{E}_\rho \psi)^2 - 2 c_n^2  \bar{f}^{-1} \mc{O}(\varepsilon^2 r^4) \cdot \psi^2 .
\end{align*}
Next, we will estimate the $\bar{E}_\rho \psi$ term in the RHS of \eqref{eq_pi_final} by $\hat{E}_\rho \psi$. Note that, \eqref{eq_Ehat_1_LM} offers us the following estimate
\begin{equation} \label{eq_Ebar_Ehat_K}
K \cdot \mc{O}(\varepsilon r^2) \frac{r^2}{\rho^2} (\bar{E}_\rho \psi)^2 \geqslant K \cdot \mc{O}(\varepsilon r^2) \frac{r^2}{\rho^2} (\hat{E}_\rho \psi)^2 - K \cdot \mc{O}(\varepsilon r^2) c_n^2 \bar{f}^{-1} \cdot \psi^2.
\end{equation}
Then, combining \eqref{eq_pi_final}, \eqref{eq_Erho_a}, and \eqref{eq_Ebar_Ehat_K} gives us
\begin{align*}
& \pi_{\alpha\beta} \nb^\alpha\psi \nb^\beta \psi - F' |S_w \psi|^2 + \frac{1}{4} a^2 b \cdot \psi^2  + \frac{1}{4} a^2 \bar{f}^{-1} \cdot \varepsilon r^2\\
& \geqslant K \cdot \mc{O}(\varepsilon r^2) \frac{r^2}{\rho^2} (\hat{E}_\rho \psi)^2 + \frac{1}{16} \varepsilon r^2 \cdot \frac{r^2}{\rho^2} (\bar{E}_\theta \psi)^2 + \frac{1}{16} \varepsilon r^2 \cdot \sum_A (\bar{E}_A \psi)^2 + \frac{a}{8} \cdot \frac{r^2}{\rho^2}(\hat{E}_\rho \psi)^2 \\
& \qquad - c_n^2 \bar{f}^{-1} [ K + 2 \mc{O}(\varepsilon r^2) ] \mc{O}(\varepsilon r^2) \cdot \psi^2  + \frac{1}{4} a^2 b \cdot \psi^2  + \frac{1}{4} a^2 \bar{f}^{-1} \cdot \varepsilon r^2 \\
& \geqslant \frac{a}{16} \cdot \frac{r^2}{\rho^2}(\hat{E}_\rho \psi)^2 + \frac{1}{16} \varepsilon r^2 \cdot \frac{r^2}{\rho^2} (\bar{E}_\theta \psi)^2 + \frac{1}{16} \varepsilon r^2 \cdot \sum_A (\bar{E}_A \psi)^2 + \frac{1}{4} a^2 b \cdot \psi^2  + \frac{1}{8} a^2 \bar{f}^{-1} \varepsilon r^2 \cdot \psi^2,
\end{align*}
for sufficiently large $a$. Now, we can drop the coefficient $a$ from the $\hat{E}_\rho \psi$ term, since we absorbed any possible negative coefficients using $a$ and it is no longer needed. Thus, we have the following estimate
\begin{align*}
& \pi_{\alpha\beta} \nb^\alpha\psi \nb^\beta \psi - F' |S_w \psi|^2 + \frac{1}{4} a^2 b \cdot \psi^2  + \frac{1}{4} a^2 \bar{f}^{-1} \cdot \varepsilon r^2 \numberthis \label{eq_carl_2_3}\\
& \geqslant \frac{r^2}{\rho^2}(\hat{E}_\rho \psi)^2 + \frac{1}{16} \varepsilon r^2 \cdot \frac{r^2}{\rho^2} (\bar{E}_\theta \psi)^2 + \frac{1}{16} \varepsilon r^2 \cdot \sum_A (\bar{E}_A \psi)^2 + \frac{1}{4} a^2 b \cdot \psi^2  + \frac{1}{8} a^2 \bar{f}^{-1} \varepsilon r^2 \cdot \psi^2. 
\end{align*}
Substituting \eqref{eq_carl_2_3} in \eqref{eq_carl_2_1}, shows that \eqref{eq_carl_2} holds. This completes the proof of Proposition \ref{prop_carl_2}.
\end{proof}

\subsubsection{Changing frames back to $E_\alpha$.} \label{ssec_carl_abar_a_LM}
Now we will change the frames used in Proposition \ref{prop_carl_2} from $ (\hat{E}_\rho, \bar{E}_\theta, \bar{E}_A ) $ to $ (E_\rho, E_\theta, E_A ) $, since these were the frames we started with.

\begin{proposition} \label{prop_carl_CF}
Let $\psi \in C^2(\mc{U})$. Then, at every point of $\mc{U}$, the following estimate is satisfied
\begin{align} \label{eq_carl_CF}
\frac{1}{4a} \bar{f} |\mc{L} \psi|^2 + \nb^\beta P_\beta & \gtrsim \frac{1}{32} \varepsilon r^2 \frac{r^2}{\rho^2} (E_\rho \psi)^2 + \frac{1}{32} \varepsilon r^2 \frac{r^2}{\rho^2} (E_\theta \psi)^2 + \frac{1}{32} \varepsilon r^2 \sum_A (E_A \psi)^2 \\
\notag & \qquad + \frac{1}{4} a^2 b \cdot \psi^2 + \frac{1}{16} a^2 \bar{f}^{-1} \varepsilon r^2 \cdot \psi^2.
\end{align}
\end{proposition}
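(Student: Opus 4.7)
The plan is to rewrite the three frame-derivative contributions on the right-hand side of \eqref{eq_carl_2} in terms of the original frames $(E_\rho, E_\theta, E_A)$ using the change-of-frame identities \eqref{eq.pc_Eb}, \eqref{eq_barrho2}, and \eqref{eq_Ehat_1_LM}, and then to absorb all the resulting cross terms into positive terms already present in \eqref{eq_carl_2}. Room for absorption comes from two sources: the target coefficients $\tfrac{1}{32}\varepsilon r^2$ in \eqref{eq_carl_CF} are only half the coefficients $\tfrac{1}{16}\varepsilon r^2$ of the $\bar{E}$-derivatives on the left, and the $(\hat{E}_\rho\psi)^2$ term comes with the full coefficient $1$, vastly dominating any $\varepsilon r^2$-scale correction once $\varepsilon_0$ is small enough.

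First treat the tangential directions. From $\bar{E}_\theta = E_\theta + \tfrac{r}{2}E_\theta\eta\cdot E_\rho$ and $\bar{E}_A = E_A + \tfrac{r}{2}E_A\eta\cdot E_\rho$, together with Proposition \ref{thm.pc_eta_est} (which gives $\tfrac{r}{2}E_\theta\eta,\tfrac{r}{2}E_A\eta = \mc{O}(\varepsilon r^2)$), Young's inequality yields
\[
(\bar{E}_\theta\psi)^2 \geq \tfrac{1}{2}(E_\theta\psi)^2 - \mc{O}(\varepsilon^2 r^4)(E_\rho\psi)^2, \qquad (\bar{E}_A\psi)^2 \geq \tfrac{1}{2}(E_A\psi)^2 - \mc{O}(\varepsilon^2 r^4)(E_\rho\psi)^2.
\]
Multiplying each by its coefficient in \eqref{eq_carl_2} produces the desired $\tfrac{1}{32}\varepsilon r^2\tfrac{r^2}{\rho^2}(E_\theta\psi)^2$ and $\tfrac{1}{32}\varepsilon r^2\sum_A(E_A\psi)^2$ contributions, leaving only cross-terms of size $\mc{O}(\varepsilon^3 r^6)\tfrac{r^2}{\rho^2}(E_\rho\psi)^2$, which are a factor $\varepsilon r^2 \ll 1$ smaller than the $\tfrac{r^2}{\rho^2}(\hat{E}_\rho\psi)^2$ reservoir on the left.

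For the radial direction, chaining \eqref{eq_Ehat_1_LM} with \eqref{eq_barrho2} gives
\[
\hat{E}_\rho\psi = \eta^{-2}[1+\mc{O}(\varepsilon^2 t^2 r^2)] E_\rho\psi + \eta^{-2}\tfrac{rE_\theta\eta}{2}\bar{E}_\theta\psi - \eta^{-2}\tfrac{\rho^2}{r^2}\sum_A\tfrac{rE_A\eta}{2}\bar{E}_A\psi + \tfrac{2c_n}{r}\eta^{-3}[1+\mc{O}(\varepsilon r^2)]\psi.
\]
A four-term Young's inequality then produces, for some universal $C>0$,
\[
(\hat{E}_\rho\psi)^2 \geq \tfrac{1}{4}(E_\rho\psi)^2 - C\mc{O}(\varepsilon^2 r^4)(\bar{E}_\theta\psi)^2 - C\mc{O}(\varepsilon^2 r^4)\tfrac{\rho^4}{r^4}\sum_A(\bar{E}_A\psi)^2 - \tfrac{C}{r^2}\psi^2.
\]
Multiplying by $\tfrac{r^2}{\rho^2}$, the leading $\tfrac{1}{4}\tfrac{r^2}{\rho^2}(E_\rho\psi)^2$ term is much larger than the required $\tfrac{1}{32}\varepsilon r^2\tfrac{r^2}{\rho^2}(E_\rho\psi)^2$; the $\bar{E}_\theta, \bar{E}_A$ errors are a factor $\varepsilon r^2\ll 1$ smaller than the $\tfrac{1}{16}\varepsilon r^2$ coefficients on the left and absorb into them (together with the small cross-terms from the previous step); and the $\tfrac{1}{r^2}\psi^2$ correction becomes $\tfrac{1}{\rho^2}\psi^2\sim \bar{f}^{-1}\psi^2$ and is absorbed by the zero-order term $\tfrac{1}{8}a^2\bar{f}^{-1}\varepsilon r^2\psi^2$, halving its coefficient, because $a^2\varepsilon r^2\gg 1$ by \eqref{eq_thm_carl_a_LM}.

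The main difficulty is purely bookkeeping: every cross term generated must be traced back to a combination of $\varepsilon r^2$, $\varepsilon^2 r^4$, or $a^{-2}(\varepsilon r^2)^{-1}$ smallness to justify absorption, and the uniform bounds $\eta\simeq 1$ on $\mc{D}_{r_0}$ from \eqref{eq.pc_eta} must be invoked to replace all $\eta^{-k}$ factors by $\mc{O}(1)$ constants. Once this accounting is carried out, the combined inequality is exactly \eqref{eq_carl_CF}, with the coefficient of $a^2\bar{f}^{-1}\varepsilon r^2\psi^2$ having been reduced from $\tfrac{1}{8}$ to $\tfrac{1}{16}$ as stated.
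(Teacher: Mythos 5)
You correctly identify the change-of-frame identities and the general Young-inequality absorption scheme, and your treatment of the tangential directions ($\bar{E}_\theta$, $\bar{E}_A$) is sound. The radial step, however, contains a genuine gap at the point where you absorb the $\psi^2$ correction.

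When you decompose $\hat{E}_\rho\psi$ via \eqref{eq_Ehat_1_LM} while keeping the full coefficient $1$ on $\tfrac{r^2}{\rho^2}(\hat{E}_\rho\psi)^2$, the zero-order error coming from the $c_n\tfrac{2}{r}\eta^{-3}[1+\mc{O}(\varepsilon r^2)]\psi$ piece is, after multiplication by $\tfrac{r^2}{\rho^2}$, of size $\sim c_n^2 \bar{f}^{-1}\psi^2$ with \emph{no} factor of $\varepsilon r^2$. You then claim to absorb this into $\tfrac{1}{8}a^2\bar{f}^{-1}\varepsilon r^2\psi^2$ on the strength of ``$a^2\varepsilon r^2\gg 1$ by \eqref{eq_thm_carl_a_LM}'' --- but this inequality is false: $\varepsilon = \varepsilon_0 r_0^{-2}$, so $a^2\varepsilon r^2 = a^2\varepsilon_0\,(r/r_0)^2$, which tends to $0$ as $r\to 0$ for any fixed $a$, and is not bounded below on $\mc{D}_{r_0}$. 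The absorption therefore fails near the vertex (and $r$ can indeed be arbitrarily small in $\overline{\mc{U}\cap\mc{D}}$ when $p\in\bar{\mc{U}}$). Since $\bar{f}^{-1}\sim\rho^{-2}$ is unbounded, this error also cannot be dumped into the other zero-order term $\tfrac{1}{4}a^2 b\,\psi^2$.

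The paper sidesteps this by \emph{first} downgrading the radial reservoir, via $\varepsilon r^2 \leq \varepsilon_0 \leq 1$, to $\tfrac{r^2}{\rho^2}(\hat{E}_\rho\psi)^2 \geq \varepsilon r^2\,\tfrac{r^2}{\rho^2}(\hat{E}_\rho\psi)^2$, and only \emph{then} splitting off the $\psi^2$ correction. This forces the resulting error to carry the factor $\varepsilon r^2$, giving $c_n^2\bar{f}^{-1}\varepsilon r^2\,\psi^2$, which \emph{is} dominated by $\tfrac{1}{16}a^2\bar{f}^{-1}\varepsilon r^2\psi^2$ once $a$ is large relative to $n$ --- a condition that does hold by \eqref{eq_thm_carl_a_LM}. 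This downgrade is also precisely why the final coefficient of $\tfrac{r^2}{\rho^2}(E_\rho\psi)^2$ in \eqref{eq_carl_CF} is $\tfrac{1}{32}\varepsilon r^2$ rather than the ``much larger'' $\mc{O}(1)$ you obtain: you cannot keep the full weight on the radial term, because its price is an unabsorbable zero-order error.
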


\begin{proof}
It is enough to estimate the first order terms in the RHS of \eqref{eq_carl_2}, from below and make sure that they are positive. First we consider $\hat{E}_\rho \psi$ by estimating it via the term $\bar{E}_\rho \psi$. For $\bar{E}_\rho \psi$, using \eqref{eq_barrho1} shows that
\begin{align*}
\bar{E}_\rho \psi & = \eta^{-2} E_\rho \psi + \eta^{-2} \frac{r E_\theta \eta}{2} E_\theta \psi - \eta^{-2} \frac{\rho^2}{r^2} \sum_A \frac{r E_A \eta}{2} E_A \psi. \\
\Rightarrow \frac{r^2}{\rho^2} (\bar{E}_\rho \psi)^2 & \gtrsim \frac{r^2}{\rho^2} (E_\rho \psi)^2 - \frac{r^2}{\rho^2} (E_\theta \psi)^2 \mc{O}(\varepsilon^2 t^2 r^2) - \sum_A (E_A \psi)^2 \mc{O}(\varepsilon^2 t^2 r^2). \numberthis \label{eq_Eb_E_1}
\end{align*}
Now, using \eqref{eq_Ehat_1_LM} gives us
\begin{align*}
\frac{r^2}{\rho^2} (\hat{E}_\rho \psi)^2 \geqslant \varepsilon r^2 \frac{r^2}{\rho^2} (\hat{E}_\rho \psi)^2 \gtrsim \varepsilon r^2 \frac{r^2}{\rho^2} (\bar{E}_\rho \psi)^2 - c_n^2 \bar{f}^{-1} \varepsilon r^2 \cdot \psi^2.
\end{align*}
Then, using the above estimate and \eqref{eq_Eb_E_1} shows that
\begin{align*}
\frac{r^2}{\rho^2} (\hat{E}_\rho \psi)^2 & \gtrsim \varepsilon r^2 \frac{r^2}{\rho^2} (E_\rho \psi)^2 - \frac{r^2}{\rho^2} (E_\theta \psi)^2 \mc{O}(\varepsilon^3 t^2 r^4) -  \sum_A (E_A \psi)^2 \mc{O}(\varepsilon^3 t^2 r^4) \numberthis \label{eq_Eb_E_1a} \\
& \qquad - c_n^2 \bar{f}^{-1} \varepsilon r^2 \cdot \psi^2. 
\end{align*}
For the $\bar{E}_\theta$ component, using \eqref{eq.pc_Eb} shows that
\begin{align*}
\bar{E}_\theta \psi & = E_\theta \psi + \frac{r}{2} E_\theta \eta E_\rho. \\
\Rightarrow \varepsilon r^2 \frac{r^2}{\rho^2} (\bar{E}_\theta \psi)^2 & \gtrsim \varepsilon r^2 \frac{r^2}{\rho^2} (E_\theta \psi)^2 - \varepsilon r^2 \frac{r^2}{\rho^2} (E_\rho \psi)^2 \mc{O}(\varepsilon^2 t^2 r^2). \numberthis \label{eq_Eb_E_2}
\end{align*}
Finally, for the $\bar{E}_A$ component, using \eqref{eq.pc_Eb} gives us
\begin{align*}
\bar{E}_A \psi & = E_A \psi + \frac{r}{2} E_A \eta E_\rho. \\
\Rightarrow \varepsilon r^2 \sum_A (\bar{E}_A \psi)^2 & \gtrsim \varepsilon r^2 \sum_A (E_A \psi)^2 - \varepsilon r^2 (E_\rho \psi)^2 \mc{O}(\varepsilon^2 t^2 r^2). \numberthis \label{eq_Eb_E_3}
\end{align*}
Then combining \eqref{eq_Eb_E_1a}, \eqref{eq_Eb_E_2}, and \eqref{eq_Eb_E_3} shows that
\begin{align}
\frac{r^2}{\rho^2} & (\hat{E}_\rho \psi)^2 + \frac{1}{16} \varepsilon r^2 \cdot \frac{r^2}{\rho^2} (\bar{E}_\theta \psi)^2 + \frac{1}{16} \varepsilon r^2 \cdot \sum_A (\bar{E}_A \psi)^2 \label{eq_carl_6} \\
& \quad \gtrsim \frac{1}{32} \varepsilon r^2 \cdot \frac{r^2}{\rho^2} (E_\rho \psi)^2 + \frac{1}{32} \varepsilon r^2 \cdot \frac{r^2}{\rho^2} (E_\theta \psi)^2 + \frac{1}{32} \varepsilon r^2 \cdot \sum_A (E_A \psi)^2  - c_n^2 \bar{f}^{-1} \varepsilon r^2 \cdot \psi^2. \notag
\end{align}
Using the above, along with \eqref{eq_carl_2}, shows that \eqref{eq_carl_CF} holds for sufficiently large $a$. This concludes the proof of the proposition.
\end{proof}

\subsubsection{Conjugation reversal} \label{ssec_carl_conj_rev_LM}

In this section, we will go back to the original function $\phi$, by undoing the conjugation process introduced in \eqref{eq_carl_def1_LM}. In particular, we make the substitution $\psi = e^{-F} \phi$.

First, we define $P^\star = P^\star [\phi]$ on $\mc{U}$ as
\begin{align} \label{eq_carl_revP}
P^\star_\beta &:= S ( e^{ - F } \phi )\nabla_\beta ( e^{ - F } \phi ) - \frac{1}{2} \nabla_\beta \bar{f} \cdot \nabla^\mu ( e^{ - F }\phi) \nabla_\mu ( e^{ - F } \phi ) + w \cdot e^{ - F } \phi \nabla_\beta ( e^{ - F } \phi ) \\
\notag & \qquad + \frac{1}{2} ( \mc{A} \nabla_\beta \bar{f} - \nabla_\beta w ) \cdot e^{ - 2 F } \phi^2.
\end{align}

\begin{proposition} \label{prop_carl_3}
Let $\phi \in C^2(\mc{U}) \cap C^1(\bar{\mc{U}})$. Then, we have the following inequality
\begin{align}
\frac{1}{4a} \bar{f} e^{-2F} |\square \phi|^2 + \nb^\beta P_\beta^\star & \gtrsim \frac{\varepsilon r^2}{64} e^{-2F} \left[\frac{r^2}{\rho^2} (E_\rho \phi)^2 + \frac{r^2}{\rho^2} (E_\theta \phi)^2 + \sum_A (E_A \phi)^2 \right] \label{eq_carl_3_1} \\
\notag & \qquad + \frac{1}{8} a^2 b e^{-2F} \phi^2 + \frac{1}{32} a^2 \bar{f}^{-1} \varepsilon r^2 e^{-2F} \cdot \phi^2. 
\end{align}
\end{proposition}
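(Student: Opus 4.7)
The plan is to substitute $\psi := e^{-F}\phi$ into the pointwise Carleman estimate \eqref{eq_carl_CF} of Proposition \ref{prop_carl_CF}, convert the first-order $E_\alpha\psi$ terms on the right-hand side into first-order $E_\alpha\phi$ terms via Cauchy--Schwarz, and absorb the $\phi^2$ error so produced into a fraction of each of the two zero-order terms.

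Since $\mc{L} = e^{-F}\square e^F$ by \eqref{eq_carl_def1_LM}, we have $\mc{L}\psi = e^{-F}\square\phi$, so the leading term of \eqref{eq_carl_CF} immediately becomes $\frac{1}{4a}\bar{f}e^{-2F}|\square\phi|^2$, matching the left-hand side of \eqref{eq_carl_3_1}. Moreover, direct comparison of \eqref{eq_carl_revP} with the expression for $P_\beta$ in Proposition \ref{prop_carl_1} shows that $P^\star_\beta$ is exactly $P_\beta$ evaluated at $\psi = e^{-F}\phi$, so $\nabla^\beta P_\beta = \nabla^\beta P^\star_\beta$, and the left-hand side of \eqref{eq_carl_3_1} coincides with the left-hand side of \eqref{eq_carl_CF}.

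For the first-order terms, we compute for each frame $E_\alpha \in \{E_\rho, E_\theta, E_A\}$ that
\[
E_\alpha\psi = e^{-F}\bigl(E_\alpha\phi - F' E_\alpha\bar{f}\cdot\phi\bigr),
\]
and Cauchy--Schwarz (with weight $\tfrac{1}{2}$ on the cross term) yields the pointwise inequality
\[
(E_\alpha\psi)^2 \;\geq\; \tfrac{1}{2}e^{-2F}(E_\alpha\phi)^2 - e^{-2F}(F')^2(E_\alpha\bar{f})^2\phi^2.
\]
Applying this inside each of the three first-order terms of \eqref{eq_carl_CF} produces the first-order $\phi$-terms in \eqref{eq_carl_3_1} with the claimed coefficient $\frac{1}{64}\varepsilon r^2$, at the price of a $\phi^2$ error proportional to $(F')^2(E_\alpha\bar{f})^2$.

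The main task is then to control this $\phi^2$ error and absorb it into the zero-order terms of \eqref{eq_carl_CF}. Since $E_\theta$ and $E_A$ are $f$-tangent by Proposition \ref{thm.hf_E}, the relation $\bar{f} = f\eta^{-1}$ gives $E_\theta\bar{f} = -f\eta^{-2}E_\theta\eta$ and $E_A\bar{f} = -f\eta^{-2}E_A\eta$, and the bounds of Proposition \ref{thm.pc_eta_est} make the associated contributions lower-order in $\varepsilon$. The dominant contribution is from the radial direction: using $E_\rho f = 2f/r$ (from \eqref{eq.hf_gauss} and \eqref{eq.hf_E_rho}) together with $E_\rho\eta = -2\varepsilon t^2/r$, one finds the exact identity $E_\rho\bar{f} = 2f/(r\eta^2)$, and hence $\frac{r^2}{\rho^2}(E_\rho\bar{f})^2 = \rho^2/(4\eta^4) \leq \mc{C}\bar{f}$. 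Combined with the bound $(F')^2 \leq \mc{C} a^2\bar{f}^{-2}(1+b\bar{f})^2$ from \eqref{eql.misc_zero_0}, the full $\phi^2$ error is of order $(a^2\bar{f}^{-1} + a^2 b)\,\varepsilon r^2\, e^{-2F}\phi^2$; choosing $\varepsilon_0$ sufficiently small, this can be absorbed into half of each of $\frac{1}{16}a^2\bar{f}^{-1}\varepsilon r^2 e^{-2F}\phi^2$ and $\frac{1}{4}a^2 b\, e^{-2F}\phi^2$ from \eqref{eq_carl_CF}, leaving precisely the stated coefficients $\frac{1}{32}$ and $\frac{1}{8}$.
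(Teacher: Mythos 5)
Your proof is correct and follows essentially the same route as the paper: substitute $\psi = e^{-F}\phi$ into the pointwise estimate \eqref{eq_carl_CF}, expand each $E_\alpha\psi$ by the chain rule and apply the elementary inequality $(u+v)^2\geq\tfrac12 u^2-v^2$, then control the resulting $\phi^2$ errors using the exact identity $E_\rho\bar{f}=2f/(r\eta^2)$ (together with $E_\theta\bar f=-f\eta^{-2}E_\theta\eta$, $E_A\bar f=-f\eta^{-2}E_A\eta$) and absorb them into the two zero-order terms of \eqref{eq_carl_CF}. One small clarification: the phrase ``choosing $\varepsilon_0$ sufficiently small'' is what lets the $a^2 b\,\varepsilon r^2$-type error disappear into $\tfrac14 a^2 b\,\phi^2$, but the dominant radial error $\sim (F')^2\tfrac{r^2}{\rho^2}(E_\rho\bar f)^2\cdot\tfrac{\varepsilon r^2}{32}\,\phi^2\sim\tfrac{1}{32}a^2\bar f^{-1}\varepsilon r^2\,\phi^2$ carries the same $\varepsilon r^2$ weight as the term $\tfrac{1}{16}a^2\bar f^{-1}\varepsilon r^2\,\phi^2$ it is absorbed into, so there the absorption succeeds because the numerical prefactor is strictly smaller than $\tfrac{1}{16}$ (not because of $\varepsilon_0$-smallness); this is also how the paper handles it, and since \eqref{eq_carl_3_1} is stated with $\gtrsim$ the slight imprecision in constants is harmless.
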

\begin{proof}

Firstly, for the $\bar{E}_\rho$ component, we have
\begin{align*}
E_\rho \psi & = E_\rho (e^{-F} \phi) \\
& = e^{-F} E_\rho \phi - e^{-F} F' E_\rho \bar{f} \cdot \phi \\
& = e^{-F} E_\rho \phi + e^{-F} a \bar{f}^{-1} (1+b\bar{f}) \frac{2f}{r} \eta^{-2} \cdot \phi \\
& = e^{-F} E_\rho \phi + e^{-F} a (1+b\bar{f}) \frac{\eta^{-1}}{r}  \cdot \phi.
\end{align*}
Then,
\begin{align*}
\frac{1}{2} e^{-2F} (E_\rho \phi)^2 & \leqslant (E_\rho \psi)^2 + e^{-2F} a^2 \frac{1}{r^2} \cdot \phi^2,\\
e^{-2F} \frac{\varepsilon r^2 }{64} \frac{r^2}{\rho^2} (E_\rho \phi)^2 & \leqslant \frac{\varepsilon r^2 }{32} \frac{r^2}{\rho^2} (E_\rho \psi)^2 + \frac{1}{32} e^{-2F}  a^2 \bar{f}^{-1} \varepsilon r^2 \cdot \phi^2. \numberthis \label{eq_conj_rev_a}
\end{align*}
For the $\bar{E}_\theta$ component, we have
\begin{align*}
E_\theta \psi & = e^{-F} E_\theta \phi - e^{-F} a (1+b\bar{f}) \eta^{-1} E_\theta \eta \cdot \phi, \\
\frac{1}{2} \varepsilon r^2 e^{-2F} \frac{r^2}{\rho^2} (E_\theta \phi)^2 & \leqslant \varepsilon r^2 \frac{r^2}{\rho^2} (E_\theta \psi)^2 + e^{-2F} a^2 \bar{f}^{-1} \mc{O}(\varepsilon^3 t^2 r^4) \cdot \phi^2. \numberthis \label{eq_conj_rev_b}
\end{align*}
For the $\bar{E}_A$ component, we have
\begin{align*}
\frac{1}{2} \varepsilon r^2 e^{-2F} \sum_A (E_A \phi)^2 \leqslant \varepsilon r^2 \sum_A (E_A \psi)^2 + e^{-2F} a^2 b \mc{O}(\varepsilon^2 t^2 r^2) \cdot \phi^2, \numberthis \label{eq_conj_rev_c}
\end{align*}
where we used the fact that $\varepsilon \ll b$ for the zero-order term.

Combining \eqref{eq_conj_rev_a}-\eqref{eq_conj_rev_c}, gives us
\begin{align}
& \frac{1}{32} \varepsilon r^2 \cdot \frac{r^2}{\rho^2} (E_\rho \psi)^2 + \frac{1}{32} \varepsilon r^2 \cdot \frac{r^2}{\rho^2} (E_\theta \psi)^2 + \frac{1}{32} \varepsilon r^2 \cdot \sum_A (E_A \psi)^2 \label{eq_carl_3_3} \\
& \ \geqslant \frac{\varepsilon r^2}{64} e^{-2F} \left[ \frac{r^2}{\rho^2} (E_\rho \phi)^2 + \frac{r^2}{\rho^2} (E_\theta \phi)^2 + \sum_A (E_A \phi)^2 \right] - \frac{1}{8}  e^{-2F} a^2 b \phi^2 - \frac{1}{32} e^{-2F} a^2 \bar{f}^{-1} \varepsilon r^2 \phi^2. \notag
\end{align}
Next, we have
\begin{align*}
\mc{L} (e^{-F} \phi) = e^{-F} \square \phi, \qquad \frac{1}{4} a^2 b \psi^2 = \frac{1}{4} a^2 b e^{-2F} \phi^2,
\end{align*}
where we used \eqref{eq_carl_def1_LM} to get the first equation. Using the above expressions and \eqref{eq_carl_3_3} in \eqref{eq_carl_CF}, shows that \eqref{eq_carl_3_1} is true.
\end{proof}

The next proposition will allow us to estimate the boundary term when we integrate the latest pointwise Carleman estimate \eqref{eq_carl_3_1}. We will foliate the integral region using level sets of $f$, as that is more convenient for our computations. Hence, in the following proposition we measure the quantities with respect to $f$.
\begin{proposition} \label{prop_revP}
On $\bar{\mc{U}}$, the quantity $P^\star$ satisfies the following
\begin{equation} \label{eq_carl_revP1}
|P^\star (\nb^\sharp f)| \lesssim r_0 e^a f^{2a} \left[(E_\rho \phi)^2 + (E_\theta \phi)^2 + \sum_A (E_A \phi)^2 + a^2 f^{-1} \phi^2 \right].
\end{equation}
Moreover, if \eqref{eq_thm_carl_p0_LM} holds, then
\begin{equation} \label{eq_carl_revP2}
P^\star (\mc{N}) \Big|_{\partial \mc{U}} = \frac{1}{2} e^{-2F} \mc{N} \bar{f} |\mc{N} \phi|^2 \Big|_{\partial \mc{U}}.
\end{equation}
\end{proposition}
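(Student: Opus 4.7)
\smallskip

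\noindent\textbf{Proof proposal for Proposition \ref{prop_revP}.}

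The plan is to treat the two parts separately, since \eqref{eq_carl_revP2} is an exact identity on $\partial\mc{U}$ exploiting the boundary vanishing of $\phi$, while \eqref{eq_carl_revP1} is a universal pointwise bound obtained by expanding $P^\star$ and applying the estimates from Sections \ref{sec.hf}--\ref{sec.CE} term by term.

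For \eqref{eq_carl_revP2}, I would first observe that on $\partial\mc{U} \cap \mc{D}$ the condition $\phi = 0$ forces all tangential derivatives of $\phi$ to vanish there, so $\nabla_\beta \phi = (\mc{N}\phi)\,\mc{N}_\beta$ on the boundary; combined with $\phi=0$ this gives $\nabla_\beta(e^{-F}\phi) = e^{-F}(\mc{N}\phi)\,\mc{N}_\beta$. Substituting into \eqref{eq_carl_revP} and contracting with $\mc{N}^\beta$, the two terms containing an explicit factor of $\phi$ or $\phi^2$ drop out immediately. Using that $\partial\mc{U}$ is timelike so $g(\mc{N},\mc{N}) = 1$, the remaining two terms become
\begin{align*}
S(e^{-F}\phi)\cdot \mc{N}^\beta \nabla_\beta(e^{-F}\phi) &= e^{-2F}(\mc{N}\phi)^2\, g(\nabla^\sharp \bar{f},\mc{N}) = e^{-2F}(\mc{N}\phi)^2\,\mc{N}\bar{f}\text{,}\\
-\tfrac{1}{2}\mc{N}^\beta\nabla_\beta \bar{f}\cdot \nabla^\mu(e^{-F}\phi)\nabla_\mu(e^{-F}\phi) &= -\tfrac{1}{2} e^{-2F}(\mc{N}\phi)^2\,\mc{N}\bar{f}\text{,}
\end{align*}
and adding these yields \eqref{eq_carl_revP2}.

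For \eqref{eq_carl_revP1}, I would proceed by expanding each term in $P^\star_\beta \nabla^\beta f$ using the product rule $\nabla_\beta(e^{-F}\phi) = e^{-F}[\nabla_\beta \phi - F'\phi\,\nabla_\beta \bar{f}]$ together with the identities $\nabla^\sharp f = \tfrac{r}{2}E_\rho$ and $\nabla^\alpha f \nabla_\alpha \bar{f} = \tfrac{1}{2}f\eta^{-2}(1+\varepsilon t^2)$ from Propositions \ref{thm.hf_E} and \ref{thm.misc_gauss}. Each resulting piece is a sum of products of the form $(\text{weight})\cdot (E_a\phi)(E_b\phi)$ or $(\text{weight})\cdot(E_a\phi)\phi$ or $(\text{weight})\cdot\phi^2$; Cauchy--Schwarz and the trivial inequality $2|ab| \le a^2 + b^2$ then allow me to group everything into the three first-order squares and the one zero-order square appearing on the right-hand side. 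The weights are controlled by combining: the bounds $|F'|\lesssim a(\bar{f}^{-1}+b)$ and $|\mc{A}|\lesssim a^2(\bar{f}^{-1}+b+b^2\bar{f})$ read off from \eqref{eql.misc_zero_0} and \eqref{eq.misc_A}; the pointwise bound $|\nabla \bar{f}|\lesssim r$ and $|\nabla w|\lesssim 1$ on the compact set $\bar{\mc{U}}\cap\bar{\mc{D}}$, with $w$ bounded; and the exponential factor
\[
e^{-2F} = \left[f\eta^{-1}e^{bf\eta^{-1}}\right]^{2a} \leq f^{2a} e^{2abf/\eta} \lesssim e^{a} f^{2a}\text{,}
\]
which holds on $\bar{\mc{U}}\cap\bar{\mc{D}}$ because $bf \leq b_0 r^2/(4r_0^2) \leq b_0/4 \ll 1$ and $\eta \simeq 1$ there. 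Collecting and pulling out the universal factor $r_0 e^a f^{2a}$ (absorbing all remaining constants into $\lesssim$) yields \eqref{eq_carl_revP1}.

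The main obstacle will be the bookkeeping in the bound \eqref{eq_carl_revP1}: one must ensure that after expanding $S(e^{-F}\phi)\nabla_\beta(e^{-F}\phi)\cdot \nabla^\beta f$ and the analogous $|\nabla(e^{-F}\phi)|^2$ term, every first-order expression can be re-expressed using only $E_\rho\phi$, $E_\theta\phi$, and $E_A\phi$ with coefficients of size $\mc{O}(r_0)$; this requires a careful choice of decomposition of $\nabla\phi$ in the $\{E_\rho,E_\theta,E_A\}$ frame together with the identity $E_\rho f = 2f/r$, so that the weight $f^{2a}$ matches on both sides. Once the decomposition is in place, all remaining inequalities are straightforward applications of $|xy|\le x^2 + y^2$ and the previously established geometric estimates.
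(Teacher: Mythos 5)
Your proposal is essentially correct and follows the same approach as the paper: a direct expansion of $P^\star(\nabla^\sharp f)$ term by term using $\nabla^\sharp f = \tfrac{r}{2}E_\rho$ for the interior bound, and a direct substitution of the boundary condition for the boundary identity. For \eqref{eq_carl_revP2}, your step $\nabla_\beta\phi = (\mc{N}\phi)\mc{N}_\beta$ on $\partial\mc{U}$ is equivalent to the paper's argument via the orthogonal projection of $S$ onto the boundary.

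One small slip in your treatment of the exponential factor: the intermediate inequality
\[
\left[f\eta^{-1}e^{bf\eta^{-1}}\right]^{2a} \leq f^{2a} e^{2abf/\eta}
\]
is reversed, since $\eta = 1-\varepsilon t^2 \leq 1$ implies the factor $\eta^{-2a}$ you drop is $\geq 1$, not $\leq 1$. The conclusion $e^{-2F}\lesssim e^a f^{2a}$ is nevertheless correct: one has $\eta^{-2a} \leq (1-\varepsilon_0)^{-2a} = e^{\mc{O}(a\varepsilon_0)}$, and together with $e^{2ab\bar{f}} \leq e^{\mc{O}(ab_0)}$ the total extra factor is $e^{\mc{O}(a(\varepsilon_0+b_0))} \leq e^a$ under $\varepsilon_0 \ll b_0 \ll 1$. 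So the argument survives, but the justification you gave ("$\eta\simeq 1$") glosses over the fact that the power $2a$ can be arbitrarily large, which is why the smallness of $\varepsilon_0$ is actually essential here. (Also, a minor typo: $\nabla^\alpha f \nabla_\alpha \bar{f} = f\eta^{-2}$ exactly, not $\tfrac{1}{2}f\eta^{-2}(1+\varepsilon t^2)$; the formula you quote from Proposition~\ref{thm.misc_gauss} is for $\nabla^\alpha\bar{f}\nabla_\alpha\bar{f}$.)
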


\begin{proof}
We have,
\begin{align*}
P^\star (\nb^\sharp f) & = S ( e^{ - F } \phi ) \nb^\beta f \nabla_\beta ( e^{ - F } \phi ) - \frac{1}{2} \nb^\beta f \nabla_\beta \bar{f} \cdot \nabla^\mu ( e^{ - F }\phi) \nabla_\mu ( e^{ - F } \phi )  \\
\notag & \qquad + w \cdot e^{ - F } \phi \nb^\beta f \nabla_\beta ( e^{ - F } \phi ) + \frac{1}{2} ( \mc{A} \nb^\beta f \nabla_\beta \bar{f} - \nb^\beta f \nabla_\beta w ) \cdot e^{ - 2 F } \phi^2.
\end{align*}
But, since
\begin{equation*}
\nb^\alpha f \nb_\alpha = \frac{r}{2} E_\rho,
\end{equation*}
we get that
\begin{align*}
P^\star (\nb^\sharp f) & = \frac{r}{2} S ( e^{ - F } \phi ) E_\rho ( e^{ - F } \phi ) - \frac{1}{2} \cdot \frac{r}{2} E_\rho \bar{f} \cdot \nabla^\mu ( e^{ - F }\phi) \nabla_\mu ( e^{ - F } \phi ) \\
\notag & \qquad + \frac{r}{2} w \cdot e^{ - F } \phi E_\rho ( e^{ - F } \phi ) + \frac{1}{2} \cdot \left( \frac{r}{2} \mc{A} E_\rho \bar{f} - \nb^\beta f \nb_\beta w \right) \cdot e^{ - 2 F } \phi^2.
\end{align*}
Now we will estimate each of the five terms in the above equation individually.

\textbf{(i)} Using \eqref{eq_barrho1}, shows that
\begin{align*}
|S ( e^{ - F } \phi )| & = \left|\frac{r}{2} \bar{E}_\rho (e^{-F} \phi)\right| \\
& = \left|e^{-F} \frac{r}{2} \bar{E}_\rho \phi - e^{-F} F' \frac{r}{2} \bar{E}_\rho \bar{f} \cdot \phi \right| \\
& \lesssim r e^{-F} \left( |E_\rho \phi| + |E_\theta \phi| + \bigg| \sum_A E_A \phi \bigg| \right) + e^{-F} a \cdot |\phi|,
\end{align*}
and we also have
\begin{align*}
\frac{r}{2} |E_\rho ( e^{ - F } \phi )| & = \frac{r}{2}|e^{-F} E_\rho \phi - e^{-F} F' E_\rho f \cdot \phi| \\
& \lesssim r e^{-F} |E_\rho \phi| + e^{-F} a\cdot |\phi|.
\end{align*}
Thus 
\begin{align*}
\frac{r}{2} |S ( e^{ - F } \phi ) E_\rho ( e^{ - F } \phi )| \lesssim r_0^2 e^{-2F} \left[  (E_\rho \phi)^2 + (E_\theta \phi)^2 + \sum_A (E_A \phi)^2 + a^2 f^{-1} \phi^2 \right],
\end{align*}
where we also used the fact that $f \leqslant r_0^2$ for the $\phi^2$ term.

\textbf{(ii)} For the second term, we have
\begin{align*}
|\nabla^\mu ( e^{ - F }\phi) \nabla_\mu ( e^{ - F } \phi )| & = \big| e^{-2F} (F')^2 \bar{f} \eta^{-2} [1+\mc{O}(\varepsilon t^2)] \cdot \phi^2 - 2 e^{-2F} F' \frac{r}{2} \bar{E}_\rho \phi \cdot \phi \\
& \qquad + e^{-2F} \nb^\mu \phi \nb_\mu \phi \big| \\
& \lesssim e^{-2F} \left[  (E_\rho \phi)^2 + (E_\theta \phi)^2 + \sum_A (E_A \phi)^2 + a^2 f^{-1} \phi^2 + a^2 f^{-2} r^2 \phi^2 \right] \\
& \qquad + e^{-2F} \frac{r^2}{\rho^2} \left[  (E_\rho \phi)^2 + (E_\theta \phi)^2 + \frac{\rho^2}{r^2} \sum_A (E_A \phi)^2 \right],
\end{align*}
which implies that
\begin{align*}
\left| \frac{1}{2} \cdot \frac{r}{2} E_\rho \bar{f} \cdot \nabla^\mu ( e^{ - F }\phi) \nabla_\mu ( e^{ - F } \phi ) \right| & \lesssim f | \nabla^\mu ( e^{ - F }\phi) \nabla_\mu ( e^{ - F } \phi )| \\
& \lesssim r_0^2 e^{-2F} \left[  (E_\rho \phi)^2 + (E_\theta \phi)^2 + \sum_A (E_A \phi)^2 + a^2 f^{-1} \phi^2 \right].
\end{align*}

\textbf{(iii)} Similarly, for the third term, we have
\begin{align*}
\frac{r}{2} |w \cdot e^{ - F } \phi E_\rho ( e^{ - F } \phi ) | \lesssim r_0^2 e^{-2F} \left[  (E_\rho \phi)^2 + (E_\theta \phi)^2 + \sum_A (E_A \phi)^2 + a^2 f^{-1} \phi^2 \right].
\end{align*}

\textbf{(iv)} For the fourth term, we note that
\begin{align*}
r |\mc{A} E_\rho \bar{f}| \lesssim |\mc{A} f| \lesssim a^2 , \qquad | \nb^\beta f \nb_\beta w | \lesssim a^2,
\end{align*}
for sufficiently large $a$. Then,
\begin{align*}
\left| \frac{1}{2} \cdot \left( \frac{r}{2} \mc{A} E_\rho \bar{f} - \nb^\beta f \nb_\beta w \right) \cdot e^{ - 2 F } \phi^2 \right| \lesssim r_0^2 e^{-2F} a^2 f^{-1} \phi^2.
\end{align*}
Combining points \textbf{(i)}-\textbf{(iv)} above, and estimating $e^{-2F}$ using \eqref{eq_thm_carl_a_LM} and \eqref{eq_e2F_zeta_LM} shows that \eqref{eq_carl_revP1} is true. 

Now, let us prove \eqref{eq_carl_revP2}. Due to \eqref{eq_thm_carl_p0_LM}, we see that
\begin{align*}
P^\star (\mc{N})|_{\partial \mc{U}} = e^{-2F} \left( S \phi \mc{N} \phi - \frac{1}{2} \mc{N} \bar{f} \cdot |\mc{N} \phi|^2 \right) \bigg|_{\partial \mc{U}}.
\end{align*}
Next, as $S - \mc{N} \bar{f} \cdot \mc{N}$ is the $g$-orthogonal projection of $S$ onto $\partial \mc{U} \cap \mc{D}$, we get
\[ S \phi|_{\partial \mc{U} \cap \mc{D}} = \mc{N} \bar{f} \cdot \mc{N} \phi|_{\partial \mc{U} \cap \mc{D}}.\]
The last two equations together show that \eqref{eq_carl_revP2} holds.
\end{proof}

\subsubsection{Integration} \label{ssec_carl_int_eq_LM}
Now we will integrate \eqref{eq_carl_3_1} to obtain the main Carleman estimate.

Let $\delta$ be sufficiently small. Then, define the following sets
\begin{align*}
\mc{D}_\delta = \mc{D} \cap \{ f>\delta \}, \qquad \mc{H}_\delta = \mc{D} \cap \{ f = \delta \}
\end{align*}
We integrate \eqref{eq_carl_3_1} on $\mc{U} \cap \mc{D}_\delta$, to get
\begin{align*}
\frac{1}{4a} \int_{\mc{U} \cap \mc{D}_\delta} \zeta \bar{f} |\square \phi|^2 + \int_{\mc{U} \cap \mc{D}_\delta} \nb^\beta P_\beta^\star & \geqslant \frac{\varepsilon}{64} \int_{\mc{U} \cap \mc{D}_\delta} \zeta r^2 \left[ \frac{r^2}{\rho^2} (E_\rho \phi)^2 + \frac{r^2}{\rho^2} (E_\theta \phi)^2 + \sum_A (E_A \phi)^2 \right] \\
& \qquad + \frac{1}{8} a^2 b \int_{\mc{U} \cap \mc{D}_\delta} \zeta \phi^2  + \frac{1}{32} a^2 \varepsilon \int_{\mc{U} \cap \mc{D}_\delta} \zeta r^2 \bar{f}^{-1} \phi^2,
\end{align*}
where we also used \eqref{eq_e2F_zeta_LM} for replacing $e^{-2F}$ by $\zeta$. The final term on the RHS of the above estimate is no longer needed, so will drop it. Now, taking the limit as $\delta \searrow 0$, and using the monotone convergence theorem gives us the following estimate
\begin{align*}
\frac{1}{4a} \int_{\mc{U} \cap \mc{D}} & \zeta f |\square \phi|^2 + \lim_{\delta \searrow 0} \int_{\mc{U} \cap \mc{D}_\delta} \nb^\beta P_\beta^\star \\
& \geqslant \frac{\varepsilon}{64} \int_{\mc{U} \cap \mc{D}} \zeta r^2 \left[ \frac{r^2}{\rho^2}(E_\rho \phi)^2 + \frac{r^2}{\rho^2}(E_\theta \phi)^2 + \sum_A (E_A \phi)^2 \right] + \frac{1}{8} a^2 b \int_{\mc{U} \cap \mc{D}} \zeta \phi^2 ,
\end{align*}
if the integral term with the limit exists. Also note that, we replaced the $\bar{f}$ on the LHS by $f$; this can be done by estimating $\eta$ appropriately.

Now, $\mc{H}_\delta$ is a timelike hypersurface, with the outward unit normal given by $f^{-1} \nb^\sharp f$. Then, using the divergence theorem implies that
\begin{align*}
\lim_{\delta \searrow 0} \int_{\mc{U} \cap \mc{D}_\delta} \nb^\beta P_\beta^\star & = \lim_{\delta \searrow 0} \int_{\partial \mc{U} \cap \mc{D}_\delta} P^\star (\mc{N}) - \lim_{\delta \searrow 0} \int_{\mc{U} \cap \mc{H}_\delta} f^{-1} P^\star(\nb^\sharp f) \\
& = \frac{1}{2} \int_{\partial \mc{U} \cap \mc{D}} \zeta \mc{N} \bar{f} |\mc{N} \phi|^2 - \lim_{\delta \searrow 0} \delta^{-1} \int_{\mc{U} \cap \mc{H}_\delta} P^\star(\nb^\sharp f).
\end{align*}
Using \eqref{eq_carl_revP1} shows that
\begin{align*}
\left| \lim_{\delta \searrow} \delta^{-1} \int_{\mc{U} \cap \mc{H}_\delta} P^\star(\nb^\sharp f) \right| \lesssim C' \lim_{\delta \searrow 0} \delta^{2a-2} \int_{\mc{U} \cap \mc{H}_\delta} 1,
\end{align*}
for some constant $C'$ depending on $\phi$ and $r_0$.

To complete the proof of the theorem, it is enough to show that the RHS of the above estimate is zero. For this purpose, we foliate $\mc{H}_\delta$ using level sets of $t$. Let $\bar{\nb}^\sharp t$ denote the $g$-gradient of $t$ with respect to the induced metric. Then, 
\begin{equation*}
g(\bar{\nb}^\sharp t,\bar{\nb}^\sharp t) = -\frac{r^2}{\rho^2} (E_\theta t)^2 + \sum_A (E_A t)^2 = -\frac{r^2}{\rho^2} \left( 1 + \frac{\rho^2}{r^2} \mc{O}(\varepsilon r^2) \right) + \mc{O} (\varepsilon r^2) = -\frac{r^2}{4f} ( 1 +\mc{O}(\varepsilon r^2) ).
\end{equation*}
This implies that
\begin{align*}
|g(\bar{\nb}^\sharp t,\bar{\nb}^\sharp t)|^{-\frac{1}{2}} = 2 f^{\frac{1}{2}} r^{-1} ( 1 +\mc{O}(\varepsilon r^2) ).
\end{align*}
Hence, applying the coarea formula gives
\begin{align*}
\lim_{\delta \searrow 0} \delta^{2a-2} \int_{\mc{U} \cap \mc{H}_\delta} 1 & \lesssim \lim_{\delta \searrow 0} \delta^{2a-2} \int_{-r_0}^{r_0} \int_{\mb{S}^{n-1}} |g(\bar{\nb}^\sharp t,\bar{\nb}^\sharp t)|^{-\frac{1}{2}} r^{n-1}|_{(t, r= \sqrt{t^2 + 4\delta}, \omega)} d\mathring{\gamma}_{\omega} dt \\
& \lesssim r_0^{n-1}\lim_{\delta \searrow 0} \delta^{2a-\frac{3}{2}} \int_{-r_0}^{r_0} \int_{\mb{S}^{n-1}} r^{-1}|_{(t, r= \sqrt{t^2 + 4\delta}, \omega)} d\mathring{\gamma}_{\omega} dt \\
& = 0.
\end{align*}
This shows that \eqref{eq_thm_carl_est_LM} holds, which completes the proof of Theorem \ref{thm_carl_est_LM}.

\section{Conclusion} \label{sec_control_LM}

In this section, we show how our main Carleman estimate, Theorem \ref{thm_carl_est_LM}, can be applied to prove the main observability and controllability results of this paper.

\subsection{The Carleman estimate revisited}

Recall the Carleman estimate \eqref{eq_thm_carl_est_LM} was given in terms of the orthogonal frames $\{ E_\rho, E_\theta, E_A \}$.
However, since $E_\theta$ and $E_\rho$ become arbitrarily close to each other as one moves toward $\partial \mc{D}$, this frame is inconvenient for controlling $H^1$-norms.
To address this, we reformulate \eqref{eq_thm_carl_est_LM} here in terms of the frames $\{ E_\rho, E_0, E_A \}$, which do generate all the directions of $\mc{M}$ in a uniform manner.

\begin{proposition} \label{thm_carl_est_LM_2}
Under all the assumptions of Theorem \ref{thm_carl_est_LM}, we have the estimate
\begin{align}
\label{eq_carl_H1_norm_2} C' \varepsilon \int_{\mc{U} \cap \mc{D}} \zeta \rho^2 & \left[ (E_\rho \phi)^2 + (E_0 \phi)^2 + \sum_A (E_A \phi)^2 \right] + \frac{1}{8} a^2 b \int_{\mc{U} \cap \mc{D}} \zeta \phi^2 \\
\notag &\qquad \leqslant \frac{1}{4a} \int_{\mc{U} \cap \mc{D}} \zeta f |\square \phi|^2 + \frac{1}{2} \int_{\partial \mc{U} \cap \mc{D}} \zeta \mc{N} (f [1- \varepsilon t^2 ]^{-1}) |\mc{N} \phi|^2 \text{,}  
\end{align}
for some constant $C'>0$.
\end{proposition}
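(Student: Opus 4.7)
The plan is to deduce \eqref{eq_carl_H1_norm_2} directly from \eqref{eq_thm_carl_est_LM} by a pointwise linear change of frame, since the right-hand sides are identical. The only work lies in bounding the new gradient term $\rho^2\bigl[(E_\rho\phi)^2+(E_0\phi)^2+\sum_A(E_A\phi)^2\bigr]$ by the old gradient term $r^2\bigl[\tfrac{r^2}{\rho^2}(E_\rho\phi)^2+\tfrac{r^2}{\rho^2}(E_\theta\phi)^2+\sum_A(E_A\phi)^2\bigr]$ that appears on the left of \eqref{eq_thm_carl_est_LM}. The key algebraic input is \eqref{eq.hf_E_special}, which can be inverted to give
\[
E_0 \;=\; \frac{r^2}{\rho^2}\,E_\theta \;-\; \frac{t r}{\rho^2}\,E_\rho\text{.}
\]

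First I would handle the $E_0$-term, which is the only delicate one. Applying the displayed identity and Cauchy--Schwarz yields the pointwise bound
\[
\rho^2 (E_0\phi)^2 \;\leq\; 2\,\frac{r^4}{\rho^2}(E_\theta\phi)^2 \;+\; 2\,\frac{t^2 r^2}{\rho^2}(E_\rho\phi)^2\text{.}
\]
On $\mc{D}$ one has $f>0$, i.e.\ $t^2 < r^2$, so $t^2 r^2/\rho^2 \leq r^4/\rho^2 = r^2\cdot r^2/\rho^2$, giving
\[
\rho^2 (E_0\phi)^2 \;\leq\; 2\,r^2\,\frac{r^2}{\rho^2}(E_\theta\phi)^2 \;+\; 2\,r^2\,\frac{r^2}{\rho^2}(E_\rho\phi)^2\text{.}
\]
The remaining two terms are immediate from the trivial inequality $\rho^2 \leq r^2$: since $\rho\leq r$ we have $\rho^2 \leq r^2 \cdot (r^2/\rho^2)$, so $\rho^2(E_\rho\phi)^2 \leq r^2\cdot\tfrac{r^2}{\rho^2}(E_\rho\phi)^2$, and $\rho^2(E_A\phi)^2 \leq r^2(E_A\phi)^2$.

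Summing these three pointwise bounds, one obtains
\[
\rho^2\!\left[(E_\rho\phi)^2 + (E_0\phi)^2 + \sum_A(E_A\phi)^2\right] \;\leq\; 3\,r^2\!\left[\frac{r^2}{\rho^2}(E_\rho\phi)^2 + \frac{r^2}{\rho^2}(E_\theta\phi)^2 + \sum_A(E_A\phi)^2\right]\text{.}
\]
Multiplying by $\varepsilon\zeta$ (which is nonnegative), integrating over $\mc{U}\cap\mc{D}$, and choosing $C':=1/192$ produces a quantity bounded above by the first term on the left-hand side of \eqref{eq_thm_carl_est_LM}. Substituting this into \eqref{eq_thm_carl_est_LM} then yields \eqref{eq_carl_H1_norm_2} with the same zero-order term, the same right-hand side, and the universal constant $C'$. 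There is no substantive obstacle here: the whole argument is an algebraic reshuffling, and the only point where one uses the geometry of $\mc{D}$ is the bound $t^2<r^2$ needed to control the $E_\rho$ piece of $E_0$.
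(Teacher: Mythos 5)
Your proof is correct and takes essentially the same route as the paper: both rely on the algebraic identity \eqref{eq.hf_E_special} relating $E_0$, $E_\theta$, $E_\rho$, followed by Cauchy--Schwarz and the inequality $t^2<r^2$ on $\mc{D}$ (equivalently $\rho\leq r$) to reshuffle the weights. The only cosmetic difference is that you invert \eqref{eq.hf_E_special} to solve for $E_0$ and divide through by $\rho^2$, whereas the paper keeps the factors of $\rho^4/r^4$ on the left and never explicitly writes $E_0$ as a combination of $E_\theta,E_\rho$; these are the same manipulation and yield the same conclusion.
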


\begin{proof}
First, note that \eqref{eq.hf_E_special} implies
\begin{align*}
r E_\theta \phi = t E_\rho \phi + \frac{ \rho^2 }{ r^2 } r E_0 \phi \text{,}
\end{align*}
which implies that in $\mc{D}$, we have
\begin{align*}
\frac{\rho^4}{r^4} r^2 (E_0 \phi)^2 \lesssim r^2 (E_\theta \phi)^2 + t^2 (E_\rho \phi)^2 \lesssim r^2 (E_\theta \phi)^2 + r^2 (E_\rho \phi)^2.
\end{align*}
From the above, we then obtain the following:
\[
\rho^2 (E_0 \phi)^2 = \frac{\rho^2}{r^2} r^2 (E_0 \phi)^2 \lesssim \frac{r^2}{\rho^2} r^2 (E_\theta \phi)^2 + \frac{r^2}{\rho^2} r^2 (E_\rho \phi)^2 \text{.}
\]
Thus, using the above and \eqref{eq_thm_carl_est_LM}, we see that
\begin{align*}
\mc{C} \varepsilon \int_{\mc{U} \cap \mc{D}} \zeta & \left[ \frac{r^2}{\rho^2} r^2 (E_\rho \phi)^2 + \rho^2 (E_0 \phi)^2 + r^2 \sum_A (E_A \phi)^2 \right] + \frac{1}{8} a^2 b \int_{\mc{U} \cap \mc{D}} \zeta \phi^2 \\
&\qquad \leqslant \frac{1}{4a} \int_{\mc{U} \cap \mc{D}} \zeta f |\square \phi|^2 + \frac{1}{2} \int_{\partial \mc{U} \cap \mc{D}} \zeta \mc{N} (f [1- \varepsilon t^2 ]^{-1}) |\mc{N} \phi|^2 \text{,} 
\end{align*}
The desired \eqref{eq_carl_H1_norm_2} now follows by combining the above with the inequalities
\[ \rho^2 \leqslant r^2 \leqslant \frac{r^2}{\rho^2} \cdot \rho^2 \leqslant \frac{r^2}{\rho^2} \cdot r^2 \text{.} \qedhere \]
\end{proof}

\subsection{Observability}

In this subsection, we sketch how Theorem \ref{thm_carl_est_LM} can be used to prove observability inequalities.
Here, we will work with the adjoint system \eqref{eq_obs_wv_LM},
\begin{equation} \label{eq_obs_main_adeq_LM}
\begin{cases}
\square \phi + \nb_{\mc{X}} \phi + V \phi = 0, \qquad & \text{on } \mc{U},\\
\phi = 0, & \text{in } \partial \mc{U},\\
(\phi,\partial_t \phi) = (\phi_0,\phi_1), & \text{on } \mc{V}_-,
\end{cases}
\end{equation}
where $\mc{V}_-$ is a spacelike cross-section of $\mc{U}$ on which the initial data is posed.

To prove observability, we require suitable energy estimates satisfied by solutions of \eqref{eq_obs_main_adeq_LM}.
The following can be proved using standard energy arguments; see, for instance, \cite{Arick}. 

\begin{proposition} \label{thm_energy_prop}
Let $\mc{U}$ be as in Assumption \ref{ass_LM}, and let $\mc{V}_\pm$ be two spacelike cross-sections of $\mc{U}$.
Then, there exists $\tilde{C}:= \tilde{C}(V, \mc{X}, \mc{V}_\pm ) > 0$ such that
\footnote{More specifically, $\tilde{C}$ depends on the time separation between $\mc{V}_-$ and $\mc{V}_+$.}
\begin{equation} \label{eq_energy_prop}
|| (\phi, \nb_N \phi)||_{H^1(\mc{V}_-) \times L^2(\mc{V}_-)} \leqslant \tilde{C} || (\phi, \nb_N \phi)||_{H^1(\mc{V}_+) \times L^2(\mc{V}_+)} \text{,}
\end{equation}
where $N$ denotes the future unit normal to $\mc{V}_\pm$, and where $\phi \in C^2(\mc{U}) \cap C^1(\bar{\mc{U}})$ is any (classical) solution of the system \eqref{eq_obs_main_adeq_LM} that also satisfies $\phi|_{\partial\mc{U}} = 0$.
\end{proposition}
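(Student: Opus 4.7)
The plan is to prove Proposition \ref{thm_energy_prop} by the classical multiplier (stress--energy) method, iterated in time. First, I would foliate the slab between $\mc{V}_-$ and $\mc{V}_+$ by a smooth family of spacelike cross-sections $\{\mc{V}_\tau\}_{\tau \in [\tau_-, \tau_+]}$ of $\mc{U}$, with $\mc{V}_{\tau_\pm} = \mc{V}_\pm$, and let $N_\tau$ denote the future unit normal. Associated with $\phi$, define the energy--momentum tensor
\[
T_{\alpha\beta}[\phi] := \nb_\alpha \phi \, \nb_\beta \phi - \tfrac{1}{2} g_{\alpha\beta} \, \nb^\mu \phi \, \nb_\mu \phi,
\]
and the energy
\[
E(\tau) := \int_{\mc{V}_\tau} \bigl[ T(N_\tau, N_\tau) + \phi^2 \bigr] \, d\sigma_\tau.
\]
The term $\phi^2$ is included so that $E(\tau)$ controls the full $H^1 \times L^2$ norm on $\mc{V}_\tau$; conversely, $E(\tau)$ is bounded by a constant (depending on the foliation) times this norm.

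Next, I would contract $T$ with the multiplier $N_\tau$, extended smoothly to the slab, and apply the divergence theorem to $\nb^\alpha ( T_{\alpha\beta} N^\beta )$ on the region $\mc{U} \cap \{\tau_1 \leq \tau \leq \tau_2\}$. Because $\nb^\alpha T_{\alpha\beta} = \Box\phi \cdot \nb_\beta \phi$ and the wave equation \eqref{eq_obs_main_adeq_LM} gives $\Box \phi = -\nb_\mc{X}\phi - V\phi$, the resulting bulk integrand is pointwise bounded by $\mc{C}(V,\mc{X},N_\tau)\bigl[ |\nb\phi|^2 + \phi^2 \bigr]$, where $|\nb\phi|^2$ is measured in the positive-definite reference metric $g_+$ defined implicitly by $N_\tau$. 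The boundary contribution at $\partial \mc{U} \cap \{\tau_1 \leq \tau \leq \tau_2\}$ involves $T(N_\tau, \mc{N})$; since $\phi$ vanishes on $\partial \mc{U}$, tangential derivatives of $\phi$ vanish there, so $\nb\phi|_{\partial \mc{U}}$ is parallel to $\mc{N}$, and this flux reduces to $\tfrac{1}{2} g(N_\tau, \mc{N}) |\mc{N}\phi|^2$. One then chooses the foliation so that $g(N_\tau, \mc{N}) \leq 0$ on $\partial \mc{U}$ (possible since $\mc{N}$ is spacelike and $N_\tau$ is timelike, and the signs can be controlled by orienting the foliation), so the boundary term contributes with a favorable sign when moving backward in time. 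Combining these observations yields a differential inequality of the form
\[
\bigl| E'(\tau) \bigr| \leq \mc{C}(V, \mc{X}, \mc{V}_\pm) \, E(\tau),
\]
where $\mc{C}$ absorbs also the lapse and shift of the foliation and the $L^\infty$ norms of $V$ and $\mc{X}$ on the slab.

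Integrating this inequality via Grönwall from $\tau_+$ down to $\tau_-$, I obtain $E(\tau_-) \leq e^{\mc{C}(\tau_+ - \tau_-)} E(\tau_+)$. Translating back to the norms via the equivalence $E(\tau) \simeq \|(\phi, \nb_{N}\phi)\|_{H^1(\mc{V}_\tau) \times L^2(\mc{V}_\tau)}^2$ yields \eqref{eq_energy_prop} with $\tilde{C}$ depending on $V, \mc{X}, \mc{V}_\pm$, as claimed. The argument extends from smooth $\phi$ to classical $\phi \in C^2(\mc{U}) \cap C^1(\bar{\mc{U}})$ with vanishing Dirichlet data by standard approximation.

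The main obstacle is the timelike-boundary flux term on $\partial \mc{U}$: unlike closed manifolds or spacelike boundaries, the sign of $T(N_\tau, \mc{N})$ is not automatic, and one must exploit the Dirichlet condition $\phi|_{\partial\mc{U}} = 0$ to collapse the flux to a definite-sign multiple of $|\mc{N}\phi|^2$ (and, if needed, modify the foliation near $\partial\mc{U}$ so that $g(N_\tau,\mc{N})$ has the correct sign throughout the slab). All other steps---the product-rule computation of $\nb^\alpha(T_{\alpha\beta} N^\beta)$, the treatment of the first-order term $\nb_\mc{X}\phi$ via Cauchy--Schwarz, and the final Grönwall iteration---are routine.
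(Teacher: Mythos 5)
Your argument is correct and is essentially the standard energy-method proof that the paper itself invokes (the paper gives no explicit proof, instead pointing to ``standard energy arguments'' and citing \cite{Arick}). Your handling of the timelike-boundary flux is the right idea: with the Dirichlet condition, $\nabla\phi|_{\partial\mc{U}}$ is parallel to $\mc{N}^\flat$, so the flux $T(N_\tau,\mc{N})$ collapses to $\tfrac{1}{2} g(N_\tau,\mc{N})\,|\mc{N}\phi|^2$; the cleanest way to dispose of this is to choose the foliation so that $\mc{N}$ is tangent to each $\mc{V}_\tau$ (equivalently $g(N_\tau,\mc{N})=0$ on $\partial\mc{U}$), which kills the term outright and avoids having to argue about its sign relative to the direction of Gr\"onwall integration. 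This is a minor refinement of what you wrote; everything else (computing $\nabla^\alpha T_{\alpha\beta}=\Box\phi\,\nabla_\beta\phi$, absorbing $\nabla_{\mc{X}}\phi$ and $V\phi$ by Cauchy--Schwarz into $\mc{C}(|\nabla\phi|^2+\phi^2)$, the deformation-tensor contribution from $\nabla N_\tau$, the equivalence $E(\tau)\simeq\|(\phi,\nabla_N\phi)\|_{H^1\times L^2}^2$, and the backward Gr\"onwall iteration) is right and matches the argument the paper has in mind.
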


We now present two observability results that differ depending on the location of a chosen ``centre point" $p \in \mc{M}$, near which we apply our Carleman estimate.
Since the arguments are analogous to those previously done on Minkowski spacetime (which were presented in detail in \cite{Arick}), we will only sketch the proofs of our observability results here.

\subsubsection{Exterior observability}

The following result addresses the case when observability is obtained using the Carleman estimate about a point $p \not\in \bar{\mc{U}}$.

\begin{theorem} \label{thm_obs_main_final_LM}
Assume the setup given in Assumption \ref{ass_LM}, and fix $p \notin \bar{\mc{U}}$.
In addition, let $\mc{V}_-$ be a spacelike cross-section of $\mc{U}$.
Then, there exist universal constants $\mc{C}_\dagger >0$ and $0 < \varepsilon_0 \ll 1$ such that if the following curvature bounds hold,
\footnote{Here, the frames $\{ E_\rho, E_0, E_A \}$ are defined with respect to the point $p$, as in Section \ref{ssec_hyperq_func_LM}.}
\begin{align}
\label{eq_obs_curv_final_LM} \sup_{ X, Y, Z \in \{ E_\rho, E_0, E_A \} } | R ( E_\rho, X, Y, Z ) | & < \frac{\varepsilon_0 \mc{C}_\dagger}{r_0^2}, \\
\notag \sup_{ X, Y, Z \in \{ E_\rho, E_0, E_A \} } | \nabla_Z R ( E_\rho, X, E_\rho, Y ) | & < \frac{\mc{C}_\dagger}{r_0^3},
\end{align}
then there exists a constant $C := C(p,\mc{U},V,\mc{X},\varepsilon_0) >0$ such that the observability inequality
\begin{equation} \label{eq_obs_main_final_LM}
\| (\phi_0,\phi_1) \|_{H^1(\mc{V}_-) \times L^2(\mc{V}_-)}^2 \leqslant C \| \mc{N} \phi \|_{L^2(\Gamma_+)}^2
\end{equation}
holds for any $(\phi_0,\phi_1) \in H^1_0(\mc{V}_-) \times L^2(\mc{V}_-)$, where $\phi$ is the corresponding solution of \eqref{eq_obs_main_adeq_LM}, where $\mc{N}$ is the outward unit normal of $\mc{U}$, and where the observation region $\Gamma_+$ is given by
\begin{equation} \label{eq_gamma_final_LM}
\Gamma_+ := \partial \mc{U} \cap \{ \mc{N} (f [1- \varepsilon_0 r_0^{-2} t^2 ]^{-1}) > 0 \} \cap \mc{D} \text{.}
\end{equation}
\end{theorem}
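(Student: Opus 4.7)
The strategy is to apply the reformulated Carleman estimate of Proposition \ref{thm_carl_est_LM_2} to $\phi$, absorb the lower-order terms generated by $\nb_\mc{X}$ and $V$ using the large parameter $a$, drop the portion of the boundary integral supported on $(\partial \mc{U} \cap \mc{D}) \setminus \Gamma_+$ (which carries the ``wrong'' sign), and then use a pigeonhole/coarea argument followed by the energy inequality of Proposition \ref{thm_energy_prop} to convert the resulting weighted spacetime bound into the desired cross-sectional bound at $\mc{V}_-$.

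Concretely, $\phi$ satisfies $\phi|_{\partial \mc{U}} = 0$, so the hypotheses of Proposition \ref{thm_carl_est_LM_2} are met. Substituting $\square \phi = -\nb_\mc{X} \phi - V \phi$ yields $|\square \phi|^2 \lesssim \|\mc{X}\|_\infty^2 |\nb \phi|^2 + \|V\|_\infty^2 \phi^2$, and since $f = \rho^2/4$, the bulk term $\tfrac{1}{4a}\int_{\mc{U} \cap \mc{D}} \zeta f |\square \phi|^2$ is bounded by $a^{-1}$ times a constant depending on $\mc{X}$, $V$, $r_0$ multiplied into the two positive LHS terms of \eqref{eq_carl_H1_norm_2}; taking $a$ sufficiently large then absorbs it. On the boundary, $\mc{N}(f[1-\varepsilon t^2]^{-1}) \leqslant 0$ outside $\Gamma_+$ by the definition \eqref{eq_gamma_final_LM}, so that piece of the integral is non-positive and can be discarded. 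Since $\bar{\mc{U}} \cap \bar{\mc{D}}$ is compact in $\mc{M}$ (Assumption \ref{ass_LM}(5)), both $\zeta$ and $\mc{N}(f[1-\varepsilon t^2]^{-1})$ are bounded on $\Gamma_+$, giving
\[
\int_{\mc{U} \cap \mc{D}} \zeta \rho^2 |\nb \phi|^2 + \int_{\mc{U} \cap \mc{D}} \zeta \phi^2 \lesssim \|\mc{N} \phi\|_{L^2(\Gamma_+)}^2.
\]

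Because $p \notin \bar{\mc{U}}$, one can select a compact subset $K \subset \mc{U} \cap \mc{D}$ lying in the future of $\mc{V}_-$ inside $\mc{U}$ and at positive distance from the null cone $\partial \mc{D}$, on which $\zeta$ and $\rho$ are bounded below by positive constants. Restricting the above bound to $K$ yields an unweighted estimate $\|\phi\|_{H^1(K)}^2 \lesssim \|\mc{N} \phi\|_{L^2(\Gamma_+)}^2$. Foliating $K$ by a family of spacelike cross-sections $\{\mc{V}_s\}_{s \in [s_0, s_1]}$ of $\mc{U}$ and applying the coarea formula together with a standard pigeonhole argument yields some $\mc{V}_\ast = \mc{V}_{s_\ast}$ with
\[
\|\phi\|_{H^1(\mc{V}_\ast)}^2 + \|\partial_t \phi\|_{L^2(\mc{V}_\ast)}^2 \lesssim \|\mc{N} \phi\|_{L^2(\Gamma_+)}^2.
\]
Since $\mc{V}_\ast$ lies to the future of $\mc{V}_-$ in $\mc{U}$, Proposition \ref{thm_energy_prop} (applied with $\mc{V}_+ := \mc{V}_\ast$) transports this backwards to $\mc{V}_-$, giving \eqref{eq_obs_main_final_LM}.

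The principal technical hurdle is the construction of the intermediate set $K$: although $p \notin \bar{\mc{U}}$ ensures $r$ is bounded below on $\bar{\mc{U}}$, the null cone $\partial \mc{D}$ can still intersect $\mc{U}$, so simultaneously staying away from $\partial \mc{D}$ and retaining a full slab of cross-sections of $\mc{U}$ is a genuine geometric constraint implicit in the observability setup (and can be verified, if necessary, by first applying an energy estimate to reduce $\mc{V}_-$ to a nearby cross-section contained in $\mc{D}$). Once $K$ is in hand, tracking the dependence of the resulting constant $C$ on $p$, $\mc{U}$, $V$, $\mc{X}$, $\mc{V}_-$ and $\varepsilon_0$ is routine, as every step produces explicit constants.
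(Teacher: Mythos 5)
Your proposal is correct and follows essentially the same route as the paper's proof sketch. The sequence of steps -- apply Proposition \ref{thm_carl_est_LM_2}, substitute the equation to bound $|\square\phi|^2$, absorb lower-order terms with large $a$, discard the non-positive part of the boundary integral (leaving only $\Gamma_+$), restrict the bulk estimate to a region away from $\partial\mc{D}$ where $\zeta$ is uniformly positive, and finally transfer to $\mc{V}_-$ via Proposition \ref{thm_energy_prop} -- mirrors the paper. The only notable difference is cosmetic: where you flag the construction of the intermediate set $K$ as a ``genuine geometric constraint'' and propose a pigeonhole over a foliation, the paper resolves it directly by taking the explicit slab $\mc{U}_{\text{slab}} := \mc{U}\cap\{|t|\ll r_-\}$ with $r_- := \inf_{\mc{U}\cap\{t=0\}} r > 0$ (which is positive precisely because $p\notin\bar{\mc{U}}$ and $\partial\mc{U}$ is timelike), on which one reads off the lower bounds $f/(1-\varepsilon t^2) \geqslant r_-^2/16$ and $\zeta \geqslant (r_- e^{br_-/4}/4)^{4a}$, and then integrates the energy inequality over the full family of $t$-slices rather than selecting a single good slice. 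Both versions yield the same conclusion with the same constant dependence; the paper's slab construction is merely a clean, explicit instance of the set $K$ you posit.
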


\begin{proof}[Proof sketch]
We first apply the Carleman estimate variant \eqref{eq_carl_H1_norm_2} to $\mc{U}$ and $p$ to obtain
\begin{align*}
&C' \varepsilon_0 r_0^{-2} \int_{\mc{U} \cap \mc{D}} \zeta \rho^2 \left[ (E_\rho \phi)^2 + (E_0 \phi)^2 + \sum_A (E_A \phi)^2 \right] + \frac{1}{8} a^2 b_0 r_0^{-2} \int_{\mc{U} \cap \mc{D}} \zeta \phi^2 \\
&\qquad \leqslant \frac{ \mc{C} }{a} \int_{\mc{U} \cap \mc{D}} \zeta f |\nb_{\mc{X}} \phi|^2 + \frac{ \mc{C} }{a} \int_{\mc{U} \cap \mc{D}} \zeta f |V \phi|^2 + \frac{1}{2} \int_{\partial \mc{U} \cap \mc{D}} \zeta \mc{N} (f [1- \varepsilon t^2 ]^{-1}) |\mc{N} \phi|^2 \text{,}
\end{align*}
with $\varepsilon_0$ as in the theorem statement, and with all other quantities as in Theorem \ref{thm_carl_est_LM}.
Taking $a := a ( \mc{U}, V, \mc{X}, \varepsilon_0 )$ to be large enough, we can absorb the first two terms on the right,
\begin{equation}
\mc{C} \varepsilon_0 r_0^{-2} \int_{\mc{U} \cap \mc{D}} \zeta \rho^2 \left[ (E_\rho \phi)^2 + (E_0 \phi)^2 + \sum_A (E_A \phi)^2 \right] + \mc{C} a^2 b_0 r_0^{-2} \int_{\mc{U} \cap \mc{D}} \zeta \phi^2 \leqslant I_{\Gamma} \text{,} \label{eq_ext_obs_pf_2}
\end{equation}
where $I_\Gamma$ is the boundary term
\begin{equation}
\label{eq_ext_obs_pf_Gamma} I_{\Gamma} := \int_{\partial \mc{U} \cap \mc{D}} \zeta \mc{N} (f [1- \varepsilon t^2 ]^{-1}) |\mc{N} \phi|^2 \text{.}  
\end{equation}

Note the Carleman weight $\zeta$ vanishes on the boundary of the null cone $\mc{D}$; see \eqref{eq_thm_carl_def_LM}.
Thus, the LHS of \eqref{eq_ext_obs_pf_2} fails to capture the full $H^1$-norm of $\phi$ near $\partial \mc{D}$.
Hence, we must restrict the domain of integration away from $\partial \mc{D}$, so that the weight $\zeta$ is uniformly positive.
For this purpose, we reduce $\mc{U} \cap \mc{D}$ on the LHS of \eqref{eq_ext_obs_pf_2} to a slab of the form
\begin{equation}
\label{eq_ext_obs_pf_slab} \mc{U}_\text{slab} := \mc{U} \cap \left\{ |t| \ll r_- \right\} \subseteq \mc{U} \cap \mc{D} \text{,} \qquad r_- := \inf_{ \mc{U} \cap \{ t = 0 \} } r \text{.}
\end{equation}
Since $p \notin \bar{\mc{U}}$, and since $\partial \mc{U}$ is timelike, we can indeed bound $f$ and $\zeta$ from below on $\mc{U}_\text{slab}$:
\begin{equation}
\left. \frac{f}{1-\varepsilon t^2} \right|_{ \mc{U}_\text{slab} } \geqslant \frac{r_-^2}{16}, \qquad \zeta |_{ \mc{U}_\text{slab} } \geqslant \left( \frac{r_-}{4} e^{\frac{b r_-}{4}} \right)^{4a} \text{.}
\end{equation}
Using the above, the estimate \eqref{eq_ext_obs_pf_2} reduces to
\[
C' \left( \frac{r_-}{4} e^{\frac{b r_-}{4}} \right)^{4a} \int_{\mc{U}_\text{slab}} \left\{ \frac{\varepsilon_0 r_-^2}{r_0^2} \left[ (E_\rho \phi)^2 + (E_0 \phi)^2 + \sum_A (E_A \phi)^2 \right] + \frac{a^2 b_0}{r_0^2} \phi^2 \right\} \leqslant I_{\Gamma} \text{.}
\]

Next, we use Fubini's theorem to split the above integral over $\mc{U}_\text{slab}$ into nested integrals over spacelike cross-sections, and we then use the energy estimate from Proposition \ref{thm_energy_prop} to control the integral over each cross-section by the energy at $\mc{V}_-$.
This results in the estimate
\begin{align} \label{eq_ext_obs_pf_6}
C \| (\phi_0,\phi_1) \|_{H^1(\mc{V}_-) \times L^2(\mc{V}_-)}^2 \leqslant I_\Gamma \text{,} \qquad C := C ( p, \mc{U}, V, \mc{X}, \varepsilon_0 ) \text{.}
\end{align}
Finally, for $I_\Gamma$, since the integrand in \eqref{eq_ext_obs_pf_Gamma} is only positive on $\Gamma_+$, we have
\[
I_\Gamma \leq C' \int_{\Gamma_+} |\mc{N} \phi|^2 \text{,} \qquad C' := C' ( p, \mc{U}, V, \mc{X}, \varepsilon_0 ) \text{.}
\]
Combining the above with \eqref{eq_ext_obs_pf_6} completes the proof of \eqref{eq_obs_main_final_LM}.
\end{proof}

\subsubsection{Interior observability}

Next, we address the remaining case when $p \in \bar{\mc{U}}$.
Here, there is an additional technical issue that arises---if we apply the Carleman estimate about $p \in \bar{\mc{U}}$, then the Carleman weight $\zeta$ can vanish inside $\overline{\mc{U} \cap \mc{D}}$.
This prevents us from capturing the $H^1$-norm of $\phi$ along any spacelike cross-section of $\mc{U}$.

To get around this, we must apply the Carleman estimate differently than before:
\begin{itemize}
\item If $p \in \partial \mc{U}$, then we apply the estimate about a point $p' \notin \bar{\mc{U}}$ that is close to $p$, and we make use of estimates already obtained from Theorem \ref{thm_obs_main_final_LM}.

\item Otherwise, if $p \in \mc{U}$, then we apply the Carleman estimate with respect to a pair of points $p_1,p_2 \in \mc{U}$ close to $p$ and then sum the resulting estimates.
\end{itemize}
The price to be paid is that we must slightly enlarge our observation domain on the boundary, and we must also slightly weaken our curvature assumptions.

\begin{theorem} \label{thm_int_obs_1}
Assume the setup given in Assumption \ref{ass_LM}, and fix $p \in \bar{\mc{U}}$.
Moreover:
\begin{itemize}
\item Assume there exists a neighbourhood $\mc{V}$ of $p$ in $\mc{M}$, satisfying that $\mc{M}$ is a geodesically star-shaped neighbourhood of every $q \in \mc{V}$.

\item Let $\mc{V}_-$ be a spacelike cross-section of $\mc{U}$.
\end{itemize}
Then, there exist universal constants $\mc{C}_\dagger >0$ and $0 < \varepsilon_0 \ll 1$ such that if the bound \eqref{eq_obs_curv_final_LM} holds, then there exists $C := C(p, \mc{U}, V, \mc{X}, \varepsilon_0, \Gamma) > 0$ such that the observability inequality
\begin{equation} \label{eq_obs_main_final_LMi}
\| (\phi_0,\phi_1) \|_{H^1(\mc{V}_-) \times L^2(\mc{V}_-)}^2 \leqslant C \| \mc{N} \phi \|_{L^2(\Gamma)}^2
\end{equation}
holds for any $(\phi_0,\phi_1) \in H^1_0(\mc{V}_-) \times L^2(\mc{V}_-)$, where $\phi$ is the corresponding solution of \eqref{eq_obs_main_adeq_LM}, where $\mc{N}$ is the outward unit normal of $\mc{U}$, and where the observation region $\Gamma$ can be any open subset of $\partial \mc{U}$ that contains $\bar{\Gamma}_+$, with $\Gamma_+$ defined as in \eqref{eq_gamma_final_LM}.
\end{theorem}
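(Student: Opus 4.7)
The plan is to reduce Theorem \ref{thm_int_obs_1} to the exterior observability result Theorem \ref{thm_obs_main_final_LM} by translating the centre $p$ to one or more auxiliary points that lie outside $\bar{\mc{U}}$. The assumption that every $q$ in a neighbourhood $\mc{V}$ of $p$ is a geodesically star-shaped centre for $\mc{M}$ ensures that this translation is legitimate, and the continuity of the frames $\{E_\rho, E_0, E_A\}$ and of the functions $f$, $\zeta$, and $\Gamma_+$ in the base point guarantees that the hypotheses of Theorem \ref{thm_obs_main_final_LM}, in particular the curvature bounds \eqref{eq_obs_curv_final_LM}, remain satisfied (possibly after slightly enlarging $\mc{C}_\dagger$ or shrinking $\varepsilon_0$) after a small perturbation of the centre.

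First I would treat the case $p \in \partial \mc{U}$. Since $\partial \mc{U}$ is a smooth timelike hypersurface, we may pick $p' \in \mc{V} \setminus \bar{\mc{U}}$ arbitrarily close to $p$. Apply Theorem \ref{thm_obs_main_final_LM} with centre $p'$ in place of $p$; this yields \eqref{eq_obs_main_final_LMi} with observation region $\Gamma_+^{p'}$ defined by \eqref{eq_gamma_final_LM} using $f_{p'}$, and constant $C = C(p', \mc{U}, V, \mc{X}, \varepsilon_0)$. The key geometric observation is that, as $p' \to p$, both $\bar{\Gamma}_+^{p'}$ and its defining quantity $\mc{N}(f_{p'}[1-\varepsilon_0 r_0^{-2} t_{p'}^2]^{-1})$ converge (in the Hausdorff sense) to the corresponding objects at $p$. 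Hence, for $p'$ sufficiently close to $p$, we have $\Gamma_+^{p'} \subseteq \Gamma$, and the desired estimate \eqref{eq_obs_main_final_LMi} follows.

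For the case $p \in \mc{U}$, a single shift of the centre is insufficient: because $p$ lies in the interior of $\mc{U}$, a translated centre $p' \notin \bar{\mc{U}}$ produces a null cone exterior $\mc{D}_{p'}$ whose intersection with a spacelike cross-section $\mc{V}_-$ covers only one spatial side of $\mc{V}_-$. To cover all of $\mc{V}_-$, I would choose two points $p_1, p_2 \in \mc{V} \setminus \bar{\mc{U}}$ close to $p$, positioned on opposite sides so that $\mc{V}_- \subseteq \mc{D}_{p_1} \cup \mc{D}_{p_2}$ (for example, by displacing $p$ slightly in two opposite spacelike directions, and using that the sets $\mc{D}_{p_1}$ and $\mc{D}_{p_2}$ together exhaust a full neighbourhood of $p$ outside the two shifted cones). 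Applying Theorem \ref{thm_obs_main_final_LM} separately at $p_1$ and $p_2$ to the corresponding subsets $\mc{V}_-^{(i)}$ of $\mc{V}_-$ lying in $\mc{D}_{p_i}$, and summing the two resulting estimates, bounds $\|(\phi_0, \phi_1)\|_{H^1(\mc{V}_-) \times L^2(\mc{V}_-)}^2$ in terms of $\|\mc{N}\phi\|_{L^2(\Gamma_+^{p_1} \cup \Gamma_+^{p_2})}^2$. As before, for $p_1, p_2$ chosen close enough to $p$, both $\Gamma_+^{p_1}$ and $\Gamma_+^{p_2}$ lie in $\Gamma$.

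The main technical obstacle is the quantitative two-point construction in the interior case: one must guarantee simultaneously that $\mc{V}_- \subseteq \mc{D}_{p_1} \cup \mc{D}_{p_2}$, that the curvature assumption \eqref{eq_obs_curv_final_LM} continues to hold at $p_1, p_2$ (with the same $r_0$, possibly shrinking $\mc{C}_\dagger$), and that $\Gamma_+^{p_i} \subseteq \Gamma$. This is a continuous open-cover argument exploiting that $\Gamma$ is an open neighbourhood of the closed set $\bar{\Gamma}_+$, together with the continuous dependence of $\mc{D}_{p'}$ and $\Gamma_+^{p'}$ on $p'$. Once this geometric setup is in place, the remainder of the proof is a direct application of Theorem \ref{thm_obs_main_final_LM} and a standard partition-and-sum argument on $\mc{V}_-$, analogous to the interior observability step in \cite{Arick}.
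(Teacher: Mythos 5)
Your proposal correctly identifies the overall strategy of shifting the centre, and the treatment of $p \in \partial\mc{U}$ matches the paper exactly. However, the treatment of $p \in \mc{U}$ contains a genuine gap that makes the argument fail.

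The core problem is the choice $p_1, p_2 \in \mc{V}\setminus\bar{\mc{U}}$. The hypothesis only gives you \emph{some} neighbourhood $\mc{V}$ of $p$ on which the geodesically star-shaped property holds; if $p$ lies strictly inside $\mc{U}$, this $\mc{V}$ may well be entirely contained in $\mc{U}$, in which case $\mc{V}\setminus\bar{\mc{U}}=\emptyset$ and no such $p_1, p_2$ exist. You cannot enlarge $\mc{V}$ at will, since the star-shaped property is a geometric constraint, not a choice. Moreover, your stated reason for needing two points --- that a single exterior centre $p'$ covers ``only one spatial side of $\mc{V}_-$'' --- mislocates the obstruction. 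In the exterior proof one does not cover $\mc{V}_-$ directly: one covers a slab $\mc{U}\cap\{|t|\ll r_-\}$ near $t=0$ by $\mc{D}_{p'}$, and then Proposition \ref{thm_energy_prop} propagates the bound to $\mc{V}_-$. So if a $p'\in\mc{V}\setminus\bar{\mc{U}}$ did exist, a \emph{single} application of Theorem \ref{thm_obs_main_final_LM} would already give the full observability inequality, and the two-point construction would be unnecessary. Conversely, applying Theorem \ref{thm_obs_main_final_LM} to ``subsets $\mc{V}_-^{(i)}\subset\mc{V}_-$'' is not legitimate: that theorem is an observability estimate for a full cross-section, and its energy-propagation step does not restrict to a piece of the cross-section.

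The paper instead takes $p_1,p_2\in\mc{V}\cap\mc{U}$, i.e.\ \emph{inside} $\mc{U}$, with $t(p_1)=t(p_2)=0$ and $p_1\neq p_2$. Since the centres lie in $\mc{U}$, Theorem \ref{thm_obs_main_final_LM} does not apply; one returns to the Carleman estimate itself (Proposition \ref{thm_carl_est_LM_2}) at each $p_i$. The obstruction at a single interior centre is that the Carleman weight $\zeta^i$ degenerates on the null cone $\partial\mc{D}_i$, which passes through $\mc{U}$, so $\zeta^i$ is not uniformly bounded below on any slab $\{|t|\ll r_-\}$. The two-point trick fixes precisely this: with $r_- := \min(r_1(p_2),r_2(p_1))>0$, every point of $\mc{U}\cap\{|t|\ll r_-\}$ lies in $\mc{D}_1\cup\mc{D}_2$, and $\max(\zeta^1,\zeta^2)$ is uniformly positive on the slab. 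Summing the two Carleman estimates and then invoking Proposition \ref{thm_energy_prop} gives the full $H^1$ bound on $\mc{V}_-$, with boundary term supported in $\Gamma_+^1\cup\Gamma_+^2\subseteq\Gamma$ once $p_1,p_2$ are close to $p$. Your ``continuous open-cover argument'' and the observation that $\Gamma_+^{p'}\to\Gamma_+$ as $p'\to p$ are both correct and remain useful, but the central mechanism (interior centres, non-degeneracy of $\max(\zeta^1,\zeta^2)$ on a slab, and Carleman rather than the exterior theorem) is missing from your sketch.
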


Note the difference in the normal coordinate assumptions between Theorem \ref{thm_obs_main_final_LM} and Theorem \ref{thm_int_obs_1}---we need stronger assumptions in Theorem \ref{thm_int_obs_1}, as we must also apply the Carleman estimate about points near $p$.
Moreover, the observation domain $\Gamma$ in Theorem \ref{thm_int_obs_1} can be interpreted as being slightly larger than but arbitrarily close to $\Gamma_+$.

\begin{remark}
By continuity, both Assumption \ref{ass_LM} and the curvature conditions \eqref{eq_obs_curv_final_LM} also hold for points near $p$.
Thus, the Carleman estimate can be applied near $p$ as well.
\end{remark}

\begin{proof}[Proof sketch]
First, if $p \in \partial \mc{U}$, then we choose a point $p' \in \mc{V} \setminus \bar{\mc{U}}$ near $p$.
Repeating the proof of Theorem \ref{thm_obs_main_final_LM}, but now centred about the point $p'$, yields
\[
\| (\phi_0,\phi_1) \|_{H^1(\mc{V}_-) \times L^2(\mc{V}_-)}^2 \leqslant C \| \mc{N} \phi \|_{L^2(\Gamma_+')}^2 \text{,}
\]
where $\Gamma_+'$ is as in \eqref{eq_gamma_final_LM}, but defined with respect to $p'$ rather than $p$.
If $p'$ is sufficiently close to $p$, then $\Gamma_+' \subseteq \Gamma$, and the desired \eqref{eq_obs_main_final_LMi} follows immediately.

From now on, we assume $p \in \mc{U}$.
Also, choose $p_1, p_2 \in \mc{V} \cap \mc{U}$ near $p$, with $t (p_1) = t (p_2) = 0$.
As a general convention, we will attach the index $i$ to refer to quantities with respect to the coordinate system centred at $p_i$; for instance, $f_i = \frac{1}{4} (r_i^2 - t_i^2)$, where $r_i$ and $t_i$ denote normal polar coordinates with respect to $p_i$. 
Applying the Carleman estimate \eqref{eq_carl_H1_norm_2} about $p_i$ (with common constants $a$, $b_0$, $\varepsilon_0$) and proceeding as in the proof of Theorem \ref{thm_obs_main_final_LM}, we obtain
\begin{equation}
\mc{C} \varepsilon_0 r_0^{-2} \int_{ \mc{U} \cap \mc{D}_i } \zeta^i \rho_i^2 \left[ (E_\rho^i \phi)^2 + (E_0^i \phi)^2 + \sum_A (E_A^i \phi)^2 \right] + \mc{C} a^2 b_0 r_0^{-2} \int_{ \mc{U} \cap \mc{D}_i } \zeta^i \phi^2 \leq I_\Gamma^i \text{,} \label{eq_int_obs_pf_2}
\end{equation}
where $I_\Gamma^i$ denotes the corresponding boundary term
\begin{equation}
\label{eq_int_obs_pf_Gamma} I_\Gamma^i = \int_{\partial \mc{U} \cap \mc{D}_i} \zeta^i \mc{N} (f_i [ 1 - \varepsilon t_i^2 ]^{-1}) |\mc{N} \phi |^2 \text{.}
\end{equation}

Similar to before, we define the slab
\begin{equation}
\label{eq_int_obs_pf_slab} \mc{U}_\text{slab} := \mc{U} \cap \left\{ |t| \ll r_- \right\} \text{,} \qquad r_- := \min( r_1 ( p_2 ), r_2 ( p_1 ) ) \text{.}
\end{equation}
The key observations here are that if $p_1 \neq p_2$, then:
\begin{itemize}
\item $\mc{U}_\text{slab}$ is contained in $( \mc{U} \cap \mc{D}_1 ) \cup ( \mc{U} \cap \mc{D}_2 )$.

\item $\max ( \zeta^1, \zeta^2 )$ is uniformly bounded from below on $\mc{U}_\text{slab}$ by a positive constant.
\end{itemize}
Thus, summing the estimates \eqref{eq_int_obs_pf_2} for $i = 1, 2$ and noting the above, we obtain
\[
C' \int_{\mc{U}_\text{slab}} \left[ (E_\rho \phi)^2 + (E_0 \phi)^2 + \sum_A (E_A \phi)^2 + \phi^2 \right] \leqslant I_\Gamma^1 + I_\Gamma^2 \text{,}
\]
where $C'$---and all subsequent constants---also depends on $p$, $\mc{U}$, $V$, $\mc{X}$, $\varepsilon_0$, $\Gamma$.

Finally, applying the energy estimate from Proposition \ref{thm_energy_prop} to the above and then bounding $I_\Gamma^i$ as in the proof of Theorem \ref{thm_obs_main_final_LM} results in the bound
\[
C \| (\phi_0, \phi_1) \|_{H^1(\mc{V}_-) \times L^2(\mc{V}_-)}^2 \leqslant I_\Gamma^1 + I_\Gamma^2 \leqslant C' \sum_{ i = 1, 2 } \int_{ \Gamma_+^i } | \mc{N} \phi |^2 \text{.}
\]
The observability inequality \eqref{eq_obs_main_final_LMi} now follows from the above, along with the observation that $\Gamma_+^1 \cup \Gamma_+^2 \subseteq \Gamma$ as long as $p_1$ and $p_2$ are sufficiently close to $p$.
\end{proof}

\begin{remark}
One can obtain more detailed information on the observability constants $C$ in Theorems \ref{thm_obs_main_final_LM} and \ref{thm_int_obs_1} through a more careful analysis of the estimates within their proofs.
We avoid doing this here for brevity, but see \cite{Arick} for details of this process.
\end{remark}

\subsection{Controllability}

Through the Hilbert uniqueness method, the observability inequalities from Theorems \ref{thm_obs_main_final_LM} and \ref{thm_int_obs_1} imply our main controllability results.
In the following, we give the precise statements of our controllability results:

\begin{theorem}[Exterior Control] \label{thm_control_ext_mr}
Suppose Assumption \ref{ass_LM} holds, and fix $p \notin \bar{\mc{U}}$.
Moreover:
\begin{itemize}
\item Let $\mc{V}_-$ and $\mc{V}_+$ be two spacelike cross-sections of $\mc{U}$, with $\mc{V}_-$ lying in the past of $\mc{U} \cap \bar{\mc{D}}$, and with $\mc{V}_+$ lying in the future of $\mc{U} \cap \bar{\mc{D}}$.

\item Let $\mc{C}_\dagger > 0$ and $0 < \varepsilon_0 \ll 1$ be the universal constants from the statement of Theorem \ref{thm_obs_main_final_LM}, and assume that the curvature bounds \eqref{eq_obs_curv_final_LM} hold.

\item Let the control region $\Gamma_+ \subset \partial \mc{U}$ be defined as in \eqref{eq_gamma_final_LM}.
\end{itemize}
Then, given $(y_0^{\pm}, y_1^{\pm}) \in L^2(\mc{V}_{\pm}) \times H^{-1}(\mc{V}_{\pm})$, there exists $F \in L^2(\Gamma_+)$ so that the solution $y$ of 
\[
\begin{cases}
\square y + \nb_{\mc{X}} y + q y = 0, \qquad & \text{on } \mc{U} \text{,} \\
y = F \mathbf{1}_{ \Gamma_+ }, & \text{on } \partial \mc{U} \text{,} \\
(y,\partial_t y) = (y_0^-,y_1^-), & \text{on } \mc{V}_- \text{,}
\end{cases}
\]
satisfies
\[
(y,\partial_t y) = (y_0^+,y_1^+) \qquad \text{on } \mc{V}_+ \text{.}
\]
\end{theorem}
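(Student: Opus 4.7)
The plan is to apply the Hilbert Uniqueness Method (HUM) to convert the observability inequality from Theorem~\ref{thm_obs_main_final_LM} into the desired controllability statement. By linearity of \eqref{eq_ctrl_wv_1_LM}, it suffices to treat two subproblems separately: (a) steering initial data $(y_0^-, y_1^-)$ at $\mc{V}_-$ to the zero state at $\mc{V}_+$, and (b) steering zero data at $\mc{V}_-$ to prescribed $(y_0^+, y_1^+)$ at $\mc{V}_+$. Problem (b) reduces to (a) by reversing the time orientation: the wave operator is formally self-adjoint, Assumption~\ref{ass_LM} and the curvature bound \eqref{eq_obs_curv_final_LM} are preserved under this reversal, and Theorem~\ref{thm_obs_main_final_LM} applied to the reversed problem yields the required observability in that direction. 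Superposing the two resulting controls gives the full controllability statement.

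For (a), I would introduce the HUM functional on $H^1_0(\mc{V}_-) \times L^2(\mc{V}_-)$ by
\[
J(\phi_0, \phi_1) := \tfrac{1}{2} \| \mc{N} \phi \|_{L^2(\Gamma_+)}^2 - \langle (y_0^-, y_1^-), (\phi_1, -\phi_0) \rangle,
\]
where $\phi$ solves the adjoint problem \eqref{eq_obs_wv_LM} with data $(\phi_0, \phi_1)$ at $\mc{V}_-$, and the pairing is the natural duality between $L^2(\mc{V}_-) \times H^{-1}(\mc{V}_-)$ and $H^1_0(\mc{V}_-) \times L^2(\mc{V}_-)$. The observability inequality \eqref{eq_obs_main_final_LM} provides coercivity of $J$, while a hidden-regularity estimate $\| \mc{N} \phi \|_{L^2(\partial \mc{U})} \lesssim \|(\phi_0, \phi_1)\|_{H^1_0 \times L^2}$ (obtained via a transversal-multiplier identity on $\bar{\mc{U}}$; cf.\ \cite{lionj:ctrlstab_hum, Arick}) provides continuity. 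Strict convexity is automatic, so $J$ admits a unique minimizer $(\bar{\phi}_0, \bar{\phi}_1)$, and its Euler--Lagrange equation identifies the desired control as $F := \mc{N} \bar{\phi}|_{\Gamma_+} \in L^2(\Gamma_+)$.

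The HUM mechanism itself is routine; the main technical obstacle is ensuring its prerequisites hold in the curved, time-dependent Lorentzian setting, namely: (i) well-posedness of \eqref{eq_ctrl_wv_1_LM} in the transposition sense for $F \in L^2(\Gamma_+)$ and $(y_0^-, y_1^-) \in L^2 \times H^{-1}$, with continuous-in-time trace at $\mc{V}_+$ in the analogous space; (ii) the hidden regularity above for the adjoint; and (iii) the identification of the trace at $\mc{V}_+$ of the transposition solution driven by $F = \mc{N}\bar\phi|_{\Gamma_+}$ with $(0,0)$, via the duality computation underlying HUM. All three rest on a suitable weighted energy/multiplier argument on $\bar{\mc{U}}$, using a globally timelike multiplier vector field whose existence is guaranteed by the timelike character of $\partial \mc{U}$ and the compactness of $\overline{\mc{U} \cap \mc{D}}$ in $\mc{M}$. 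These are standard adaptations of the Minkowski arguments of \cite{Arick}, so once they are in place, combining them with Theorem~\ref{thm_obs_main_final_LM} and the time-reversed counterpart completes the proof; the genuine geometric content of the theorem has already been absorbed into the observability estimate.
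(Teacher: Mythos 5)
Your proposal is correct and takes essentially the same route as the paper, which simply invokes the Hilbert Uniqueness Method / duality machinery of \cite{dolec_russe:obs_control, lionj:ctrlstab_hum} together with the observability inequality of Theorem~\ref{thm_obs_main_final_LM} without spelling out the functional-analytic details (well-posedness by transposition, hidden regularity, convexity and coercivity of the HUM functional), all of which you correctly identify. One small economy: rather than splitting into subproblems (a) and (b) and invoking a time-reversed version of the observability estimate for (b), one can subtract the free (zero-control) evolution with prescribed data $(y_0^+, y_1^+)$ at $\mc{V}_+$, thereby reducing the full problem at once to a single null-control problem of your type (a), which is parametrized by adjoint data at $\mc{V}_-$ and hence uses Theorem~\ref{thm_obs_main_final_LM} exactly as stated.
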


\begin{theorem}[Interior Control] \label{thm_control_int_mr}
Suppose Assumption \ref{ass_LM} holds, and fix $p \in \bar{\mc{U}}$.
Moreover:
\begin{itemize}
\item Let $\mc{V}_-$ and $\mc{V}_+$ be two spacelike cross-sections of $\mc{U}$, with $\mc{V}_-$ lying in the past of $\mc{U} \cap \bar{\mc{D}}$, and with $\mc{V}_+$ lying in the future of $\mc{U} \cap \bar{\mc{D}}$.

\item Let $\mc{C}_\dagger > 0$ and $0 < \varepsilon_0 \ll 1$ be the universal constants from the statement of Theorem \ref{thm_int_obs_1}, and assume that the curvature bounds \eqref{eq_obs_curv_final_LM} hold.

\item Let $\Gamma_+$ be as in \eqref{eq_gamma_final_LM}, and let $\Gamma$ be any open subset of $\partial \mc{U}$ that contains $\bar{\Gamma}_+$.
\end{itemize}
Then, given $(y_0^{\pm}, y_1^{\pm}) \in L^2(\mc{V}_{\pm}) \times H^{-1}(\mc{V}_{\pm})$, there exists $F \in L^2(\Gamma)$ so that the solution $y$ of 
\[
\begin{cases}
\square y + \nb_{\mc{X}} y + q y = 0, \qquad & \text{on } \mc{U} \text{,} \\
y = F \mathbf{1}_\Gamma, & \text{on } \partial \mc{U} \text{,} \\
(y,\partial_t y) = (y_0^-,y_1^-), & \text{on } \mc{V}_- \text{,}
\end{cases}
\]
satisfies
\[
(y,\partial_t y) = (y_0^+,y_1^+) \qquad \text{on } \mc{V}_+ \text{.}
\]
\end{theorem}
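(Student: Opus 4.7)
The plan is to derive Theorem \ref{thm_control_int_mr} from the interior observability inequality of Theorem \ref{thm_int_obs_1} via the Hilbert Uniqueness Method (HUM) of Lions \cite{lionj:control_hum,lionj:ctrlstab_hum}, following the same pattern used on Minkowski spacetime in \cite{Arick}. By linearity, it suffices to solve two subproblems: null-controllability (from $(y_0^-,y_1^-)$ at $\mc{V}_-$ to zero at $\mc{V}_+$) and its time-reverse (from zero at $\mc{V}_-$ to $(y_0^+,y_1^+)$ at $\mc{V}_+$); the desired control is then the sum of the two partial controls. I describe the first subproblem, the second being analogous after reversing time.

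The first step is to fix a cutoff $\chi \in C_c^\infty(\partial \mc{U})$ with $\chi \equiv 1$ on a neighbourhood of $\bar{\Gamma}_+$ and $\mathrm{supp}\,\chi \subset \Gamma$, which is possible since $\bar{\Gamma}_+ \subset \Gamma$ is compact and $\Gamma$ is open. I would then define the HUM operator $\Lambda$ on $H^1_0(\mc{V}_-) \times L^2(\mc{V}_-)$ by first solving the adjoint system \eqref{eq_obs_main_adeq_LM} with data $(\phi_0,\phi_1)$ forward in time to obtain $\phi$ on $\mc{U}$, then solving the direct system for $y$ with Dirichlet data $\chi\, \mc{N}\phi$ on $\partial \mc{U}$ and \emph{zero} Cauchy data at $\mc{V}_+$, backward in time, interpreted by transposition in the sense of Lions-Magenes \cite{lionj_mage:bvp1}. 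The output $\Lambda(\phi_0,\phi_1)$ is defined as the trace of $(\partial_t y, -y)$ at $\mc{V}_-$, valued in the dual $H^{-1}(\mc{V}_-) \times L^2(\mc{V}_-)$.

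The next step is to establish the fundamental duality identity via integration by parts in $\mc{U}$ against $\phi$:
\[
\langle \Lambda(\phi_0,\phi_1), (\phi_0,\phi_1) \rangle = \int_{\partial \mc{U}} \chi\, |\mc{N}\phi|^2 \geq \int_{\Gamma_+} |\mc{N}\phi|^2.
\]
Combining this lower bound with the observability inequality \eqref{eq_obs_main_final_LMi} of Theorem \ref{thm_int_obs_1} immediately yields coercivity of $\Lambda$, namely $\langle \Lambda(\phi_0,\phi_1), (\phi_0,\phi_1) \rangle \geq C^{-1}\|(\phi_0,\phi_1)\|^2_{H^1_0(\mc{V}_-) \times L^2(\mc{V}_-)}$. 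Boundedness of $\Lambda$ follows from standard transposition well-posedness estimates for the direct problem with $L^2$ boundary data. The Lax-Milgram theorem then yields that $\Lambda$ is an isomorphism onto its dual, so for any target pair $(y_0^-,y_1^-) \in L^2(\mc{V}_-) \times H^{-1}(\mc{V}_-)$ there is a unique $(\phi_0,\phi_1)$ with $\Lambda(\phi_0,\phi_1)$ prescribed, and $F := \chi\, \mc{N}\phi \in L^2(\Gamma)$ is the sought control.

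The main obstacle I anticipate is not the HUM scheme itself, which is formal and well-known, but rather the careful tracking of function-space regularity in the curved Lorentzian setting: since $F \in L^2(\Gamma)$ produces a direct-system solution only in $C^0(L^2) \cap C^1(H^{-1})$, the identification of traces on $\mc{V}_\pm$ and the validity of the above duality identity must be justified by a transposition/density argument, approximating $(\phi_0,\phi_1)$ by smoother data and passing to the limit. This technical framework is standard on Minkowski backgrounds (see \cite{Arick} and the references therein), but requires minor adaptation here because the spacelike cross-sections $\mc{V}_\pm$ and boundary $\partial \mc{U}$ are general smooth submanifolds rather than coordinate hyperplanes; the fact that $\mc{V}_\pm$ are spacelike and $\partial \mc{U}$ is timelike ensures the well-posedness theory of \cite{lionj_mage:bvp1} transfers without essential change.
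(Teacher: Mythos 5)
Your approach is the Hilbert Uniqueness Method that the paper itself invokes (without further detail) at the start of Section 4.3, so you are following the paper's intended route. One small imprecision: your displayed chain lower-bounds the duality pairing by $\int_{\Gamma_+}|\mc{N}\phi|^2$, but the interior observability inequality \eqref{eq_obs_main_final_LMi} only controls the left-hand side by $\|\mc{N}\phi\|_{L^2(\Gamma')}^2$ for an \emph{open} set $\Gamma'\supset\bar{\Gamma}_+$, not by $\|\mc{N}\phi\|_{L^2(\Gamma_+)}^2$ itself; you should therefore either apply Theorem \ref{thm_int_obs_1} with a smaller open set $\Gamma'\subset\{\chi\equiv1\}$ still containing $\bar{\Gamma}_+$, or dispense with the cutoff altogether and take $F:=\mc{N}\phi\,\mathbf{1}_\Gamma$ so that the duality identity yields exactly $\|\mc{N}\phi\|_{L^2(\Gamma)}^2$. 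Either repair is immediate, and the rest of your transposition/Lax--Milgram argument is sound.
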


\subsection{Application to Inverse Problems}

Finally, recall that our main Carleman estimate, Theorem \ref{thm_carl_est_LM}, implies a unique continuation result on the null cone exterior $\mc{D}$.
This unique continuation result can then be plugged into the arguments of \cite{lor_cald} to obtain an inverse problems result on Lorentzian backgrounds---in particular, that of recovering the potential in a linear wave equation from the Dirichlet-to-Neumann map.

In particular, from the above, we obtain a variant of the main result of \cite{lor_cald}, in which we can treat wave equations with arbitrary first-order terms, but we assume the curvature bounds from this article.
A rough statement of the result is as follows:

\begin{theorem} \label{thm_inv_pblm_VX}
Let $( \mc{M}, g )$ be a $(n+1)$-dimensional connected, smooth Lorentzian manifold, with timelike boundary $\partial \mc{M}$.
In addition, assume the geometric conditions (H1)-(H5) in \cite{lor_cald} hold on $( \mc{M}, g )$, but with the curvature bound (H2) replaced by the condition
\footnote{The $L^\infty$-norms can be measured using any orthonormal frame from a fixed unit timelike vector field.}
\[
\|R\|_{ L^\infty ( \mc{M} ) } < \varepsilon_0 \mc{C}_0 \text{,} \qquad \| \nb R \|_{ L^\infty ( \mc{M} ) } < \mc{C}_1 \text{,}
\]
where $0 < \varepsilon_0 \ll 1$ is a universal constant, and where $\mc{C}_0, \mc{C}_1 > 0$ depend on $\mc{M}$.
\footnote{$\mc{C}_0$ and $\mc{C}_1$ depend on the ``size" of $\mc{M}$; these arise from the factors $\mc{C}_\dagger$ and $r_0$ in \eqref{eq_obs_curv_final_LM}.}
Now, given a smooth vector field $\mc{X}$ and scalar-valued functions $V_1, V_2$ on $\mc{M}$, we set
\[
\mi{I}( \mc{X}, V_i ) := \{ ( \phi, \mc{N} \phi) |_{\partial \mc{M}} \mid \square \phi + \nb_{\mc{X}} \phi + V_i \phi = 0 \} \text{,} \qquad i = 1, 2 \text{,}
\]
with $\mc{N}$ the unit outward normal to $\mc{U}$.
Then, if $\mi{I}( \mc{X}, V_1 ) = \mi{I} ( \mc{X}, V_2 )$, then $V_1 = V_2$.
\end{theorem}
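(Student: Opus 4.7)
The plan is to follow the scheme of \cite{lor_cald}, replacing its unique continuation input with the one implied by our Carleman estimate, Theorem \ref{thm_carl_est_LM}, which crucially tolerates first-order terms. Given $\mi{I}(\mc{X}, V_1) = \mi{I}(\mc{X}, V_2)$, for any smooth $\phi_1$ solving $\square \phi_1 + \nb_{\mc{X}} \phi_1 + V_1 \phi_1 = 0$ there is a $\phi_2$ satisfying the analogous equation with $V_2$ whose Cauchy data on $\partial \mc{M}$ match those of $\phi_1$. Setting $\phi := \phi_1 - \phi_2$ and $V := V_2 - V_1$, one has
\[
\square \phi + \nb_{\mc{X}} \phi + V_1 \phi = V \phi_2, \qquad (\phi, \mc{N}\phi)|_{\partial \mc{M}} = 0.
\]
Testing this against a solution $\psi$ of the formal adjoint $\square \psi - \nb_{\mc{X}} \psi + (V_1 - \nb_\alpha \mc{X}^\alpha)\psi = 0$ and integrating by parts kills all boundary contributions on $\partial \mc{M}$, yielding the orthogonality identity $\int_{\mc{M}} V \phi_2 \psi = 0$.

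The heart of the argument is to extract from this orthogonality that $V \equiv 0$, which requires first knowing that $\phi$ in fact vanishes in the bulk of $\mc{M}$. This is precisely where Theorem \ref{thm_carl_est_LM} enters. For any vertex $p$ at which the curvature bounds of the theorem are valid, I would apply the estimate to $\phi$, viewed as a solution of a wave equation with first-order term $\nb_{\mc{X}}$ and zero-order coefficient $V_1$. Using the large parameter $a$ to absorb $|\nb_{\mc{X}} \phi|^2$ into the $\varepsilon r^4 \rho^{-2} (E_\rho \phi)^2$ and $\varepsilon r^4 \rho^{-2}(E_\theta \phi)^2$ terms, and similarly absorbing the $V_1 \phi$ contribution into the $a^2 b \phi^2$ term, one obtains—since the Cauchy data of $\phi$ vanish on $\partial \mc{U} \cap \mc{D}$—the vanishing of $\phi$ throughout $\mc{U} \cap \mc{D}$ about $p$. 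Covering $\mc{M}$ by null cone exteriors as $p$ varies, and invoking the geometric conditions (H1)--(H5) to propagate the vanishing, one concludes $\phi \equiv 0$ on $\mc{M}$. This step is a direct analogue of the propagation scheme of \cite{lor_cald}.

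With unique continuation in hand, the recovery of $V$ proceeds as in \cite{lor_cald}: by constructing a sufficiently rich family of solutions $\phi_2$ and $\psi$ via Gaussian beam or geometric optics concentration along null geodesics, one forces the orthogonality integral $\int_{\mc{M}} V \phi_2 \psi = 0$ to imply $V \equiv 0$ pointwise. This downstream portion of the argument carries over unchanged because the adjoint of $\square + \nb_{\mc{X}} + V_1$ has the same structural form, and the geometric optics construction depends only on $g$ and not on the lower-order coefficients.

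The principal obstacle, and precisely the improvement over \cite{lor_cald}, is the unique continuation step. In \cite{lor_cald}, the Carleman weight built from the unperturbed hyperquadric $f$ is only weakly pseudoconvex, so the only lower-order terms that can be absorbed are zero-order; an $\nb_{\mc{X}}$ term cannot be controlled. The shifted weight $\bar{f} = f(1-\varepsilon t^2)^{-1}$ developed in Section \ref{sec.pc} is strictly pseudoconvex under our curvature hypotheses (Theorem \ref{thm.pc}), which generates the extra $\varepsilon r^2$-weighted gradient control in Theorem \ref{thm_carl_est_LM} needed to dominate $|\nb_{\mc{X}} \phi|^2$ via large $a$. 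Verifying that the absorption scheme does indeed close—i.e.\ that the constants produced by the Carleman estimate are compatible with the integral identity above on all of $\mc{M}$—is the part that requires the most care, but it follows the now-standard template from \cite{Arick}.
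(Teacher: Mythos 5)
The paper does not actually present a proof of Theorem~\ref{thm_inv_pblm_VX}; it simply notes that the unique continuation result implied by Theorem~\ref{thm_carl_est_LM} can be substituted for the one in \cite{lor_cald}, whose machinery otherwise carries over unchanged. Your proposal correctly identifies this substitution as the mechanism and correctly locates the improvement (the strict pseudoconvexity of $\bar{f}$ tolerates the first-order term $\nb_{\mc{X}}$), but your reconstruction of \emph{where} unique continuation enters contains a genuine gap that makes the argument circular as written.

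You set $\phi := \phi_1 - \phi_2$ and observe that it has vanishing Cauchy data on $\partial\mc{M}$. You then propose to apply the Carleman estimate and absorb the lower-order terms to conclude $\phi \equiv 0$ on $\mc{U}\cap\mc{D}$. But $\phi$ does \emph{not} solve a homogeneous wave equation: it satisfies
\[
\square\phi + \nb_{\mc{X}}\phi + V_1\phi = V\phi_2, \qquad V := V_2 - V_1,
\]
and the source $V\phi_2$ is precisely the quantity whose vanishing you are trying to prove. After absorbing $\nb_{\mc{X}}\phi$ and $V_1\phi$ using the large parameter $a$, the Carleman estimate leaves $\frac{1}{4a}\int\zeta f\,|V\phi_2|^2$ on the right-hand side; this term does not involve $\phi$ and cannot be absorbed, so one cannot conclude $\phi \equiv 0$. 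If you \emph{could} show $\phi\equiv 0$ this way, it would already give $V\phi_2 \equiv 0$, and the subsequent geometric optics / orthogonality step would be superfluous. Moreover, the orthogonality identity $\int_{\mc{M}} V\phi_2\psi = 0$ already follows from integration by parts against the adjoint solution together with the vanishing Cauchy data of $\phi$; it never requires $\phi$ to vanish in the bulk. The actual role of unique continuation in \cite{lor_cald} is different: it furnishes an observability (equivalently, an approximate controllability or density) statement that underlies the Boundary Control machinery, allowing one to construct solutions concentrated near interior points and to pass from the integral orthogonality to pointwise recovery of $V$. That is the step that breaks in \cite{lor_cald} once $\mc{X}\ne 0$, and that is the step that Theorem~\ref{thm_carl_est_LM} repairs. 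Your high-level summary of the contribution is therefore correct, but the specific intermediate claim that $\phi$ vanishes by unique continuation is both unnecessary and unjustified, and would need to be removed and replaced by the density argument that \cite{lor_cald} actually uses.
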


\begin{remark}
From our unique continuation results combined with the techniques of \cite{lor_cald, lor_inv_new}, one should also be able to recover the first-order coefficients $\mc{X}$ in Theorem \ref{thm_inv_pblm_VX}.
However, this runs into additional technical issues involving gauge invariances for $\mc{X}$, hence we avoid treating this more general case here for simplicity.
\end{remark}

%
%

\raggedright

\providecommand{\bysame}{\leavevmode\hbox to3em{\hrulefill}\thinspace}
\providecommand{\MR}{\relax\ifhmode\unskip\space\fi MR }
\providecommand{\MRhref}[2]{%
  \href{http://www.ams.org/mathscinet-getitem?mr=#1}{#2}
}
\providecommand{\href}[2]{#2}


\begin{thebibliography}{10}



\bibitem{lor_cald}
S.~Alexakis, A.~Feizmohammadi, and L.~Oksanen, \emph{Lorentzian Calder\'{o}n problem under curvature bounds}, arXiv:2008.07508 (2020).

\bibitem{lor_inv_new}
\bysame, \emph{Lorentzian Calder\'{o}n problem near the Minkowski geometry}, arXiv:2112.01663 (2021).


\bibitem{alex_shao:uc_global}
S.~Alexakis and A.~Shao, \emph{Global uniqueness theorems for linear and nonlinear waves}, J. Func. Anal. \textbf{269} (2015), no.~11, 3458--3499.


\bibitem{BLR}
\bysame, \emph{Sharp sufficient conditions for the observation, control and stabilization of waves from the boundary}, SIAM J. Control Optim. \textbf{30} (1992), 1024--1065.

\bibitem{Carleman}
L.~Baudouin, M.~de~Buhan, and S.~Ervedoza, \emph{Global Carleman estimates for waves and applications}, Commun. Partial Differ. Equ. \textbf{38}  (2013), no.~5, 823--859.

\bibitem{cald:unique_cauchy}
A.~P. Calder\'on, \emph{Uniqueness in the Cauchy problem for partial differential equations}, Amer. J. Math. \textbf{80} (1958), 16--36.

\bibitem{carl:uc_strong}
T.~Carleman, \emph{Sur un probl\`{e}me d'unicit\'e pour les syst\`{e}mes d\'equations aux d\'eriv\'ees partielles \`a deux variables ind\'ependentes}, Ark. Mat. Astr. Fys. \textbf{26} (1939), no.~17, 1--9.

\bibitem{cbl_lef:rin}
B.-L.~Chen and P.~G.~LeFloch, \emph{Injectivity Radius of Lorentzian Manifolds}, Commun. Math. Phys. \textbf{278} (2008), 679--713.

\bibitem{dolec_russe:obs_control}
S.~Dolecki and D.~L. Russell, \emph{A general theory of observation and control}, SIAM J. Control Optim. \textbf{15} (1977), no.~2, 185--220.

\bibitem{duy_zhang_zua:obs_opt}
T.~Duyckaerts, X.~Zhang, and E.~Zuazua, \emph{On the optimality of the observability inequalities for parabolic and hyperbolic systems with potentials}, Ann. I. H. Poincar\'e \textbf{25} (2008), 1--41.


\bibitem{FYZ}
X.~Fu, J.~Yong, and X.~Zhang, \emph{Exact controllability for multidimensional semilinear hyperbolic equations}, SIAM J. Control Optim. \textbf{46} (2007), no.~5, 1578--1614.

\bibitem{ho:obs_wave}
L.~F.~Ho, \emph{Observabilit\'e fronti\`ere de l'\'equation des ondes}, C. R. Acad. Sci. Paris S\'er. I Math. \textbf{302} (1986), 443--446

\bibitem{hor:lpdo4}
L.~H{\"o}rmander, \emph{The analysis of linear partial differential operators IV: Fourier integral operators}, Springer-Verlag, 1985.


\bibitem{iman}
O.~Y.~Imanuvilov, \emph{On Carleman estimates for hyperbolic equations}, Asymptotic Analysis. \textbf{32} (2002), 185-220. 

\bibitem{jena}
V.~K.~Jena, \emph{Carleman estimate for ultrahyperbolic operators and improved interior control for wave equations}, J. Differ. Equ. \textbf{302} (2021), 273--333.

\bibitem{komo:control_mintime}
V.~Komornik, \emph{Contr\^olabilit\'e exacte en un temps minimal}, C. R. Acad. Sci. Paris S\'er. I Math. \textbf{304} (1987), 223--225.

\bibitem{lasie_trigg_zhang:wave_global_uc}
I.~Lasiecka, R.~Triggiani, and X.~Zhang, \emph{Nonconservative wave equations with unobserved Neumann BC: Global uniqueness and observability in one shot}, Contemp. Math. \textbf{268} (2000), 227--326.

\bibitem{laur_leaut:obs_unif}
C.~Laurent and M.~L\'eautaud, \emph{Uniform observability estimates for linear waves}, ESAIM Contr. Op. Ca. Va. \textbf{22} (2016), no.~4, 1097--1136.

\bibitem{ler_robb:unique}
N.~Lerner and L.~Robbiano, \emph{Unicit\'e de Cauchy pour des op\'erateurs de type principal par}, J. Anal. Math. \textbf{44} (1984), 32--66.

\bibitem{lionj:control_hum}
J.-L. Lions, \emph{Contr\^olabilit\'e exacte des syst\`emes distribu\'es}, C. R. Acad. Sci. Paris S\'er I Math. \textbf{302} (1986), 471--475.

\bibitem{lionj:ctrlstab_hum}
\bysame, \emph{Exact controllability, stabilizability, and perturbations for distributed systems}, SIAM Rev. \textbf{30} (1988), 1--68.

\bibitem{lionj_mage:bvp1}
J.-L. Lions and E.~Magenes, \emph{Non-homogeneous boundary value problems and applications}, Grundlehren math. Wiss., 1972.

\bibitem{micu_zua:control_pde}
S.~Micu and E.~Zuazua, \emph{An introduction to the controllability of linear PDE}, Quelques questions de th\'eorie du contr\^ole (T.~Sari, ed.), Collection Travaux en Cours, Hermann, 2005, pp.~69--157.


\bibitem{oneill} 
B. O'Neill, \emph{Semi-Riemannian Geometry With Applications to Relativity}, Volume 103 (Pure and Applied Mathematics), Academic Press, 1983.

\bibitem{lero_leb_terpo_trel:gcc_time}
J.~Le Rousseau, G.~Lebeau, P.~Terpolilli, and E.~Tr\'elat, \emph{Geometric control condition for the wave equation with a time-dependent observation domain}, Anal. PDE \textbf{10} (2017), no.~4, 983--1015.



\bibitem{Arick} 
A. Shao, \textit{On Carleman and Observability estimates for wave equations on time dependent domains}, Proc. Lond. Math. Soc. \textbf{119} (2019), no.~4 998--1064.

\bibitem{taka:carleman_td}
H. Takase, \textit{Inverse source problem for a system of wave equations on a Lorentzian manifold}, Commun. Partial Differ. Equ. \textbf{45} (2020), 1414--1434.

  http://math.berkeley.edu/{$\sim$}tataru/papers/ucpnotes.ps.

\bibitem{tat:paper}
\bysame, \emph{A-priori estimates of Carleman's type in domains with boundaries}, J. Math. Pures Appl. 73 (1994) 355–387.


\bibitem{tat:uc_hh}
\bysame, \emph{Unique continuation for solutions to PDEs; Between H\"ormander's theorems and Holmgren's theorem}, Commun. Partial Differ. Equ. \textbf{20} (1995), no.~5-6, 855--884.

\bibitem{trigg_yao:carleman_wave_uc}
R.~Triggiani and P.~F. Yao, \emph{Carleman estimates with no lower-order terms for general Riemann wave equations. Global uniqueness and observability in one shot}, Appl. Math. Optim. \textbf{46} (2002), 331--375.


\bibitem{yao_var_coeff}
P. Yao, \emph{On The Observability Inequalities for Exact Controllability of Wave Equations With Variable Coefficients}, SIAM J. Control Optim. \textbf{37} (1999), no.~5, 1568--1599. 

\end{thebibliography}
\end{document}